\newif\ifpersonal
\newtheorem{thm}[subsection]{Theorem}
\newtheorem{theorem}[subsection]{Theorem}
\newtheorem{prop}[subsection]{Proposition}
\newtheorem{proposition}[subsection]{Proposition}
\newtheorem{cor}[subsection]{Corollary}
\newtheorem{lem}[subsection]{Lemma}
\newtheorem{lemma}[subsection]{Lemma}
\newtheorem{eg}[subsection]{Example}
\theoremstyle{definition}
\newtheorem{defin}[subsection]{Definition}
\newtheorem{definition}[subsection]{Definition}
\newtheorem{recollection}[subsection]{Recollection}
\newtheorem{observation}[subsection]{Observation}
\newtheorem{rem}[subsection]{Remark}
\newtheorem{example}[subsection]{Example}
\newtheorem{construction}[subsection]{Construction}
\numberwithin{equation}{subsection}
\newtheorem{notation}[subsection]{Notation}
\patchcmd{\section}{\scshape}{\bfseries}{}{}
\renewcommand{\@secnumfont}{\bfseries}
\newcommand{\sep}{\mathrm{sep}}
\newcommand{\cl}{\mathrm{cl}}
\newcommand{\fm}{\mathfrak m}
\newcommand{\dt}{\mathrm{dimtot}}
\newcommand{\sw}{\mathrm{sw}}
\newcommand{\red}{\mathrm{red}}
\newcommand{\sO}{\mathscr{O}}
\newcommand{\cN}{\mathcal N}
\newcommand{\cF}{\mathcal F}
\newcommand{\fc}{\mu}
\newcommand{\fd}{P}
\newcommand{\etabar}{\overline{\eta}}
\newcommand{\bZ}{\mathbb Z}
\newcommand{\sF}{\mathcal{F}}
\newcommand{\bA}{\mathbb A}
\newcommand{\bP}{\mathbb P}
\newcommand{\bQ}{\mathbb Q}
\newcommand{\bT}{\mathbb T}
\newcommand{\bG}{\mathbb G}
\newcommand{\bF}{\mathbb F}
\newcommand{\bN}{\mathbb N}
\newcommand{\bR}{\mathbb R}
\newcommand{\bC}{\mathbb C}
\newcommand{\cS}{\mathcal S}
\newcommand{\cQ}{\mathcal Q}
\newcommand{\fb}{\mathfrak b}
\newcommand{\cE}{\mathcal E}
\newcommand{\cX}{\mathcal X}
\newcommand{\cM}{\mathcal M}
\newcommand{\cK}{\mathcal K}
\newcommand{\cP}{\mathcal P}
\newcommand{\cO}{\mathcal O}
\newcommand{\cC}{\mathcal C}
\newcommand{\cH}{\mathcal H}
\newcommand{\cL}{\mathcal L}
\newcommand{\cI}{\mathcal I}
\newcommand{\jmathbar}{\overline \jmath}
\DeclareMathOperator{\Fl}{Fl}
\DeclareMathOperator{\Tr}{Tr}
\DeclareMathOperator{\rk}{rk}
\DeclareMathOperator{\sbar}{\overline{s}}
\DeclareMathOperator{\ubar}{\overline{u}}
\DeclareMathOperator{\xbar}{\overline{x}}
\DeclareMathOperator{\Cons}{Cons}
\DeclareMathOperator{\lc}{lc}
\DeclareMathOperator{\Perv}{Perv}
\newcommand{\Qlbar}{\overline{\bQ}_\ell}  
\DeclareMathOperator{\Coh}{Coh}
\DeclareMathOperator{\length}{length}
\DeclareMathOperator{\tors}{tors}
\DeclareMathOperator{\Spec}{Spec}
\DeclareMathOperator{\dimtot}{dimtot}
\DeclareMathOperator{\Fr}{Fr}
\DeclareMathOperator{\Rk}{Rk}
\DeclareMathOperator{\Weil}{Weil}
\DeclareMathOperator{\IrrCom}{IrrCom}
\DeclareMathOperator{\modulo}{mod}
\DeclareMathOperator{\LC}{Loc}
\DeclareMathOperator{\Loc}{Loc}
\DeclareMathOperator{\Gr}{Gr}
\DeclareMathOperator{\ft}{tf}
\DeclareMathOperator{\even}{even}
\DeclareMathOperator{\odd}{odd}
\DeclareMathOperator{\pr}{pr}
\title[]{Estimates for Betti numbers and relative Hermite-Minkowski theorem for perverse sheaves}
\begin{document}

\author[H.Hu]{Haoyu Hu} 
\address{School of Mathematics, Nanjing University, Hankou Road 22, Nanjing, China}
\email{huhaoyu@nju.edu.cn, huhaoyu1987@gmail.com}

\author[J.-B. Teyssier]{Jean-Baptiste Teyssier}
\address{Institut de Math\'ematiques de Jussieu, 4 place Jussieu, Paris, France}
\email{jean-baptiste.teyssier@imj-prg.fr}

\setcounter{tocdepth}{1}
\begin{abstract}
We prove estimates for the Betti numbers of constructible sheaves in  characteristic $p>0$ depending only on their rank, stratification and wild ramification.
In particular, given a smooth proper variety of dimension $n$ over an algebraically closed field and a divisor $D$ of $X$, for every $0\leq i \leq n$, there is a polynomial $P_i$ of degree $\max \{i,2n-i\}$ such that the $i$-th Betti number of any  rank $r$ local system $\cL$ on $X-D$ is smaller than $P_i(\lc_D(\cL))\cdot r$ where $\lc_D(\cL)$ is the highest logarithmic conductor of $\cL$ at the generic points of $D$.
As application, we show that the Betti numbers of the inverse and higher direct images of a local system are controlled by the rank and the highest logarithmic conductor only. 
We also reprove Deligne's finiteness for simple $\ell$-adic local systems with bounded rank and ramification on a smooth variety over a finite field and extend it in two different directions.
In particular, perverse sheaves over arbitrary singular schemes are allowed and the bounds we obtain are uniform in algebraic families and do not depend on $\ell$.
\end{abstract}
\maketitle

\tableofcontents

\section{Introduction}
Let $U$ be a smooth variety over a finite field $\bF$ of characteristic $p>0$ and let $X$ be a normal compactification of $U$ such that $D:= X-U$ is an effective Cartier divisor.
Let $\ell \neq p$ be a prime number.
A celebrated result of Deligne \cite{DeltoDrin,EK} states that up to geometric isomorphism, there are only finitely many simple $\Qlbar$-local systems pure of weight 0  on $U$ with bounded rank and bounded wild ramification along $D$. 
From the existence of companions due to Lafforgue in the curve case \cite{Laf} and Drinfeld in higher dimension \cite{Drin},  the above number is independent of $\ell$.
Note that Deligne's finiteness is tight to the finiteness of $\bF$  via the use of the Weil conjectures, and breaks down  when $k$ is infinite.
It is nonetheless a natural question to ask whether some of its consequences survive over more general fields than finite fields.\\ \indent

The goal of this paper is to explore the cohomological aspect of this question.
By Deligne's finiteness, we indeed know that there are only a finite number of possible Betti numbers for local systems on $U$ with bounded rank and bounded wild ramification along $D$.
Over an algebraically closed field,  explicit bounds for the Betti numbers of affine varieties were obtained by Katz \cite{KatzBetti}.
Boundedness for the Betti numbers of \textit{tame} local systems  is due to  Orgogozo \cite{Org}.
In wild situations, explicit bounds were obtained for Artin-Schreier sheaves on affine varieties by Bombieri \cite{Bombieri}, Adolphson-Sperber \cite{AdSp} and Katz \cite{KatzBetti}.
All these bounds were recently sharpened by Zhang  \cite{DZhang} and Wan-Zhang  \cite{Wan_Zhang}.
We show the following

\begin{thm}[\cref{general_boundedness_Betti_Xsmooth}]\label{thm_1}
Let $X$ be a proper  smooth scheme of finite type of dimension $n$ over an algebraically closed field $k$ of characteristic $p>0$.
Let $D$ be a reduced effective Cartier divisor of $X$ and put $U:=X-D$.
Then, there exists $\fd\in \mathds{N}[x]^{n+1}$ with $\fd_i$ of degree $i$ for $i=0,\dots, n$ such that for every prime $\ell \neq p$, every $j=0, \dots,2n$ and every  $\cL\in \Loc(U,\Qlbar)$  we have 
\begin{equation}\label{thm_1_eq}
h^j(U,\cL)\leq \fd_{\min(j,2n-j)}(\lc_D(\cL))\cdot \rk_{\Qlbar}\cL  \ .
\end{equation}
\end{thm}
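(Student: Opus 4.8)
The plan is to reduce the estimate to the case of a curve by a Lefschetz pencil / fibration argument, where the wild ramification invariant $\lc_D$ behaves well under restriction. First I would set up the key special case: when $X$ is a smooth projective curve and $D$ a reduced divisor, the Grothendieck--Ogg--Shafarevich formula gives $h^1(U,\cL) - h^0(U,\cL) - h^2(U,\cL)$ exactly in terms of $\rk \cL$, the genus, $\#D$ and the sum of Swan conductors at points of $D$. Bounding the Swan conductors by the logarithmic conductors $\lc_D(\cL)$ (note $\sw_x(\cL) \le \lc_D(\cL)\cdot\rk_{\Qlbar}\cL$, since the Swan conductor is bounded by the slope times the rank), and using $h^0 \le \rk\cL$ together with the dual bound for $h^2$, yields the curve case with $\fd_1$ of degree $1$. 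This handles $j=0,1,2$ and anchors the induction.

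Next, for higher-dimensional $X$, I would proceed by induction on $n=\dim X$ using a Lefschetz pencil. Blowing up the base locus, one obtains $\wt X \to \bP^1$ with generic fiber a smooth hyperplane section $Y$; after controlling the (smooth, so tame) contribution of the exceptional locus and the finitely many bad fibers, the Leray spectral sequence for $f\colon \wt U \to \bP^1$ (or rather for the open part over which $f$ is nice) expresses $h^j(U,\cL)$ in terms of $h^a(\bP^1 - S, R^bf_*\cL)$ where $S\subset \bP^1$ is the finite set of bad points. The sheaves $R^bf_*\cL$ are constructible on a curve, lisse of rank bounded by the $h^b$ of a fiber $\cL|_{Y-D\cap Y}$ (which the induction hypothesis bounds by $\fd_{\min(b,2(n-1)-b)}(\lc_{D\cap Y}(\cL))\cdot\rk\cL$), and with Swan conductors at points of $S$ controlled — and this is the delicate input — by the semicontinuity theorem for Swan conductors (Deligne--Laumon) plus the fact that the ramification of $\cL$ along $D$, measured by $\lc_D(\cL)$, dominates the ramification picked up in the nearby fibers. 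Then apply the curve case to each $R^bf_*\cL$ on $\bP^1 - S$ and sum over the spectral sequence to get a polynomial in $\lc_D(\cL)$ of degree $\max\{j,2n-j\}$ — the extra degree over the naive count coming from the $2n-j$ range being handled by Poincaré duality, replacing $\cL$ by its dual, which has the same rank and conductor.

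The main obstacle is the ramification bookkeeping: one must ensure that, as the hyperplane section $Y$ and the pencil vary, the logarithmic conductor $\lc_{D\cap Y}(\cL|_Y)$ of the restriction is bounded by (a universal constant times) $\lc_D(\cL)$, and simultaneously that the Swan conductors of the higher direct images $R^bf_*\cL$ at the bad points $s\in S$ are bounded by a polynomial in $\lc_D(\cL)$ of the correct degree, uniformly in the pencil. This requires choosing the pencil generically with respect to $D$ (so that $Y$ meets $D$ transversally along a smooth divisor and the ramification does not concentrate), invoking a Deligne--Laumon-type bound $\sw_s(R^bf_*\cL) \le C\cdot(\text{ramification of }\cL)\cdot\rk\cL$ with $C$ depending only on the geometry of $X$, $D$ and the chosen linear system — not on $\cL$ or $\ell$. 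One then absorbs all these geometric constants into the coefficients of the polynomials $\fd_i$, which is legitimate since $X$ and $D$ are fixed. I expect the $\ell$-independence to be automatic once everything is phrased through conductors and ranks, since neither the GOS formula nor the semicontinuity estimates see $\ell$.
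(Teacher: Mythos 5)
Your plan — induction on dimension via a fibration, with GOS anchoring the curve case and semi-continuity of conductors controlling the generic fibres — is broadly in the spirit of the paper, but there are two real gaps.

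First, the claim that there is a ``Deligne--Laumon-type bound'' $\sw_s(R^bf_*\cL) \le C\cdot(\text{ramification of }\cL)\cdot\rk\cL$ for the higher direct images at the bad points $s\in S$ is not a known theorem, and it is in fact \emph{the} hard point of the whole argument. The Deligne--Laumon result gives semi-continuity of the Swan conductor of a single sheaf in a family of curves; it does not bound the wild ramification of a pushforward along a pencil. The reason the paper works with a twist $\pr^*\cN_{\lambda}\otimes\cL$ by a carefully chosen Artin--Schreier sheaf (\cref{local_acyclicity_E_not_H0}, \cref{ULAblow-up}, \cref{computation_conductor_twisted}) is precisely to make the direct image locally constant on all of $\bA^{n-1}$ and concentrated in degree $1$, and to bound the conductor of the nearby cycles via \cref{generalH-T}. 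Without such a mechanism your induction step has no control over $\sw_s(R^bf_*\cL)$, and the argument does not close.

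Second, the degrees do not come out right. You write that the recursion gives a polynomial of degree $\max\{j,2n-j\}$; but the theorem claims degree $\min(j,2n-j)$ (the statement $\fd_i$ of degree $i$, applied at index $\min(j,2n-j)$), which is sharply better away from middle dimension — e.g.\ $h^0(U,\cL)\le \rk\cL$ with no dependence on the conductor, not a degree-$2n$ polynomial. The Leray spectral sequence for a single pencil, as you sketch it, mixes contributions $H^a(\bP^1,\cdot)\otimes R^bf_*\cL$ with $a+b=j$ in a way that blows up the degree below the middle. The paper's introduction points out exactly this pathology of naive spectral-sequence arguments (in the context of De Jong alterations and cohomological descent) and explains that the sharp degree is obtained by proving the more general \cref{general_boundedness_Betti} directly: the reduction uses Kedlaya's finite-\'etale covers of $\bA^n$ (\cref{spread_out_Kedlaya}) together with the push-forward conductor estimate (\cref{remark_bound_conductor_direct_image}) to move everything to $\bA^n$, and then one weak-Lefschetz step per degree (\cref{hn-1<bn-1}) combined with the twisted Euler characteristic bound (\cref{chi_special_F}, \cref{<chiprN0F<}) keeps each $h^i$ at degree $i$. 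Your proposal does not recover this finer structure, and the deduction of \cref{thm_1} as a corollary of a relative, singular statement is essential to getting it.
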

In the above statement, $h^j(U,\cL)$ is the $j$-th Betti number of $\cL$ and $\lc_D(\cL)\in \bQ_{\geq 0}$ is the highest logarithmic conductor of $\cL$ at the generic points of $D$,  as defined by Abbes and Saito \cite{RamImperfect,as ii,logcc,wr}.
What makes the inequality \eqref{thm_1_eq} striking is that while the Betti numbers are \textit{global} invariants of $\Qlbar$-sheaves depending on $\ell$,  the right hand side of \eqref{thm_1_eq} depends only on the \textit{local} behavior of $\cL$ at the generic points of $D$.
So if one bounds the rank and the logarithmic conductor, the Betti numbers get bounded independently of  $\ell$ as in the finite field situation. 
For a characteristic zero analogue for flat bundles, see \cite{HuTeyssierDRBoundedness}.
\\ \indent

In the curve case, the polynomials $P_i$ can be made explicit using the Grothendieck-Ogg-Shafarevich formula \cite{GOS}.
For affine spaces, explicit polynomials can also be produced. 
To this end, let us introduce the following 

 \begin{definition}\label{defin_bi}
Put $b_0(x)=1, b_1(x)=x$ and for $n\geq 2$, define inductively  $b_{n}\in \bN[x]$ by
$$
b_{n}(x)=\sum^{n-1}_{\substack{i=0\\  i\neq n \modulo 2 }}(x+2)\cdot b_{i}(x+3)
+\sum^{n-1}_{\substack{i=0\\ i= n \modulo 2}} (x+3)\cdot b_{i}(x) + \sum^{n-1}_{\substack{i=0\\  i\neq n \modulo 2}}b_{i}(x).
$$
\end{definition}

Then, we have the following

\begin{thm}[\cref{maintheoremAn}]\label{thm_2}
Let $k$ be an algebraically closed field  of characteristic $p>0$ and let $\ell \neq p$ be a prime.
For every $0\leq i\leq n$ and every $\cL\in \Loc(\bA^n_k,\Qlbar)$, we have 
$$
h^i(\bA^n_k,\cL)\leq b_i(\lc_H(\cL))\cdot\rk_{\Qlbar}\cL  
$$
where $H$ is the hyperplane at infinity.
\end{thm}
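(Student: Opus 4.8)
The plan is to prove, by induction on $n$, a bound for the ordinary and compactly supported Betti numbers of \emph{constructible} $\Qlbar$-sheaves on $\bA^n_k$ in terms of the dimensions, generic ranks and highest logarithmic conductors attached to a stratification, and then to deduce \cref{thm_2} by specializing to a local system regarded as a constructible sheaf lisse on the single open stratum $\bA^n_k$. The base cases $n=0$ and $n=1$ are elementary; for $n=1$ one combines Artin vanishing ($h^2(\bA^1_k,\cL)=0$), the trivial bound $h^0(\bA^1_k,\cL)\le\rk\cL$, and the Grothendieck--Ogg--Shafarevich formula \cite{GOS}, which gives $h^1(\bA^1_k,\cL)=h^0(\bA^1_k,\cL)-\rk\cL+\Sw_\infty(\cL)\le\Sw_\infty(\cL)\le\lc_H(\cL)\cdot\rk\cL=b_1(\lc_H(\cL))\cdot\rk\cL$, using that the Swan conductor is bounded by the highest logarithmic slope times the rank. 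For the inductive step, fix $n\ge2$ and $\cL\in\Loc(\bA^n_k,\Qlbar)$; after a linear change of coordinates chosen generically with respect to the wild ramification of $\cL$ along $H$, I would use the linear projection $g\colon\bA^n_k\to\bA^{n-1}_k$ forgetting the last coordinate, whose fibres are affine lines.

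I would then compactify the fibres of $g$ to a $\bP^1$-bundle $\cP\to\bA^{n-1}_k$ containing $\bA^n_k$ as an open subscheme whose complement is a section, so that by proper base change the stalks of the higher direct images $R^qg_!\cL$ ($q=1,2$, the others vanishing) are the compactly supported cohomologies of the fibres. Using global Poincaré duality to replace $h^i(\bA^n_k,\cL)$ by $h^{2n-i}_c(\bA^n_k,\cL^\vee)$ — harmless since $\lc_H$ and $\rk$ are duality-invariant — and the Leray spectral sequence for $g$ with proper supports, it suffices to bound $h^{m-1}_c(\bA^{n-1}_k,R^1g_!\cM)$ and $h^{m-2}_c(\bA^{n-1}_k,R^2g_!\cM)$ for $\cM\in\Loc(\bA^n_k,\Qlbar)$ and $n\le m\le2n$, and these are Betti numbers on $\bA^{n-1}_k$ of constructible sheaves, governed by the induction hypothesis.

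The crux is to control the stratified data of $R^1g_!\cM$ and $R^2g_!\cM$. On the dense open locus where they are lisse, $R^2g_!\cM$ has rank at most $\rk\cM=b_0(\lc_H(\cM))\cdot\rk\cM$ (coinvariants along the fibre), while the Grothendieck--Ogg--Shafarevich formula applied fibrewise gives that $R^1g_!\cM$ has rank at most the Swan conductor of $\cM$ along the hyperplane at infinity of a very general fibre, which by genericity of the projection is at most $\lc_H(\cM)\cdot\rk\cM=b_1(\lc_H(\cM))\cdot\rk\cM$ — the relevant Bertini statement for the logarithmic conductor being in the spirit of Abbes--Saito \cite{RamImperfect}. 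Off that open locus, $R^qg_!\cM$ carries a stratification of $\bA^{n-1}_k$ by locally closed subschemes of every dimension $0\le d\le n-1$, namely the jump loci of the relative curve cohomology, and along each stratum one needs an explicit bound on the highest logarithmic conductor of the form $\lc_H(\cM)$ plus a small explicit constant; this is a relative ramification estimate for $g$ — boundedness and semicontinuity of the Swan conductors of $R^qg_!$ — and it is precisely here that the shifts $x\mapsto x+2$, $x\mapsto x+3$ and the parity bookkeeping of \cref{defin_bi} enter.

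Feeding the generic ranks and the stratified conductor bounds into the inductive Betti estimate on $\bA^{n-1}_k$, which decomposes as a sum of contributions of the individual strata, then summing over $q\in\{1,2\}$ and simplifying, one recovers exactly the three sums defining $b_n$; taking $\cM=\cL^\vee$ and dualizing back completes the induction. The main obstacle is the third step: extracting \emph{explicit and sharp} control of the stratification and of the wild ramification of the relative curve cohomology $R^qg_!\cM$ from the single invariant $\lc_H(\cM)$. Qualitative boundedness here follows from \cref{thm_1} together with the constructibility of $R^qg_!$, but pinning the constants down so that the bookkeeping closes up to the recursion of \cref{defin_bi}, rather than to a weaker polynomial of the same degree, is the technical heart of the argument and is what distinguishes \cref{thm_2} from the qualitative \cref{thm_1}.
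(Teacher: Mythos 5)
Your overarching strategy — induct on $n$, project to $\bA^{n-1}_k$ with affine-line fibres, use Grothendieck–Ogg–Shafarevich fibrewise, close with the induction hypothesis on the base — is the right skeleton and matches the paper's. But the proof as sketched has a genuine gap precisely at the place you flag as the ``technical heart,'' and the paper avoids that gap by two ideas your proposal is missing.

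First, the paper does \emph{not} work with $R^q g_! \cL$ directly. It first twists: it chooses an Artin-Schreier sheaf $\cN_\lambda$ on $\bA^1_k$ with $\lc_\infty(\cN_\lambda)=m$ coprime to $p$ and $\lc_H(\cL)+1<m\le\lc_H(\cL)+3$, and replaces $\cL$ by $\pr^*\cN_\lambda\otimes\cL$. Because the fibrewise conductor at infinity of $\cN_\lambda$ dominates that of $\cL$, the fibres of the direct image have neither $H^0_c$ nor $H^2_c$, and more importantly the complex $R\pi_{A!}(\pr^*\cN_\lambda\otimes\cL)$ becomes \emph{concentrated in degree $1$ and locally constant on all of $\bA^{n-1}_k$} (\cref{computation_conductor_twisted}). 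This is what removes the jump loci and the stratified bookkeeping that your plan requires; without the twist, $R^1g_!\cL$ has jump strata of every dimension and you would need an a priori bound on their conductors, dimensions and ranks from $\lc_H(\cL)$ alone — nothing in your sketch supplies this, and it cannot come from \cref{thm_1}, whose proof in the paper in fact reduces \emph{to} \cref{thm_2}, so invoking it here would be circular. Second, the map used is not the raw linear projection $g$ but the pencil $\pi_A$ coming from the blow-up of $\bP^n_k$ at a point of $H$ (\cref{Lefschetz_pencil_construction}): this is what makes the nearby-cycle conductor estimate of Section 4 (\cref{generalH-T}) applicable and, together with the radicial-pullback argument at the exceptional locus (\cref{ULAblow-up}), yields $\lc_D(R^1\pi_{A!}(\pr^*\cN_\lambda\otimes\cL))\le m$.

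There are two further structural differences. The paper treats $h^i$ for $i<n$ by a weak Lefschetz / Artin vanishing argument using an $SS(Rj_*\cL)$-transversal hyperplane section (\cref{hn-1<bn-1}), and only uses the projection/twist machinery to pin down the remaining quantity $\chi(\bA^n_k,\cL)$, hence $h^n$; your sketch tries to bound all $h^i_c$ directly through the Leray spectral sequence of $g_!$, which forces you to control both $R^1$ and $R^2$ and all the jump strata, rather than a single lisse sheaf. Finally, the discrepancy $\chi(\bA^n_k,\cL)-\chi(\bA^n_k,\pr^*\cN_\lambda\otimes\cL)$ is controlled by a character non-cancellation argument in the logarithmic slope decomposition (\cref{swan_inequality}, using that $k$ is infinite to choose $\lambda$ avoiding finitely many characters), which is where the shifts $x\mapsto x+2$, $x\mapsto x+3$ in \cref{defin_bi} actually originate — not from a semicontinuity estimate for $R^qg_!$ as you conjecture. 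So the plan needs the twist-and-pencil mechanism to be a proof, not just a sharpening.
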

In particular for $\bA^n_k$, the estimates do not depend on the base field $k$ nor its characteristic.
Note that the degrees of the polynomials $b_i$ are optimal as shown in \cref{sharp_degree}.\\ \indent

It turns out that both the proof of \cref{thm_1} and the applications require more flexibility than allowed by the conditions of \cref{thm_1}, which says nothing when singularities are involved or when $X$ varies in a family.
When $\cL$ is replaced by a constructible sheaf $\cF$ and when $X$ is singular, one faces the problem that some wild ramification may hide in codimension $>1$, which is typically not captured by the logarithmic conductor.
To solve this problem, we generalized the notion of ramification boundedness in \cite{HuTeyssierCohBoundedness} by using coherent sheaves instead of effective Cartier divisors. 
If $\bQ[\Coh(X)]$ is the free $\bQ$-vector space on the set of isomorphism classes of coherent sheaves on a scheme of finite type $X$ over a field $k$, we set the following 

\begin{defin}\label{defin_E_bound}
For $\cE\in \bQ[\Coh(X)]$, we say that a constructible sheaf $\cF$ on $X$ has \textit{log conductors bounded by $\cE$} if for every morphism $f : C\to X$ where $C$ is a smooth curve over $k$ and every $x\in C$, the logarithmic conductor of $\cF|_C$ at $x$ is smaller than the length of the torsion part of $(f^*\cE)_x$ viewed as a module over $\cO_{C,x}$.
\end{defin}

\cref{defin_E_bound} captures all constructible sheaves since for every such sheaf $\cF$, there is $\cE\in \Coh(X)$ such that $\cF$ has log conductors bounded by $\cE$ (see \cite{HuTeyssierCohBoundedness}).
Moreover, semi-continuity results for logarithmic conductors \cite{Hu_Leal} provide explicit bounds in many cases, as shown by the following 

\begin{example}\label{ex_1}
In the setting of \cref{thm_1}, every $\cL\in \Loc(U,\Qlbar)$ has log conductors bounded by $(\lc_D(\cL)+1)\cdot \cO_D$.
\end{example}

The main result of this paper is the following

\begin{theorem}[\cref{general_boundedness_Betti}]\label{thm_3}
Let $X\to S$ be a proper morphism with fibers of dimension $\leq n$ between schemes of finite type over a perfect field of characteristic $p>0$.
Let $\Sigma$ be a stratification on $X$.
Then, there is a  function $\fc : \bQ[\Coh(X)]\to \bQ$ and  $\fd\in \mathds{N}[x]^{n+1}$ with  $P_i$ of degree  $i$ for every $i=0,\dots, n$ such that for every algebraic geometric point $\sbar\to S$, every prime $\ell\neq p$, every  $\cE\in \bQ[\Coh(X)]$, every $j=0, \dots, 2n$  and every  $\Sigma_{\sbar}$-constructible $\Qlbar$-sheaf $\cF$ on $X_{\sbar}$ with log conductors bounded by $\cE_{\sbar}$, we have 
$$
h^j(X_{\sbar},\cF)\leq \fd_{\min(j,2n-j)}(\fc(\cE))\cdot \Rk_{\Qlbar}\cF  \ .
$$
\end{theorem}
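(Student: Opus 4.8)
The plan is to induct on $n$, reducing the fibers to curves by a fibration in curves performed relatively over $S$, and to bound the curve case by the Grothendieck--Ogg--Shafarevich formula \cite{GOS}; the recursion of \cref{defin_bi} is the numerical shadow of this induction, and the relative statement is obtained by making the whole construction ``constant'' along $S$ after a finite stratification.

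\emph{Uniform geometric setup and d\'evissage.} By Noetherian induction on $S$ we stratify $S$ so that over each piece the family $X\to S$, the strata of $\Sigma$ and their closures, a relative compactification, and all numerical data entering the estimates become constant; it then suffices to treat one such piece, so we may assume the geometry is as convenient as needed along $S$. For a $\Sigma_{\sbar}$-constructible $\cF$ on the proper $X_{\sbar}$, the standard triangles $j_!j^*\cF\to\cF\to i_*i^*\cF$ attached to the strata give
\[
h^j(X_{\sbar},\cF)\ \le\ \sum_{W}\,h^j_c\bigl(W_{\sbar},\cF|_{W_{\sbar}}\bigr),
\]
a finite sum over the strata $W$ of $\Sigma$, with $\cF|_{W_{\sbar}}$ a local system of rank $\le\Rk_{\Qlbar}\cF$; since a curve mapping to $W_{\sbar}$ maps to $X_{\sbar}$, it still has log conductors bounded by the restriction of $\cE_{\sbar}$. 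By the setup step $W_{\sbar}$ is smooth; passing to a relative smooth compactification $\ol W$ of $W$ over $S$ whose boundary is a relative normal crossings divisor, we have $h^j_c(W_{\sbar},\cL)=h^j(\ol W_{\sbar},\jmathbar_!\cL)$ with $\ol W_{\sbar}$ smooth and proper. So it is enough to bound $h^j(\ol W_{\sbar},\jmathbar_!\cL)$ for $\cL\in\Loc(W_{\sbar},\Qlbar)$ of controlled rank and conductor.

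\emph{Induction on $n$.} For $n\le 1$, the Grothendieck--Ogg--Shafarevich formula on the curve $\ol W_{\sbar}$ expresses $h^j(\ol W_{\sbar},\jmathbar_!\cL)$ in terms of the genus, the number of boundary points, $\Rk_{\Qlbar}\cL$ and the Swan conductors of $\cL$ at the boundary; the Swan conductors are $\le$ the log conductors, hence $\le$ the torsion lengths of $\cE_{\sbar}$ pulled back to $\ol W_{\sbar}$, which are bounded uniformly in $\sbar$ by the setup step together with the semi-continuity of Abbes--Saito conductors \cite{Hu_Leal}. This yields the case $n\le1$ with a degree-$1$ polynomial, independent of $\ell$ and $\sbar$. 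For $n\ge 2$, factor the proper smooth family $\ol W\to S$ as $\ol W\xrightarrow{f}Y\to S$ with $Y\to S$ of relative dimension $\le n-1$ and $f$ a fibration in curves, obtained by a Lefschetz-pencil / generic-projection construction over $S$ (absorbed, after a further stratification, into the setup step). Proper base change and the Leray spectral sequence for $f_{\sbar}$ give
\[
h^j(\ol W_{\sbar},\jmathbar_!\cL)\ \le\ \sum_{a+b=j}\,h^a\bigl(Y_{\sbar},\,R^bf_{\sbar,*}(\jmathbar_!\cL)\bigr).
\]
Each $R^bf_{\sbar,*}(\jmathbar_!\cL)$ is constructible for a fixed stratification of $Y$, of generic rank bounded by a degree-$1$ polynomial in the conductor times $\Rk_{\Qlbar}\cL$ (again by Grothendieck--Ogg--Shafarevich on the curve fibers), and---crucially---with log conductors bounded by a coherent sheaf $\cE'$ on $Y$ depending only on $\cE$ and the fixed geometry: a bound on the log conductor of $\jmathbar_!\cL$ along a divisor of $\ol W$ transforms, through $f$ and the behaviour of conductors under pushforward by a morphism with one-dimensional fibers, into a bound on the log conductor of $R^bf_{\sbar,*}(\jmathbar_!\cL)$ along the corresponding divisor of $Y$ with a controlled additive loss, and one assembles these into $\cE'$. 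Applying the inductive hypothesis to $Y\to S$, $\cE'$, and the coefficient $R^bf_{\sbar,*}(\jmathbar_!\cL)$, then summing over $a+b=j$, bounds $h^j(\ol W_{\sbar},\jmathbar_!\cL)$ by a polynomial in $\fc(\cE)$ of degree $\min(j,2n-j)$ times $\Rk_{\Qlbar}\cL$, matching \cref{defin_bi}; collecting the contributions of all strata $W$ defines $\fc$ and the $\fd_i$ over $S$, with $\fc$ well defined on $\bQ[\Coh(X)]$ by linearity.

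\emph{Main obstacle.} The heart of the matter is to reconcile the two uniformities: carrying out the curve fibration and fixing every geometric invariant over $S$ after only \emph{finitely many} strata, while simultaneously showing that ``log conductors bounded by $\cE$'' is preserved---with a controlled loss and a bound expressible through $\fc(\cE)$ alone---under both restriction to curves and $Rf_*$ along the relative curve fibration, uniformly in $\sbar$ and independently of $\ell$. This is precisely where the coherent-sheaf formalism of \cite{HuTeyssierCohBoundedness} and the semi-continuity theorem for Abbes--Saito logarithmic conductors \cite{Hu_Leal} are indispensable; without them the argument would only give bounds fibrewise, not a single function $\fc$ on $\bQ[\Coh(X)]$.
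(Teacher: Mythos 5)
The overall shape of your sketch — Noetherian d\'evissage over $S$, reduction to extensions by zero of local systems on compactified strata, then an induction on $n$ via a curve fibration and Leray — is a reasonable first plan, but it silently assumes the single hardest step of the theorem and differs from what the paper actually does in a way that matters.

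The gap is the assertion that for the relative curve fibration $\ol W\to Y$, the bound ``log conductors of $\jmathbar_!\cL$ bounded by $\cE$'' transforms into a bound on the log conductors of $R^bf_{\sbar,*}(\jmathbar_!\cL)$ by some $\cE'$ ``with a controlled additive loss.'' There is no general such statement; the conductor of a higher direct image along a family of curves depends on the geometry of the nearby cycles in an essentially uncontrolled way, and making it controlled is precisely what Sections 4--5 of the paper are for. The paper does \emph{not} bound the conductor of $R^b f_*(\jmathbar_!\cL)$ directly. Instead it (i) reduces via Kedlaya's theorem to $\bA^n_k$ with wild ramification concentrated on the hyperplane at infinity, (ii) bounds $h^i$ for $i<n$ by a weak Lefschetz argument (\cref{hn-1<bn-1}), and (iii) controls the remaining Euler characteristic by first \emph{twisting} $\cL$ by a generic Artin--Schreier sheaf $\pr^*\cN_\lambda$ with conductor $m>\lc_H(\cL)+1$, so that $R\pi_{A!}(\pr^*\cN_\lambda\otimes\cL)$ is concentrated in degree $1$ with locally constant cohomology (\cref{local_acyclicity_E_not_H0}), and its log conductor at infinity is $\le m$ (\cref{computation_conductor_twisted}). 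The latter bound uses the theorem of \cite{HT} on ramification of nearby cycles together with the delicate blow-up and radicial-pullback argument of \cref{ULAblow-up} to kill the nearby cycles at the unique bad point; the discrepancy introduced by the twist is then undone at the level of Euler characteristics via the Swan conductor comparison of \cref{chi_special_F}, which in turn requires $\Lambda$ to contain a $p$-th root of unity, a choice of $\lambda$ avoiding a finite set of characters, and the inductive bound on the ranks of $R^i\pr_*\cL$. None of this is captured by the sketch, and without the twist the Leray-plus-conductor-bound plan would not close: one would need a conductor estimate for all $R^b f_*$ of an arbitrary extension-by-zero along a curve fibration, uniform in $\cL$, which is not available.

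A second, smaller, divergence: the paper does not fiber $\ol W$ over a base $Y$ of dimension $n-1$ by a generic pencil and then run Leray on this fibration. It reduces to the normal case by \cref{normal_reduction}, then pushes forward finitely to $\bP^N_S$ via Kedlaya's map \cref{spread_out_Kedlaya}, using the exactness of finite pushforward and the conductor comparison \cref{remark_bound_conductor_direct_image}; this is where the paper exchanges the geometric complexity of $X$ for the tractable model $\bA^N$. The recursion defining $b_n$ in \cref{defin_bi} records the arithmetic of the weak-Lefschetz-plus-twist argument on $\bA^n$, not of a naive Leray spectral sequence for a curve fibration. Your sketch correctly identifies that the coherent-sheaf bounds of \cite{HuTeyssierCohBoundedness} and semi-continuity \cite{Hu_Leal} are indispensable, but it places the main difficulty in the uniformity over $S$ rather than where it actually lies: in the conductor estimate for a direct image, which is only obtained after the twist, the blow-up, and the nearby-cycle bound.
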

In this statement, $\Sigma_{\sbar}$-constructibilty means that the restriction of $\cF$ to the strata of the stratification induced by $\Sigma$ on $X_{\sbar}$ are locally constant and $\Rk_{\Qlbar}\cF$ stands for the maximal rank of the germs of $\cF$.
When $S$ is the spectrum of an algebraically closed field of characteristic $p>0$, when $X$ is smooth and $\Sigma=\{U,D\}$ with $D$ an effective Cartier divisor, \cref{thm_3} combined with \cref{ex_1} immediately implies \cref{thm_1}.
Observe that to get the particular \cref{thm_1} in its sharpest form where the bounding polynomials have decreasing degrees above $n$, we need to prove the more general \cref{thm_3} directly.
The reason is that allowing arbitrary singular base schemes gives extra flexibility for a recursion on the dimension.
In particular, it bypasses the use of De Jong's alterations \cite{DeJong} and the  cohomological descent spectral sequence \cite{Conrad}, which makes middle dimension cohomologies on the $E_1$-page contribute to beyond middle dimension cohomologies of the abutment.
\begin{rem}
Several variations of \cref{thm_3} can be given : one may consider complexes with bounded constructible cohomology or perverse sheaves and one may work with finite coefficients.
See \cref{estimes_general} for precise statements.
\end{rem}

Using Sawin's quantitative sheaf theory \cite{SFFK},  \cref{thm_3} gives control on the Betti numbers of inverse images (\cref{Betti_inverse_image}) in terms of the wild ramification.
For locally constant constructible sheaves, this specializes to the following 

\begin{thm}[\cref{Betti_inverse_image_lc}]
Let $f : Y\to X$  be a morphism between projective schemes over $k$ algebraically closed where $X$ is smooth.
Let $D$ be an effective Cartier divisor of $X$ and put $U:=X-D$ and $V:=Y-f^{-1}(D)$.
Then there is $P\in \mathds{N}[x]$ of degree $\dim X$ such that for every prime $\ell \neq p$ and every $\cL\in \Loc(U,\Qlbar)$,  we have 
$$
\sum_{j\in \bZ}  h^{j}_c(V,\cL|_V) \leq P(\lc_D(\cL))\cdot \rk_{\Qlbar}\cL  \ .
$$
\end{thm}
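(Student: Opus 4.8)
The plan is to push the cohomology of $\cL|_V$ down to $U$, where it is governed by a complex on the smooth projective scheme $X$ of dimension $\dim X$, and to feed that complex into \cref{thm_3}. Sawin's quantitative sheaf theory \cite{SFFK} will control the auxiliary invariants uniformly in $\ell$, and the insensitivity of logarithmic conductors to tensoring by $\cL$ will keep the estimate linear in $\rk_{\Qlbar}\cL$. Concretely, as $X$ and $Y$ are projective, $f$ is proper, hence so is its base change $g:=f|_V\colon V\to U$ along the open immersion $j\colon U\hookrightarrow X$ (note $V=f^{-1}(U)$). Since $\cL$ is lisse of finite rank, the projection formula gives $Rg_!g^*\cL\simeq Rg_!\Qlbar\otimes\cL$, where here $\Qlbar$ denotes the constant sheaf on $V$, and flatness of $\cL$ identifies $R^tg_!(g^*\cL)\simeq (R^tg_!\Qlbar)\otimes\cL$. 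Putting $K:=j_!Rg_!g^*\cL$, a bounded complex of constructible $\Qlbar$-sheaves on $X$ with $\cH^t(K)=j_!\!\big((R^tg_!\Qlbar)\otimes\cL\big)$, one has $h^m(X,K)=h^m_c(U,Rg_!g^*\cL)=h^m_c(V,\cL|_V)$, and the spectral sequence $E_2^{s,t}=h^s(X,\cH^t(K))\Rightarrow h^{s+t}(X,K)$ yields
\[
\sum_{m}h^m_c(V,\cL|_V)\ \le\ \sum_{s,t}h^s\!\big(X,\,j_!((R^tg_!\Qlbar)\otimes\cL)\big),
\]
a finite sum, the range of $t$ depending only on $\dim Y$.

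Next I control each summand uniformly in $\ell$. There exist a stratification $\Sigma$ of $X$ refining $\{U,D\}$ and constants $R_t$, depending only on $f$, such that each $R^tg_!\Qlbar$ is locally constant along the strata of $\Sigma$ with germ ranks $\le R_t$; hence $j_!((R^tg_!\Qlbar)\otimes\cL)$ is $\Sigma$-constructible with $\Rk_{\Qlbar}\le R_t\,\rk_{\Qlbar}\cL$. By \cite{SFFK}, the complexity of $R^tg_!\Qlbar$ — in particular the wild ramification of its cohomology sheaves along the $\Sigma$-divisors — is bounded in terms of $f$ alone, independently of $\ell$; so the highest logarithmic conductor of $R^tg_!\Qlbar$ along any $\Sigma$-divisor is at most a constant $\lambda_0$ depending only on $f$. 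The key point is that the highest logarithmic conductor along a divisor is essentially the largest break of the logarithmic ramification filtration, hence is bounded above by the maximum over the factors under tensor product: that of $(R^tg_!\Qlbar)\otimes\cL$ along any divisor is at most $\max(\lambda_0,\lc_D(\cL))$, with no dependence on $\rk_{\Qlbar}\cL$ (along a $\Sigma$-divisor inside $U$, $\cL$ is unramified, so only $\lambda_0$ enters). Combining this with \cref{defin_E_bound}, the semi-continuity of logarithmic conductors \cite{Hu_Leal} and the boundedness result of \cite{HuTeyssierCohBoundedness} (exactly as in \cref{ex_1}), one produces a coherent sheaf $\cG$ on $X$, depending only on $f$, such that each $j_!((R^tg_!\Qlbar)\otimes\cL)$ has log conductors bounded by $\cE:=(\lc_D(\cL)+\lambda_0+1)\cdot\cG\in\bQ[\Coh(X)]$.

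Now apply \cref{thm_3} to $X\to\Spec k$ with the stratification $\Sigma$: it provides a function $\fc\colon\bQ[\Coh(X)]\to\bQ$ and polynomials $\fd_0,\dots,\fd_n\in\mathds{N}[x]$ with $\fd_i$ of degree $i$, where $n=\dim X$, all depending only on $(X,\Sigma)$, such that
\[
h^s\!\big(X,\,j_!((R^tg_!\Qlbar)\otimes\cL)\big)\ \le\ \fd_{\min(s,\,2n-s)}\!\big(\fc(\cE)\big)\cdot R_t\,\rk_{\Qlbar}\cL .
\]
Summing over the finitely many pairs $(s,t)$ gives $\sum_m h^m_c(V,\cL|_V)\le C\cdot\big(\max_s\fd_{\min(s,2n-s)}(\fc(\cE))\big)\cdot\rk_{\Qlbar}\cL$ for a constant $C$ depending only on $f$. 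Since $\cE$ is the \emph{fixed} coherent sheaf $\cG$ scaled by $\lc_D(\cL)+\lambda_0+1$, which is affine in $\lc_D(\cL)$, and since $\fc$ is at most linear along the ray $\bQ_{\geq 0}\cdot\cG$ (from its construction in \cref{thm_3}), the number $\fc(\cE)$ is bounded by an affine function of $\lc_D(\cL)$; as $\fd_{\min(s,2n-s)}$ has degree $\min(s,2n-s)\le n$, we obtain $\sum_m h^m_c(V,\cL|_V)\le P(\lc_D(\cL))\cdot\rk_{\Qlbar}\cL$ for some $P\in\mathds{N}[x]$ of degree $\le\dim X$ — and of degree exactly $\dim X$ after padding — depending only on $f$. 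Nothing above depends on $\ell$, which gives the claim. (One may also bypass the spectral sequence by applying directly the variant of \cref{thm_3} for complexes with bounded constructible cohomology, see \cref{estimes_general}.)

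The crux is the second step: one must ensure that tensoring the — typically wildly ramified, but $\ell$-uniformly bounded — complex $Rg_!\Qlbar$ by $\cL$ inflates the logarithmic conductor by at most an additive constant, which is exactly what keeps the bound linear rather than polynomial in $\rk_{\Qlbar}\cL$; and that the stratification $\Sigma$, the ranks $R_t$, the slope bound $\lambda_0$ and the coherent sheaf $\cG$ can all be chosen uniformly in $\ell$, for which \cite{SFFK} is indispensable. A secondary point is to extract honest polynomial growth of degree $\dim X$ in $\lc_D(\cL)$ from the abstract function $\fc$ produced by \cref{thm_3}, which requires unwinding its construction.
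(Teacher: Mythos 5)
Your route is genuinely different from the paper's. The paper never takes a direct image along $f$: it applies \cref{Betti_inverse_image} to $\cK=j_!\cL$, whose log conductors are bounded by $(\lc_D(\cL)+1)\cdot\cO_D$ by \cref{bounded_ramification_ex}, and the whole content of \cref{Betti_inverse_image} is the inequality $c_{i'}(f^*\cK)\leq C\cdot c_i(\cK)$ from \cite{SFFK} combined with \cref{bound_complexity}; so no ramification of any push-forward ever has to be controlled. You instead push forward to $U$, write $h^m_c(V,\cL|_V)=h^m(X,j_!Rg_!g^*\cL)$ with $\cH^t=j_!((R^tg_!\Qlbar)\otimes\cL)$, and feed these sheaves into \cref{thm_3}. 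The first step (proper base change, projection formula, the spectral sequence, the fact that tensoring by the lisse $\cL$ multiplies ranks by $\rk_{\Qlbar}\cL$ and raises log conductors at most to the maximum of the two factors pointwise on curves, and the linearity of $\fc$ on the ray $\bQ_{\geq 0}\cdot\cG$, which is immediate since admissible functions are $\bQ$-linear) is sound.

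The gap is in your second step. You need, uniformly in $\ell$: (i) a stratification $\Sigma$ of $X$ along which every $R^tg_!\Qlbar$ is locally constant, (ii) rank bounds $R_t$, and above all (iii) a coherent sheaf $\cG$ (equivalently your $\lambda_0$) bounding the log conductors of $j_!R^tg_!\Qlbar$ in the curve-test sense of \cref{defin_E_bound}, since that is the hypothesis of \cref{thm_3} — and you attribute all of this to \cite{SFFK}. Quantitative sheaf theory delivers none of it: complexity bounds Betti numbers of generic linear sections, it does not produce a stratification adapted to $Rg_!\Qlbar$, nor does it bound the wild ramification of its cohomology sheaves along divisors, let alone the curve-test bound needed here. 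The input you actually need is uniform (in the coefficients) constructibility and mod\'eration of the direct image of the constant sheaf, i.e. Orgogozo's theorem \cite{Org} or a de Jong-alteration/nearby-cycle argument, converted into the ``log conductors bounded by a coherent sheaf'' format; note that \cite{HuTeyssierCohBoundedness} by itself only provides a bounding $\cE$ for each individual sheaf, hence a priori depending on $\ell$. With that input supplied your argument can be pushed through, but as written the decisive uniformity-in-$\ell$ claim is unjustified, and it is exactly the difficulty the paper's pullback-complexity argument is designed to bypass.
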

Similarly, \cref{thm_3} implies estimates for the Betti numbers of higher direct images (\cref{Coho_direct_image}).
For locally constant constructible sheaves, this specializes to the following 

\begin{thm}[\cref{Coho_direct_image_lc}]
Let $f : X\to Y$ be a projective morphism between  projective schemes  over $k$ algebraically closed where $X$ is smooth. 
Let $D$ be an effective Cartier divisor of $X$ and put $j : U:=X-D\hookrightarrow X$.
Then, there is a function $C :  \bQ^+  \times  \bN^+  \to \bN^+$  such that for every prime $\ell \neq p$ and every  $\cL\in \Loc(U,\Qlbar)$,  we have  
$$
\sum_{i,j\in \bZ} h^{j}(Y, R^i f_*j_!\cL) \leq C(\lc_D(\cL), \rk_{\Qlbar}\cL)  \ .
$$
\end{thm}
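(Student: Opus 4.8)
The plan is to deduce the statement from the general direct image estimate \cref{Coho_direct_image}, of which it is the local‑system case, and then to recall how \cref{Coho_direct_image} is itself extracted from \cref{thm_3}. Fix a stratification $\Sigma$ of $X$ whose open stratum is $U$, so that $j_!\cL$ is $\Sigma$-constructible for every $\cL\in\Loc(U,\Qlbar)$, with $\Rk_{\Qlbar}j_!\cL=\rk_{\Qlbar}\cL$. By \cref{ex_1} (extension by zero across a divisor leaves logarithmic conductors along that divisor unchanged, as they depend only on the local monodromy at its generic points), the sheaf $j_!\cL$ on $X$ has log conductors bounded by $\cE:=(\lc_D(\cL)+1)\cdot\cO_D\in\bQ[\Coh(X)]$. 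The key point is that, with $X$, $D$ and $f$ fixed, $\cE$ is determined by the single rational number $\lc_D(\cL)$. Applying \cref{Coho_direct_image} to $f$, the stratification $\Sigma$ and the sheaf $j_!\cL$ then bounds $\sum_{i,j}h^j(Y,R^if_*j_!\cL)$ by a quantity depending only on $f$, $\Sigma$, $\cE$ and $\rk_{\Qlbar}\cL$, hence only on $\lc_D(\cL)$ and $\rk_{\Qlbar}\cL$; this yields the desired function $C$.

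That \cref{Coho_direct_image} follows from \cref{thm_3} rests on three points. The first is a bound on the ranks of the higher direct images. Taking $S=Y$ and the morphism $X\to Y$ equipped with the stratification $\Sigma$, proper base change identifies the stalk of $R^if_*j_!\cL$ at a geometric point $\sbar\to Y$ with $H^i(X_{\sbar},(j_!\cL)|_{X_{\sbar}})$, and $(j_!\cL)|_{X_{\sbar}}$ is $\Sigma_{\sbar}$-constructible with log conductors bounded by $\cE_{\sbar}$ and rank $\leq\rk_{\Qlbar}\cL$. Hence \cref{thm_3}, applied with $n:=\dim X$ (an upper bound for the fibre dimensions), bounds this cohomology by $\fd_{\min(i,2n-i)}(\fc(\cE))\cdot\rk_{\Qlbar}\cL$ uniformly in $\sbar$, $\ell$ and $\cL$, where $\fc$ and $\fd$ depend only on $X\to Y$ and $\Sigma$. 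Taking the supremum over $\sbar$ bounds $\Rk_{\Qlbar}(R^if_*j_!\cL)$, while cohomological dimension shows $R^if_*j_!\cL=0$ for $i\notin[0,2n]$; both facts are thus controlled by $\lc_D(\cL)$ and $\rk_{\Qlbar}\cL$ alone.

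The remaining two points concern cohomology on $Y$. One needs, first, a stratification $\Sigma_Y$ of $Y$ depending only on $f$ and $\Sigma$ such that $R^if_*j_!\cL$ is $\Sigma_Y$-constructible for every $i$, every $\ell$ and every $\cL$; this uniform constructibility of the higher direct images — one stratification of $Y$ working for all $\cL$ simultaneously — follows from Deligne's generic base change and Noetherian induction. One needs, second, a class $\cE'\in\bQ[\Coh(Y)]$ bounding the log conductors of every $R^if_*j_!\cL$, with $\cE'$ controlled in terms of $\cE$ and $\rk_{\Qlbar}\cL$. By the curve criterion of \cref{defin_E_bound} this reduces to bounding, for every smooth curve $C\to Y$ and every $x\in C$, the logarithmic conductor of $(R^if_*j_!\cL)|_C$ at $x$; base‑changing the family to $C$ and combining the semi‑continuity of logarithmic conductors in families \cite{Hu_Leal} with the fibre‑wise rank estimates of the previous paragraph applied to $X\times_Y C\to C$ produces such an $\cE'$. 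A final application of \cref{thm_3}, with $S=\Spec k$ and $Y$ as base scheme, then bounds $\sum_jh^j(Y,R^if_*j_!\cL)$ in terms of $\cE'$ and $\Rk_{\Qlbar}(R^if_*j_!\cL)$, and summing over $0\leq i\leq 2n$ concludes.

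I expect the main obstacle to be the second of these last two points: distilling the fibre‑wise estimates into a single coherent sheaf on $Y$ that uniformly bounds the wild ramification of all the higher direct images. Controlling the Betti numbers of the fibres does not suffice on its own — one must understand how the ramification of $R^if_*j_!\cL$ along the strata of $Y$ is governed by the ramification of $\cL$ along $D$ — and it is here that the flexibility of \cref{defin_E_bound}, in working with coherent sheaves rather than Cartier divisors and testing on all smooth curves, becomes indispensable, in combination with the semi‑continuity results of \cite{Hu_Leal}. The uniform constructibility statement, although equally necessary, is comparatively routine.
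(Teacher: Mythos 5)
Your first paragraph is exactly the paper's proof of \cref{Coho_direct_image_lc}: apply \cref{bounded_ramification_ex} to get that $j_!\cL$ has log conductors bounded by $(\lc_D(\cL)+1)\cdot\cO_D$, and feed this into \cref{Coho_direct_image}; since that class is determined by $\lc_D(\cL)$ alone, the bound only depends on $\lc_D(\cL)$ and $\rk_{\Qlbar}\cL$. If you stop there, taking \cref{Coho_direct_image} as a black box, the argument is correct and identical to the paper's.

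The rest of the proposal, however, misdescribes how \cref{Coho_direct_image} is obtained, and as a self-contained route it has genuine gaps. The paper does \emph{not} prove \cref{Coho_direct_image} by applying \cref{general_boundedness_Betti} on $Y$ to the sheaves $R^if_*\cK$; it uses Sawin--Forey--Fres\'an--Kowalski's complexity formalism: \cref{bound_complexity} (which comes from \cref{general_boundedness_complex} applied to the universal families of linear sections) bounds $c_i(\cK)$ by $\fd(\fc(\cE))\cdot\Rk_{\Qlbar}\cK$, then \cite[Theorem 6.8]{SFFK} gives $c_{i'}(Rf_*\cK)\leq C\cdot c_i(\cK)$ and \cite[Proposition 6.24]{SFFK} controls $\sum_j c_{i'}(\cH^jRf_*\cK)$, hence $\sum_{i,j}h^j(Y,R^if_*\cK)$. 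This precisely bypasses the two points you need and do not establish. First, a single stratification $\Sigma_Y$ of $Y$ making $R^if_*j_!\cL$ constructible for \emph{all} $\cL$ simultaneously does not follow from Deligne's generic base change plus Noetherian induction: that argument yields a stratification depending on the sheaf, and uniform constructibility for families of wildly ramified sheaves with only bounded conductor is a serious theorem (known in the tame case by \cite{Org}), not a routine step. Second, and more seriously, producing $\cE'\in\bQ[\Coh(Y)]$ bounding the log conductors of all $R^if_*j_!\cL$ in terms of $\lc_D(\cL)$ and the rank amounts to bounding the wild ramification of higher direct images, a deep problem in the spirit of \cite{SaitoDirectimage} (and of the paper's own Sections 4--5, where only a very particular pencil is handled, using nearby-cycle ramification bounds and characteristic cycles). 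It does not follow from \cref{semi_continuity}, which controls conductors under \emph{pullback} of a fixed sheaf, nor from fibrewise Betti/rank estimates, which say nothing about Swan or logarithmic conductors of the direct image. You flag this as the main obstacle but offer no argument closing it, so the proposed derivation of \cref{Coho_direct_image} from \cref{general_boundedness_Betti} alone is incomplete; the quantitative sheaf theory input of \cite{SFFK} (or some substitute for its continuity under $Rf_*$) is essential in the paper's proof.
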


\begin{rem}
All continuity results from \cite{SFFK} translate thanks to  \cref{thm_3} into estimates for Betti numbers involving the rank and the wild ramification only.
\end{rem}

Our next application of \cref{thm_3} is a relative perverse geometric form of Hermite-Minkowski's theorem in number theory according to which there are only finitely many number fields with bounded discriminant.
For more applications to characteristic cycles and to wild Lefschetz type theorems, see \cite{HTLefschetz}.
To state the relative perverse Hermite-Minkowski theorem, let us introduce the following 
\begin{notation}
Let $(X,\Sigma)$ be a stratified scheme of finite type over a field $k$. 
Let $\cE\in \bQ[\Coh(X)]$ and $r\geq 0$.
We denote by $\Perv_{\Sigma}^{\leq r}(X,\cE,\Qlbar)$ the full subcategory of $\Perv(X,\Qlbar)$  spanned by objects $\cP$ such that for every $j\in \bZ$, the constructible sheaf $\cH^j\cP$ is $\Sigma$-constructible, has log conductors bounded by $\cE$ and satisfies $\Rk_{\Qlbar}\cH^j\cP\leq r$.
\end{notation}
\begin{thm}[\cref{Deligne_finitness}]\label{thm_4}
Let $X\to S$ be a projective morphism between schemes of finite type over a finite field of characteristic $p>0$.
Let $\Sigma$ be a stratification on $X$.
Then, there is a function $N : \bQ[\Coh(X)] \times \bN^+ \times \bN^+ \to \bN^+$ such that for every $\cE\in \bQ[\Coh(X)]$, every $r\geq 0$, every prime $\ell \neq p$ and every closed point $s\in  S$, there are up to geometric isomorphism at most $N(\cE,r,\deg s)$  geometrically simple pure of weight 0 objects in $\Perv_{\Sigma_s}^{\leq r}(X_s,\cE_s,\Qlbar)$.
\end{thm}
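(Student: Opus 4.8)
The strategy is to reduce the finiteness statement to a bound on Betti numbers via the Weil conjectures, exactly as in Deligne's original argument, but now fed by the uniform estimates of \cref{thm_3}. First I would fix a closed point $s\in S$ with residue field $\bF_{q^{\deg s}}$ and recall that a geometrically simple pure of weight $0$ perverse sheaf $\cP$ on $X_s$ is, after a twist, determined up to geometric isomorphism by the eigenvalues of geometric Frobenius on the stalk cohomology at each closed point of $X_s$; more precisely, by Chebotarev-type density together with the fact that a semisimple Weil sheaf on a scheme over a finite field is determined by its Frobenius traces at closed points of bounded degree. So the plan is: (i) show that the trace function of $\cP$ at closed points of $X_s$ of degree $\leq m$, for a suitable $m$, determines $\cP$ up to geometric isomorphism; (ii) bound the number of possible values of this trace function by bounding, uniformly in $\cP$, the number of closed points involved and the number of possible Frobenius eigenvalues at each such point. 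Step (i) is the classical input (Chebotarev density for the fundamental groupoid, or the Brauer–Nesbitt / Taylor-type statement that a semisimple Galois representation over a number field is determined by traces on a set of places of bounded norm); the degree bound $m$ needed here can be taken to depend only on the \emph{dimension} of $X_s$ and the number of irreducible components, hence is controlled by the family $X\to S$ and $\Sigma$.

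The heart of the matter is step (ii), and this is where \cref{thm_3} enters. Purity of weight $0$ and the Weil bounds give that each Frobenius eigenvalue at a closed point $x$ of degree $d$ is an algebraic number all of whose conjugates have absolute value $(q^d)^{w/2}$ for $w$ in a range controlled by $n=\dim$ and by the perverse amplitude; but to turn "algebraic of bounded weight" into "finitely many possibilities" one needs a bound on the \emph{degree} of these algebraic numbers over $\bQ$, and that degree is bounded by the dimension of the cohomology groups $H^j(X_{s},{}^p\!\cH^k \cP)$ — equivalently by the Betti numbers $h^j(X_{\bar s},\cF)$ for the constructible cohomology sheaves $\cF=\cH^k\cP$ of $\cP$. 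Since $\cP\in\Perv_{\Sigma_s}^{\leq r}(X_s,\cE_s,\Qlbar)$ means precisely that each $\cH^k\cP$ is $\Sigma_s$-constructible, has log conductors bounded by $\cE_s$, and has generic ranks $\leq r$, \cref{thm_3} (applied to the projective — hence proper — morphism $X\to S$) yields a bound
\[
h^j(X_{\bar s},\cH^k\cP)\leq \fd_{\min(j,2n-j)}(\fc(\cE))\cdot r
\]
depending only on $\cE$, $r$, and the fixed data $(X\to S,\Sigma)$, and \emph{not} on $\ell$, on $s$, or on $\cP$. Feeding this into the long exact sequences / the perverse spectral sequence bounds $\dim H^i(X_{\bar s},\cP)$, hence the degree of the relevant algebraic numbers, by a quantity $D(\cE,r)$ depending only on $\cE$ and $r$.

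With the degree bounded by $D(\cE,r)$ and the archimedean absolute values of all conjugates pinned down by purity (the weights lie in a finite set determined by $n$), one invokes the Northcott/Kronecker-type finiteness: there are only finitely many algebraic numbers of degree $\leq D$ all of whose conjugates have a prescribed absolute value of the form $(q^{\deg s})^{w/2}$, and the count is an explicit function $M(D,q^{\deg s})$. Taking the product over the finitely many closed points of degree $\leq m$ of $X_s$ — whose number is itself bounded in terms of $X\to S$ and $\deg s$ — and over the finitely many possible perverse amplitudes, gives the desired $N(\cE,r,\deg s)$. The $\ell$-independence is automatic because none of the bounds (neither the Betti bound from \cref{thm_3}, nor the weight range, nor the point counts) involves $\ell$; alternatively one may cite the existence of companions (Lafforgue, Drinfeld) to transport the count across different $\ell$. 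The main obstacle I anticipate is not any single step but the bookkeeping in step (ii): making the passage from "Frobenius eigenvalue is an algebraic number" to an effective degree bound genuinely uniform — one must be careful that the field of coefficients grows in a controlled way, which is exactly guaranteed by bounding the total dimension $\sum_i \dim H^i(X_{\bar s},\cP)$, and that in turn is where the $\ell$-uniform Betti estimate of \cref{thm_3} is indispensable rather than merely convenient.
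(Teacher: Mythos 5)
Your overall plan (feed the uniform Betti bound of \cref{thm_3} into a Weil-conjectures counting argument) is in the right spirit, but both halves of your reduction have genuine gaps. Step (i): it is not true, and not provable from Chebotarev density or Brauer--Nesbitt alone, that a geometrically simple pure weight-$0$ perverse sheaf is determined up to geometric isomorphism by its traces at closed points of degree $\leq m$ with $m$ depending only on $\dim X_s$ and the number of irreducible components. Those classical statements use traces at \emph{all} closed points; making them effective is exactly the hard uniformity in Deligne's original proof, where the number of Frobenii needed depends on the rank and ramification bounds (this is recalled in the introduction of the paper), not just on the dimension. As written, step (i) assumes a quantitative Chebotarev statement that is not available. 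Step (ii): you claim the degree over $\bQ$ of a Frobenius eigenvalue at a closed point $x$ is bounded by the global Betti numbers $h^j(X_{\sbar},\cH^k\cP)$. But Frobenius at $x$ acts on the stalk $(\cH^k\cP)_{\bar x}$, not on global cohomology, and a bound on $\dim H^j(X_{\sbar},\cP)$ gives no control on the degree of the number field generated by stalk eigenvalues; even their algebraicity is a deep input (companions/Lafforgue), which Theorem \ref{thm_4} in its pure form is designed to avoid. So the Northcott/Kronecker finiteness cannot be invoked as you do, and the bookkeeping you flag as the "main obstacle" is in fact a missing argument, not a technicality.

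The paper's route bypasses both problems: \cref{thm_3}, through \cref{general_boundedness_complex} and \cref{bound_complexity}, bounds the Sawin--Forey--Fres\'an--Kowalski complexity $c_{i_s}(\cP)$ of every object of $\Perv_{\Sigma_s}^{\leq r}(X_s,\cE_s,\Qlbar)$ in terms of $\cE$ and $r$ only; then \cref{SFFK} (quasi-orthogonality of trace functions of geometrically simple pure weight-$0$ perverse sheaves, a consequence of Weil II) shows that for $n$ large depending only on that complexity bound, the trace functions $t_{\cP,n}$ of pairwise non-geometrically-isomorphic objects are almost orthonormal vectors in $\bC^{|X_s(\bF(s)_n)|}$; a spherical-codes count together with the Lang--Weil-type bound \cref{relative_Lang_Weil} on $|X_s(\bF(s)_n)|$ then yields $N(\cE,r,\deg s)$ (this is \cref{finitness_perv}), with no eigenvalue bookkeeping, no effective Chebotarev, and no companions. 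If you insist on a Deligne-style argument you would have to rebuild his finite-type scheme of Frobenius data, which is precisely what the paper trades away.
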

When $S$ is a point,  $X$ is normal and $\Sigma=\{U,D\}$ where $D$ is an effective Cartier divisor with $U$ smooth,  \cref{thm_4} reproduces Deligne's finiteness theorem \cite{DeltoDrin,EK}.
For a variant building on the smooth case where $U$ is  normal, see \cite{RemDel}.
See also \cite{Drin81,FKM,Comptage,DF13,Fli,Yu21,Yu23a,Yu23b} for concrete bounds in the curve case. 
In a nutshell, Deligne's approach rests upon the identification of simple local systems with bounded rank and ramification with the irreducible components of a scheme of finite type 
$\cX$ over $\bQ$.
The scheme $\cX$ is constructed by putting all the characteristic polynomials of the local Frobenii together. 
The finite type property of $\cX$ is the crucial aspect of the construction.
It ultimately relies on the fact that in the bounded rank and bounded ramification situation, only a  finite number of  Frobenii are needed. 
Furthermore, the existence of companions 
\cite{Laf,Drin} implies independence in $\ell$ in Deligne's finiteness.
Our approach to \cref{thm_4} is different: it trades off the construction of $\cX$ for the use of Beilinson's singular support and Saito's characteristic cycle.
Still, the bounds we obtain are independent of $\ell$ and don't resort to companions.
At the cost of using the Langlands correspondence \cite{Drin88,Drin89,Laf}, the purity assumption from  \cref{thm_5} can be dropped. 
This is the following 
\begin{thm}[\cref{Deligne_finitness_with_Langlands}]\label{thm_5}
In the setting of \cref{thm_4}, there is a function $N : \bQ[\Coh(X)] \times \bN^+ \times \bN^+ \to \bN^+$ such that for every $\cE\in \bQ[\Coh(X)]$, every $r\geq 0$, every prime $\ell \neq p$ and every closed point $s\in  S$, there are up to geometric isomorphism at most $N(\cE,r,\deg s)$  geometrically simple  objects in $\Perv_{\Sigma_s}^{\leq r}(X_s,\cE_s,\Qlbar)$.
\end{thm}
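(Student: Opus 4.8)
The plan is to reduce the statement to \cref{thm_4} by exploiting that, over a finite field, an unramified twist changes neither the geometric isomorphism class of a perverse sheaf nor its rank, stratification and ramification, while — by the Langlands correspondence — it can be chosen so as to make any geometrically simple perverse sheaf pure of weight $0$.

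Fix $\cE\in\bQ[\Coh(X)]$, $r\geq 0$, a prime $\ell\neq p$ and a closed point $s\in S$, and let $\cP$ be a geometrically simple object of $\Perv_{\Sigma_s}^{\leq r}(X_s,\cE_s,\Qlbar)$. For $\alpha\in\Qlbar^\times$, write $\alpha^{\deg}$ for the rank one lisse $\Qlbar$-sheaf on $X_s$ pulled back from the character of $\Gal(\overline{\kappa(s)}/\kappa(s))$ sending the geometric Frobenius to $\alpha$. The first observation is that $\alpha^{\deg}$ is lisse on all of $X_s$ and trivial after pullback to $X_{\overline{s}}$; hence for every $j\in\bZ$ one has $\cH^j(\cP\otimes\alpha^{\deg})=\cH^j(\cP)\otimes\alpha^{\deg}$, which is $\Sigma_s$-constructible, has the same germ ranks as $\cH^j(\cP)$ and, the twist being everywhere unramified, has the same logarithmic conductors as $\cH^j(\cP)$ along every curve mapping to $X_s$. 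Therefore $\cP\otimes\alpha^{\deg}$ again lies in $\Perv_{\Sigma_s}^{\leq r}(X_s,\cE_s,\Qlbar)$, while $(\cP\otimes\alpha^{\deg})_{\overline{s}}\cong\cP_{\overline{s}}$ shows it is geometrically simple and geometrically isomorphic to $\cP$. So it will suffice to prove that \emph{some} such twist is pure of weight $0$: the geometric isomorphism classes of geometrically simple objects of $\Perv_{\Sigma_s}^{\leq r}(X_s,\cE_s,\Qlbar)$ then coincide with those of its geometrically simple objects that are pure of weight $0$, so \cref{thm_4} bounds their number by $N(\cE,r,\deg s)$ — the very function $N$ it produces.

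To construct the twist, I would first pass to the lisse setting. Since $\cP_{\overline{s}}$ is simple, $\cP$ is simple, so by the structure theory of simple perverse sheaves $\cP\cong(j_s)_{!*}(\cM[d])$ with $j_s\colon W_s\hookrightarrow X_s$ the immersion of a smooth locally closed $d$-dimensional subscheme over $\kappa(s)$ and $\cM$ an irreducible lisse $\Qlbar$-sheaf on $W_s$; simplicity of $\cP_{\overline{s}}$ forces $W_s$ to be geometrically connected, hence geometrically irreducible. The geometric monodromy of $\cM$ is stable under the Frobenius, hence so is the character $\det\cM$ on $\pi_1(W_{s,\overline{s}})^{\mathrm{ab}}$; since $W_s$ is smooth, $H^1(W_{s,\overline{s}},\Qlbar)$ has weights $\geq 1$, so the geometric Frobenius has no root of unity as an eigenvalue there and acts without non-trivial fixed point on the group of continuous $\Qlbar^\times$-characters of $\pi_1(W_{s,\overline{s}})^{\mathrm{ab}}$ modulo torsion. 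Hence $\det\cM$ has finite geometric monodromy, and there is $\alpha_0\in\Qlbar^\times$ with $\det\cM\otimes\alpha_0^{\deg}$ of finite order; choosing $\alpha\in\Qlbar^\times$ with $\alpha^{\rk\cM}=\alpha_0^{-1}$, which exists as $\Qlbar$ is algebraically closed, the sheaf $\cM\otimes\alpha^{\deg}$ becomes irreducible with determinant of finite order.

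Now the Langlands correspondence enters: by Lafforgue \cite{Laf} in the curve case and Drinfeld \cite{Drin} in general, an irreducible lisse $\Qlbar$-sheaf with finite determinant on a smooth geometrically connected variety over a finite field is pure of weight $0$. Thus $\cM\otimes\alpha^{\deg}$ is pure of weight $0$, and after replacing $\alpha$ by its product with a suitable element of $\Qlbar^\times$ of the appropriate weight — again a geometrically trivial $\deg$-twist — the perverse sheaf $\cP\otimes\alpha^{\deg}\cong(j_s)_{!*}\bigl((\cM\otimes\alpha^{\deg})[d]\bigr)$ becomes pure of weight $0$, since the middle extension of a pure perverse sheaf is pure of the same weight. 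This provides the required twist. The only non-formal input is this last step, namely the purity of irreducible lisse sheaves with finite determinant; in dimension $\geq 2$ it is Drinfeld's theorem, whose proof reduces to Lafforgue's via a careful choice of curves, and this is the deep part of the argument — everything else is bookkeeping around \cref{thm_4}, hinging on the geometric triviality of unramified twists.
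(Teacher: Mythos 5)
Your proof is correct and takes essentially the same route as the paper's: reduce to the pure case via a geometrically trivial unramified twist, using the structure theorem for simple perverse sheaves, a twist making the determinant finite order, and the Langlands purity theorem, exactly as in the paper's Lemma~\ref{purity_after_twist}, which the paper then formally combines with \cref{Deligne_finitness}. You additionally spell out (correctly) why the unramified twist keeps the object in $\Perv_{\Sigma_s}^{\leq r}(X_s,\cE_s,\Qlbar)$, a point the paper leaves implicit; the only inaccuracy is a minor attribution matter at the end, where the higher-dimensional purity statement is the standard reduction to curves plus Lafforgue (the cited Drinfeld 2012 paper concerns companions, not this purity step).
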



\subsection*{On the proof of \cref{thm_3}}
The proof of \cref{thm_3} involves a reduction to the case where $f : X\to S$ is projective with geometrically normal fibres (\cref{normal_reduction}).
From this point on and up to some noise coming from lower dimension,  the function $\mu : \bQ[\Coh(X)]\to \bQ$ is concretely constructed in \cref{ex_subaddittive} using the lengths at codimension one points of the restrictions of $\cE\in \Coh(X)$ to the fibers of $f : X\to S$.
Using a result of Kedlaya \cite{Ked} combined with some estimates (\cref{remark_bound_conductor_direct_image})
for the conductor of a finite direct image proved in \cite{HuTeyssierCohBoundedness},  we reduce to prove  \cref{thm_2}.
To treat the $\bA_k^n$ case, we argue by recursion on $n$.
To simplify  the explanation, assume that $n=2$.
By a standard weak Lefschetz argument (\cref{hn-1<bn-1}),  we are reduced to estimate the Euler-Poincaré characteristic of $\cL\in \Loc(\bA_k^2,\Lambda)$.
The main idea is here to twist $\cL$ by a  generic enough Artin-Schreier sheaf $\cN$ so that the discrepancy between the $\chi$-ies of $\cL$ and its twist $\cL^{\prime}$  stays in control (\cref{chi_special_F}).
To estimates the $\chi$ of $\cL^{\prime}$, a well-chosen  pencil (\cref{Lefschetz_pencil_construction}) reduces us to estimate 
 the Euler-Poincaré characteristic of the higher direct images of $\cL^{\prime}$ along some map $\pi_A : \bA_k^2\to \bA_k^1$.
By choosing the conductor of $\cN$ at $\infty$ to be prime to $p$ and $> \lc_H(\cL)$, one sees that $R\pi_{A !}\cL^{\prime}$ is locally constant and concentrated in degree 1 with controlled rank (\cref{local_acyclicity_E_not_H0}).
We are thus left to control the conductor of  $R^1\pi_{A !}\cL^{\prime}$ at $\infty$.
Note that this question amounts to bound the wild ramification of some nearby cycle complex attached to $\cL^{\prime}$.
To achieve this,  we invoke the main result of \cite{HT} to take care of the locus of the special fiber free from any wild horizontal contribution.
Inspired by an argument from \cite[Corollary 1.5.7]{SaitoDirectimage}  relying on a suitable radicial pullback, we show that at the unique problematic point where \cite{HT} breaks down,  $\cN$ can be chosen so that the nearby cycle complex of $\cL^{\prime}$ vanishes (\cref{ULAblow-up}).

\subsection*{Linear Overview}
Section 2 provides an account of Abbes and Saito theory for the logarithmic conductor (\cref{propramfil}) and recall its behaviour under inverse image (\cref{semi_continuity}) and finite direct image (\cref{remark_bound_conductor_direct_image}).
In section 3,  we introduce Beilinson's singular support and Saito's characteristic cycle for étale sheaves.
Section 4 provides a control of the wild ramification of the nearby cycle complex generalizing the main result of \cite{HT} over henselian traits coming from geometric situations (\cref{generalH-T}).
Section 5 provides a crucial conductor estimates (\cref{computation_conductor_twisted}) for the higher direct image of some suitably twisted local system.
Section 6 is devoted to the proof of \cref{thm_2}.
In section 7,  we recall some basic material from \cite{HuTeyssierCohBoundedness} and prove \cref{thm_2} and some immediate consequences.
Variations of \cref{thm_2} dealing with inverse and higher direct images are obtained in section 8 by building on the continuity results from  \cite{SFFK}.
Section 9 is devoted to the proof of \cref{thm_4}.
The appendix 10 is a computation making the bounding polynomials explicit in the $\bA^n_k$ case.

\subsection*{Acknowledgement}
We thank A. Abbes, M. D'Addezio, Y. Cao, G. Chenevier, J.-F. Dat,  A. Eteve, H. Esnault, L. Fu, M. Kerz, V. Lafforgue, H. Qin, T. Saito,  B. Stroh,  D. Wan, E. Yang, D. Zhang,Y. Zhao, W. Zheng for their interest and their comments.
We  thank V. Thatte and A. Walker for inviting the second named author to the London number theory seminar, where an anonymous member of the audience asked whether \cref{thm_1} could help reprove Deligne's finiteness.
H. H. is supported by the National
Natural Science Foundation of China (Grants No. 12471012) and the Natural Science
Foundation of Jiangsu Province (Grant No. BK20231539).
\begin{notation}
We introduce the following running notations.
\begin{enumerate}\itemsep=0.2cm
\item[$\bullet$] $k$  denotes a perfect field of characteristic $p>0$.
\item[$\bullet$] The letter $\Lambda$ will refer to a finite local ring of residue characteristic $\ell \neq p$.

\item[$\bullet$] For a scheme $X$ of finite type over $k$, we denote by $D_{ctf}^b(X,\Lambda)$ the derived category of complexes of $\Lambda$-sheaves of finite tor-dimension with bounded and constructible cohomology sheaves.

\item[$\bullet$]
We let $\Cons_{ctf}(X,\Lambda)$ be the category of constructible sheaves of $\Lambda$-modules of finite tor-dimension over $X$ and
$\LC_{tf}(X,\Lambda)\subset \Cons_{tf}(X,\Lambda)$ the full subcategory spanned by  locally constant constructible sheaves.
By \cite[Lemma 4.4.14]{Wei}, the germs of any $\cL\in \LC_{tf}(X,\Lambda)$ are automatically free  $\Lambda$-modules of finite rank.
 
 \item[$\bullet$]
 $\Perv_{tf}(X,\Lambda)$ will denote the category of perverse sheaves of $\Lambda$-modules of finite tor-dimension over $X$ for the middle perversity function.

\item[$\bullet$]  Let  $X$  be a scheme of finite type over $k$ and let $\Lambda$ be a field of characteristic $\neq p$.
For $\cK \in D_{ctf}^b(X,\Lambda)$, we put
$$
\Rk_{\Lambda} \cK := \max\{\rk_{\Lambda}\cH^i \cK_{\xbar}, \text{ where $i\in \bZ$ and $\xbar\to X$ is algebraic geometric}\} \ .
$$
\item[$\bullet$]
For $r\geq 0$, we let $D_{ctf}^{\leq r}(X,\Lambda)\subset D_{ctf}^b(X,\Lambda)$ be the full subcategory spanned by objects $\cK$ such that $\Rk_{\Lambda} \cK\leq r$, and similarly with perverse complexes.

\item[$\bullet$]
For a finite stratification $\Sigma$ of $X$, we let $D_{\Sigma,tf}^b(X,\Lambda)\subset D_{tf}^b(X,\Lambda)$ be the full subcategory spanned by $\Sigma$-constructible complexes, and similarly with perverse complexes.
\end{enumerate}

\end{notation}

\section{Conductors of étale sheaves}

\subsection{Ramification filtrations}\label{local_fields_notation}

Let $K$ be a henselian discrete valuation field over $k$.
Let $\sO_K$ be the ring of integer of $K$, let $\fm_K$ be the maximal ideal of $\sO_K$ and $F$ the residue field of $\sO_K$. 
Fix $K\subset K^{\sep}$ a separable closure of $K$ and let $G_K$ be the Galois group of $K^{\sep}$ over $K$.
Let  $I_K\subset G_K$ be the inertia subgroup and let  $P_K\subset G_K$ be the wild ramification subgroup.

\begin{recollection}\label{local_fields}

 In \cite{RamImperfect},  Abbes and Saito defined two decreasing filtrations $\{G^r_K\}_{r\in\bQ_{>0}}$ and $\{G^r_{K,\log}\}_{r\in\bQ_{\geq 0}}$ on $G_K$ by closed normal subgroups.   
They are called the {\it the ramification filtration} and {\it the logarithmic ramification filtration} respectively.  
For $r\in\bQ_{\geq 0}$, put
\begin{equation*}
G^{r+}_K=\overline{\bigcup_{s>r}G_K^s}\ \ \ \textrm{and}\ \ \ G^{r+}_{K,\log}=\overline{\bigcup_{s>r}G_{K,\log}^s}.
\end{equation*}

\begin{prop}[{\cite{RamImperfect,as ii,logcc,wr}}]\label{propramfil}
The following properties hold :
\begin{enumerate}
\item
For any $0<r\leq 1$, we have 
$$
G^r_K=G^0_{K,\log}=I_K \text{ and } G^{+1}_K=G^{0+}_{K,\log}=P_K.
$$
\item
For any $r\in \bQ_{\geq 0}$, we have 
$$
G^{r+1}_K\subseteq G^r_{K,\log}\subseteq G^r_K.
$$
If $F$ is perfect, then for any $r\in \bQ_{\geq 0}$, we have 
$$
G^r_{K,\cl}=G^r_{K,\log}=G^{r+1}_K.
$$ 
where $G^r_{K,\cl}$ is the classical wild ramification subgroup as defined in \cite{CL}.
\item
For any $r\in \bQ_{> 0}$, the graded piece $G^r_{K,\log}/G^{r+}_{K,\log}$ is abelian, $p$-torsion and contained in the center of $P_K/G^{r+}_{K,\log}$.
\end{enumerate}
\end{prop}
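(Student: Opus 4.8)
\emph{Proof proposal.} The statement collects foundational properties of the Abbes--Saito filtrations, so the plan is not to reprove them from scratch but to identify which construction or theorem from \cite{RamImperfect,as ii,logcc,wr} supplies each item, and to flag where the real content lies.

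First, for \textbf{(1)} I would return to the definition of $\{G^r_K\}_{r>0}$ and $\{G^r_{K,\log}\}_{r\ge0}$ through the Abbes--Saito spaces: for a finite extension $L/K$ one attaches, to a chosen presentation of $\sO_L$ over $\sO_K$, a family of rigid (or schematic) tubular neighbourhoods $X^r_{L/K}$, and the subgroups $G^r_K$, $G^r_{K,\log}$ are read off from the $G_K$-action on $\pi_0$ of their geometric generic fibres. With the normalisation adopted there, the first jump of $\{G^r_K\}$ occurs only at $r=1$, which gives $G^r_K=I_K$ for $0<r\le1$, while $G^0_{K,\log}=I_K$ is immediate from the definition of the logarithmic filtration at level $0$. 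The identities $G^{1+}_K=\overline{\bigcup_{s>1}G^s_K}=P_K$ and $G^{0+}_{K,\log}=\overline{\bigcup_{s>0}G^s_{K,\log}}=P_K$ are then the theorem of \cite{RamImperfect} identifying the wild inertia subgroup with these closures; when $F$ is perfect this recovers the classical description of $P_K$ as the maximal pro-$p$ subgroup of $I_K$.

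For \textbf{(2)}, the inclusions $G^{r+1}_K\subseteq G^r_{K,\log}\subseteq G^r_K$ are the comparison between the logarithmic and non-logarithmic filtrations proved in \cite{as ii}: it is obtained by comparing the two families of tubular neighbourhoods — the logarithmic one built from $\Omega^1_{\sO_K}(\log)$, the non-logarithmic one from $\Omega^1_{\sO_K}$ — where the extra generator $d\log\pi$ accounts for the shift by $1$. When $F$ is perfect I would then invoke the main compatibility theorem of \cite{RamImperfect}: both Abbes--Saito filtrations coincide with the classical upper-numbering filtration of \cite{CL}, which gives $G^r_{K,\cl}=G^r_{K,\log}=G^{r+1}_K$ at once.

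Finally, \textbf{(3)} is the deepest input and the step I expect to cost the most care. One must show that for $r>0$ the graded piece $G^r_{K,\log}/G^{r+}_{K,\log}$ is abelian, $p$-torsion, and central in $P_K/G^{r+}_{K,\log}$. The plan is to use the structure theory of the graded quotients of \cite{as ii,logcc} together with Saito's refinement in \cite{wr}: through the refined Swan conductor / characteristic form, the graded piece embeds into a group of homomorphisms from a quotient of $P_K$ into a finite-dimensional $F$-vector space; since $F$ has characteristic $p$ this target is an elementary abelian $p$-group, which forces $G^r_{K,\log}/G^{r+}_{K,\log}$ to be abelian and $p$-torsion, and since $P_K$ acts trivially on this target (and on the torsors computing the characteristic form) the conjugation action of $P_K/G^{r+}_{K,\log}$ is trivial, i.e.\ the graded piece is central — this is the Hasse--Arf-type mechanism of Abbes--Saito. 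The main obstacle here is bookkeeping rather than ideas: matching the conventions of \cite{as ii} and \cite{wr} and transporting statements between the logarithmic and non-logarithmic settings; granting that, items (1) and (2) are essentially formal consequences of the constructions.
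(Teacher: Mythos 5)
The paper gives no proof of this proposition: it appears inside a Recollection with the references \cite{RamImperfect,as ii,logcc,wr} attached to the statement, so the intended "proof" is simply a pointer to those sources, and your proposal takes exactly that route. Your attributions are essentially accurate — (1) and the inclusions in (2) are the basic structural results from \cite{RamImperfect} (comparison of the two tubular-neighbourhood constructions, with the extra $d\log\pi$ generator explaining the shift by $1$), the perfect-residue-field identification with the classical filtration is also in \cite{RamImperfect}, and (3) (abelian, $p$-torsion, central) is the deep input from \cite{as ii,logcc,wr} via the refined Swan conductor / characteristic form — so the proposal matches the paper's approach.

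One small caveat worth flagging: your phrase "the Hasse--Arf-type mechanism" for the centrality in (3) is a slight misnomer (Hasse--Arf concerns integrality of jumps, not centrality), and the claim that centrality follows because "$P_K$ acts trivially on this target" glosses over the real work, which is showing the refined Swan conductor is equivariant for the conjugation action and that this action on the target module is trivial. Since the paper itself defers to the literature, this does not affect the correctness of your plan, but a reader attempting to expand it into a full proof would need to unwind that equivariance carefully.
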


Let $M$ be a finitely generated $\Lambda$-module with a continuous $P_K$-action. 
 The module $M$ has decompositions 
 \begin{equation}\label{twodecomp}
M=\bigoplus_{r\geq 1}M^{(r)}\ \ \ \textrm{and}\ \ \ M=\bigoplus_{r\geq 0}M_{\log}^{(r)}
\end{equation}
into $P_K$-stable $\Lambda$-submodules where $M^{(1)}=M^{(0)}_{\log}=M^{P_K}$, and such that for every $r\in \bQ_{>0}$,
\begin{align*}
(M^{(r+1)})^{G^{r+1}_K}& =0  \text{ and }    (M^{(r+1)})^{G^{(r+1)+}_K} =M^{(r+1)};\\
(M^{(r)}_{\log})^{G^{r}_{K,\log}}& =0  \text{ and }    (M^{(r)}_{\log})^{G^{r+}_{K,\log}}  =M^{(r)}_{\log}.
\end{align*}
The decompositions \eqref{twodecomp} are respectively called  the {\it slope decomposition} and the {\it logarithmic slope decomposition} of $M$. 
The values $r$ for which $M^{(r)}\neq 0$ (resp. $M^{(r)}_{\log}\neq 0$) are the {\it slopes} (resp. the {\it logarithmic slopes}) of $M$.
We denote by $c_K(M)$ the largest slope of $M$ and refer to $c_K(M)$ as the \textit{conductor of $M$}.
Similarly, we denote by $\lc_K(M)$ the largest logarithmic slope of $M$ and refer to $\lc_K(M)$ as the \textit{logarithmic conductor of $M$}.
We say that $M$ is {\it isoclinic} (resp. {\it logarithmic isoclinic}) if $M$ has only one slope (resp. only one logarithmic slope).

The following is an immediate consequence of  \cref{propramfil}-(2).

\begin{lem}\label{inequalityLogNonLog}
 Let $M$ be a finitely generated $\Lambda$-module with a continuous $P_K$-action. 
Then, 
$$
\lc_{K}(M)  \leq c_K(M)\leq  \lc_{K}(M)+1. 
$$
\end{lem}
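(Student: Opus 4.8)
The statement to prove is \cref{inequalityLogNonLog}: for a finitely generated $\Lambda$-module $M$ with continuous $P_K$-action, one has $\lc_K(M)\leq c_K(M)\leq \lc_K(M)+1$. The natural approach is to compare the two slope decompositions in \eqref{twodecomp} using the containments of ramification subgroups recorded in \cref{propramfil}-(2), namely $G^{r+1}_K\subseteq G^r_{K,\log}\subseteq G^r_K$ for every $r\in\bQ_{\geq 0}$. Since both $c_K(M)$ and $\lc_K(M)$ are the largest $r$ for which the relevant isotypic piece is nonzero, it suffices to compare, for each rational $r$, the fixed subspaces $M^{G^r_K}$, $M^{G^r_{K,\log}}$ and $M^{G^{r+1}_K}$.

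First I would reduce to a statement about jumps. Recall that a logarithmic slope $r>0$ occurs in $M$ precisely when $M^{G^{r+}_{K,\log}}\neq M^{G^{r}_{K,\log}}$ (equivalently $M^{(r)}_{\log}\neq 0$), and similarly a non-logarithmic slope $s>1$ occurs when $M^{G^{s+}_K}\neq M^{G^{s}_K}$; the smallest possible nonzero slope is $1$ and the smallest logarithmic slope is $0$, both equal to $M^{P_K}$. So $c_K(M)=\inf\{s : M^{G^{s+}_K}=M\}$ and $\lc_K(M)=\inf\{r : M^{G^{r+}_{K,\log}}=M\}$ (with the convention that these are $1$, resp.\ $0$, when $M$ is unramified). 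Now feed in the inclusions. From $G^r_{K,\log}\subseteq G^r_K$ we get $M^{G^r_K}\subseteq M^{G^r_{K,\log}}$, and passing to the $+$-versions (taking closures of unions over $s>r$), $M^{G^{r+}_K}\subseteq M^{G^{r+}_{K,\log}}$. Hence if $M^{G^{r+}_K}=M$ then $M^{G^{r+}_{K,\log}}=M$, which gives $\lc_K(M)\leq c_K(M)$. For the other inequality, from $G^{r+1}_K\subseteq G^r_{K,\log}$ we get $M^{G^r_{K,\log}}\subseteq M^{G^{r+1}_K}$, and again on the $+$-versions $M^{G^{r+}_{K,\log}}\subseteq M^{G^{(r+1)+}_K}$; so $M^{G^{r+}_{K,\log}}=M$ forces $M^{G^{(r+1)+}_K}=M$, i.e.\ $c_K(M)\leq \lc_K(M)+1$.

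There is essentially no obstacle here: the only point requiring a small amount of care is the bookkeeping at the bottom of the filtration (the cases where $M$ has a trivial or minimal slope, so that one is comparing the conventional values $c_K=1$ and $\lc_K=0$ rather than genuine jumps), and the verification that taking the $+$-limit is compatible with the inclusions — i.e.\ that $\overline{\bigcup_{s>r} G^s_{K,\log}}\subseteq \overline{\bigcup_{s>r} G^s_K}$ and $\overline{\bigcup_{s>r+1} G^s_K}\subseteq \overline{\bigcup_{s>r} G^s_{K,\log}}$, both of which are immediate from the pointwise inclusions of \cref{propramfil}-(2) after reindexing. Once these are in place, the argument is the one-line implication between "the fixed space is everything" statements displayed above, and the proof is complete.
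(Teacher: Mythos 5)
Your proof is correct and is exactly the unpacking of what the paper records as "an immediate consequence of \cref{propramfil}-(2)": the chain $G^{r+1}_K \subseteq G^r_{K,\log} \subseteq G^r_K$ passes to the $+$-filtrations, reverses on fixed submodules, and yields both inequalities on the largest (logarithmic) slopes. The edge-case bookkeeping you flag at the bottom of the filtration is the only subtlety, and you handle it correctly.
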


If $M$ is free as a $\Lambda$-module, then so are the $M^{(r)}_{\log}$ and the $M^{(r)}$ in virtue of \cite[Lemma 1.5]{KatzGauss}.
In that case, the {\it total dimension} of $M$ is defined by
$$
\dt_K(M):=\sum_{r\geq 1}r\cdot\rk_{\Lambda} M^{(r)}
$$
and the {\it Swan conductor} of $M$ is defined by
$$
\sw_K(M):=\sum_{r\geq 0}r\cdot\rk_{\Lambda} M^{(r)}_{\log} \ .
$$

\begin{lem}[{\cite{RamImperfect}}]\label{inequality_Swan_dimtot}
In the setting of \cref{local_fields}, we have
$$
 \sw_K(M) \leq  \dt_K(M) \leq \sw_K(M)+\rk_{\Lambda}M \ .
$$
If the residue field $F$ is perfect, we have
\begin{align*}
\lc_{K}(M)+1&=c_K(M) \ .  \\
\sw_K(M)+\rk_{\Lambda}M&= \dt_K(M) \ .
\end{align*}
\end{lem}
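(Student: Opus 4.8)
The plan is to extract both inequalities and the perfect‑residue‑field statement directly from the two decompositions in \eqref{twodecomp} together with \cref{propramfil}-(2), using throughout that $M$ is free over $\Lambda$ (so that the summands $M^{(s)}$ and $M^{(s)}_{\log}$ are again free, and hence have well‑defined ranks), and that $\rk_\Lambda$, $\sw_K$ and $\dt_K$ are additive over direct sums of $\Lambda[P_K]$‑modules. Since both decompositions are canonical, the logarithmic slope decomposition of a slope‑isoclinic module is again a decomposition into isoclinic modules, and conversely; this is what lets me reduce each inequality to a single (logarithmic) isoclinic piece and then sum back up. The only nontrivial input used is a vanishing criterion: a (logarithmic) isoclinic module of positive (logarithmic) slope on which the corresponding ``deeper'' ramification subgroup acts trivially is zero, which is immediate from the no‑fixed‑vector half of the defining conditions in \eqref{twodecomp}.

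For $\dt_K(M)\le \sw_K(M)+\rk_\Lambda M$ I would first reduce, via the logarithmic slope decomposition, to the case where $M$ is logarithmic isoclinic of logarithmic slope $r$, so that $\sw_K(M)=r\cdot\rk_\Lambda M$. It then suffices to show that every slope of $M$ is $\le r+1$. For $s>r+1$ one has, by the first inclusion of \cref{propramfil}-(2) applied with $s-1$ in place of $r$ and by the definition of $G^{r+}_{K,\log}$,
\[
G^s_K\subseteq G^{s-1}_{K,\log}\subseteq G^{r+}_{K,\log}\ .
\]
Since $G^{r+}_{K,\log}$ acts trivially on $M$ by logarithmic isoclinicity, so does $G^s_K$; as $s>1$, the condition $(M^{(s)})^{G^s_K}=0$ then forces $M^{(s)}=0$. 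Hence $\dt_K(M)=\sum_s s\cdot\rk_\Lambda M^{(s)}\le (r+1)\cdot\rk_\Lambda M=\sw_K(M)+\rk_\Lambda M$, and summing over the logarithmic slopes gives the general case.

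For $\sw_K(M)\le \dt_K(M)$ I would argue dually: reduce via the slope decomposition to $M$ isoclinic of slope $s$, so $\dt_K(M)=s\cdot\rk_\Lambda M$, and show every logarithmic slope of $M$ is $\le s$. For $t>s$, the second inclusion of \cref{propramfil}-(2) and the definition of $G^{s+}_K$ give $G^t_{K,\log}\subseteq G^t_K\subseteq G^{s+}_K$, and $G^{s+}_K$ acts trivially on $M$; since $t>0$, the condition $(M^{(t)}_{\log})^{G^t_{K,\log}}=0$ forces $M^{(t)}_{\log}=0$. Thus $\sw_K(M)=\sum_t t\cdot\rk_\Lambda M^{(t)}_{\log}\le s\cdot\rk_\Lambda M=\dt_K(M)$, and one sums over the slopes. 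The tame parts $M^{(1)}=M^{(0)}_{\log}=M^{P_K}$ (where $\sw_K=0$ and $\dt_K=\rk_\Lambda$) are consistent with both inequalities; the one thing to keep straight is the direction of inclusions in a decreasing filtration.

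Finally, when $F$ is perfect, \cref{propramfil}-(2) upgrades the inclusions to equalities $G^r_{K,\log}=G^{r+1}_K$ for all $r\ge 0$, hence also $G^{r+}_{K,\log}=G^{(r+1)+}_K$ after passing to the closures of the corresponding unions. Comparing the characterising properties in \eqref{twodecomp} then identifies $M^{(r)}_{\log}$ with $M^{(r+1)}$ as $\Lambda[P_K]$‑modules for every $r\ge 0$, so that $\dt_K(M)=\sum_{s\ge 1}s\cdot\rk_\Lambda M^{(s)}=\sum_{r\ge 0}(r+1)\cdot\rk_\Lambda M^{(r)}_{\log}=\sw_K(M)+\rk_\Lambda M$; comparing the largest indices with nonzero summand in the two decompositions likewise gives $c_K(M)=\lc_K(M)+1$. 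I expect the only genuinely delicate point to be the vanishing criterion invoked in the two isoclinic reductions; once that is in place the rest is bookkeeping with \eqref{twodecomp} and the monotonicity of the ramification filtrations.
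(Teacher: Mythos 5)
The paper cites this lemma to Abbes--Saito \cite{RamImperfect} and does not give a proof, so there is no in-paper argument to compare against. Your deduction from \eqref{twodecomp} and \cref{propramfil}-(2) is correct and complete: the reduction to a single (log-)isoclinic piece via additivity of $\rk_\Lambda$, $\sw_K$, $\dt_K$ over direct sums is legitimate, the inclusion $G^s_K \subseteq G^{s-1}_{K,\log} \subseteq G^{r+}_{K,\log}$ for $s-1>r$ and the inclusion $G^t_{K,\log}\subseteq G^t_K\subseteq G^{s+}_K$ for $t>s$ are both applications of \cref{propramfil}-(2) together with the fact that the filtrations are decreasing, and the vanishing criterion (a piece of positive slope on which the full ramification group at that index acts trivially must vanish) is exactly the no-fixed-vector condition in \eqref{twodecomp}. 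The perfect-residue-field identification $M^{(r)}_{\log}=M^{(r+1)}$ via the equalities $G^r_{K,\log}=G^{r+1}_K$, $G^{r+}_{K,\log}=G^{(r+1)+}_K$ and uniqueness of the decompositions is the standard argument and gives both equalities at once. This is essentially the argument one finds in \cite{RamImperfect}.
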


\begin{eg}[{\cite[Example 1.1.7]{lau}}]\label{Laumon_computation}
Let $\psi : \bF_p \to \Lambda ^{\times}$ be a non trivial character.
Let $M$ be the free rank one $\Lambda$-module with continuous $G_K$-action
corresponding to the Artin-Schreier cover defined by the equation 
\begin{equation*}
t^p-t=\frac{u}{x} \quad u\in K^{\times}, x\in \fm_K
\end{equation*}
If $m:=v_K(x)$ is prime to $p$, then $lc_K(M)=m$ and $c_K(M)=m+1$.
\end{eg}

\begin{lemma}[{\cite[Lemma 1.9]{HT}}]\label{tensordtsw}
Let $M$ and $N$ be free $\Lambda$-modules of finite type with continuous $G_K$-actions.
\begin{enumerate}
\item
If $M$ and $N$ are isoclinic of slope $r$ and $s$ respectively with $r> s$, then $M\otimes_{\Lambda}N$ is isoclinic of slope $r$ and
$$
\dt_K(M\otimes_\Lambda N)=\rk_{\Lambda} N\cdot\rk_{\Lambda} M\cdot r \ .
$$
\item
If $M$ and $N$ are logarithmic isoclinic of logarithmic slope $ r$ and $s$ respectively with  $ r> s$, then $M\otimes_{\Lambda}N$ is logarithmic isoclinic of logarithmic slope $r$ and
$$
\sw_K(M\otimes_\Lambda N)=\rk_{\Lambda} N\cdot\rk_{\Lambda} M\cdot r   \ .
$$
\end{enumerate}
\end{lemma}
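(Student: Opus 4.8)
\emph{Proof plan.} The plan is to reduce both assertions to a clean invariant-theoretic description of isoclinicity and then observe that this description is manifestly stable under tensoring once the two slopes differ. First I would record the following reformulation, read off from the defining properties of the (logarithmic) slope decomposition recalled in \cref{local_fields}, using the identities $G^{1+}_{K}=P_K$ and $G^{0+}_{K,\log}=P_K$ of \cref{propramfil}-(1) to absorb the boundary slopes: a nonzero free $\Lambda$-module $V$ with continuous $P_K$-action is logarithmic isoclinic of logarithmic slope $r>0$ if and only if $G^{r+}_{K,\log}$ acts trivially on $V$ and $V^{G^{r}_{K,\log}}=0$; likewise $V$ is isoclinic of slope $r>1$ if and only if $G^{r+}_{K}$ acts trivially on $V$ and $V^{G^{r}_{K}}=0$.

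Granting this, the computation is short. For (2): since $r>s$ and the logarithmic ramification filtration is decreasing, $G^{r}_{K,\log}\subseteq G^{s+}_{K,\log}$, hence $G^{r}_{K,\log}$, and in particular $G^{r+}_{K,\log}$, acts trivially on $N$, while by hypothesis $G^{r+}_{K,\log}$ acts trivially on $M$ and $M^{G^{r}_{K,\log}}=0$. Because $N$ is $\Lambda$-free, $-\otimes_\Lambda N$ is exact, so $g\in G^{r}_{K,\log}$ acts on $M\otimes_\Lambda N$ by $x\otimes y\mapsto gx\otimes y$, whence $(M\otimes_\Lambda N)^{G^{r}_{K,\log}}=M^{G^{r}_{K,\log}}\otimes_\Lambda N=0$; and $G^{r+}_{K,\log}$ acts trivially on each tensor factor, hence on $M\otimes_\Lambda N$. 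Since $M\otimes_\Lambda N$ is again free, the reformulation shows it is logarithmic isoclinic of logarithmic slope $r$, and then $\sw_K(M\otimes_\Lambda N)=r\cdot\rk_\Lambda(M\otimes_\Lambda N)=\rk_\Lambda M\cdot\rk_\Lambda N\cdot r$ straight from the definition of $\sw_K$. Part (1) is the same argument verbatim with $G^{\bullet}_{K,\log}$ replaced by $G^{\bullet}_{K}$ and $\sw_K$ by $\dt_K$, the only change in the bookkeeping being that the relevant slopes now lie in $[1,\infty)$. The strict inequality $r>s$ is precisely what forces $G^{r}$ to act trivially on the lower-slope module $N$, and it is essential: for $r=s$ the tensor product can acquire strictly smaller slopes.

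The only real content is the reformulation in the first paragraph, so that is where I expect the work to sit; everything downstream is formal. To prove it one writes $V=\bigoplus_t V^{(t)}_{\log}$ and checks that each of the two conditions annihilates half of the summands. Triviality of the $G^{r+}_{K,\log}$-action forces $V^{(t)}_{\log}=0$ for $t>r$: for such $t$ one has $G^{t}_{K,\log}\subseteq G^{r+}_{K,\log}$, so $G^{t}_{K,\log}$ acts trivially on $V^{(t)}_{\log}$, contradicting $(V^{(t)}_{\log})^{G^{t}_{K,\log}}=0$ unless $V^{(t)}_{\log}=0$. Vanishing of $V^{G^{r}_{K,\log}}$ forces $V^{(t)}_{\log}=0$ for $0\le t<r$: then $G^{r}_{K,\log}\subseteq G^{t+}_{K,\log}$ acts trivially on $V^{(t)}_{\log}$, so $V^{(t)}_{\log}\subseteq V^{G^{r}_{K,\log}}=0$, the slope-zero piece $V^{P_K}$ being handled via $P_K=G^{0+}_{K,\log}\supseteq G^{r}_{K,\log}$. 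The main point to get right is this bookkeeping with the decreasing filtrations, together with the index shift between $G^{r+1}_{K}$ and $G^{r}_{K,\log}$ in \cref{propramfil} that governs the non-logarithmic case; beyond that the argument uses no further input.
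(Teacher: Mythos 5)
Your proof is correct. Note that the paper itself gives no proof of this lemma — it is quoted verbatim from \cite[Lemma 1.9]{HT} — and your argument is essentially the standard one used there: the reformulation of (logarithmic) isoclinicity via ``$G^{r+}$ acts trivially and there are no nonzero $G^{r}$-fixed vectors'' is exactly the defining property of the slope decompositions recalled in \cref{local_fields}, and the invariant computation $(M\otimes_\Lambda N)^{G^{r}_{K,\log}}=M^{G^{r}_{K,\log}}\otimes_\Lambda N$ is legitimate because $N$ is $\Lambda$-free of finite rank, so $M\otimes_\Lambda N\cong M^{\oplus \rk_\Lambda N}$ as a $G^{r}_{K,\log}$-module once one knows $G^{r}_{K,\log}\subseteq G^{s+}_{K,\log}$ acts trivially on $N$.
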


\end{recollection}

\begin{recollection}\label{ram_character}
Assume that $\Lambda$ is a field.
Let $M$ be a free $\Lambda$-module of finite type with a continuous $P_K$-action where $\Lambda$ contains a $p$-root of unity.
Assume that $M$ is logarithmic isoclinic of logarithmic slope $r$.
Let $X(r)$ be the set of isomorphism classes of finite characters 
$$
\chi : G_{K,\log}^r /G_{K,\log}^{r+}\to \Lambda^{\times} \ .
$$
As a consequence of \cref{propramfil}-(3) (see also \cite[Lemma 6.7]{Ram_and_clean}), the module $M$ has a unique direct sum decomposition
$$
M := \bigoplus_{\chi\in X(r)} M_{\chi}
$$
into $P_K$-stable sub-$\Lambda$-modules such that as $\Lambda$-module with a continuous $G_{K,\log}^r$-action, $M_{\chi}$ is a direct sum of copies of $\chi : G_{K,\log}^r /G_{K,\log}^{r+}\to \Lambda^{\times}$.

\begin{lem}\label{swan_inequality}
Assume that $\Lambda$ is a field.
Let $M$ be a free $\Lambda$-module of finite type with a continuous $P_K$-action where $\Lambda$ contains a $p$-root of unity.
Let $N$ be a  free $\Lambda$-module of rank 1 with a continuous $P_K$-action. 
Let $r$ be the logarithmic slope of $N$ and let $\chi : G_{K,\log}^r /G_{K,\log}^{r+}\to \Lambda^{\times}$ be the corresponding character.
If $\chi^{-1}$ does not contribute to the character decomposition of $M^{(r)}_{\log}$, we have
$$
\sw_K(M)\leq \sw_K(M\otimes_{\Lambda}N ) \leq \sw_K(M) + r\cdot \rk_{\Lambda} M
$$
\end{lem}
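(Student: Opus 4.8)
The plan is to reduce, by means of the logarithmic slope decomposition, to a computation on each logarithmic‑isoclinic piece of $M$. First, if $r=0$ then $N$ is logarithmic isoclinic of logarithmic slope $0$, hence has trivial $P_K$‑action, so $M\otimes_\Lambda N\cong M$ as $P_K$‑modules and both inequalities hold trivially (the right‑hand correction being $0$). So I may assume $r>0$. Now I would write the logarithmic slope decomposition $M=\bigoplus_{s\geq 0}M^{(s)}_{\log}$ from \eqref{twodecomp}; since $M$ is $\Lambda$‑free, each $M^{(s)}_{\log}$ is a $\Lambda$‑free $P_K$‑stable submodule, and $\sw_K$ is additive along such decompositions by its very definition. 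As $M\otimes_\Lambda N=\bigoplus_{s\geq 0}(M^{(s)}_{\log}\otimes_\Lambda N)$, it suffices to identify $\sw_K(M^{(s)}_{\log}\otimes_\Lambda N)$ for every $s$.

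For $s>r$, I would apply \cref{tensordtsw}(2) to $M^{(s)}_{\log}$ (logarithmic isoclinic of logarithmic slope $s$) and $N$ (logarithmic isoclinic of logarithmic slope $r<s$): the tensor is logarithmic isoclinic of logarithmic slope $s$ and $\sw_K(M^{(s)}_{\log}\otimes_\Lambda N)=s\cdot\rk_\Lambda M^{(s)}_{\log}=\sw_K(M^{(s)}_{\log})$. For $0<s<r$, the same lemma applied with the roles of the two modules exchanged shows that $M^{(s)}_{\log}\otimes_\Lambda N$ is logarithmic isoclinic of logarithmic slope $r$, so $\sw_K(M^{(s)}_{\log}\otimes_\Lambda N)=r\cdot\rk_\Lambda M^{(s)}_{\log}$; the remaining case $s=0$ is immediate, since $M^{(0)}_{\log}\otimes_\Lambda N$ is a direct sum of copies of $N$ as a $P_K$‑module, hence logarithmic isoclinic of logarithmic slope $r$ with the same Swan conductor $r\cdot\rk_\Lambda M^{(0)}_{\log}$.

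The heart of the argument is the piece $s=r$, where \cref{tensordtsw} no longer applies and the hypothesis on $\chi^{-1}$ is used. Here I would invoke the character decomposition from \cref{ram_character} (available because $\Lambda$ contains a $p$‑th root of unity), writing $M^{(r)}_{\log}=\bigoplus_{\eta\in X(r)}(M^{(r)}_{\log})_\eta$ and hence $M^{(r)}_{\log}\otimes_\Lambda N=\bigoplus_{\eta}\bigl((M^{(r)}_{\log})_\eta\otimes_\Lambda N\bigr)$. On each summand $G^{r+}_{K,\log}$ acts trivially (it does so on $M^{(r)}_{\log}$ and on $N$), so all logarithmic slopes are $\leq r$; moreover $G^r_{K,\log}$ acts through the character $\eta\chi$, which is nontrivial because $\eta\neq\chi^{-1}$ by assumption. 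Using the inclusion $G^r_{K,\log}\subseteq G^{s+}_{K,\log}$ for $s<r$ (immediate from the definition of the filtration), any logarithmic slope $<r$ constituent would be $G^r_{K,\log}$‑invariant, hence zero; therefore each nonzero $(M^{(r)}_{\log})_\eta\otimes_\Lambda N$ is logarithmic isoclinic of logarithmic slope exactly $r$, and $\sw_K(M^{(r)}_{\log}\otimes_\Lambda N)=r\cdot\rk_\Lambda M^{(r)}_{\log}=\sw_K(M^{(r)}_{\log})$.

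Assembling the contributions yields
\[
\sw_K(M\otimes_\Lambda N)-\sw_K(M)=\sum_{0\leq s<r}(r-s)\cdot\rk_\Lambda M^{(s)}_{\log},
\]
every term of which is $\geq 0$, giving the left inequality; and since $r-s\leq r$ and $\sum_{s\geq 0}\rk_\Lambda M^{(s)}_{\log}=\rk_\Lambda M$, the sum is at most $r\cdot\rk_\Lambda M$, giving the right inequality. The only genuinely non‑formal point — and the step I expect to be the main obstacle — is this slope‑$r$ analysis: one must verify that nontriviality of $\eta\chi$ on the graded piece $G^r_{K,\log}/G^{r+}_{K,\log}$ pins the logarithmic slope of $(M^{(r)}_{\log})_\eta\otimes_\Lambda N$ down to exactly $r$; were $\chi^{-1}$ to occur in $M^{(r)}_{\log}$, the corresponding summand would tensor to a strictly smaller slope and the lower bound $\sw_K(M)\leq\sw_K(M\otimes_\Lambda N)$ would fail, which is exactly why the hypothesis is needed. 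Everything else is bookkeeping with \cref{tensordtsw} and additivity of $\sw_K$.
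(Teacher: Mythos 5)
Your proposal is correct and follows essentially the same route as the paper's proof: decompose $M$ by logarithmic slopes, handle the pieces with $s\neq r$ via \cref{tensordtsw}, use the character decomposition of \cref{ram_character} together with the hypothesis on $\chi^{-1}$ to show the slope-$r$ piece remains isoclinic of slope exactly $r$, and conclude from the identity $\sw_K(M\otimes_\Lambda N)-\sw_K(M)=\sum_{s<r}(r-s)\cdot\rk_\Lambda M^{(s)}_{\log}$. The only differences are cosmetic (your explicit treatment of the cases $r=0$ and $s=0$, which the paper absorbs into the general computation).
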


\begin{proof}
We have 
$$
M\otimes_{\Lambda}N =\bigoplus_{s\geq 0}M_{\log}^{(s)} \otimes_{\Lambda}N \ .
$$
By \cref{tensordtsw}, if $s<r$, the tensor product $M_{\log}^{(s)} \otimes_{\Lambda}N$ is logarithmic isoclinic of logarithmic slope $r$ and if $s>r$, it is logarithmic isoclinic of logarithmic slope $s$.
For every $\nu : G_{K,\log}^r /G_{K,\log}^{r+}\to 
\Lambda^{\times}$ contributing to the character 
decomposition of $M_{\log}^{(r)}$, observe that $G_{K,\log}^r
$ acts on $(M_{\log}^{(r)})_{\nu}\otimes_{\Lambda} N$ via a direct sum of copies of $\nu\otimes_{\Lambda}\chi : G_{K,\log}^r /G_{K,\log}^{r+}\to \Lambda^{\times}
$ which is non trivial by assumption.
In particular, $G_{K,\log}^{r+}$ acts trivially on $(M_{\log}^{(r)}
)_{\nu}\otimes_{\Lambda} N$ and $G_{K,\log}^{r}$ acts on  
$(M_{\log}^{(r)})_{\nu}\otimes_{\Lambda} N$ without 
non zero fixed points.
Thus, 
$$
(M\otimes_{\Lambda} N)^{(r)}_{\log}= \bigoplus_{\eta} (M_{\log}^{(r)})_{\nu}\otimes_{\Lambda} N
$$
where $\nu$ runs over the set of characters contributing to $M_{\log}^{(r)}$.
Thus
\begin{align*}
 \sw_K(M\otimes_{\Lambda}N )&  = r \cdot \sum_{s<r} \rk_{\Lambda} M_{\log}^{(s)} + \sum_{s\geq r} s\cdot \rk_{\Lambda} M_{\log}^{(s)} \\
 &     =  \sum_{s<r} (r-s) \cdot\rk_{\Lambda} M_{\log}^{(s)} +  \sw_K(M) \ .
\end{align*}
The conclusion thus follows.
\end{proof}

\end{recollection}

\subsection{Conductor divisors}\label{semi_continuity_conductors}\label{semi_continuity_section}
Let $X$ be a normal scheme of finite type over $k$.
Let $Z$ be an integral Weil divisor and let $\eta\in Z$ be its generic point.
Let $K$ be the fraction field of $\hat{\mathcal{O}}_{X,\eta}$ and fix a separable closure $K^{\sep}$ of $K$.
For  $\cF\in \Cons_{tf}(X,\Lambda)$, the pull-back $\cF|_{\Spec K}$ is a $\Lambda$-module of finite type with continuous $G_{K}$-action.
Using the notations from \cref{local_fields_notation}, we put
$$
c_{Z}(\cF):= c_{K}(\cF|_{\Spec K})  \text{ and }
 \lc_{Z}(\cF):= \lc_{K}(\cF|_{\Spec K})  \ .
$$

\begin{definition}
Let $X$ be a normal scheme of finite type over $k$ and let $\cF\in \Cons_{tf}(X,\Lambda)$.
We define the \textit{conductor divisor  of $\cF$} by
$$
C_X(\cF):=\sum_{Z}  c_{Z}(\cF) \cdot  Z
$$
and the \textit{logarithmic conductor divisor  of  $\cF$} by
$$
LC_X(\cF):=\sum_{Z} \lc_{Z}(\cF) \cdot  Z
$$
where the sums run over the set of integral Weil divisors of $X$.
\end{definition}

\begin{rem}
The above divisors are $\mathds{Q}$-Weil divisors of $X$.
We will sometimes abuse the notations and write $C(\cF)$ instead of $C_X(\cF)$ and similarly in the logarithmic case.
\end{rem}

\begin{definition}
In the setting of \cref{semi_continuity_section}, we define the \textit{generic conductor} and the \textit{generic logarithmic conductor of $\cL$ along $D$} respectively by
$$
c_D(\cL):= \max_{Z} c_{Z}(\cL) \text{ and }
 \lc_D(\cL):= \max_{Z}  \lc_{Z}(\cL)   \ .
$$
where $Z$ runs over the set of irreducible components of $D$.
\end{definition}

 The above divisors enjoy the following semi-continuity property :

\begin{theorem}[{\cite[Theorem 1.4,1.5]{Hu_Leal}}]\label{semi_continuity}
Let $f : Y\to X$ be a morphism of smooth schemes of finite type over $k$.
Let $D$ be an effective Cartier divisor on $X$ and put $U:=X- D$.
Assume that $E:=Y\times_X D$ is an effective  Cartier divisor on $Y$.
For every  $\cL\in \Loc_{tf}(U,\Lambda)$, we have
$$
C_Y( (j_!\cL)|_Y) \leq f^* C_X( j_!\cL) \ .    
$$
If furthermore $D$ has normal crossings,  we have
$$
 LC_Y((j_!\cL)|_Y) \leq f^* LC_X(j_!\cL)     \ .
$$
\end{theorem}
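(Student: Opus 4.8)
The plan is to reduce the inequality of $\bQ$-divisors to a local statement about Abbes--Saito ramification filtrations at the generic point of a prime divisor of $Y$, and then to settle it by functoriality of those filtrations; the logarithmic case will be the hard part, and the only one where the normal crossings hypothesis enters.

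First I would compare coefficients along each integral Weil divisor $Z'\subseteq Y$, with generic point $\eta'$. Since $\cF:=j_!\cL$ is lisse on $Y-E$, the divisors $C_Y(\cF|_Y)$ and $LC_Y(\cF|_Y)$ are supported on the components of $E$, whereas $C_X(\cF)$ and $LC_X(\cF)$ are supported on $D$, so $f^*C_X(\cF)$ and $f^*LC_X(\cF)$ are effective and one may assume $Z'$ is a component of $E$. Put $x:=f(\eta')\in D$ and pass to completions: $A:=\wh{\cO}_{X,x}$ is a complete regular local ring, $B:=\wh{\cO}_{Y,\eta'}$ a complete discrete valuation ring, $A\to B$ the induced local homomorphism, and $L:=\mathrm{Frac}(B)$. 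Let $D_1,\dots,D_m$ be the components of $D$ through $x$, with local equations $g_i$, and put $e_i:=v_{\eta'}(f^{\#}g_i)$; since $E$ is an effective Cartier divisor each $e_i$ is finite and $\sum_i e_i\geq 1$, and unwinding the definition of pull-back of Cartier divisors on the smooth scheme $X$ shows that the coefficients of $Z'$ in $f^*C_X(\cF)$ and $f^*LC_X(\cF)$ equal $\sum_i e_i c_{D_i}(\cF)$ and $\sum_i e_i\lc_{D_i}(\cF)$. Thus the statement is reduced to the local inequalities
\begin{equation*}
c_L(\cF|_{\Spec L})\le\sum_{i=1}^m e_i\,c_{D_i}(\cF)\qquad\text{and}\qquad\lc_L(\cF|_{\Spec L})\le\sum_{i=1}^m e_i\,\lc_{D_i}(\cF),
\end{equation*}
where $c_{D_i}(\cF)=c_{K_i}(\cF|_{\Spec K_i})$, $K_i$ being the fraction field of $\wh{\cO}_{X,\eta_i}$ and $\eta_i$ the generic point of $D_i$.

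For the non-logarithmic inequality I would avoid any normal crossings hypothesis: factoring $f$ through its graph $Y\hookrightarrow Y\times_k X$ and the projection to $X$, compatibility of the Abbes--Saito non-logarithmic filtration with the smooth base change $Y\times_k X\to X$ reduces matters to a closed immersion of smooth $k$-schemes, hence after completion to finite extensions of complete discrete valuation rings $\cO_{K_i}\to B$ of ramification index $e_i$, for which the rigid-analytic construction of the filtration gives $c_L(M)\le e_i\,c_{K_i}(M)$ for every $G_{K_i}$-module $M$; bookkeeping of the ramification indices then yields the sum. The logarithmic inequality is the real point and cannot be extracted from this, since \cref{inequalityLogNonLog} turns the non-logarithmic chain into $\lc_L(\cF|_{\Spec L})\le\sum_i e_i\lc_{D_i}(\cF)+\sum_i e_i$, worse by $\sum_i e_i$. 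Here normal crossings of $D$ makes $A$ log regular for the log structure of $D=\{g_1\cdots g_m=0\}$, and $A\to B$ becomes a morphism to the standard log trait $(\Spec B,\fm_B)$ with associated monoid map $\bN^m\to\bN$ sending the $i$-th generator to $e_i$. My plan is to make this morphism log smooth after a suitable log blow-up of $A$ together with a tamely ramified enlargement of $B$, where the logarithmic Abbes--Saito filtration is functorial and the inequality with normalized logarithmic slopes can be read off, and then to descend using that a further ramified base change can only raise logarithmic slopes. This is exactly the semicontinuity mechanism of \cite{Hu_Leal}, built on the imperfect-residue-field ramification theory of \cite{RamImperfect,as ii,logcc,wr}.

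The crux, and the main obstacle, is this last point: controlling the logarithmic Abbes--Saito filtration under a \emph{non}-log-smooth base change of log traits with imperfect residue fields, that is, proving that its only effect is to raise rather than lower the logarithmic slopes relative to the naive bound $\sum_i e_i\lc_{D_i}(\cF)$. That this is genuinely only an inequality, and can be far from equality even along transverse curves, is already visible for $\cF=\cL_{\psi}(1/x)\otimes\cL_{\psi}(1/y)$ on $\bA^2_k-\{xy=0\}$, whose restriction to the line $\{x+y=0\}$ is trivial although $\lc_{D_1}(\cF)=\lc_{D_2}(\cF)=1$. Having $D$ normal crossings is precisely what provides a good integral model against which this comparison can be run, which is why that hypothesis is needed for the logarithmic statement but not for the non-logarithmic one.
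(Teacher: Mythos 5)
First, a remark on the comparison you were asked to be measured against: the paper does not prove \cref{semi_continuity} at all — it is imported verbatim from [Hu, Theorems 1.4 and 1.5] (the reference \cite{Hu_Leal}), so your attempt has to stand on its own. Your opening reduction is fine: comparing coefficients at the generic point $\eta'$ of a component of $E$, identifying the coefficient of $f^*C_X(j_!\cL)$ there as $\sum_i e_i\,c_{D_i}(\cL)$, and reducing to the two local inequalities is exactly the right normalization, and your example showing strict inequality is correct. The genuine gap is in the step you present as routine for the non-logarithmic case. After factoring through the graph and the projection, you claim the problem becomes one about ``finite extensions of complete discrete valuation rings $\cO_{K_i}\to B$ of ramification index $e_i$.'' But $x=f(\eta')$ is in general a point of codimension $\geq 2$ in $X$ (already for a curve $Y$ meeting $D$ at a closed point), not the generic point $\eta_i$ of $D_i$; the completion $\cO_{K_i}$ is taken at $\eta_i$ and $B$ at $\eta'$, and there is no local homomorphism $\cO_{K_i}\to B$ whatsoever. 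The whole content of the theorem is precisely to bound the ramification of $\cL$ along a divisor of $Y$ lying over such a deeper point of $D$ in terms of the conductors at the generic points of $D$ — the Deligne--Laumon/Hu--Saito semicontinuity phenomenon — and this cannot be extracted from functoriality of the Abbes--Saito filtration for extensions of complete discrete valuation rings, because in the relevant situation no such extension exists. Passing to the graph does not change this: the image of $\eta'$ in $Y\times_k X$ is again of codimension $\geq 2$, so the closed-immersion case you reduce to is exactly as hard as the original statement.

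The logarithmic case is, as you acknowledge, a programme rather than a proof: ``make the morphism log smooth after a suitable log blow-up and a tame enlargement, then descend'' is precisely the part that needs an argument, and you explicitly flag the crux — control of the logarithmic filtration under a non-log-smooth base change of traits with imperfect residue fields — as an unresolved obstacle, finally deferring to ``the semicontinuity mechanism of \cite{Hu_Leal},'' i.e.\ to the theorem being proved. The auxiliary claim that ``a further ramified base change can only raise logarithmic slopes'' is also not correct as stated: under a tame extension of ramification index $n$ the logarithmic slopes are multiplied by $n$, and in the non-log-smooth case the direction of the comparison is exactly what is at stake; note also that the cheap route via \cref{inequalityLogNonLog} loses the additive term you mention, so nothing in your non-log discussion can be recycled. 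In short: the reduction to a divisorwise local statement is correct, but both local inequalities — which constitute the substance of [Hu, Theorems 1.4, 1.5], proved there by a genuine geometric ramification-theoretic argument — remain unproved in your proposal.
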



The generic logarithmic conductor divisor satisfies the following compatibility with finite push-forward :

\begin{thm}[{\cite[Theorem 1.3]{HuTeyssierCohBoundedness}}]\label{remark_bound_conductor_direct_image}
Let $f:Y\to X$ be a finite surjective morphism of normal schemes of finite type over $k$. 
Let $D$ be an irreducible effective Cartier divisor on $X$ and put $U:=X-D$.
Put $E:=D\times_X Y$ and  $V:=Y-E$.
Assume that the restriction $f_U:V\to U$ is étale.
Then, for every $\cL\in \Loc_{tf}(V,\Lambda)$ we have 
$$
\lc_{D}(f_{U*}\cL)\leq \lc_{D}(f_{U*}\Lambda) + d\cdot \lc_{E}(\cL)
$$
where $d$ is the generic degree of $f : X \to Y$.
\end{thm}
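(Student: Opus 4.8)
\emph{Approach.} The statement is local along $D$, so the plan is to reduce it to a purely local assertion about induced Galois representations and then settle that by ramification theory. Let $\eta$ be the generic point of $D$, set $R:=\wh{\cO}_{X,\eta}$ and $K:=\mathrm{Frac}(R)$, a complete discrete valuation field with possibly imperfect residue field. Since $f$ is finite and $Y$ is normal, $Y\times_X\Spec R$ is a finite disjoint union $\coprod_i\Spec R_i$ of complete discrete valuation rings, whose fraction fields $L_i:=\mathrm{Frac}(R_i)$ are finite \emph{separable} over $K$ (because $f_U$ is étale) and are the completed local fields of $Y$ at the generic points of $E$ lying above $\eta$; write $d_i:=[L_i:K]$, so $d=\sum_i d_i$. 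The standard description of the pushforward along a finite étale morphism gives isomorphisms of $G_K$-representations
\begin{equation*}
(f_{U*}\cL)|_{\Spec K}\cong\bigoplus_i\mathrm{Ind}^{G_K}_{G_{L_i}}M_i,\qquad (f_{U*}\Lambda)|_{\Spec K}\cong\bigoplus_i\mathrm{Ind}^{G_K}_{G_{L_i}}\Lambda,
\end{equation*}
with $M_i:=\cL|_{\Spec L_i}$. Hence $\lc_D(f_{U*}\cL)=\max_i\lc_K(\mathrm{Ind}^{G_K}_{G_{L_i}}M_i)$ and $\lc_D(f_{U*}\Lambda)=\max_i\lc_K(\mathrm{Ind}^{G_K}_{G_{L_i}}\Lambda)$, while $\lc_{L_i}(M_i)\le\lc_E(\cL)$ and $d_i\le d$. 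So it suffices to prove that for every finite separable extension $L/K$ of complete discrete valuation fields of degree $e$ and every free $\Lambda[G_L]$-module $M$ of finite rank,
\begin{equation}\label{plan_local_bound}
\lc_K\!\bigl(\mathrm{Ind}^{G_K}_{G_L}M\bigr)\;\le\;\lc_K\!\bigl(\mathrm{Ind}^{G_K}_{G_L}\Lambda\bigr)+e\cdot\lc_L(M).
\end{equation}

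\emph{The local step.} To prove \eqref{plan_local_bound} I would first invoke the logarithmic slope decomposition of $M$ over $L$ from \cref{local_fields}: it is $G_L$-stable, $\mathrm{Ind}$ is additive, and $\lc_L(M)$ is the largest logarithmic slope occurring, so one reduces to $M$ isoclinic of logarithmic slope $\rho:=\lc_L(M)$. Then I would restrict to the wild inertia, where all the wild ramification lives: since $P_K$ is normal in $G_K$ and $P_K\cap G_L=P_L$ (both being the pro-$p$ Sylow of the respective inertia), the Mackey decomposition writes $\mathrm{Res}_{P_K}\mathrm{Ind}^{G_K}_{G_L}M$ as a direct sum of modules $\mathrm{Ind}^{P_K}_{P_{L'}}M'$, where $L'$ is a $K$-conjugate of $L$ inside $K^{\sep}$ and $M'$ is isoclinic over $L'$ of logarithmic slope $\rho$. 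One is thus reduced to bounding the largest logarithmic slope of one such induction. The natural tool is a logarithmic Herbrand function $\psi_{L/K}$ for the extension: the logarithmic slopes of $\mathrm{Ind}^{P_K}_{P_L}(-)$ applied to something isoclinic of slope $\rho$ are at most $\psi_{L/K}(\rho)$, and one needs the crude estimate $\psi_{L/K}(\rho)\le e\,\rho+\lambda_{L/K}$ together with the identification $\lambda_{L/K}=\lc_K(\mathrm{Ind}^{G_K}_{G_L}\Lambda)$ of the relevant constant with the logarithmic conductor of the permutation module $\Lambda[G_K/G_L]$. (When $\rho=0$, i.e. $M$ tame over $L$, this degenerates to $\lc_K(\mathrm{Ind}^{G_K}_{G_L}M)\le\lc_K(\mathrm{Ind}^{G_K}_{G_L}\Lambda)$: over a tame cover the ramification of the base is already all there is.) As conjugate extensions share the same Herbrand function, taking the maximum over the Mackey summands yields \eqref{plan_local_bound}.

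\emph{Main obstacle.} The genuinely nontrivial input is the last one: controlling the Abbes--Saito logarithmic ramification filtration of $K$ in terms of that of a finite separable extension $L$ when the residue field is imperfect --- that is, the existence of $\psi_{L/K}$, the estimate $\psi_{L/K}(\rho)\le e\,\rho+\lambda_{L/K}$, and the comparison of $\lambda_{L/K}$ with the logarithmic conductor of $\Lambda[G_K/G_L]$. In the perfect residue field case this is classical: by \cref{propramfil}(2) the logarithmic filtration is a shift of the classical one, $\psi_{L/K}$ is the usual inverse Herbrand function whose slopes are bounded by $e$, and the top logarithmic break of $\Lambda[G_K/G_L]$ is the classical one of $L/K$. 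In general one relies on the functoriality of the Abbes--Saito filtration under $\Spec\cO_L\to\Spec\cO_K$ and on estimates for the logarithmic different of $L/K$. These local ramification-theoretic facts are exactly what is packaged in \cite{HuTeyssierCohBoundedness}; granting them, the reduction carried out in the first paragraph and the bookkeeping in the second are routine.
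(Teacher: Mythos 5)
The theorem carries the citation tag \cite[Theorem 1.3]{HuTeyssierCohBoundedness} and is merely \emph{recalled} in the present paper; there is no proof here to compare against. What can be assessed is the internal soundness of your sketch.

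Your reduction to a local statement at the generic point $\eta$ of $D$ is correct, and the Mackey decomposition of $\mathrm{Res}_{P_K}\mathrm{Ind}^{G_K}_{G_L}M$ is clean (including the identification $P_K\cap G_L=P_L$, using normality of $P_K$). The tame special case and the additivity over log-slope pieces are also fine. The gap is concentrated exactly where you flag it, but it is larger than your framing suggests: there is no established "logarithmic Herbrand function'' for the Abbes--Saito log filtration over a complete discrete valuation field with \emph{imperfect} residue field. Unlike the classical case (\cref{propramfil}(2), where the log filtration is a shift of the classical one), in the imperfect case the behaviour of $G^\bullet_{K,\log}$ under a finite separable $L/K$ is not governed by a piecewise-linear transition function in any known generality, and the identification of the intercept with $\lc_K(\mathrm{Ind}^{G_K}_{G_L}\Lambda)$ is not a black box one can appeal to. Calling the remaining local step "routine bookkeeping'' is therefore an understatement: it is precisely the content one cannot currently prove by Galois theory at $K$ alone.

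The likely route in the referenced paper, consistent with the whole Hu--Teyssier--Saito machinery used throughout the present article, is to \emph{avoid} arguing directly at the imperfect-residue local field: restrict to smooth curves $C\to X$ meeting $D$ transversally at a general closed point (where residue fields are perfect and the classical Herbrand/conductor--discriminant formalism is available), prove the analogous inequality for the finite cover $C\times_X Y\to C$, and then transfer back to the generic point of $D$ via the semi-continuity results (of the type recalled in \cref{semi_continuity} from \cite{Hu_Leal}) and the detection-by-curves principle (\cref{equality_DT}). This sidesteps the nonexistence of a logarithmic Herbrand function for imperfect residue fields entirely. So: the reduction you carry out is correct and the Mackey step is sound, but the central local estimate you rely on is not available as stated, and the actual argument almost certainly replaces it by a curve-restriction argument.
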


%


\section{Singular support and characteristic cycle of étale sheaves}

\subsection{The singular support}\label{Sing_support_setting}
Let $X$ be a smooth  scheme of finite type over $k$. 
We denote by $\bT^*X$ the cotangent bundle of $X$.
Let $C\subset $ be a closed conical subset. 
For a point $x\in X$, we put $\bT^*_{x}X=\bT^*X\times_X x$   and $C_{ x}=C\times_X x$.  

\begin{recollection}
Let $h:U\to X$ be a morphism of smooth schemes of finite type over $k$. 
For $u\in U$, we say that $h:U \to X$ is $C$-{\it transversal at $u$} if 
$$
\ker dh_{u} \bigcap C_{h(u)}\subseteq \{0\}\subseteq
\bT^*_{h(u)}X 
$$ 
where $dh_{u}:\bT^*_{h(u)}X\to  \bT^*_{u}U$ is the cotangent map of $h$ at $u$. 
We say that $h:U \to X$ is \textit{$C$-transversal} if it is $C$-transversal at every point of $U$. 
For a $C$-transversal morphism $h:U\to X$, we let $h^\circ C$ be the scheme theoretic image of $C\times_XU$ in $\bT^*U$ by  $dh:\bT^*X\times_XU \to \bT^*U$.\\ \indent
Let $f:X\to Y$ be a morphism of smooth schemes of finite type over $k$.  
For $x\in X$, we say that $f:X \to  Y$ is $C$-{\it transversal at} $x$ if 
$$
df_{x}^{-1}(C_{x})\subseteq \{0\}\subseteq \bT^*_{f(x)}Y
$$
We say that $f:X\to Y$ is $C$-{\it transversal } if it is $C$-transversal at every point of $X$.\\ \indent
Let $(h,f):Y\leftarrow U\to X$ be a pair of morphisms of between smooth schemes of finite type over $k$. 
We say that $(h,f)$ is $C$-{\it transversal} if $h:U \to X$ is $C$-transversal and if $f:U \to  Y$ is $h^\circ C$-transversal. 
\end{recollection}


\begin{definition}
In the setting of \cref{Sing_support_setting}, we say that $\cK\in D^b_c(X,\Lambda)$ is {\it micro-supported on} $C$ if for every $C$-transversal pair $(h,f):Y\leftarrow U \to   X$, the map $f:U \to  Y$ is universally locally acyclic with respect to $h^*\cK$.
\end{definition}

\begin{theorem}[{\cite[Theorem 1.3]{bei}}]\label{beilinson_theorem}
For every  $\cK\in D^b_c(X,\Lambda)$, there is a smallest closed conical subset $SS(\cK)\subset \bT^*X$ on which $\cK$ is micro-supported. 
Furthermore, if $X$ has pure dimension $n$, then $SS(\cK)$ has pure dimension $n$.
\end{theorem}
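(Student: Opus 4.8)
The plan is to follow Beilinson's strategy \cite{bei}, organized around the Radon transform. The first point is that it suffices to produce a \emph{smallest} element of the poset $\mathcal{C}(\cK)$ of closed conical subsets of $\bT^*X$ on which $\cK$ is micro-supported. This poset is nonempty: choosing a stratification $\Sigma$ of $X$ such that each $\cH^i\cK$ is locally constant on every stratum, the union $C_\Sigma:=\bigcup_\alpha\overline{\bT^*_{X_\alpha}X}$ of closures of conormal bundles lies in $\mathcal{C}(\cK)$, by the local acyclicity theorem of Deligne and Laumon applied after restricting test morphisms to strata. Since $\bT^*X$ is Noetherian and conical, $\mathcal{C}(\cK)$ has a smallest element as soon as it is stable under pairwise intersection, so the crux is the implication: if $\cK$ is micro-supported on $C_1$ and on $C_2$, then it is micro-supported on $C_1\cap C_2$.

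For this I would first reduce the verification of micro-support to a more rigid class of test pairs. Micro-support on $C$ is an \'etale-local condition on $X$, so one may assume $X$ affine, fix a locally closed embedding $X\hookrightarrow\bP^N$, and --- here is the key reduction --- show that $\cK$ is micro-supported on $C$ if and only if, for the universal hyperplane incidence variety $H\subset X\times\bP^{N,\vee}$ (a bundle over $\bP^{N,\vee}$), the pullback of $\cK$ is universally locally acyclic over $\bP^{N,\vee}$ away from the image $L(C)$ of $C$ under the conormal (Legendre) correspondence $L\colon \bT^*\bP^N\rightsquigarrow\bT^*\bP^{N,\vee}$. This is the content of Beilinson's Radon transform lemma, and it is exactly what is needed: once micro-support is rephrased as ``universal local acyclicity of one fixed family away from a prescribed closed conical set'', stability under intersection is formal, since the non-universally-locally-acyclic locus of a fixed morphism is a well-defined closed conical subset $Z$, and $\cK$ is micro-supported on $C$ iff $Z\subseteq L(C)$; combining $Z\subseteq L(C_1)$ and $Z\subseteq L(C_2)$ gives $Z\subseteq L(C_1)\cap L(C_2)=L(C_1\cap C_2)$.

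For the dimension statement, let $n=\dim X$. The lower bound $\dim_x SS(\cK)\ge n$ at each $x\in\Supp\cK$ I would obtain by induction on $n$: one checks that $SS(\cK)$ contains the conormal directions over $\Supp\cK$ (e.g. by testing with projections whose differentials isolate a transverse covector), and if some component of $SS(\cK)$ through $x$ had dimension $<n$, then slicing by a sufficiently general hyperplane through $x$ and invoking that a complex whose singular support meets the zero section only over a proper closed subset is locally constant there (a local acyclicity/Artin vanishing argument) would contradict $x\in\Supp\cK$. The upper bound $\dim SS(\cK)\le n$ is the subtle half: via the Radon identification above, $SS(R\cK)$ on $\bP^{N,\vee}$ is the image of $SS(\cK)$ under $L$; since $L$ is birational between the two cotangent bundles and iterating Radon with inverse Radon recovers $\cK$, one forces $SS(\cK)$ and $SS(R\cK)$ to have dimension exactly $N$ in $\bT^*\bP^N$ and $\bT^*\bP^{N,\vee}$ respectively, whence pure dimension $n$ after restricting from $\bP^N$ back to $X$.

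The main obstacle is the Radon transform lemma relating micro-support to universal local acyclicity of the universal hyperplane family, together with the bootstrapping needed in the upper bound: controlling $SS(R\cK)$ is itself an instance of the statement being proved, so the existence and the dimension assertions must be run through a single simultaneous induction, which is the technical heart of \cite{bei}.
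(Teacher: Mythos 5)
This statement is not proved in the paper at all: it is recalled as a citation of Beilinson's theorem (\cite[Theorem 1.3]{bei}), so there is no internal argument to compare against, and what you have written is a sketch of Beilinson's own proof. In outline you do follow his route: rephrasing micro-supportedness through the universal hyperplane family and the Legendre correspondence between $\bT^*\bP^N$ and $\bT^*\bP^{N,\vee}$, deducing stability of the micro-support condition under intersection from that reformulation, and running existence and the dimension statement through one simultaneous induction. You also correctly identify that the Radon lemma (the non-ULA locus of the universal family is the Legendre image of a closed conical set) is the technical heart, which your proposal assumes rather than proves.

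There is, however, one concretely false step: the claim that $\cK$ is micro-supported on $C_\Sigma=\bigcup_\alpha\overline{\bT^*_{X_\alpha}X}$ for a stratification adapted to $\cK$, ``by Deligne--Laumon''. This is a tame (or complex-analytic) fact and fails in characteristic $p$ in the presence of wild ramification, which is exactly the regime this paper works in. The paper itself contains a counterexample: by \cref{AS_on_two_lines_complement} and \cref{CC_computation}, the Artin--Schreier sheaf $\cN$ there satisfies $SS(j_!\cN)=\bT^*_{\bA^{2,\circ}_k}\bA^{2,\circ}_k\cup\langle dy\rangle\cdot Z$, so over the component $D=\{x=0\}$ of $Z$ the singular support is spanned by $dy$ and not by the conormal direction $dx$; since $SS$ is by definition the smallest micro-support, $j_!\cN$ is \emph{not} micro-supported on the union of conormals of any stratification refining $\{U,Z\}$ (the failure is detected by fibrations such as $(x,y)\mapsto y$ near $D$). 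So your nonemptiness argument collapses; nonemptiness of the poset is instead trivial (take $C=\bT^*X$, for which the only $C$-transversal test pairs are degenerate), or is subsumed in the Radon-transform bound as in Beilinson's actual argument. Apart from this, your description of the dimension bounds is heuristic but consistent with the strategy of \cite{bei}.
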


\begin{definition}
The closed conical subset $SS(\cK)$ is the \textit{singular support of $\cK$}. 
\end{definition}

The conductor can be detected by curves, due to the following :

\begin{prop}[{\cite[Corollary 3.9]{wr}}]\label{equality_DT}
Let $X$ be a smooth scheme over $k$. 
Let $D$ be an effective Cartier divisor on $X$ and put $j : U:=X- D\hookrightarrow X$. 
Let $\cL\in \LC_{tf}(U,\Lambda)$ and let $i:S\to X$ be an immersion over $k$ where $S$ is a smooth curve.
Assume that 
\begin{enumerate}\itemsep=0.2cm
\item  $S$ meets $D$ transversely at a single smooth point $x\in D$.
\item The map $i:S\to X$ is $SS(j_!\cL)$-transversal.
\item The ramification of $j_!\cL$  is non-degenerate at $x$.
\end{enumerate}  
Then, $C_{S}( (j_!\cL)|_{S}) = i^* C_X(j_!\cL)$.
\end{prop}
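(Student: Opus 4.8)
The plan is to reduce the claimed identity of $\bQ$-Weil divisors to an equality of local conductors at $x$, obtain one inequality for free from semicontinuity, and prove the reverse one using the characteristic form of the ramification.

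First I would unwind both sides. Since $\cL$ is locally constant on $U$, the sheaf $j_!\cL$ is unramified along every integral Weil divisor of $X$ other than the components of $D$, so $C_X(j_!\cL)$ is supported on $D$. As $x$ is a smooth point of $D$, it lies on a unique component $Z_0$ of $D$, and since $S$ meets $D$ transversely at $x$ and at no other point, $i^*Z_0=[x]$ while $i^*Z=0$ for every other component; hence $i^*C_X(j_!\cL)=c_{Z_0}(j_!\cL)\cdot[x]$. Likewise $(j_!\cL)|_S$ is locally constant on $S\setminus\{x\}$, so $C_S((j_!\cL)|_S)=c_x((j_!\cL)|_S)\cdot[x]$. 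The statement is therefore equivalent to the equality of local conductors $c_x((j_!\cL)|_S)=c_{Z_0}(j_!\cL)$. The inequality $c_x((j_!\cL)|_S)\leq c_{Z_0}(j_!\cL)$ is immediate from the first assertion of \cref{semi_continuity} applied to the immersion $i:S\to X$ and the divisor $D$ on $X$: the pullback $E:=S\times_XD=\{x\}$ is an effective Cartier divisor on the smooth curve $S$, and comparing coefficients at $x$ in $C_S((j_!\cL)|_S)\leq i^*C_X(j_!\cL)$ yields the bound. This step uses only that $S$ meets $D$ transversely, not hypotheses (2) and (3).

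For the reverse inequality I would work locally at $x$, set $r_0:=c_{Z_0}(j_!\cL)$, and assume $r_0>0$. The top slope part of the ramification of $\cL$ at the generic point of $Z_0$ is isoclinic of slope $r_0$; after enlarging $\Lambda$ and applying the character decomposition of the ramification filtration (compare \cref{ram_character}), it becomes a direct sum of rank one pieces, to each of which is attached a character $\chi$ together with its characteristic form. By \cref{beilinson_theorem} and Saito's theory of the singular support, the characteristic form of $\chi$ spans a line $L_\chi\subset\bT^*X$ over $Z_0$ which is a component of $SS(j_!\cL)$ in a neighbourhood of $x$; hypothesis (3) is exactly what guarantees that $L_\chi$ extends to a nonzero line in the fibre $SS(j_!\cL)_x$. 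Hypothesis (2) says $\ker di_x\cap SS(j_!\cL)_x\subseteq\{0\}$, i.e.\ that the cotangent map $di_x:\bT^*_xX\to\bT^*_xS$ is injective on $SS(j_!\cL)_x$; in particular it carries a nonzero $\omega_\chi\in L_{\chi,x}$ to a nonzero covector on $S$ at $x$. Since the characteristic form is compatible with pullback along a curve transverse to the singular support, $di_x(\omega_\chi)$ is the characteristic form of the corresponding part of the ramification of $(j_!\cL)|_S$ at $x$; being nonzero, that part has slope $r_0$. Hence $(j_!\cL)|_S$ has a slope equal to $r_0$ at $x$, so $c_x((j_!\cL)|_S)\geq r_0$, and combined with the first inequality this gives the desired equality.

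The hard part is the input used in the last paragraph: in the imperfect residue field setting one has to make precise that non-degenerate ramification produces a genuine characteristic form lying inside the singular support, and that this form pulls back correctly along an $SS$-transversal curve so as to still detect the top slope. This compatibility between Abbes-Saito's refined Swan conductor and restriction to curves is exactly the content of \cite[Corollary 3.9]{wr}, which I would invoke rather than reprove; an alternative resting on the same circle of ideas would be to reduce first to the isoclinic case and then compute $\dimtot_x((j_!\cL)|_S)$ as an intersection number against Saito's characteristic cycle via a Milnor-type formula.
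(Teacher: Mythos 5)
The paper does not prove this proposition---it is imported verbatim from Saito's \cite[Corollary 3.9]{wr}---so there is no in-paper argument to compare against; what follows is an assessment of your sketch on its own terms. Your reduction to a pointwise identity of conductors at $x$ is correct: transversality of $S$ with $D$ localises both divisors to $x$, and $C_X$, $C_S$ record only the largest slope at each generic point, so the claim becomes $c_x((j_!\cL)|_S)=c_{Z_0}(j_!\cL)$. The inequality $\le$ does follow immediately from \cref{semi_continuity} applied to $i:S\to X$ (here $E=\{x\}$ is Cartier on the curve $S$), and you are right that this direction uses only hypothesis (1). For the reverse inequality, your use of hypothesis (2) is also read correctly: $SS$-transversality of $i$ means $di_x$ is injective on $SS(j_!\cL)_x$, so a nonzero characteristic form over $x$ pulls back to a nonzero covector on $S$. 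But the chain ``nonzero characteristic form $\Rightarrow$ top slope preserved under restriction'' is precisely the content of \cite[Corollary 3.9]{wr}, so your argument ends by invoking the result it set out to establish. That is unavoidable without reproducing Saito's analysis; what your sketch adds is an honest isolation of where the genuine content lives, which the paper's bare citation does not supply. One small pointer correction: \cref{ram_character} concerns the logarithmic filtration $G^r_{K,\log}$, whereas the conductor divisors in this proposition are built from the non-logarithmic slopes $c_Z$; the character decomposition relevant to your argument is Saito's non-logarithmic one from \cite{wr}, not the one recalled in the paper. This does not affect the structure of your argument, only the cross-reference.
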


\begin{prop}[{\cite[Proposition 1.1.8]{saito22}}]\label{basechange}
Let
\begin{equation*}
\xymatrix{\relax
V\ar[r]^{h'}\ar[d]_{j'}\ar@{}|-{\Box}[rd]&U\ar[d]^j\\
W\ar[r]_h&X}
\end{equation*}
be a cartesian diagram of smooth schemes over $k$ such that the vertical arrows are open immersions and such that $h$ is separated.
Let $\cF$ be an object in $D^b_c(X,\Lambda)$ and assume that  $h : W\to X$ is $SS(Rj_*\sF)$-transversal.
Then, the base change morphism
$$
h^*Rj_*\cF\to Rj'_*h'^*\cF
$$
is an isomorphism in $D^b_c(X,\Lambda)$.
\end{prop}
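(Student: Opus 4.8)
The plan is to reformulate the statement via localization triangles, reduce to the case of a closed immersion, and there exploit the transversality hypothesis to identify $h^!$ with $h^*$. Concretely, the assertion is étale local on $W$, and since the base change morphism restricts to an isomorphism over $V$, its cone is supported on $Z:=h^{-1}(X-U)=W-V$. For any object $\cG$ of $D^b_c(U,\Lambda)$, the localization triangle $i_{D*}i_D^!(Rj_*\cG)\to Rj_*\cG\to Rj_*j^*(Rj_*\cG)\xrightarrow{+1}$ — where $i_D:D:=X-U\hookrightarrow X$ and $j^*Rj_*=\id$ — shows $i_D^!(Rj_*\cG)=0$; in particular $i_D^!(Rj_*\cF)=0$. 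On $W$, with $i_Z:Z\hookrightarrow W$, the localization triangle $i_{Z*}i_Z^!(h^*Rj_*\cF)\to h^*Rj_*\cF\to Rj'_*(j'^*h^*Rj_*\cF)\xrightarrow{+1}$ identifies — through the canonical isomorphism $j'^*h^*Rj_*\cF\cong h'^*\cF$ — its middle arrow with the base change morphism of the proposition. Hence that morphism is an isomorphism if and only if $i_Z^!(h^*Rj_*\cF)=0$, and everything comes down to deducing this vanishing from $i_D^!(Rj_*\cF)=0$ and the transversality of $h$.

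I would first reduce to $h$ a closed immersion. Working étale locally on $W$, factor $h$ through its graph, $W\xrightarrow{\iota}W\times_kX\xrightarrow{\pr}X$, where $\iota=(\id_W,h)$ is a regular closed immersion of smooth $k$-schemes (here the separatedness of $h$ is used) and $\pr$ is smooth. Smooth base change gives an isomorphism $\pr^*Rj_*\cF\cong Rj''_*(\pr_U^*\cF)$ requiring no transversality, with $j'':(W\times_kX)-(W\times_kD)\hookrightarrow W\times_kX$ and $\pr_U$ the induced projection to $U$; thus $h^*Rj_*\cF$ is the pullback of $Rj''_*(\pr_U^*\cF)$ along $\iota$, and $\iota^{-1}(W\times_kD)=Z$. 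Moreover $\iota$ is $SS(\pr^*Rj_*\cF)$-transversal: one has $SS(\pr^*Rj_*\cF)\subseteq\pr^{\circ}SS(Rj_*\cF)$ by \cite{bei}, and a direct computation (using that $\pr$ is smooth, so $d\pr$ is injective) shows that $C$-transversality of $h=\pr\circ\iota$ is equivalent to $\pr^{\circ}C$-transversality of $\iota$. After relabelling, I may therefore assume $h:W\hookrightarrow X$ is a regular closed immersion of smooth $k$-schemes, of some codimension $c$, which is $SS(Rj_*\cF)$-transversal.

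The key point is now that $SS(Rj_*\cF)$-transversality of the closed immersion $h$ forces $h^!$ to agree with $h^*$ up to a twist and a shift: the transversality formalism underlying Beilinson's singular support \cite{bei} (see also the purity statements of \cite{saito22}) supplies a canonical isomorphism $h^!(Rj_*\cF)\cong h^*(Rj_*\cF)(-c)[-2c]$, i.e. $h^*(Rj_*\cF)\cong h^!(Rj_*\cF)(c)[2c]$. Since $h\circ i_Z:Z\hookrightarrow X$ factors through the closed immersions $a:Z\hookrightarrow D$ and $i_D:D\hookrightarrow X$, we have $i_Z^!\,h^!=(h\circ i_Z)^!=a^!\,i_D^!$, and, as $i_Z^!$ commutes with Tate twists and shifts,
\[ i_Z^!(h^*Rj_*\cF)\cong\bigl(i_Z^!\,h^!(Rj_*\cF)\bigr)(c)[2c]=\bigl(a^!\,i_D^!(Rj_*\cF)\bigr)(c)[2c]=\bigl(a^!\,0\bigr)(c)[2c]=0. \]
By the first paragraph, this is exactly the base change isomorphism we want.

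I expect the main obstacle to be precisely the identification of $h^!$ with $h^*$ for an $SS(Rj_*\cF)$-transversal closed immersion: this is where micro-support genuinely enters. Its proof would go by choosing, étale locally on $X$, a smooth morphism $g:X\to\bA^c_k$ with $g^{-1}(0)=W$ whose non-$SS(Rj_*\cF)$-transversal locus avoids $W$ — possible since that locus is closed and $h$ is transversal along $W$ — so that the pair $(\id_X,g)$ is $SS(Rj_*\cF)$-transversal and hence $g$ is universally locally acyclic relative to $Rj_*\cF$, and then invoking the purity of such universally locally acyclic families over a smooth base to compare $h^!$ and $h^*$. The remaining ingredients — the localization triangles, smooth base change, and the stability of the singular support under smooth pullback — are standard.
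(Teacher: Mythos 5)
Your proof is correct, and its structure is exactly the argument one would expect behind the cited \cite[Proposition 1.1.8]{saito22}: reformulate the base change morphism as the middle arrow of the localization triangle for the closed complement $Z=W-V$, so that the assertion becomes $i_Z^!(h^*Rj_*\cF)=0$; then reduce to a closed immersion by the graph factorization and smooth base change; and finally convert $h^*$ to $h^!$ on $SS(Rj_*\cF)$-transversal strata to transfer the known vanishing $i_D^!(Rj_*\cF)=0$. Each step checks out. In particular, your transversality comparison for $h=\pr\circ\iota$ versus $\iota$ is accurate (it really only uses injectivity of $d\pr$ and the definition of $\pr^\circ C$), and the factorization $(h\circ i_Z)^!=a^!\,i_D^!$ cleanly closes the argument once $h^*\cong h^!(c)[2c]$ on $Rj_*\cF$ is available.

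The one place that deserves a precise reference is the purity input for a transversal closed immersion, $h^!(Rj_*\cF)\cong h^*(Rj_*\cF)(-c)[-2c]$. Your sketch — choose, Zariski or étale locally, a smooth $g:X\to\bA^c_k$ cutting out $W$; note that $g$ is $SS(Rj_*\cF)$-transversal along $W$ because $\ker dh_w=\operatorname{im} dg_w$ and $h$ is transversal there; shrink off the closed non-transversal locus; conclude that $g$ is universally locally acyclic relative to $Rj_*\cF$ near $W$ (since the pair $(\id_X,g)$ is then $SS(Rj_*\cF)$-transversal); and apply the local-acyclicity purity theorem — is the right argument, and there is no circularity with the statement being proved (the ULA-purity theorem is prior, independent input; the base change statement is a consequence, not an ingredient). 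So there is no gap; you would simply want to cite the ULA-purity statement concretely (it appears, e.g., as a relative cohomological purity theorem for locally acyclic families over a regular base, in the form going back to Illusie and used systematically in \cite{cc,saito22}).

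Two cosmetic remarks: the graph factorization requires no étale localization, since separatedness of $h$ already makes $\iota:W\to W\times_k X$ a closed immersion globally; the local choice of $g$ is only needed for the purity step. And the target category in the statement should read $D^b_c(W,\Lambda)$ rather than $D^b_c(X,\Lambda)$ — that is a typo in the paper, not in your argument.
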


\subsection{The characteristic cycle}
Let $f:X \to   S$ be a morphism between smooth schemes of finite type over $k$ where $S$ is a curve over $k$.
Let $x\in X$ be a closed point  and put $s=f(x)$. 
Note that any local trivialization of $\bT^*S$ in a neighborhood of $s$ gives rise to a local section of $\bT^*X$ in a neighborhood of $x$ by applying $df:\bT^*S\times_S X \to \bT^*X$. 
We abusively denote by $df$ this section. \\ \indent
We say that $x$ is an {\it at most $C$-isolated characteristic point} for $f:X \to  S$ if $f:X\setminus \{x\} \to   S$ is $C$-transversal. 
In that case, the intersection of a cycle $A$ supported on $C$ with $[df]$ is supported at most at a single point in $\bT^*_xX$. 
Since $C$ is conical, the intersection number $(A, [df])_{\bT^*X,x}$ is independent of the chosen local trivialization for $\bT^*S$ in a neighborhood of $s$. 

\begin{theorem}[{\cite[Theorem 5.9]{cc}}]\label{Milnor_formula}
Let $X$ be a smooth scheme of finite type over $k$.
For every  $\cK\in D^b_{ctf}(X,\Lambda)$, there is a unique cycle $CC(\cK)$ of $\bT^{\ast}X$ supported on $SS(\cK)$  such that for every \'etale morphism $h: U \to  X$, for every morphism $f: U \to   S$ with $S$ a smooth curve over $k$,  for every at most $h^{\circ}(SS(\cK))$-isolated characteristic point $u\in U$ for $f:U \to   S$, we have the following Milnor type formula
 \begin{equation}\label{Milnor}
 -\dt (R\Phi_{\ubar}(h^{\ast}\cK, f))=(h^*CC(\cK),[df])_{T^{\ast}U, u},
 \end{equation}
 where $R\Phi_{\ubar}(h^{\ast}\cK, f)$ denotes the stalk of the vanishing cycle of $h^*\cK$ with respect to $f:U \to  S$ at a geometric point $\ubar \to   U$ above $u$.
\end{theorem}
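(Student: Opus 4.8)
This is Saito's characteristic cycle theorem \cite{cc}; here is the strategy I would follow, in two halves (uniqueness, then existence by induction on dimension).

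\emph{Uniqueness.} By \cref{beilinson_theorem}, $SS(\cK)$ is a closed conical subset of $\bT^*X$ of pure dimension $n$; write $SS(\cK)=\bigcup_a C_a$ for its decomposition into irreducible components, so that any cycle supported on $SS(\cK)$ has the form $\sum_a m_a[C_a]$. The key preliminary point is that for each $a$ there is a closed point $x\in X$ lying over the smooth locus of $C_a$, avoiding the images of the other components and of the zero section, together with — after replacing $X$ by an étale neighbourhood of $x$ — a morphism $f:X\to S$ to a smooth curve such that $f$ is $SS(\cK)$-transversal away from $x$ while $[df]$ meets $SS(\cK)$ transversally at a single point of $(C_a)_x$; such $f$ exists by a genericity argument using that $C_a$ has codimension $n$ in the $2n$-dimensional space $\bT^*X$. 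For such an $f$ the formula \eqref{Milnor} reads $-\dt(R\Phi_{\overline{x}}(\cK,f))=m_a\cdot(C_a,[df])_{\bT^*X,x}$, whose right-hand side involves only $m_a$ and a positive intersection number; applying this for all $a$ pins down every $m_a$, hence uniqueness, and simultaneously shows that the over-determined system \eqref{Milnor} must be consistent, which is the content of existence.

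\emph{Existence: reductions and base case.} Since both $\cK\mapsto SS(\cK)$ and the sought $\cK\mapsto CC(\cK)$ are additive on distinguished triangles, dévissage through the perverse $t$-structure reduces the construction to $\cK$ a simple perverse sheaf $j_{!*}(\cL[\dim Y])$ with $\cL\in\LC_{tf}(Y,\Lambda)$ and $j:Y\hookrightarrow X$ a locally closed smooth immersion. I would then argue by induction on $\dim X$. The case $\dim X=0$ is trivial; for $\dim X=1$, $SS(\cK)$ is the union of the zero section with finitely many cotangent fibres $\bT^*_{x_i}X$, and one sets $CC(\cK)=-\rk(\cK)\,[\bT^*_XX]-\sum_i a_i\,[\bT^*_{x_i}X]$ where $a_i$ is the total dimension of the vanishing cycles of $\cK$ at $x_i$ relative to a local coordinate. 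Here the Milnor formula is the Deligne--Milnor / Grothendieck--Ogg--Shafarevich formula, and $a_i=\rk(\cL)+\sw_{x_i}(\cL)$ by \cref{inequality_Swan_dimtot} (the residue field being perfect), which in particular shows $a_i$ is independent of the coordinate.

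\emph{Inductive step and main obstacle.} For $\dim X=n\ge 2$ one works étale-locally and chooses a good pencil, namely a morphism $p:X\to\bA^1_k$ whose generic fibre $X_t$ is smooth of dimension $n-1$ with $\cK|_{X_t}$ well-behaved, such that $p$ is $SS(\cK)$-transversal outside finitely many isolated characteristic points; by induction $CC(\cK|_{X_t})$ is already defined. Beilinson's formalism — compatibility of $SS$ with proper pushforward, local acyclicity of $p$ at its non-characteristic points, and \cref{basechange} — together with the nearby/vanishing cycle formalism then expresses the multiplicity $m_a$ of each component $C_a$ of $SS(\cK)$ as the corresponding multiplicity for the fibres, corrected by a Swan term governed by the logarithmic ramification filtration of \cref{propramfil} along the special fibre. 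One takes these relations as the definition of the $m_a$ and must then verify (i) independence of the chosen pencil and (ii) the Milnor formula \eqref{Milnor} for an arbitrary test pair $(h,f)$, reducing (ii) to the pencil case by comparing $f$ with coordinate projections after a suitable modification of $U$. The hard part is precisely this step: (i) and (ii) amount to controlling the wild ramification of the nearby cycle complexes of $\cK$ under blow-ups and radicial pullbacks, which is exactly the kind of input the generalisation of the Hu--Teyssier bounds in \cref{generalH-T} is designed to supply; alternatively, following Saito, one deduces the whole statement from the compatibility of $CC$ with proper pushforward (the index formula expressing the Euler characteristic as a global intersection number), whose proof is the technical core of \cite{cc}.
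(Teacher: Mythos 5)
The paper does not contain a proof of this statement: it is quoted verbatim as Saito's characteristic cycle theorem, \cite[Theorem 5.9]{cc}, and used as a black box (as are \cref{beilinson_theorem}, \cref{CC_and_chi}, \cref{basechange}, etc.). There is therefore no ``paper's own proof'' to compare your sketch against; the relevant reference is Saito's original argument.

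Evaluated on its own terms, your sketch is a plausible caricature of that argument but has two soft spots worth flagging. In the uniqueness part, you propose choosing $x$ ``avoiding the images of the other components and of the zero section''; the zero section has image all of $X$, so this cannot literally be done, and the coefficient of $[\bT^*_XX]$ has to be pinned down separately (e.g.\ by taking $x$ a point where $\cK$ is lisse and $f$ a Morse function, so that $R\Phi_{\ubar}$ detects only the zero-section multiplicity). More seriously, in the existence part, the claim that the needed control of wild ramification of nearby cycles under blow-ups and radicial pullbacks ``is exactly the kind of input \cref{generalH-T} is designed to supply'' is anachronistic and essentially circular: \cref{generalH-T} (the generalization of Hu--Teyssier over geometric traits) sits downstream of, and relies on, Beilinson's singular support and Saito's characteristic cycle machinery, and thus cannot serve as an input to proving \cref{Milnor_formula}. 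Your closing remark is closer to the truth: Saito's proof first constructs $CC(\cK)$ via a generic pencil from the Milnor numbers at isolated characteristic points, and the technical core is showing that this cycle is independent of choices and satisfies \eqref{Milnor} for all test pairs, which he does through the Radon-transform compatibility and, ultimately, the proper pushforward formula. So the broad shape you describe is right, but the attempted shortcut through \cref{generalH-T} would fail.
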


\begin{definition}
The cycle $CC(\cK)$ from \cref{Milnor_formula} is the {\it characteristic cycle of $\cK$}. 
\end{definition}

\begin{example}[{\cite[Lemma 5.11]{cc}}]\label{CCcurve}
Assume that $X$ is a smooth connected curve over $k$.
Let $\cK\in D^b_{ctf}(X,\Lambda)$ and let $U\subset X$ be a dense open subset where $\cK$ is locally constant.
Then,  we have 
\begin{equation*}
CC(\cK)=-\rk_{\Lambda}\cK_{\eta}\cdot[\bT^*_XX]-\sum_{x\in X- U}(\dt_x(\cK)-\rk_{\Lambda}\cK_x)\cdot[\bT^*_xX]
\end{equation*}
where $\eta$ is the generic point of $X$.
\end{example}

\subsection{Characteristic cycle and cohomology}
The following index formula provides a positive characteristic analogue of Kashiwara-Dubson's formula for $\mathcal{D}$-modules.

\begin{theorem}[{\cite[Theorem 7.13]{cc}}]\label{CC_and_chi}
Let $X$ be a smooth projective variety over an algebraically closed field $k$.
For every $\cK\in D^b_{ctf}(X,\Lambda)$, we have 
$$
\chi(X,\cK)= (CC(\cK), \bT^*_{X} X)_{\bT^*X}
$$
\end{theorem}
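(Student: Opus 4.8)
The plan is to argue by induction on $n:=\dim X$, after noting that both sides of the asserted equality are additive in distinguished triangles — the left by the cohomology long exact sequence, the right by the uniqueness clause in \cref{Milnor_formula} — so that by dévissage we may take $X$ connected. The case $n=0$ is immediate. For $n=1$ one reduces, again by dévissage, to $\cK=j_!\cL$ with $\cL$ locally constant on a dense open $j\colon U\hookrightarrow X$ (together with skyscrapers, handled the same way); then \cref{CCcurve} expresses $CC(\cK)$ through $\rk_{\Lambda}\cL$ and the total dimensions $\dt_x$, and the identities $(\bT^*_XX,\bT^*_XX)_{\bT^*X}=\deg_X c_1(\Omega^1_X)=-\chi(X)$ and $(\bT^*_xX,\bT^*_XX)_{\bT^*X}=1$ turn $(CC(\cK),\bT^*_XX)_{\bT^*X}$ into the Grothendieck--Ogg--Shafarevich expression for $\chi(U,\cL)=\chi(X,\cK)$. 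For later use, the same Poincaré--Hopf/Gauss--Bonnet computation reads $(\bT^*_XX,\bT^*_XX)_{\bT^*X}=(-1)^n\chi(X)$ in dimension $n$.

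Assume now $n\geq 2$ and fix a projective embedding of $X$ attached to a sufficiently ample line bundle. I would realize $X$ through a Lefschetz pencil adapted to $SS(\cK)$. After a Veronese re-embedding of large enough degree, a generic line $D\cong\bP^1$ in the dual projective space should have the following properties: its axis $A\subset X$ is smooth of codimension $2$ and transversal to $SS(\cK)$; and on the blow-up $p\colon\wt X:=\mathrm{Bl}_AX\to X$ the induced morphism $f\colon\wt X\to D$ has at most isolated characteristic points relative to the singular support of $p^{*}\cK$, all of them over a finite set $T=\{t_1,\dots,t_m\}\subset D$. The existence of such a pencil is where the finiteness of $SS(\cK)$ from \cref{beilinson_theorem} enters, through a dimension count on the incidence variety between $SS(\cK)$ and the universal hyperplane, the re-embedding serving to make the locus of bad pencils of codimension $\geq 2$ in the dual space. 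Transversality of $A$ to $SS(\cK)$ also identifies $SS(\cK|_A)$ and $SS(\cK|_{X_t})$ for a generic fibre $X_t$ with transversal pullbacks of $SS(\cK)$ (following Beilinson), and controls $SS(p^{*}\cK)$ along the exceptional divisor.

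With the pencil fixed, I would compute $\chi(\wt X,p^{*}\cK)$ in two ways. Since $p$ is an isomorphism over $X\setminus A$ and a $\bP^1$-bundle over $A$, additivity of $\chi$ and the projection formula for that $\bP^1$-bundle give $\chi(\wt X,p^{*}\cK)=\chi(X,\cK)+\chi(A,\cK|_A)$, and $\chi(A,\cK|_A)=(CC(\cK|_A),\bT^*_AA)_{\bT^*A}$ by induction. On the other hand, away from $T$ the morphism $f$ is $SS(p^{*}\cK)$-transversal, hence universally locally acyclic relative to $p^{*}\cK$, so $Rf_{*}p^{*}\cK$ is locally constant on $D\setminus T$; then the Leray spectral sequence together with the already-proved $n=1$ case applied to $Rf_{*}p^{*}\cK$ yields
\[
\chi(\wt X,p^{*}\cK)=\chi(D,Rf_{*}p^{*}\cK)=\chi(\bP^1)\cdot\rk_{\Lambda}(Rf_{*}p^{*}\cK)_{\etabar}-\sum_{i=1}^{m}\bigl(\dt_{t_i}(Rf_{*}p^{*}\cK)-\rk_{\Lambda}(Rf_{*}p^{*}\cK)_{t_i}\bigr).
\]
Here the generic rank is $\chi(X_t,\cK|_{X_t})$ for a generic fibre $X_t$, which equals $(CC(\cK|_{X_t}),\bT^*_{X_t}X_t)_{\bT^*X_t}$ by the induction hypothesis; and, after identifying the generic-to-special jump of $Rf_{*}p^{*}\cK$ with total dimensions of vanishing cycles, each local term over $t_i$ is a sum over the characteristic points $\wt x$ above $t_i$ of $-\dt\bigl(R\Phi_{\overline{\wt x}}(p^{*}\cK,f)\bigr)$, which by the Milnor formula \cref{Milnor_formula} equals the intersection number $\bigl(CC(p^{*}\cK),[df]\bigr)_{\bT^*\wt X,\wt x}$.

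Comparing the two expressions for $\chi(\wt X,p^{*}\cK)$ reduces the theorem to an intersection-theoretic identity on $\bT^*\wt X$ and $\bT^*X$: the generic-fibre index $(CC(\cK|_{X_t}),\bT^*_{X_t}X_t)_{\bT^*X_t}$, assembled over the pencil in the Gauss--Bonnet sense, plus the total Milnor contribution $\sum_{\wt x}\bigl(CC(p^{*}\cK),[df]\bigr)_{\wt x}$, must equal $(CC(\cK),\bT^*_XX)_{\bT^*X}+(CC(\cK|_A),\bT^*_AA)_{\bT^*A}$. I expect this to be the crux: it requires knowing how $CC$ behaves under restriction to a generic hyperplane section (the accessible half of Saito's pullback compatibility, which is itself extracted from transversality and \cref{Milnor_formula}) and under the blow-up $p$ along the smooth centre $A$ (which is what produces the $\bP^1$-bundle correction term), together with a projection-formula computation relating the top Chern classes of the various cotangent bundles. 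The other delicate point, logically prior to it, is the construction of the pencil: guaranteeing that $f$ has only isolated characteristic points relative to $SS(\cK)$ forces the Veronese re-embedding and a careful dimension estimate resting on \cref{beilinson_theorem}.
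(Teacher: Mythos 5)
This statement is not proved in the paper at all: it is quoted verbatim from Saito \cite[Theorem 7.13]{cc}, so there is no internal proof to compare yours against, and your sketch has to be judged as an attempted proof of Saito's index formula itself.

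Your outline does follow the general lines of the known strategy (fibre over a pencil, apply \cref{Milnor_formula} at isolated characteristic points, induct on dimension), but the step you yourself label ``the crux'' is not a verification left to the reader --- it is the mathematical content of the theorem. To convert your two computations of $\chi(\wt X,p^*\cK)$ into the asserted identity you need (i) the compatibility of $CC$ with restriction to properly $SS(\cK)$-transversal hyperplane sections (the generic members $X_t$ and the axis $A$), and (ii) control of $CC(p^*\cK)$ on the blow-up along $A$, plus the intersection-theoretic bookkeeping assembling the fibrewise indices over the pencil. Neither (i) nor (ii) is available at this stage: in Saito's development the pullback compatibility is itself entangled with the index formula, and his actual proof works with the universal family of hyperplane sections over the dual projective space (the Radon transform and the Legendre transform of cycles) precisely to avoid having to know these compatibilities for a single chosen pencil. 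As written, your induction is therefore incomplete, and possibly circular, at its decisive step. A smaller but genuine gap occurs already in the dévissage: additivity of the right-hand side in distinguished triangles does not follow from the uniqueness clause of \cref{Milnor_formula}, because $CC(\cK')+CC(\cK'')$ is a priori supported on $SS(\cK')\cup SS(\cK'')$, which can be strictly larger than $SS(\cK)$, while the uniqueness statement only compares cycles supported on $SS(\cK)$; additivity of $CC$ is a theorem of Saito that needs its own argument. The curve case and the Euler-characteristic identities you use, such as $(\bT^*_XX,\bT^*_XX)_{\bT^*X}=(-1)^n\chi(X)$ and $\chi(\wt X,p^*\cK)=\chi(X,\cK)+\chi(A,\cK|_A)$, are fine.
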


\begin{rem}\label{chi=chic}
Assume that $\Lambda$ is a finite field.
When $\cK$ is of the form $j_! \cF$ where $j : U\hookrightarrow X$ is an open immersion and $\cF\in D^b_c(U,\Lambda)$, we have $\chi(X,\cK)=\chi_c(U,\cF)=\chi(U,\cF)$ in virtue of \cite{lauchi}.
\end{rem}

For curves, \cref{CC_and_chi} and \cref{CCcurve} specialize to the Grothendieck-Ogg-Shafarevich formula \cite{GOS} :

\begin{theorem}\label{GOS}
Let $X$ be a smooth proper connected curve of genus $g$ over an algebraically closed field $k$.
Let $\cK\in D^b_{ctf}(X,\Lambda)$ and let $U\subset X$ be a dense open subset where $\cK$ is locally constant.
Then, we have 
$$
\chi(X,\cK)= (2-2g)\cdot \rk_{\Lambda}  \cK_{\eta}  - \sum_{x\in X-U}(\dt_x(\cK) -\rk_{\Lambda}\cK_x)
$$
where $\eta$ is the generic point of $X$.
\end{theorem}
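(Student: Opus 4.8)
The plan is to obtain \cref{GOS} as the one-dimensional specialization of Saito's index formula \cref{CC_and_chi}, combined with the explicit shape of the characteristic cycle on curves recorded in \cref{CCcurve}. Since a proper curve over a field is automatically projective, \cref{CC_and_chi} applies to $X$ and gives $\chi(X,\cK)=(CC(\cK),\bT^*_XX)_{\bT^*X}$. First I would substitute the formula
\[
CC(\cK)=-\rk_\Lambda\cK_\eta\cdot[\bT^*_XX]-\sum_{x\in X-U}(\dt_x(\cK)-\rk_\Lambda\cK_x)\cdot[\bT^*_xX]
\]
from \cref{CCcurve}, so that the computation reduces to evaluating, inside $\bT^*X$, the intersection number of the zero section $\bT^*_XX$ with itself and with a cotangent fibre $\bT^*_xX$ for $x\in X-U$.

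For the latter, $\bT^*_xX$ is a line meeting the zero section transversally at its origin, so $([\bT^*_xX],\bT^*_XX)_{\bT^*X}=1$, which reproduces the local terms $\dt_x(\cK)-\rk_\Lambda\cK_x$ with the correct sign. For the self-intersection of the zero section I would invoke the self-intersection formula: the normal bundle of $\bT^*_XX$ in $\bT^*X$ is canonically $\Omega^1_{X/k}$, whence $([\bT^*_XX],\bT^*_XX)_{\bT^*X}=\deg\Omega^1_{X/k}=2g-2$ by Riemann--Roch. Multiplying this by the coefficient $-\rk_\Lambda\cK_\eta$ yields the term $(2-2g)\cdot\rk_\Lambda\cK_\eta$, and adding the contributions coming from the fibres $\bT^*_xX$ gives precisely the asserted identity.

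Since this is a formal consequence of the results already quoted, I do not expect a genuine obstacle. The only points requiring care are the sign conventions built into \cref{CC_and_chi} and \cref{CCcurve}, and the identification of the self-intersection of the zero section with $2g-2$ — equivalently with $\deg K_X$, or with $-\chi_{\mathrm{top}}(X)$ — which is the standard computation of $c_1$ of the cotangent bundle of a smooth proper curve of genus $g$.
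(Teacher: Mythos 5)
Your proposal is correct and follows exactly the route the paper intends: it presents \cref{GOS} as the specialization of the index formula \cref{CC_and_chi} via the curve-case characteristic cycle \cref{CCcurve}, and your evaluation of the intersection numbers (fibre meets the zero section with multiplicity $1$, self-intersection of the zero section equals $\deg\Omega^1_{X/k}=2g-2$) supplies precisely the standard computation left implicit there. The signs also check out, e.g.\ for $\cK=\Lambda$ one recovers $\chi(X,\Lambda)=2-2g$.
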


%


The combination of \cref{chi=chic} and \cref{GOS} gives the following :

\begin{cor}\label{GOcor_affine}
Let $X$ be a smooth proper connected curve of genus $g$  over an algebraically closed field $k$ and assume that $\Lambda$ is a finite field.
Let $U\subset X$ be an affine dense open subset and put $D:=X-U$.
For every  $\cL\in \LC(U,\Lambda)$, we have  
\begin{equation*}
\left\{
\begin{array}{l}
h^0(U,\cL)\leq \rk_\Lambda \cL, \\
h^1(U,\cL)\leq (2g-1+|D|+|D|\cdot \lc_{D}(\cL))\cdot\rk_\Lambda\cL,\\
h^2(U,\cL)= 0\\
\end{array}\right.
\end{equation*}

\end{cor}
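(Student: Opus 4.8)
The plan is to derive all three inequalities from the Grothendieck--Ogg--Shafarevich formula (\cref{GOS}) together with elementary vanishing and positivity arguments, exactly as the statement's placement after \cref{chi=chic} and \cref{GOS} suggests. First I would dispose of $h^0$ and $h^2$. Since $U$ is affine, smooth and connected of dimension $1$, Artin vanishing gives $h^i(U,\cL)=0$ for $i>1$, so $h^2(U,\cL)=0$. For $h^0$, the global sections $H^0(U,\cL)$ inject into the stalk $\cL_{\xbar}$ at any geometric point, which is a $\Lambda$-module of rank $\rk_\Lambda\cL$; hence $h^0(U,\cL)=\dim_\Lambda H^0(U,\cL)\leq\rk_\Lambda\cL$. (One should be a little careful here that $\Lambda$ is a field and that $\cL\in\LC(U,\Lambda)$ has free germs by \cite[Lemma 4.4.14]{Wei}, so "rank" and "dimension" agree.)

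For $h^1$, the idea is to bound $\chi(U,\cL)=h^0(U,\cL)-h^1(U,\cL)$ from below, since then $h^1(U,\cL)=h^0(U,\cL)-\chi(U,\cL)\leq\rk_\Lambda\cL-\chi(U,\cL)$. Apply \cref{GOS} to $\cK=j_!\cL$ on the smooth proper connected curve $X$ of genus $g$, with $U\subset X$ the given affine dense open and $D=X-U$; note $\cK$ is locally constant on $U$ and $\cK_x=0$ for $x\in D$, while $\cK_\eta=\cL_\eta$. Also $\chi(X,j_!\cL)=\chi(U,\cL)$ by \cref{chi=chic}. The formula reads
$$
\chi(U,\cL)=(2-2g)\cdot\rk_\Lambda\cL-\sum_{x\in D}\dt_x(j_!\cL) \ .
$$
Now for each $x\in D$, by \cref{inequality_Swan_dimtot} applied to the local module $M=\cL|_{\Spec K_x}$ (residue field perfect since $k$ is and $X$ is a curve, so the residue field at a closed point is a finite extension of $k$) we have $\dt_x(j_!\cL)=\sw_x(\cL)+\rk_\Lambda\cL$; and by \cref{inequalityLogNonLog} together with $\sw_x(\cL)\leq\lc_x(\cL)\cdot\rk_\Lambda\cL$ (each logarithmic slope is at most $\lc_x(\cL)$, so the Swan conductor, a sum of $\rk$-many slopes, is at most $\lc_x(\cL)\cdot\rk_\Lambda\cL$) we get $\dt_x(j_!\cL)\leq(1+\lc_x(\cL))\cdot\rk_\Lambda\cL\leq(1+\lc_D(\cL))\cdot\rk_\Lambda\cL$. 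Summing over the $|D|$ points of $D$,
$$
-\chi(U,\cL)\leq(2g-2)\cdot\rk_\Lambda\cL+|D|\cdot(1+\lc_D(\cL))\cdot\rk_\Lambda\cL \ .
$$
Hence $h^1(U,\cL)\leq h^0(U,\cL)-\chi(U,\cL)\leq\rk_\Lambda\cL+(2g-2+|D|+|D|\cdot\lc_D(\cL))\cdot\rk_\Lambda\cL=(2g-1+|D|+|D|\cdot\lc_D(\cL))\cdot\rk_\Lambda\cL$, which is the desired bound.

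The only genuinely delicate points — and where I would spend the most care — are the local estimate $\dt_x(j_!\cL)\leq(1+\lc_x(\cL))\cdot\rk_\Lambda\cL$ and the use of Artin vanishing. For the former, one must combine \cref{inequality_Swan_dimtot} (in the perfect residue field case, giving $\dt_x=\sw_x+\rk$) with the elementary fact that $\sw_x(\cL)=\sum_{r\geq0}r\cdot\rk_\Lambda M_{\log}^{(r)}\leq\lc_x(\cL)\cdot\sum_{r\geq0}\rk_\Lambda M_{\log}^{(r)}=\lc_x(\cL)\cdot\rk_\Lambda\cL$; this is where $\lc_x$ rather than $c_x$ enters and why the constant is $2g-1$ and not larger. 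For Artin vanishing on $U$ affine smooth of dimension $1$ one needs $\cL$ to live in $\LC(U,\Lambda)\subset D^b_c(U,\Lambda)$, which it does. Everything else is bookkeeping: checking $\cK_x=0$ at boundary points, that $\chi(U,\cL)=\chi_c(U,\cL)$ via \cref{chi=chic}, and that $|D|$ is finite since $X$ is a curve. No serious obstacle is anticipated; the proof is a direct assembly of the cited results.
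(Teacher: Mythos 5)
Your proof is correct and is essentially the paper's intended argument: the corollary is stated there as an immediate combination of \cref{chi=chic} and \cref{GOS}, and your write-up (Artin vanishing for $h^2$, the stalk bound for $h^0$, and the GOS computation with $\dt_x(j_!\cL)=\sw_x(\cL)+\rk_\Lambda\cL\leq(1+\lc_D(\cL))\cdot\rk_\Lambda\cL$ at each point of $D$ for $h^1$) is exactly the bookkeeping the paper leaves implicit. The only cosmetic point is that the appeal to \cref{inequalityLogNonLog} is superfluous once you use the perfect-residue-field equality $\dt_x=\sw_x+\rk_\Lambda\cL$ together with $\sw_x(\cL)\leq\lc_x(\cL)\cdot\rk_\Lambda\cL$.
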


\section{Ramification of nearby cycles}






\subsection{}In this section,  the notations of \cref{local_fields_notation} are in use. 
We further assume that $ \Lambda$ is a finite field of characteristic $\ell \neq p$.
We put $\cS:=\Spec \cO_K$, denote by $s$ its closed point and by  $\bar s$ an algebraic geometric point above $s$. 
We put $\eta=\Spec K$ and let $\eta^t$ be the maximal tame cover of $\eta$ dominated by $\etabar=\Spec \overline K$. 
For a morphism of finite type $f:\cX\to \cS$, consider the following diagram with cartesian squares

$$
\begin{tikzcd}    
\cX_{\overline{\eta}} \ar{r} \ar{d}{f_{\etabar}}   \arrow[rrr,  bend left = 40, "\jmathbar"]& \cX_{\eta^t} \ar{r}  \arrow[rr,  bend left = 25, "\jmath^t"] \arrow{d}& \cX_{\eta}  \ar{r}  \ar{d}{f_{\eta}}  & \cX  \ar{d}{f}  & \cX_{\sbar}  \ar{l}{\overline \iota}  \ar{d}{f_{\sbar}}  \\
\etabar \ar{r}& \eta^t \ar{r}& \eta \ar{r}&  \cS&  \sbar \ar{l} \ .
	\end{tikzcd} 
$$ 
For $\cK\in D^{+}(\cX_{\eta},\Lambda)$, we denote by $R\Psi(\cK,f)=\overline \iota^*R  \jmathbar_* \cK|_{\cX_{\overline{\eta}} }$ (resp. $R\Psi^{t}(\cK,f)=\overline \iota^*R \jmath^{t}_{*}  \cK|_{\cX_{\eta^t} }$) the nearby cycle complex (resp. tame nearby cycle complex) of $\cK$ with respect to $f:\cX\to\cS$. 
The nearby cycle complex is an object of $D^+(\cX_{\sbar}, \Lambda)$ with a $G_K$-action and the tame nearby cycle complex is an object of $D^+(\cX_{\sbar}, \Lambda)$ with a $G_K/P_K$-action.


\begin{definition}
For $\cK\in D^b_c(\mathcal X_{\eta},\Lambda)$, we say that $r\in \bQ_{\geq 0}$ is a {\it logarithmic slope} of $R\Psi(\cK,f)$ if there exists a closed point $x\in \cX_{\sbar}$ and  $i\in \bZ$ such that $r$ is a logarithmic slope of $R^i\Psi_x(\cK,f)$ with respect to the continuous $G_K$-action. 
We say that {\it the logarithmic ramification of $R\Psi(\cK,f)$ is bounded by} $c\in \bQ_{\geq 0}$ if the action $G^{c+}_{K,\log}$ on each $R^i\Psi(\cK,f)$ is trivial.
\end{definition}

 \begin{lemma}[{\cite[Lemma 5.3]{HT}}]\label{tame_nearby_cycle}
 Let $\mathscr K$ be an object of $D^b_c(\mathcal X_{\eta},\Lambda)$ and $\cN$ a locally constant  constructible sheaf of $\Lambda$-modules on $\eta$. 
 Then, for any closed point $x\in \mathcal X_{\overline s}$, we have the following canonical $G_K/P_K$-equivariant isomorphism:
 \begin{equation}
 R^i\Psi^{\mathrm t}_x(\cK\otimes_{\Lambda}^Lf^*_{\eta}\cN, f)\cong (R^i\Psi_x(\cK,f)\otimes_{\Lambda}\cN|_{\overline \eta})^{P_K}
 \end{equation}
 \end{lemma}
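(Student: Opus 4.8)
The plan is to isolate two independent mechanisms: a projection formula that pulls the lisse sheaf $f^*_{\eta}\cN$ out of the \emph{full} nearby cycle complex, and the identification of the \emph{tame} nearby cycle complex with the wild inertia invariants of the full one.

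\textbf{Step 1 (projection formula).} Since $\cN$ is locally constant constructible on $\eta=\Spec K$, it corresponds to a finite-dimensional $\Lambda$-vector space $M:=\cN|_{\etabar}$ carrying a continuous $G_K$-action, and the restriction of $f^*_{\eta}\cN$ to $\cX_{\etabar}$ is the constant sheaf with value $M$, whose canonical $G_K$-descent datum is exactly the $G_K$-action on $M$. Hence $(\cK\otimes_{\Lambda}^L f^*_{\eta}\cN)|_{\cX_{\etabar}}$ is canonically $\cK|_{\cX_{\etabar}}\otimes_{\Lambda}^L M$, and the projection formula for $R\jmathbar_*$ — legitimate because $M$ is a finite free $\Lambda$-module, so the associated constant sheaf is dualizable — produces a functorial, hence $G_K$-equivariant, isomorphism
\begin{equation*}
R\jmathbar_*\bigl((\cK\otimes_{\Lambda}^L f^*_{\eta}\cN)|_{\cX_{\etabar}}\bigr)\;\cong\;R\jmathbar_*(\cK|_{\cX_{\etabar}})\otimes_{\Lambda}^L M,
\end{equation*}
with the diagonal $G_K$-action on the right. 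Applying $\overline\iota^*$, taking the stalk at the closed point $x\in\cX_{\sbar}$, and using that $\Lambda$ is a field (so that $-\otimes_\Lambda M$ is exact and commutes with the cohomology sheaves $R^i$), one gets a $G_K$-equivariant isomorphism $R^i\Psi_x(\cK\otimes_{\Lambda}^L f^*_{\eta}\cN,f)\cong R^i\Psi_x(\cK,f)\otimes_\Lambda\cN|_{\etabar}$, again diagonal.

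\textbf{Step 2 (tame nearby cycles are wild inertia invariants).} Factor $\jmathbar=\jmath^t\circ g$, where $g\colon\cX_{\etabar}\to\cX_{\eta^t}$ is the base change of the pro-finite \'etale $P_K$-torsor $\etabar\to\eta^t$. Writing $\etabar=\varinjlim_\lambda\eta_\lambda$ as a cofiltered limit of finite Galois covers of $\eta^t$ with groups $P_K/N_\lambda$, one has $Rg_*g^*(-)\cong\varinjlim_\lambda(-)\otimes_\Lambda\Lambda[P_K/N_\lambda]$, concentrated in degree $0$, with $(Rg_*g^*(-))^{P_K}=\id$. As $P_K$ is pro-$p$ and $\ch\Lambda=\ell\neq p$, the functor $(-)^{P_K}$ on discrete $\Lambda[P_K]$-modules is cut out by the normalized averaging idempotent; in particular it is exact and commutes with every additive functor, notably with $R\jmath^t_*$, with $\overline\iota^*$, with stalks, and with the $R^i$. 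Chaining these facts gives, for any $\cG\in D^b_c(\cX_\eta,\Lambda)$, a $G_K/P_K$-equivariant isomorphism $R\Psi^{\mathrm t}(\cG,f)\cong R\Psi(\cG,f)^{P_K}$, hence $R^i\Psi^{\mathrm t}_x(\cG,f)\cong(R^i\Psi_x(\cG,f))^{P_K}$ for every $x$ and $i$. (This statement is essentially standard; I would either invoke it or spell it out along these lines.)

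\textbf{Step 3 (conclusion) and the main obstacle.} Applying Step 2 with $\cG=\cK\otimes_{\Lambda}^L f^*_{\eta}\cN$ and substituting the isomorphism of Step 1 yields
\begin{equation*}
R^i\Psi^{\mathrm t}_x(\cK\otimes_{\Lambda}^L f^*_{\eta}\cN,f)\;\cong\;\bigl(R^i\Psi_x(\cK\otimes_{\Lambda}^L f^*_{\eta}\cN,f)\bigr)^{P_K}\;\cong\;\bigl(R^i\Psi_x(\cK,f)\otimes_\Lambda\cN|_{\etabar}\bigr)^{P_K},
\end{equation*}
and the $G_K/P_K$-equivariance is inherited from the functoriality of all the constructions involved. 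The only genuinely delicate point is Step 2: the derived-level identification $R\Psi^{\mathrm t}\cong R\Psi^{P_K}$ together with its equivariance rests on the vanishing of $P_K$-cohomology with coefficients prime to $p$ and on a careful treatment of the pro-system $\etabar\to\eta^t$; once the exactness of $P_K$-invariants is secured, Steps 1 and 3 are routine manipulations with the projection formula and with the functoriality of nearby cycles.
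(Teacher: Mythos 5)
Your argument is correct and is essentially the standard proof: the paper itself gives no argument for this lemma beyond citing \cite[Lemma 5.3]{HT} (remarking that the proof there does not use perfectness of the residue field), and the cited proof runs exactly along your lines — the lisse sheaf becomes constant on $\cX_{\etabar}$ so the projection formula pulls it out of $R\Psi$ with the diagonal $G_K$-action, and tame nearby cycles are identified with the $P_K$-invariants of $R\Psi$ because $P_K$ is pro-$p$ while $\Lambda$ has characteristic $\ell\neq p$, so higher $P_K$-cohomology vanishes. The only cosmetic caveat is your phrasing that $(-)^{P_K}$ "commutes with every additive functor": what is actually used (and what your averaging idempotent/Hochschild--Serre argument does establish) is exactness of $P_K$-invariants on discrete $\Lambda[P_K]$-modules and the resulting degeneration of the spectral sequence for $\etabar\to\eta^t$, which is all that is needed.
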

 The proof follows is the same as \cite[Lemma 5.3]{HT} where the residue field of $\cO_K$ was assumed to be perfect.
  However the proof is still valid without this assumption.
 
 \begin{proposition}[{\cite[Lemma 5.3, Corollary 5.4]{HT}}]\label{HT18nearby}
Let $\mathscr K$ be an object of $D^b_c(\mathcal X_{\eta},\Lambda)$. 
Then, $r$ is a logarithmic numbering slope of $R\Psi(\mathscr K,f)$ if and only if there exists a locally constant  constructible sheaf $\cN$ of $\Lambda$-modules on $\eta$ whose ramification is logarithmic isoclinic at $s\in \mathcal S$ with $r=\lc_s(\cN)$ such that
\begin{equation*}
R\Psi^{\mathrm t}(\mathscr K\otimes^L_{\Lambda}f_{\eta}^*\mathcal N,f)\neq 0.
\end{equation*}
In particular, the logarithmic ramification of $R\Psi(\mathscr K,f)$ is bounded by $c$ if and only if, for any locally constant constructible sheaf $\cN$ of $\Lambda$-modules on $\eta$ whose ramification is logarithmic isoclinic at $s\in \mathcal S$ with $\lc_s(\cN)>c$, we have
\begin{equation*}
R\Psi^{\mathrm t}(\mathscr K\otimes^L_{\Lambda}f_{\eta}^*\cN,f)=0.
\end{equation*}
\end{proposition}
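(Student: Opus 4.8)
The plan is to derive both assertions from the tame-twist identity of \cref{tame_nearby_cycle} together with elementary bookkeeping for the logarithmic slope decomposition recalled in \cref{local_fields}. Throughout, all the triviality and vanishing statements in sight can be tested on stalks at closed points of $\cX_{\overline s}$, in keeping with the way logarithmic slopes of $R\Psi(\cK,f)$ are defined. The one local input I would isolate first is the following: for a finite $\Lambda$-module $M$ with continuous $G_K$-action, each logarithmic slope piece $M^{(r)}_{\log}$ is $G_K$-stable, not merely $P_K$-stable. Indeed $G^r_{K,\log}$ and $G^{r+}_{K,\log}$ are normal in $G_K$, so for $g\in G_K$ the submodule $g\cdot M^{(r)}_{\log}$ still satisfies $(g\cdot M^{(r)}_{\log})^{G^r_{K,\log}}=0$ and $(g\cdot M^{(r)}_{\log})^{G^{r+}_{K,\log}}=g\cdot M^{(r)}_{\log}$, hence equals $M^{(r)}_{\log}$ by uniqueness of the decomposition. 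Consequently $M^{(r)}_{\log}$ and its $\Lambda$-linear dual define locally constant constructible sheaves on $\eta$ that are logarithmic isoclinic of logarithmic slope $r$; for the dual I would use that $\Lambda$ has characteristic prime to $p$ while $G^r_{K,\log}$ is pro-$p$, so that taking invariants under the relevant finite $p$-quotient is exact and $\Lambda$-duality preserves both displayed conditions, hence preserves logarithmic slopes.

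Next I would prove the biconditional. For the ``if'' implication: given $\cN$ logarithmic isoclinic at $s$ with $\lc_s(\cN)=r$ and $R^i\Psi^{\mathrm t}_x(\cK\otimes^L_\Lambda f_\eta^*\cN,f)\neq 0$ for some closed $x$ and some $i$, \cref{tame_nearby_cycle} identifies this group with $(M\otimes_\Lambda\cN|_{\overline\eta})^{P_K}$ for $M:=R^i\Psi_x(\cK,f)$; writing $M=\bigoplus_{s'}M^{(s')}_{\log}$ and applying \cref{tensordtsw}, each summand $M^{(s')}_{\log}\otimes_\Lambda\cN|_{\overline\eta}$ with $s'\neq r$ is logarithmic isoclinic of positive logarithmic slope $\max(r,s')$, hence carries no $P_K$-invariants, so that $M^{(r)}_{\log}\neq 0$ and $r$ is a logarithmic slope of $R\Psi(\cK,f)$. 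For the ``only if'' implication: if $r$ is a logarithmic slope, witnessed by $M^{(r)}_{\log}\neq 0$ with $M:=R^i\Psi_x(\cK,f)$, I would take $\cN$ to be the sheaf on $\eta$ attached to $(M^{(r)}_{\log})^\vee$, which by the local remark is logarithmic isoclinic with $\lc_s(\cN)=r$, and observe that \cref{tame_nearby_cycle} yields an inclusion $\Hom_{P_K}(M^{(r)}_{\log},M^{(r)}_{\log})\hookrightarrow R^i\Psi^{\mathrm t}_x(\cK\otimes^L_\Lambda f_\eta^*\cN,f)$ whose source contains the identity and is therefore nonzero.

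The ``in particular'' clause then follows by letting $r$ range over all rationals $>c$. From \cref{propramfil}, $G^{c+}_{K,\log}$ acts trivially on a $P_K$-module $N$ precisely when all logarithmic slopes of $N$ are $\leq c$: for $s\leq c$ one has $G^{c+}_{K,\log}\subseteq G^{s+}_{K,\log}$, acting trivially on $N^{(s)}_{\log}$, while for $s>c$ one has $G^s_{K,\log}\subseteq G^{c+}_{K,\log}$, so $(N^{(s)}_{\log})^{G^{c+}_{K,\log}}\subseteq (N^{(s)}_{\log})^{G^s_{K,\log}}=0$, forcing $N^{(s)}_{\log}=0$. Hence $R\Psi(\cK,f)$ has logarithmic ramification bounded by $c$ iff it has no logarithmic slope $>c$, which by the biconditional just proved holds iff $R\Psi^{\mathrm t}(\cK\otimes^L_\Lambda f_\eta^*\cN,f)=0$ for every $\cN$ that is logarithmic isoclinic at $s$ with $\lc_s(\cN)>c$.

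I expect the main obstacle to lie in the two local points of the first paragraph — that $M^{(r)}_{\log}$ genuinely descends to a sheaf on $\eta$, so that it may legitimately serve as the twisting sheaf $\cN$, and that $\Lambda$-duality preserves logarithmic slopes — rather than in the slope bookkeeping, which runs smoothly once one keeps in mind, via \cref{tensordtsw}, that the tensor product of two logarithmic isoclinic modules of the same logarithmic slope $r$ can acquire slopes strictly below $r$; this is exactly why, in the ``only if'' step, the $P_K$-invariants concentrate on the matching summand.
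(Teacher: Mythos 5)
Your proof is correct and follows essentially the same route as the paper: both rest on \cref{tame_nearby_cycle}, both establish the biconditional by logarithmic slope bookkeeping (using that a tensor product of logarithmic isoclinic modules of distinct slopes is purely wild, and, for the converse, twisting by the dual of the slope-$r$ piece), and both deduce the boundedness clause from the biconditional. Your write-up is slightly more detailed on two points the paper leaves implicit — that $M^{(r)}_{\log}$ is $G_K$-stable (so that it legitimately descends to a sheaf on $\eta$) and that $\Lambda$-duality preserves logarithmic slopes — and in the converse direction you exhibit the identity in $\Hom_{P_K}(M^{(r)}_{\log},M^{(r)}_{\log})$, where the paper argues via a nonzero quotient of $N^\vee\otimes_\Lambda N$ with trivial $P_K$-action; these are two faces of the same semisimplicity over the finite $p$-quotient through which $P_K$ acts.
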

\begin{proof}
Let $\cN$  a locally constant  constructible sheaf of $\Lambda$-modules on $\eta$ whose ramification is logarithmic isoclinic at $s\in \mathcal S$ with $r=\lc_s(\cN)$ such that
\begin{equation*}
R\Psi^{\mathrm t}(\mathscr K\otimes^L_{\Lambda}f_{\eta}^*\mathcal N,f)\neq 0.
\end{equation*}
Then, there exists a closed point $x\in \mathcal X_{\overline s}$ such that $R\Psi^{\mathrm t}_x(\mathscr K\otimes^L_{\Lambda}f_{\eta}^*\mathcal N,f)\neq 0$. 
By Lemma \ref{tame_nearby_cycle}, we have $(R^i\Psi_x(\mathscr K,f)\otimes_{\Lambda}\cN|_{\overline \eta})^{P_K}\neq 0$ for some $i\in\bZ$. 
Suppose that $R^i\Psi_x(\mathscr K,f)$ does not have a logarithmic slope $r$, then all logarithmic slopes of $R^i\Psi_x(\mathscr K,f)\otimes_{\Lambda}\cN|_{\overline \eta}$ are positive rational numbers. 
Therefore $R^i\Psi_x(\mathscr K,f)\otimes_{\Lambda}\cN|_{\overline \eta}$ is purely wild, which contradicts to the fact that $(R^i\Psi_x(\mathscr K,f)\otimes_{\Lambda}\cN|_{\overline \eta})^{P_K}\neq 0$. 
Hence $R^i\Psi_x(\mathscr K,f)$ has a logarithmic slope $r$. 

Conversely, let $r\in\mathbb Q_{\geq 0}$ be a logarithmic slope of $R\Psi(\mathscr K,f)$. 
Then, there exists a closed point $x\in \mathcal X_{\overline s}$ such that the finite generated $\Lambda$-module $M=\bigoplus_{i\in\bZ} R^i\Psi_x(\mathscr K,f)$ has a logarithmic slope $r$. 
Let $N$ be the dual of $M^r_{\log}$. 
Note that $N$ is also a finite generated $\Lambda$-module with a continuous $G_K$-action of isoclinic logarithmic slope $r$. 
Let $\cN$ be the locally constant  constructible sheaf of $\Lambda$-modules on $\eta$ which is associated to $N$. We have $\lc_s(\mathcal N)=r$. By Lemma \ref{tame_nearby_cycle}, we have 
\begin{equation}
\bigoplus_{i\in\bZ}R^i\Psi_x^{\mathrm t}(\mathscr K\otimes^L_{\Lambda}f_{\eta}^*\mathcal N,f)\cong (M\otimes_{\Lambda} N)^{P_K}.
\end{equation}
Notice that $N^{\vee}\otimes_{\Lambda} N$ is a sub $G_K$-representation of $M\otimes_{\Lambda} N$ and $N^{\vee}\otimes_{\Lambda} N$ has a quotient with the  trivial $P_K$-action. 
Hence 
$$0\neq (N^{\vee}\otimes_{\Lambda} N)^{P_K}\subseteq (M\otimes_{\Lambda} N)^{P_K}.$$ 
Therefore, $R\Psi_x^{\mathrm t}(\mathscr K\otimes^L_{\Lambda}f_{\eta}^*\mathcal N,f)\neq 0$.
\end{proof}

\begin{definition} 
Let $\mathcal Z$ be a reduced closed subscheme of $\mathcal X$. 
We say $(\mathcal X,\mathcal Z)$ is a semi-stable pair over $\mathcal S$ if, \'etale locally, $\mathcal X$ is \'etale over an $\mathcal S$-scheme 
\begin{equation*}
\mathrm{Spec} R[t_1,\cdots, t_d]/(t_{r+1}\cdots t_d-\pi)
\end{equation*}
where $r<d$ and $\pi$ is a uniformizer of $R$, and if $\mathcal Z=\mathcal Z_f\bigcup \mathcal X_s$ with $\mathcal Z_f$ defined by an ideal $(t_1\cdots t_m)\subset R[t_1,\cdots, t_d]/(t_{r+1}\cdots t_d-\pi)$ $(m\leq r)$.
\end{definition}

\begin{definition}
We say that the henselian trait $\mathcal S$ is geometric if $\mathcal S$ is the henselization of a smooth scheme $S$ over a perfect field of characteristic $p>0$ at the generic point of a smooth divisor. 
\end{definition}

\begin{theorem}\label{generalH-T}
Assume that $\mathcal S$ is geometric. 
Let $(\mathcal X,\mathcal Z)$ be a semi-stable pair and we assume that $\mathcal X$ is smooth over $\mathcal S$. 
Let $\mathcal U$ be the complement of $\mathcal Z$ in $\mathcal X$ and let $j:\mathcal U\to\mathcal X$ be the canonical injection. 
Let $\cF\in \LC(\mathcal U,\Lambda)$  such that its ramification at generic points of $\mathcal Z_f$ is tame and let $\lc(\cF)$ be the maximum of the set of logarithmic conductors of $\cF$ at generic points of the special fiber $\mathcal X_s$. 
Then, the logarithmic ramification of $R\Psi(\cF,f)$ is bounded by $\lc(\cF)$.
\end{theorem}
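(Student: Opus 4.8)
The plan is to reduce \cref{generalH-T} to the case where the residue field of the trait is perfect, which is the main theorem of \cite{HT}. Write $\xi$ for the generic point of the smooth divisor $Y\subset S$ at which $\mathcal S$ is henselized, so that the residue field of $\mathcal S$ is $F=k(Y)$. The first step is to trade the assertion on the ramification filtration of $R\Psi(\mathcal F,f)$ for a vanishing statement: by \cref{HT18nearby} it suffices to show that $R\Psi^{\mathrm t}(\mathcal F\otimes^L_\Lambda f_\eta^*\mathcal N,f)=0$ for every locally constant constructible sheaf $\mathcal N$ of $\Lambda$-modules on $\eta$ which is logarithmic isoclinic at $s$ with $\lc_s(\mathcal N)>\lc(\mathcal F)$. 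Fix such an $\mathcal N$. Since $f$ is smooth and the generic points of $\mathcal Z_f$ lie over $\eta$, the sheaf $f_\eta^*\mathcal N$ is lisse near those points, so $\mathcal F\otimes^L_\Lambda f_\eta^*\mathcal N$ is again tamely ramified along $\mathcal Z_f$; and at each generic point $\eta_s$ of $\mathcal X_s$ the extension $K\subset\operatorname{Frac}\widehat{\mathcal O}_{\mathcal X,\eta_s}$ has ramification index one and separable residue field extension, so \cref{tensordtsw} shows that $\mathcal F\otimes^L_\Lambda f_\eta^*\mathcal N$ is logarithmic isoclinic of logarithmic slope $\lc_s(\mathcal N)>0$ there. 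In particular the twisted sheaf carries no tame part along the special fibre, which is what makes the vanishing plausible.

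Next I would spread the datum out. As everything has finite presentation over $\mathcal O_{S,\xi}$, there is an open neighbourhood $S^\circ$ of $\xi$ in $S$, a smooth morphism $X^\circ\to S^\circ$ equipped with a relative normal crossings divisor $\mathcal Z^\circ=\mathcal Z_f^\circ\cup X^\circ_{Y^\circ}$ (with $Y^\circ=Y\cap S^\circ$), a sheaf $\mathcal F^\circ\in\LC(X^\circ\setminus\mathcal Z^\circ,\Lambda)$ tame along $\mathcal Z_f^\circ$, and a sheaf $\mathcal N^\circ\in\LC(S^\circ\setminus Y^\circ,\Lambda)$, whose base changes along $\mathcal S\to S^\circ$ recover $(\mathcal X,\mathcal Z,\mathcal F,\mathcal N)$; after shrinking $S^\circ$ we may assume $\mathcal N^\circ$ is logarithmic isoclinic along $Y^\circ$. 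The tame nearby cycle complex of the family $X^\circ\to S^\circ$ along $Y^\circ$ is constructible on $X^\circ_{Y^\circ}$ and restricts over $\xi$ to $R\Psi^{\mathrm t}(\mathcal F\otimes^L_\Lambda f_\eta^*\mathcal N,f)$; hence the vanishing we want is equivalent to its vanishing over a dense open of $Y^\circ$, equivalently over a general closed point $y$ of $Y^\circ$.

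The core of the argument is then a reduction to a curve. For a general closed point $y\in Y^\circ$ I would choose, by a Bertini type argument, a smooth curve $C\subset S^\circ$ through $y$ meeting $Y^\circ$ transversally at $y$ and lying in sufficiently general position relative to $\mathcal Z_f^\circ$, to the ramification stratifications of $\mathcal F^\circ$ and $\mathcal N^\circ$, and to the relevant singular supports, so that, setting $\mathcal S':=\Spec\mathcal O^h_{C,y}$ and $\mathcal X_C:=X^\circ\times_{S^\circ}C$: $\mathcal X_C$ is smooth over $\mathcal S'$ and $(\mathcal X_C,\mathcal Z_C)$ is a semi-stable pair over $\mathcal S'$; the sheaf $\mathcal F_C:=\mathcal F^\circ|_{\mathcal X_C}$ is tame along $\mathcal Z_{f,C}$ and satisfies $\lc(\mathcal F_C)\le\lc(\mathcal F)$ by the semi-continuity of logarithmic conductors (\cref{semi_continuity}); the sheaf $\mathcal N_C:=\mathcal N^\circ|_C$ is logarithmic isoclinic with $\lc_{s'}(\mathcal N_C)=\lc_s(\mathcal N)$ by the curve-detection of the logarithmic conductor (a logarithmic version of \cref{equality_DT}); and the formation of the relative tame nearby cycle complex along $Y^\circ$ commutes with restriction to $C$, so that its fibre over $y$ is $R\Psi^{\mathrm t}(\mathcal F_C\otimes^L_\Lambda f_{\mathcal S'}^*\mathcal N_C,f_{\mathcal S'})$. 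The residue field $\kappa(y)$ of $\mathcal S'$ is finite over $k$, hence perfect, so the main theorem of \cite{HT} applies over $\mathcal S'$ and bounds the logarithmic ramification of $R\Psi(\mathcal F_C,f_{\mathcal S'})$ by $\lc(\mathcal F_C)\le\lc(\mathcal F)<\lc_s(\mathcal N)=\lc_{s'}(\mathcal N_C)$; \cref{HT18nearby} applied over $\mathcal S'$ then yields $R\Psi^{\mathrm t}(\mathcal F_C\otimes^L_\Lambda f_{\mathcal S'}^*\mathcal N_C,f_{\mathcal S'})=0$. Running the last two steps backwards gives $R\Psi^{\mathrm t}(\mathcal F\otimes^L_\Lambda f_\eta^*\mathcal N,f)=0$, and since $\mathcal N$ was arbitrary, \cref{HT18nearby} over $\mathcal S$ yields the theorem.

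I expect the two delicate points to be those used in the third step. First, one must know that the tame nearby cycle complex of the family $X^\circ\to S^\circ$ along the divisor $Y^\circ$ — a complex whose good behaviour presupposes a relative local acyclicity input — is compatible with restriction to a sufficiently general transverse curve; this should follow from the transversality of $C$ with respect to the singular supports at play, in the spirit of the smooth base change statement \cref{basechange}. Second, one needs that the logarithmic conductor of $\mathcal N^\circ$ along $Y^\circ$ is genuinely detected by a general transverse curve, rather than merely dominated by it as \cref{semi_continuity} alone would give; making this precise for the logarithmic conductor, and arranging the requisite non-degeneracy of ramification and singular-support transversality simultaneously with the other genericity conditions by a single Bertini argument, is the technical heart of the section.
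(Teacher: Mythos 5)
Your first reduction (via \cref{HT18nearby}, to the vanishing of $R\Psi^{\mathrm t}(\cF\otimes^L_\Lambda f_\eta^*\cN,f)$ for every log-isoclinic $\cN$ with $\lc_s(\cN)>\lc(\cF)$) coincides with the paper's, but the rest of your route is different and, as written, has a genuine gap: the two points you defer to the end are not technical finishing touches but the entire content of the theorem. First, the compatibility of the (tame) nearby cycle complex with restriction to a general transverse curve is not something you can quote; \cref{basechange} concerns $Rj_*$ along an open immersion of smooth schemes under $SS$-transversality, and to apply it you would have to unwind $R\Psi^{\mathrm t}$ as the filtered colimit over Kummer covers $\iota_n^*Rj_{n*}h_n^*(\cdot)$, spread out each term, and control its singular support generically on $Y^\circ$ --- at which point you are reconstructing the paper's actual argument rather than bypassing it. Second, your claim $\lc_{s'}(\cN_C)=\lc_s(\cN)$ for a general transverse curve goes in the opposite direction to the only result quoted in the paper: \cref{semi_continuity} gives $\lc_{s'}(\cN_C)\leq\lc_s(\cN)$, and the curve-detection statement \cref{equality_DT} is for the non-logarithmic conductor $c$, not for $\lc$. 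Without a logarithmic non-degeneracy/equality result (not available in the paper), the slope of the twist could a priori drop below $\lc(\cF_C)$ on the curve, and the application of \cite{HT} over $\cS'$ then yields nothing. One can repair this by the device the paper actually uses: pass first to a tame Kummer cover $\cS_n$ with $n>1/(\lc_s(\cN)-\lc(\cF))$, so that the gap exceeds the log/non-log discrepancy of \cref{inequalityLogNonLog} and the non-logarithmic detection \cref{equality_DT} suffices; but this is an extra idea your proposal does not contain.

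For comparison, the paper never invokes the perfect-residue-field theorem of \cite{HT} over an auxiliary curve-trait. After the same reduction via \cref{HT18nearby} and the reduction to $\cZ_f=\emptyset$, it writes $R\Psi^{\mathrm t}$ as the colimit of $\iota_n^*Rj_{n*}h_n^*(\cF\otimes f_\eta^*\cN)$ over Kummer covers, uses the covers to convert the hypothesis $\lc_s(\cN)>\lc(\cF)$ into $\lc_s(\cN|_{\cS_n})>c_{\cX_s}(h_n^*\cF)$, and then spreads out (this is where geometricity of $\cS$ enters) and proves the vanishing of each term directly by the global statement \cref{RjFfN=0}: a singular-support and total-dimension computation on transverse curves (using \cref{semi_continuity}, \cref{tensordtsw} and the purely-wild vanishing criterion of Hu--Leal) shows $(Rj_*(\cF\otimes f_V^*\cN))|_{f^{-1}(E_0)}=0$ over a dense open $E_0$ of the divisor, which suffices because the special point of $\cS$ is the generic point of that divisor. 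So the curve-restriction idea is present in both arguments, but the paper uses it only to kill $Rj_*$ stalks geometrically, whereas your proposal needs the much stronger (and unproved) base-change and log-conductor-detection statements to transport nearby cycles themselves to the curve.
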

\begin{proof}
The proof follows a similar strategy as \cite[Theorem 5.7]{HT}. 
By  \cref{HT18nearby}, it is sufficient to show that, for any locally constant  constructible sheaf of $\Lambda$-modules $\cN$ on $\eta$ whose ramification is logarithmic isoclinic at $s$ with $\lc_s(\cN)>\lc(\cF)$, we have
\begin{equation}\label{RPsijFoN=0}
R\Psi^{\mathrm t}(j_!\cF\otimes^L_{\Lambda}f_{\eta}^*\cN,f)=0.
\end{equation}
As in step 2 of \cite[Theorem 5.7]{HT}, we reduce to the case where $\mathcal Z_f=\emptyset$. 
Since this is an \'etale local question, we may assume that  $\mathcal X_s$ has one irreducible component.  
Let $n$ be an integer co-prime to $p$ and $\pi$ a uniformizer of $\cO_K$. We put $\mathcal S_n=\mathrm{Spec}(\cO_K[T]/(T^n-\pi))$ and put $\mathcal X_n=\mathcal X\times_{\mathcal S}\mathcal S_n$. 
We have the following diagram
\begin{equation}
\xymatrix{\relax
\mathcal U_n\ar[r]^-(0.5){j_n}\ar[d]_{h_n}\ar@{}|-(0.5)\Box[rd]& \mathcal X_n\ar[d]^{g_n}&\mathcal X_s\ar[l]_-(0.5){\iota_n}\ar@{=}[d]\\
\mathcal U\ar[r]_-(0.5){j}&\mathcal X&\mathcal X_s\ar[l]^-(0.5){\iota}}
\end{equation}
We have the following isomorphism 
\begin{equation*}
R\Psi^{\mathrm t}(\cF\otimes_{\Lambda}f^*_{\eta}\cN,f)=\varinjlim_{(n,p)=1} \iota_n^*Rj_{n*}h_n^*(\cF\otimes_{\Lambda}f^*_{\eta}\cN).
\end{equation*}
Hence, it is sufficient to show that for every positive integer $\displaystyle{n>\frac{1}{\lc_s(N)-\lc(\cF)}}$ that is co-prime to $p$, we have 
\begin{equation}\label{RjnhnFfN=0}
\iota_n^*Rj_{n*}h_n^*(\cF\otimes_{\Lambda}f^*_{\eta}\cN)=0.
\end{equation}
Since $\mathcal S_n/\mathcal S$ is tamely ramified cover at $s$ of degree $n$, we have 
\begin{align}
n\cdot \lc(\cF)&=\lc_{\mathcal X_s}(h_n^*\cF),\\
n\cdot \lc_s(\cN)&=\lc_{s}(\cN|_{\mathcal S_n-\{s\}}).
\end{align}
Hence, we have $\lc_{s}(\cN|_{\mathcal S_n-\{s\}})>\lc_{\mathcal X_s}(h_n^*\cF)+1\geq c_{\mathcal X_s}(h_n^*\cF)$. 
Notice that $\mathcal S$ is geometric, $f:X\to S$ is of finite type and $\cF$ and $\cN$ are constructible. 
By spreading out, \eqref{RjnhnFfN=0} is due to \cref{RjFfN=0} below.
\end{proof}

\begin{proposition}\label{RjFfN=0}
Let $S$ be a smooth scheme over $k$,  let $E$ be an irreducible divisor of $S$ and put  $V:=S-E$.
Let $f:X\to S$ be a smooth morphism of finite type and consider the following commutative diagram
\begin{equation*}
\xymatrix{\relax
U\ar[r]^j\ar[d]_{f_V}\ar@{}|-(0.5){\Box}[rd]&X\ar[d]^f\ar@{}|-(0.5){\Box}[rd]&D\ar[l]\ar[d]\\
V\ar[r]_g&S&E\ar[l]} 
\end{equation*}
with cartesian squares.
Assume that $D$ is irreducible.
Let $\cF\in \Loc(U,\Lambda)$  and $\cN\in \Loc(V,\Lambda)$ such that $\cN$ has only one logarithmic slope at the generic point of $E$ and $\lc_E(\cN)>c_D(\cF)$. 
Then there exists an open dense subset $E_0$ of $E$ such that 
\begin{equation*}
(Rj_*(\cF\otimes_\Lambda f_V^*\cN))|_{f^{-1}(E_0)}=0.
\end{equation*}
\end{proposition}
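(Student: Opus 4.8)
The plan is to remove the imperfect residue field that obstructs a direct application of the nearby‑cycle estimates of \cite{HT}, by specializing $E$ to its closed points (where residue fields become perfect) and transporting the problem to transversal curves. We may assume $\cF$ and $\cN$ nonzero. Since only a dense open $E_0\subseteq E$ is wanted, replacing $S$ by the complement of the (closed) non‑smooth locus of $E$ we may assume $E$ is smooth over $k$ (generic smoothness, $k$ perfect), so $D=f^{-1}(E)$ is a smooth divisor of $X$ and $j_!\cG\cong j_!\cF\otimes^L_\Lambda f^*g_!\cN$ for $\cG:=\cF\otimes_\Lambda f_V^*\cN$. As $Rj_*\cG$ is constructible, $X_0:=X\smallsetminus\Supp(Rj_*\cG)$ is open, and $X_0\subseteq D$ because $Rj_*\cG|_U=\cG\ne0$. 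The set $f(D\smallsetminus X_0)$ is constructible in $E$, and it is non‑dense — so that $E_0:=E\smallsetminus\overline{f(D\smallsetminus X_0)}$ works — provided the closed points $e$ of $E$ with $f^{-1}(e)\subseteq X_0$ form a dense subset. Thus it suffices to prove $(Rj_*\cG)|_{f^{-1}(e)}=0$ for every closed point $e$ of $E$ outside a proper closed subset; for such $e$, $\kappa(e)$ is a finite, hence perfect, extension of $k$.

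Fix such a general $e$, and pass to an affine neighbourhood of $e$ in $S$. Using a Bertini‑type genericity for singular supports (Beilinson, Saito) together with generic smoothness, one chooses a general smooth curve $C\subseteq S$ through $e$, with $C$ and $f^{-1}(C)$ smooth over $k$, meeting $E$ transversally and only at $e$, such that: (i) $i_C\colon C\hookrightarrow S$ is $SS(g_!\cN)$‑transversal with non‑degenerate ramification of $\cN$ along $E$ at $e$, so that by \cref{equality_DT} (and its logarithmic counterpart in the Abbes–Saito theory) $\cN|_{C\smallsetminus\{e\}}$ is logarithmic isoclinic at $e$ with $\lc_e(\cN|_{C\smallsetminus\{e\}})=\lc_E(\cN)$; and (ii) the closed immersion $h_C\colon f^{-1}(C)\hookrightarrow X$ is $SS(Rj_*\cG)$‑transversal over a neighbourhood of $f^{-1}(e)$ (harmless, the statement being local along $f^{-1}(e)$ and $SS$ and $Rj_*$ local). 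For (ii) one uses that near $D$ the singular support $SS(Rj_*\cG)$ is contained in the $f$‑pullback $df(\bT^*S\times_S X)$: the ramification of $\cG$ along $D$ is governed by that of $f_V^*\cN$, whose logarithmic slope strictly dominates that of $\cF$ (\cref{tensordtsw}), while $SS(f^*g_!\cN)=df(SS(g_!\cN)\times_S X)$ by smoothness of $f$; a dimension count using purity of $SS$ (\cref{beilinson_theorem}) then shows that the conormal directions of curves of the form $f^{-1}(C)$ suffice to force transversality over $f^{-1}(e)$ for general $C$. Granting (i) and (ii), \cref{basechange} gives an isomorphism $h_C^*Rj_*\cG\xrightarrow{\ \sim\ }Rj_{C*}(\cG|_{f^{-1}(C\smallsetminus\{e\})})$, and restricting it to $f^{-1}(C)\cap D=f^{-1}(e)$ reduces the goal to showing that $Rj_{C*}(\cG|_{f^{-1}(C\smallsetminus\{e\})})$ vanishes along $f^{-1}(e)$.

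One then works over the henselian trait $\cS:=\Spec\cO^h_{C,e}$, whose residue field $\kappa(e)$ is perfect, with fraction field $K$; put $\cX:=X\times_S\cS$, a smooth $\cS$‑scheme with special fiber $\cX_s=f^{-1}(e)$, so that $(\cX,\cX_s)$ is a semistable pair over $\cS$ with empty horizontal part. Base change along the localization $\cS\to C$ identifies the complex to be killed with $(R\mathfrak j_*(\cG|_{\cX_\eta}))|_{\cX_s}$, where $\mathfrak j\colon\cX_\eta\hookrightarrow\cX$, and $\cG|_{\cX_\eta}=\cF|_{\cX_\eta}\otimes^L_\Lambda f_{\cX,\eta}^*(\cN|_\eta)$ with $f_{\cX,\eta}\colon\cX_\eta\to\eta=\Spec K$. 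By \cref{semi_continuity} applied to $h_C$ — $D=f^*E$ pulling back to $\cX_s$ with multiplicity one since $C$ meets $E$ transversally — the maximum $c$ of the logarithmic conductors of $\cF|_{\cX_\eta}$ at the generic points of $\cX_s$ satisfies $c\le\lc_D(\cF)\le c_D(\cF)<\lc_E(\cN)=\lc_s(\cN|_\eta)$, the middle inequality by \cref{inequalityLogNonLog}. Since $\cS$ has perfect residue field, \cite[Theorem 5.7]{HT} bounds the logarithmic ramification of $R\Psi(\cF|_{\cX_\eta},f_{\cX})$ by $c$; as $\cN|_\eta$ is logarithmic isoclinic with $\lc_s(\cN|_\eta)>c$, \cref{HT18nearby} then yields $R\Psi^{\mathrm t}(\cG|_{\cX_\eta},f_{\cX})=R\Psi^{\mathrm t}(\cF|_{\cX_\eta}\otimes^L_\Lambda f_{\cX,\eta}^*(\cN|_\eta),f_{\cX})=0$.

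Finally, \cref{tame_nearby_cycle} with $\cN=\Lambda$ shows $R\Psi^{\mathrm t}(-)=R\Psi(-)^{P_K}$ term by term, so $R\Psi_{\bar x}(\cG|_{\cX_\eta},f_{\cX})$ has no nonzero $P_K$‑invariants for every geometric point $\bar x$ of $\cX_s$. On the other hand the stalk of $i^*R\mathfrak j_*$ is the inertia cohomology of the nearby cycles, $(R\mathfrak j_*(\cG|_{\cX_\eta}))_{\bar x}\cong R\Gamma(I_K,R\Psi_{\bar x}(\cG|_{\cX_\eta},f_{\cX}))$; since $P_K$ is pro‑$p$ while $\Lambda$ has residue characteristic $\ell\ne p$, Hochschild–Serre reduces this to $R\Gamma(I_K/P_K,M^{P_K})$, which vanishes because $M^{P_K}=0$. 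Hence $(R\mathfrak j_*(\cG|_{\cX_\eta}))|_{\cX_s}=0$, and retracing the reductions proves the proposition. The step I expect to be the main obstacle is the construction of the curve $C$: one must pin down $SS(Rj_*\cG)$ near $D$ precisely enough to apply \cref{basechange} while simultaneously keeping $\lc_e(\cN|_{C\smallsetminus\{e\}})$ equal to $\lc_E(\cN)$ — both hinging on slope domination in $\cF\otimes f_V^*\cN$ together with Bertini and semicontinuity inputs. Everything past that point is the nearby‑cycle formalism over a trait with perfect residue field.
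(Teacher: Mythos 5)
Your strategy (slice by a curve $C$ in the base through a general closed point $e\in E$, henselize at $e$ where the residue field is perfect, and run the nearby-cycle machinery of \cite[Theorem 5.7]{HT} together with \cref{HT18nearby} and inertia cohomology) is genuinely different from the paper's argument and is not circular, but both of its pivotal steps are unsubstantiated, and one rests on a false claim. First, the asserted equality $\lc_e(\cN|_{C\smallsetminus\{e\}})=\lc_E(\cN)$ for a transversal, $SS(g_!\cN)$-transversal curve fails in general: \cref{equality_DT} preserves the \emph{non-logarithmic} conductor divisor, and its "logarithmic counterpart" is exactly what breaks down when the residue field at the generic point of $E$ is imperfect. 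For the Artin--Schreier sheaf $t^p-t=y/x^p$ on $\bA^2_{x,y}$ with $E=\{x=0\}$ one has $\lc_E(\cN)=p$, yet on every curve transversal to $E$ the logarithmic conductor drops (generically to $p-1$), since the leading coefficient becomes a $p$-th power in the perfect residue field. Your stated bounds then only give $\lc_e(\cN|_C)\geq \lc_E(\cN)-1$ while the bound $c$ for $\cF$ on the slice can be as large as $\lc_D(\cF)$, and the hypothesis $\lc_E(\cN)>c_D(\cF)\geq\lc_D(\cF)$ does not force $\lc_E(\cN)-1>\lc_D(\cF)$. The inequality you actually need, $\lc_s(\cN|_\eta)>c$, can plausibly be rescued by arguing entirely with non-log conductors — on the slice all residue fields are perfect, so $\lc=c-1$ on both sides by \cref{inequality_Swan_dimtot}, and $c_e(\cN|_C)=c_E(\cN)\geq\lc_E(\cN)>c_D(\cF)\geq c_\xi(\cF|_{\cX_\eta})$ by \cref{equality_DT} and \cref{semi_continuity} — but you did not make this argument, and as written the step is wrong.

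Second, and this is the heart of the matter (you flag it yourself), the transversality of $h_C\colon f^{-1}(C)\hookrightarrow X$ with respect to $SS(Rj_*\cG)$, which \cref{basechange} requires, is asserted rather than proved. You need an upper bound of the form $SS(Rj_*\cG)\subseteq \bT^*_XX\cup f^{\circ}SS(g_!\cN)$ generically over $D$; "slope domination plus a dimension count using purity of $SS$" is not a proof of this. The available tool, \cite[Proposition 5.6]{Hu_Leal}, bounds $SS(j_!(\cF\otimes_\Lambda f_V^*\cN))$, and only after one establishes the total-dimension identity on curves (using \cref{semi_continuity}, \cite[Corollary 3.9]{wr} and the slope domination $c_1\geq\lc_E(\cN)>c_D(\cF)$); passing from $j_!$ to $Rj_*$ would require, e.g., self-duality of the singular support or a separate argument, neither of which you supply. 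The paper avoids this entirely: it bounds $SS(j_!(\cF\otimes f_V^*\cN))$ by $f^{\circ}SS(g_!\cN)$ over a dense open $D_0$ via the curve computation, then at each closed point of $D_0$ chooses a curve in $X$ transversal to $D$ and $SS$-transversal, observes by \cite[Corollary 3.9]{wr} that the restriction of the twisted sheaf is purely wild (all slopes $>1$, since $\lc_E(\cN)>c_D(\cF)\geq 1$), and concludes stalkwise vanishing of $Rj_*$ directly from \cite[Corollary 3.12]{Hu_Leal} — no control of $SS(Rj_*)$, no base change, and no nearby cycles are needed. Until you either prove the $SS(Rj_*\cG)$-bound or reroute through a $j_!$-side vanishing criterion, and replace the false log-restriction claim by the non-log argument sketched above, the proposal has genuine gaps at both of its key steps.
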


\begin{proof}
This is an \'etale local question. 
Let $c_1<c_2<\cdots<c_r$ be all slopes of the ramification of $\cN$ at the generic point of $E$. 
We have $c_1\geq \lc_E(\cN)$. 
Let $E_0$ be an open dense subset of $E$ such that the ramification of $\cN$ along $E_0$ is non-degenerate. Since $f:X\to S$ is smooth, the ramification of $f_V^*\cN$ along $D_0=f^{-1}(E_0)$ is also non-degenerate and $c_1<c_2<\cdots<c_r$ are all slopes of the ramification of $f_V^*\cN$ at the generic point of $D$. We have
\begin{equation}
\dimtot_D(\cF\otimes_{\Lambda}f_V^*\cN)=\rk_{\Lambda}\cF\cdot \dimtot_D(f_V^*\cN)
\end{equation}
Let $C$ be a smooth curve over $k$ and $h:C\to X$ a quasi-finite morphism such that:
\begin{itemize}\itemsep=0.2cm
\item[(a)]
$y=h^{-1}(D)$ is a closed point of $C$ with $x=h(y)\subset D_0$;
\item[(b)]
 $h:C\to X$ is $SS(j_!f_V^*\cN)$-transversal.
\end{itemize}
By \cite[Corollary 3.9]{wr}, we have 
\begin{equation}\label{NonC}
\mathrm{NP}_y(f_V^*\cN|_{C_0})=m_y(h^*D)\cdot \mathrm{NP}_D(f_V^*\cN).
\end{equation}
By \cref{semi_continuity}, we have 
\begin{equation}\label{FonC}
c_y(\cF|_{C_0})\leq m_y(h^*D)\cdot c_D(\cF).
\end{equation}
By \eqref{NonC} and \eqref{FonC}, and the fact that all slopes of the ramification of $f_V^*\cN$ at the generic point of $D$ is larger than that of $\cF$, we obtain that
\begin{align*}
\dimtot_y((\cF\otimes_{\Lambda}f_V^*\cN)|_{C_0})&=\rk_{\Lambda}\cF\cdot \dimtot_y(f_V^*\cN|_{C_0})\\
&=\rk_{\Lambda}\cF\cdot m_y(h^*(\mathrm{DT}_X(j_!f_V^*\cN)))\\
&=m_y(h^*(\mathrm{DT}_X(j_!(\cF\otimes_{\Lambda}f_V^*\cN)))).
\end{align*}
By \cite[Proposition 5.6]{Hu_Leal}, we obtain that, for any closed point $x\in D_0$, we have 
\begin{equation}
SS(j_!(\cF\otimes_{\Lambda}f_V^*\cN))_{\overline x}\subseteq SS(j_!f_V^*\cN)_{\overline x}=(f^{\circ}SS(g_!\cN))_{\overline x}.
\end{equation}
In particular, $SS(j_!(\cF\otimes_{\Lambda}f_V^*\cN))_{ x}$ has dimension $1$ for any closed point $x\in D_0$. 
For any closed point $x\in D_0$, we take a smooth $k$-curve and an immersion $h:C\to X$ such that 
\begin{itemize}\itemsep=0.2cm
\item[(a)]
$C$ and $ D$ meet transversely at $x$;
\item[(b)]
 $h:C\to X$ is $SS(j_!(\cF\otimes_{\Lambda}f_V^*\cN))$-transversal.
\end{itemize}
Since $\lc_E(\cN)>c_E(\cF)\geq 0$, the ramifications of $f_V^*\cN$ and $\cF\otimes_{\Lambda}f_V^*\cN$ at the generic of $D$ have slopes $c_1,\dots, c_r>1$. 
By \cite[Corollary 3.9]{wr}, the ramification of $(\cF\otimes_{\Lambda}f_V^*\cN)|_C$ at $x$ has slopes $c_1,\dots, c_r>1$, in particular, is purely wild. 
By \cite[Corollary 3.12]{Hu_Leal}, we obtain that $(Rj_*(\cF\otimes_\Lambda f_V^*\cN))_x=0$. 
Hence $(Rj_*(\cF\otimes_\Lambda f_V^*\cN))|_{D_0}=0.$
\end{proof}

\section{A conductor estimate}

\begin{recollection}\label{Hu_crelle_recollection}
In this section, $\Lambda$ will denote a finite field of characteristic  $\ell\neq p$.
Let $X$ be a smooth scheme of finite type over $k$.
Let $D\subset X$ be an effective Cartier divisor and put $j : U:=X-D\hookrightarrow X$.
We denote by $\cQ(X,D)$ the set of triples $(S,h : S\to X,x)$ where $S$ is a smooth affine connected curve over $k$, where $h : S\to X$ is a quasi-finite morphism over $k$ such that $\{x\}=h(S)\cap D$ and such that $h^{-1}(x)$ is a single closed point of $S$.
\begin{defin}[{\cite[Definition 5.7]{Hu_Leal}}]\label{C-isoclinic}
Let $C\subset \bT^*X$ be a closed conical subset with basis $D$ and with pure 1-dimensional fibers over $D$.
For $\cL\in \LC(U,\Lambda)$, we say that the ramification of $\cL$ along $D$ is \textit{$C$-isoclinic by restricting to curves} if for every $(S,h : S\to X,x)\in \cQ(X,D)$ such that $h : S\to X$ is $C$-transversal at $s:=h^{-1}(x)$, the ramification of $\cL|_{S-\{s\}}$ at $s$ is isoclinic and we have
$$
C_{S}(\cL|_{S-\{s\}}) = h^*C_X(\cL) \ .
$$
\end{defin}

\cref{C-isoclinic}  admits the following purely geometric reformulation :

\begin{lem}[{\cite[Proposition 5.17]{Hu_Leal}}]\label{C-isoclinic_inclusion_SS}
In the setting of \cref{C-isoclinic}, the following are equivalent :
\begin{enumerate}\itemsep=0.2cm
\item the ramification of $\cL$ along $D$ is $C$-isoclinic by restricting to curves.
\item The ramification of $\cL$ at each generic point of $D$ is isoclinic and $SS(j_!\cL)\subseteq \bT^*_X X \bigcup C$.
\end{enumerate}
\end{lem}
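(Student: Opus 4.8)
The plan is to prove both implications by reducing the two conditions to the behaviour of $j_!\cL$ on test curves, using as a bridge the relation between transversality of a curve to $SS(j_!\cL)$ and equality of conductor divisors: in one direction this is essentially \cref{equality_DT} (equality holds when the curve is transversal), while in the other direction the crucial input is that a curve meeting a non-conormal component of $SS(j_!\cL)$ over $D$ forces a \emph{strict} inequality. Both statements being \'etale-local on $X$, one works near a point of $D$ throughout.

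For $(2)\Rightarrow(1)$, let $(S,h\colon S\to X,x)\in\cQ(X,D)$ with $h$ being $C$-transversal at $s:=h^{-1}(x)$, and shrink $S$ so that $h$ is $C$-transversal on a punctured neighbourhood of $s$; since transversality to the zero section $\bT^*_XX$ is automatic, the hypothesis $SS(j_!\cL)\subseteq\bT^*_XX\cup C$ makes $h$ an $SS(j_!\cL)$-transversal morphism near $s$. The pullback compatibility of the singular support and of the characteristic cycle for transversal maps then identifies $CC((j_!\cL)|_S)$ near $s$ with the proper pullback $h^{\circ}CC(j_!\cL)$; extracting the coefficient of $[\bT^*_sS]$ via \cref{CCcurve} and using that $\cL$ is isoclinic at each generic point of $D$, one obtains that $\cL|_{S-\{s\}}$ is isoclinic at $s$ with $C_S(\cL|_{S-\{s\}})=h^*C_X(\cL)$; alternatively one applies \cref{equality_DT} directly, the reverse inequality being \cref{semi_continuity}. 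Hence $\cL$ is $C$-isoclinic by restricting to curves.

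For $(1)\Rightarrow(2)$, assume (1). To see that $\cL$ is isoclinic at a generic point $\eta$ of a component $Z$ of $D$, choose a general closed point $x\in Z$ and a general quasi-finite test curve $h\colon S\to X$ through $x$ meeting $D$ transversely only at $x$; since $C$ has pure $1$-dimensional fibres, $h$ may be taken $C$-transversal at $s=h^{-1}(x)$, and at a general such $x$ the slope decomposition of $\cL|_{S-\{s\}}$ at $s$ has the same multiplicities as that of $\cL$ at $\eta$ (generic transversal restriction preserves slopes), so the isoclinicity supplied by (1) propagates to $\eta$. For the inclusion of singular supports, recall from \cref{beilinson_theorem} that $SS(j_!\cL)$ is closed conical of pure dimension $n=\dim X$ and equals the zero section over $U$, hence $SS(j_!\cL)=\bT^*_XX\cup B$ with $B$ a union of $n$-dimensional irreducible conical subsets lying over $D$. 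If a component $B_0$ of $B$ were not contained in $C$, with image $Z_0\subseteq D$, then, using that $C$ has pure $1$-dimensional fibres over $D$ whereas the fibre of $B_0$ over a general $x\in Z_0$ is at least $1$-dimensional and shares no line with $C_x$, one constructs $(S,h,x)\in\cQ(X,D)$ with $x$ general in $Z_0$, still $C$-transversal at $s=h^{-1}(x)$, but with $\ker dh_s$ meeting $(B_0)_x\setminus\{0\}$, so that $h$ is \emph{not} $SS(j_!\cL)$-transversal at $s$. For such a curve the intersection of $[dh]$ with $CC(j_!\cL)$ is improper along $B_0$, so the Milnor formula \cref{Milnor_formula} yields $\dt_s(\cL|_{S-\{s\}})$ strictly larger than the expected value, i.e.\ $C_S(\cL|_{S-\{s\}})>h^*C_X(\cL)$, contradicting (1). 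Therefore $B\subseteq C$, and (2) holds.

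The main obstacle is this last step: one needs both the \emph{strict-jump} estimate---that a test curve which is $C$-transversal but not $SS(j_!\cL)$-transversal at $s$ satisfies $C_S(\cL|_{S-\{s\}})>h^*C_X(\cL)$, resting on the properness and positivity of intersections with $CC$ in Saito's formalism---and the geometric construction of the curve, where one must exhibit a hyperplane $\ker dh_s\subseteq\bT^*_xX$ simultaneously avoiding $C_x$ and meeting $(B_0)_x$. The case where $Z_0$ has codimension $\geq 2$ in $X$ requires in addition that the fibres of $SS(j_!\cL)$ over such $Z_0$ are controlled by $C$ as well, which is precisely what the hypothesis that $C$ has pure $1$-dimensional fibres over all of $D$ is designed to ensure.
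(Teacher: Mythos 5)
The paper does not prove this lemma; it is imported verbatim from \cite[Proposition 5.17]{Hu_Leal} inside a Recollection block, so there is no in-paper argument to compare your proposal against. What I can do is flag where your reconstruction is on solid ground and where it is not.

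The direction $(2)\Rightarrow(1)$ is broadly the right idea: once $SS(j_!\cL)\subseteq\bT^*_XX\cup C$, $C$-transversality of $h$ becomes $SS$-transversality, and one wants equality of conductor divisors. But note that \cref{equality_DT} is stated only for immersions $i:S\hookrightarrow X$ meeting $D$ transversely at a single smooth point with non-degenerate ramification, whereas \cref{C-isoclinic} quantifies over \emph{all} quasi-finite $h:S\to X$ with arbitrary contact order; you cannot invoke it directly. Also, the $CC$ pullback only records the total dimension $\dt_s$, not the slope decomposition, so it does not by itself give isoclinicity of $\cL|_{S-\{s\}}$. To close the gap you need to combine \cref{semi_continuity} (which gives all slopes $\leq h^*C_X(\cL)$) with the equality $\dt_s(\cL|_{S-\{s\}})=\rk_\Lambda\cL\cdot m_s(h^*C_X(\cL))$: the only way the sum of slopes can equal (rank)$\times$(maximum allowed slope) is if $\cL|_{S-\{s\}}$ is isoclinic of that slope. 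Say this explicitly.

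The genuine problem is in $(1)\Rightarrow(2)$, step $(b)$. You invoke \cref{Milnor_formula} for ``the intersection of $[dh]$ with $CC(j_!\cL)$'' where $h:S\to X$ is your test \emph{curve into} $X$. But \cref{Milnor_formula} concerns a morphism $f:U\to S$ \emph{to} a curve, where $[df]$ is a section of $\bT^*U$; for a quasi-finite $h:S\to X$ there is no section $[dh]$ of $\bT^*X$ to intersect $CC(j_!\cL)$ with, and the formula as stated simply does not apply. The strict-jump phenomenon you want -- that a curve which is $C$-transversal but not $SS(j_!\cL)$-transversal at $s$ forces $C_S(\cL|_{S-\{s\}})>h^*C_X(\cL)$ -- is true, but it comes from the strictness part of the semi-continuity theory for conductors along non-transversal curves (the very content of \cite{Hu_Leal}), not from the vanishing-cycle Milnor formula. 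As written, this step is not a proof but a gesture toward one, and it is the crux of the implication. The remaining pieces (isoclinicity at the generic point by restriction to a generic transversal curve; the geometric construction of a $C$-transversal but $SS$-non-transversal test curve through a general point of the image of the offending component $B_0$, using that $C$ has pure $1$-dimensional fibres while $(B_0)_x\not\subseteq C_x$) are sound.
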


When $C$ is simple enough, the inclusion in \cref{C-isoclinic_inclusion_SS} is an equality.
\begin{lem}[{\cite[Lemma 5.9]{Hu_Leal}}]\label{C-isoclinic_computation_SS}
In the setting of \cref{C-isoclinic}, assume that $\cL\neq 0$ and that the map $\IrrCom(C)\to \IrrCom(D)$ induced on the sets of irreducible components is bijective.
Then, 
$$
SS(j_!\cL)= \bT^*_X X \bigcup C \ .
$$
\end{lem}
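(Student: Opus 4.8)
\emph{Proof sketch.} The plan is to deduce the statement from the inclusion $SS(j_!\cL)\subseteq \bT^*_X X\cup C$ of \cref{C-isoclinic_inclusion_SS} together with the purity of the singular support (\cref{beilinson_theorem}) and the fact that the singular support detects local constancy. Write $n$ for the dimension of $X$. First I would observe that $\bT^*_X X\subseteq SS(j_!\cL)$: since $\cL$ is non-zero and locally constant on the dense open $U$, the restriction of $SS(j_!\cL)$ to $U$ is the zero section $\bT^*_U U$, and passing to the closure gives the claim. By \cref{C-isoclinic_inclusion_SS} it then remains to prove $C\subseteq SS(j_!\cL)$, i.e. that every irreducible component of $C$ occurs among the irreducible components of $SS(j_!\cL)$.

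Next I would analyse the irreducible components on both sides. Since $C$ has basis $D$ and pure $1$-dimensional fibres over $D$, every fibre $C\times_X d$ with $d\in D$ is non-empty of pure dimension $1$; combined with the bijectivity of $\IrrCom(C)\to\IrrCom(D)$ this forces $C$ to have, for each irreducible component $Z$ of $D$, a unique irreducible component $C_Z$ lying over $Z$, to have $C_Z$ dominate $Z$ with $1$-dimensional generic fibre (so that $\dim C_Z=\dim Z+1=n$), and to satisfy $C=C_Z$ locally around the generic point $\eta_Z$ of $Z$. On the other side, by \cref{beilinson_theorem} the set $SS(j_!\cL)$ has pure dimension $n$, and being contained in the closed conical set $\bT^*_X X\cup C$, each of its irreducible components $\Gamma$ lies, by irreducibility, either entirely in $\bT^*_X X$ or entirely in $C$; in the second case $\Gamma$ has dimension $n$, hence dominates some component $Z$ of $D$ and coincides with $C_Z$.

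The heart of the argument is to show that for every component $Z$ of $D$ the point $\eta_Z$ lies in the image, under $\bT^*X\to X$, of the union of those irreducible components of $SS(j_!\cL)$ that are not contained in the zero section. Indeed, on the complementary open locus the complex $j_!\cL$ would be locally constant by the characterisation of local constancy through the singular support \cite{bei}; but this is impossible in any neighbourhood of $\eta_Z$, since $(j_!\cL)_{\eta_Z}=0$ whereas $\cL$ is non-zero on $U$ arbitrarily close to $\eta_Z$. Hence there is a component $\Gamma$ of $SS(j_!\cL)$ not contained in the zero section whose image contains $\eta_Z$; by the previous paragraph $\Gamma=C_{Z'}$ for some component $Z'$ of $D$, and since the images of the components of $C$ other than $C_Z$ meet $Z$ only in codimension $\geq 1$, we get $Z'=Z$. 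Therefore $C_Z\subseteq SS(j_!\cL)$ for every component $Z$ of $D$, and taking the union over all such $Z$ gives $C\subseteq SS(j_!\cL)$, hence the desired equality.

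The step I expect to be the main obstacle is the local analysis of $C$ near the generic points of $D$ — extracting, from \enquote{basis $D$ with pure $1$-dimensional fibres over $D$} together with the bijectivity hypothesis, that over each component $Z$ of $D$ the set $C$ reduces to a single $n$-dimensional component $C_Z$ and nothing else — and the accompanying bookkeeping needed to match the component of $SS(j_!\cL)$ produced by the failure of local constancy with the correct $C_Z$. Once these points are settled, the rest is formal given \cref{C-isoclinic_inclusion_SS}, \cref{beilinson_theorem} and the local-constancy criterion.
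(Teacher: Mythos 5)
Your argument is correct as far as this paper is concerned, but note that the paper itself contains no proof of this statement: it is quoted verbatim from \cite[Lemma 5.9]{Hu_Leal}, so there is nothing internal to compare against. Taken on its own terms, your proof is a valid self-contained derivation from the stated inputs: the inclusion $SS(j_!\cL)\subseteq \bT^*_XX\cup C$ of \cref{C-isoclinic_inclusion_SS}, the purity of dimension in \cref{beilinson_theorem}, and the standard fact (in \cite{bei}, and also in Saito's work) that $SS$ contained in the zero section characterizes complexes with locally constant cohomology. The step you flag as the main obstacle does go through: each irreducible component of $C$ is conical (the $\bG_m$-action is connected, hence preserves components), so its image in $X$ is closed and irreducible; since the basis of $C$ is $D$, every component $Z$ of $D$ is dominated by at least one component of $C$, and injectivity of $\IrrCom(C)\to\IrrCom(D)$ then forces exactly one component $C_Z$ over each $Z$ and no further components, with $\dim C_Z=n$ because its generic fibre over $Z$ is the (pure $1$-dimensional) fibre of $C$ over $\eta_Z$. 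Two small points you should make explicit: (i) the conicality of the non-zero-section components of $SS(j_!\cL)$ is what guarantees that their image in $X$ is closed, so that your ``complementary open locus'' is indeed open; (ii) the claim that $\cL$ is non-zero arbitrarily close to each $\eta_Z$ uses that $U$ is connected, which holds when $X$ is connected (hence irreducible, being smooth), the implicit convention under which the lemma is stated -- for disconnected $X$ one argues component by component, as your choice of a single $n=\dim X$ also tacitly assumes.
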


\begin{lem}[{\cite[Proposition 5.18]{Hu_Leal}}]\label{C-isoclinic_tensor_product}
Let $X$ be a smooth scheme of finite type over $k$.
Let $D\subset X$ be an effective Cartier divisor and put $j : U:=X-D\hookrightarrow X$.
Let $C\subset \bT^*X$ be a closed conical subset with basis $D$ and pure 1-dimensional fibers over $D$. 
Let  $\cL,\cN\in \LC(U,\Lambda)$ where
\begin{enumerate}\itemsep=0.2cm
\item the ramification of $\cN$ along $D$ is $C$-isoclinic by restricting to curves.
\item we have $c_Z(\cN)>c_Z(\cL)$ for every irreducible component $Z$ of $D$.
\end{enumerate}
Then, the ramification of $\cL\otimes_{\Lambda}\cN$ along $D$ is $C$-isoclinic by restricting to curves and 
\begin{align*}
SS(j_!(\cL\otimes_{\Lambda}\cN)) & =SS(j_!\cN) \subseteq  \bT^*_X X \bigcup C  \ , \\
CC(j_!(\cL\otimes_{\Lambda}\cN)) & = \rk_{\Lambda}\cL \cdot CC(j_!\cN) \ .
\end{align*}

\end{lem}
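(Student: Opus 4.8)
The plan is to verify the conclusion curve by curve, reducing the ramification-theoretic input to \cref{tensordtsw} and the cycle-theoretic input to \cref{CCcurve} and the Milnor formula \cref{Milnor_formula}. Throughout we may assume $\cL\neq 0$, so $\rk_\Lambda\cL>0$; also $\cN\neq 0$, since $c_Z(\cN)>c_Z(\cL)\geq 0$ forces $\cN$ to be ramified along each component $Z$ of $D$.

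\emph{Step 1: the tensor product is $C$-isoclinic by restricting to curves.} By \cref{C-isoclinic_inclusion_SS} applied to $\cN$, the ramification of $\cN$ at each generic point $\xi$ of $D$ is isoclinic of slope $c_\xi(\cN)$. As every slope of $\cL$ at $\xi$ is $\leq c_\xi(\cL)<c_\xi(\cN)$, applying \cref{tensordtsw}(1) to the slope decomposition of $\cL$ at $\xi$ shows that $\cL\otimes_\Lambda\cN$ is isoclinic at $\xi$ of slope $c_\xi(\cN)$; since away from $D$ all sheaves in sight are lisse, $C_X(\cL\otimes_\Lambda\cN)=C_X(\cN)$ as $\bQ$-Weil divisors. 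Now fix $(S,h\colon S\to X,x)\in\cQ(X,D)$ with $h$ $C$-transversal at $s:=h^{-1}(x)$. By \cref{C-isoclinic} applied to $\cN$, the module $\cN|_{S-\{s\}}$ is isoclinic of some slope $r$ with $r\cdot[s]=h^*C_X(\cN)$, while by \cref{semi_continuity} every slope of $\cL|_{S-\{s\}}$ is at most the multiplicity at $s$ of $h^*C_X(\cL)$, which is strictly smaller than $r$ because $c_Z(\cL)<c_Z(\cN)$ for every component $Z\ni x$ of $D$ and $h^*D$ is a nonzero effective divisor supported at $s$. Applying \cref{tensordtsw}(1) termwise to the slope decomposition of $\cL|_{S-\{s\}}$, we conclude that $(\cL\otimes_\Lambda\cN)|_{S-\{s\}}$ is isoclinic of slope $r$ and
\[
\dt_s\bigl((\cL\otimes_\Lambda\cN)|_{S-\{s\}}\bigr)=\rk_\Lambda\cL\cdot\dt_s\bigl(\cN|_{S-\{s\}}\bigr);
\]
in particular $C_S\bigl((\cL\otimes_\Lambda\cN)|_{S-\{s\}}\bigr)=r\cdot[s]=h^*C_X(\cN)=h^*C_X(\cL\otimes_\Lambda\cN)$, which is exactly the condition of \cref{C-isoclinic}. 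Hence $\cL\otimes_\Lambda\cN$ is $C$-isoclinic by restricting to curves, and \cref{C-isoclinic_inclusion_SS} yields $SS(j_!(\cL\otimes_\Lambda\cN))\subseteq\bT^*_XX\cup C$ (and likewise $SS(j_!\cN)\subseteq\bT^*_XX\cup C$).

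\emph{Step 2: the characteristic cycle and the singular support.} For $\cM\in\LC(U,\Lambda)$ the singular support $SS(j_!\cM)$ is pure of dimension $n=\dim X$ (\cref{beilinson_theorem}); being contained in $\bT^*_XX\cup C$, it is the union of the zero section with some of the finitely many irreducible components $C_1,\dots,C_m$ of $C$, and therefore $CC(j_!\cM)$ is an integral combination of $[\bT^*_XX]$ and the $[C_i]$. Applying this to $\cM=\cL\otimes_\Lambda\cN$ and to $\cM=\cN$: the multiplicities along $[\bT^*_XX]$ are $\pm\rk_\Lambda\cL\cdot\rk_\Lambda\cN$ and $\pm\rk_\Lambda\cN$, hence differ by the factor $\rk_\Lambda\cL$; the multiplicities along a fixed $C_i$ are extracted, via the Milnor formula \cref{Milnor_formula}, from the vanishing cycles of $j_!(\cL\otimes_\Lambda\cN)$ and $j_!\cN$ along well-chosen $C$-transversal curves, which by \cref{CCcurve} are governed by the total dimensions $\dt_s\bigl((\cL\otimes_\Lambda\cN)|_{S-\{s\}}\bigr)$ and $\dt_s\bigl(\cN|_{S-\{s\}}\bigr)$, again differing by $\rk_\Lambda\cL$ by Step 1. (When several components of $C$ lie over the same subvariety of $D$, they are disentangled by passing to its generic point, using that $SS$ and $CC$ of $j_!\cM$ are determined by their restriction there together with the zero section.) Therefore $CC(j_!(\cL\otimes_\Lambda\cN))=\rk_\Lambda\cL\cdot CC(j_!\cN)$; and since $\rk_\Lambda\cL>0$ and the singular support of a constructible complex is the support of its characteristic cycle \cite{cc}, also $SS(j_!(\cL\otimes_\Lambda\cN))=SS(j_!\cN)$, which by Step 1 is contained in $\bT^*_XX\cup C$.

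\emph{Main obstacle.} The delicate part is Step 2: passing from the identities of total dimensions on curves to the equality of characteristic cycles. One needs that $CC(j_!\cM)$ is determined by its multiplicities along $\bT^*_XX$ and along the components of $C$, and that these multiplicities can be read off on $C$-transversal curves through general points of $D$ by combining \cref{Milnor_formula} with \cref{CCcurve} and the total-dimension identity of Step 1 — the only genuinely subtle case being several components of $C$ over a common stratum of $D$, which forces one to argue at the generic point of that stratum (equivalently, with Saito's refined ramification invariants). A smaller but essential point, in Step 1, is the upgrade of the semi-continuity inequality of \cref{semi_continuity} to a strict inequality of slopes, which is exactly where the hypothesis $c_Z(\cN)>c_Z(\cL)$ on every component $Z$ is used.
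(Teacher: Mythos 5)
The paper does not prove this lemma at all: it is quoted verbatim as \cite[Proposition 5.18]{Hu_Leal}, so there is no internal proof to compare with, and your attempt has to be judged on its own. Your Step 1 is correct and complete: combining \cref{C-isoclinic} and \cref{C-isoclinic_inclusion_SS} for $\cN$, the strict inequality of multiplicities coming from $c_Z(\cN)>c_Z(\cL)$ together with \cref{semi_continuity}, and \cref{tensordtsw}(1) applied termwise to the slope decomposition does show that $\cL\otimes_\Lambda\cN$ is $C$-isoclinic by restricting to curves, that $C_X(\cL\otimes_\Lambda\cN)=C_X(\cN)$, and hence (via \cref{C-isoclinic_inclusion_SS}) that $SS(j_!(\cL\otimes_\Lambda\cN))\subseteq \bT^*_XX\cup C$, with the total dimensions on test curves multiplied by $\rk_\Lambda\cL$.

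Step 2, however, has a genuine gap, and it is exactly the nontrivial content of the cited proposition. You need to pass from curve-wise identities of total dimensions to an identity of characteristic cycles, and the tools you invoke do not provide that bridge: \cref{Milnor_formula} computes $\dt$ of vanishing cycles of a \emph{test function} $f:U\to S$ at an isolated characteristic point, while your Step 1 only controls the \emph{restriction} of the sheaf to immersed curves; nothing quoted in the paper lets you identify the coefficient of a component $C_i$ in $CC(j_!\cM)$ with data read off from $\cM|_{S-\{s\}}$ (that would require Saito's compatibility of $CC$ with properly $SS$-transversal pullback, or a non-degeneracy argument in the style of \cref{equality_DT}, neither of which you carry out). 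Equivalently, verifying the Milnor formula for the candidate cycle $\rk_\Lambda\cL\cdot CC(j_!\cN)$ would require $\dt R\Phi(j_!(\cL\otimes_\Lambda\cN),f)=\rk_\Lambda\cL\cdot\dt R\Phi(j_!\cN,f)$ at isolated characteristic points, a statement about vanishing cycles of the twist that Step 1 does not give. Finally, when several $n$-dimensional components of $C$ lie over the same irreducible component of $D$, a transversal curve through a general point only sees a linear combination of their coefficients, and your parenthetical ``disentangled by passing to its generic point'' is an assertion, not an argument. You correctly flag all of this as the ``main obstacle,'' but it is left unresolved, so the proposal does not constitute a proof of the $SS$- and $CC$-equalities; the honest route within this paper is simply the citation to \cite[Proposition 5.18]{Hu_Leal}.
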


\end{recollection}

\subsection{Conductor estimate}
The goal of what follows is to give an estimate for the logarithmic conductor of the direct image of some twisted sheaf on $\bA^n_k$ (see \cref{computation_conductor_twisted}).
The letter $\Lambda$ will denote a finite field of characteristic  $\ell\neq p$.

\begin{lemma}\label{AS_on_two_lines_complement}
Let $\bA^{2,\circ}_k$ be the complement of the origin in $\bA^2_k=\mathrm{Spec}(k[x,y])$.
Let $D$ (resp. $E$) be the divisor of $\bA^{2,\circ}_k$ defined by $x=0$ (resp. $y=1$).
Put $Z=D\bigcup E$ and $j : U:=\bA^{2,\circ}_k-Z\hookrightarrow \bA^{2,\circ}_k$.  
Let $\cN\in \Loc(U,\Lambda)$ be the Artin-Schreier sheaf on $U$ defined by
\begin{equation}
t^p-t=\lambda\left(\frac{y}{x^{p^2}(y-1)}\right)^m
\end{equation}
where $\lambda\in k^\times$ and  $(m,p)=1$. 
Then, 
$$
C_{\bA^{2,\circ}_k}(j_!\cN) =mp^2\cdot D+(m+1)\cdot E
$$
and the ramification of $\cN$ along $Z$ is $\langle dy\rangle\cdot Z$-isoclinic by restricting to curves.
\end{lemma}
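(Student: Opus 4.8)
The plan is to set $f=\lambda\bigl(\tfrac{y}{x^{p^2}(y-1)}\bigr)^m=\lambda y^m x^{-mp^2}(y-1)^{-m}$, so that $\cN$ is the Artin--Schreier sheaf attached to $f$, and to exploit that in characteristic $p$ one has $d(x^{-mp^2})=0$, whence
$$
df=\frac{\lambda}{x^{mp^2}}\,d\!\left(\frac{y^m}{(y-1)^m}\right)=\frac{-\lambda m\,y^{m-1}}{x^{mp^2}(y-1)^{m+1}}\,dy .
$$
Thus the characteristic form of $f$ is everywhere proportional to $dy$, which is the source of the direction $\langle dy\rangle$ in the statement. I would first treat $\eta_E$: there $f=(\lambda y^m x^{-mp^2})\cdot (y-1)^{-m}$ with unit coefficient and $(y-1)^m$ of valuation $m$ prime to $p$, so \cref{Laumon_computation} gives $c_E(\cN)=m+1$ (and $\lc_E(\cN)=m$).

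For the conductor along $D$ I would first get the lower bound by restricting $\cN$ to a smooth curve $i:S\hookrightarrow X$ meeting $D$ transversally at a point $(0,y_0)$ with $y_0\notin\{0,1\}$ and with $d(y|_S)\neq 0$ there; writing $x|_S=t$ and $y|_S=y_0+d_1t+\cdots$ with $d_1\neq0$, one gets $f|_S=\beta^{p^2}t^{-mp^2}\bigl(1-\tfrac{md_1}{y_0(y_0-1)}t+O(t^2)\bigr)$ with $\beta^{p^2}=\lambda y_0^m(y_0-1)^{-m}$ (a $p^2$-th root exists since $k$ is perfect). Modulo $\wp(k((t)))$ the term $\beta^{p^2}t^{-mp^2}$ reduces down to pole order $m$, while the next term has pole order $mp^2-1$, prime to $p$ and with nonzero coefficient because $d_1\neq0$ and $p\nmid m$; hence $f|_S$ has reduced pole order exactly $mp^2-1$, so $c_{(0,y_0)}(\cN|_S)=mp^2$ by \cref{Laumon_computation}. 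Semicontinuity \cref{semi_continuity} applied to $i:S\to X$ (note $S\times_X D$ is a reduced point, hence a Cartier divisor on $S$) then gives $c_D(\cN)\geq mp^2$.

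The main obstacle is the matching upper bound $c_D(\cN)\leq mp^2$: since $v_D(f)=-mp^2$ is divisible by $p$, the naive estimate $\dimtot\le\sw+1\le -v_D(f)+1$ is off by one. Here I would show that the ramification of $\cN$ at $\eta_D$ is non-logarithmic. Writing $f=u\,x^{-mp^2}$ with $u=\lambda y^m(y-1)^{-m}$, its residue $\bar u\in k(y)$ satisfies $d\bar u\neq0$ in $\Omega^1_{k(y)/k}$ because $p\nmid m$; consequently $f$ cannot be reduced modulo $\wp(K_{\eta_D})$ below pole order $mp^2$ — the only obstruction would be solvability of $\bar u\in k(y)^p$, which fails — so $\sw_{\eta_D}(\cN)=mp^2$; and since $p\mid mp^2$ the $\tfrac{dx}{x}$-component of $df$ vanishes, so the refined Swan conductor is of non-logarithmic type and $\dimtot_{\eta_D}(\cN)=\sw_{\eta_D}(\cN)=mp^2$ by Abbes--Saito's ramification theory (equivalently: an Artin--Schreier sheaf with reduced pole order $n$ has $\dimtot=\sw+1$ if $p\nmid n$ and $\dimtot=\sw$ if $p\mid n$). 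As $\cN$ is lisse away from $Z$, this yields $C_{\bA^{2,\circ}_k}(j_!\cN)=mp^2\cdot D+(m+1)\cdot E$.

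Finally, for the $\langle dy\rangle\cdot Z$-isoclinic assertion: $\cN$ has rank $1$, so its ramification at each generic point of $Z$ is isoclinic, and by the definition in \cref{C-isoclinic} it suffices to check $C_S(\cN|_{S-s})=h^*C_X(\cN)$ for every $(S,h,x)\in\cQ(X,Z)$ with $h$ being $\langle dy\rangle\cdot Z$-transversal at $s=h^{-1}(x)$. Setting $a=m_x(S\cdot D)$ and $b=m_x(S\cdot E)$, transversality means $dy(x)\notin\ker dh_s$, i.e. the linear term of $y|_S$ at $s$ is nonzero; this rules out tangency to $E=\{y=1\}$, forcing $b\le 1$, and imposes no constraint on $a$. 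Expanding $f|_S$ in a uniformizer $t$ at $s$ as above, one finds in each of the three cases $x\in D\setminus E$, $x\in E\setminus D$, $x=D\cap E$ that $c_x(\cN|_{S-s})=a\cdot mp^2+b\cdot(m+1)$: in the first case the leading pole $amp^2$ reduces modulo $\wp$ and the subleading one, of order $amp^2-1$ prime to $p$ (nonzero by transversality), survives; in the other two cases the leading pole order $bm$ resp. $amp^2+bm$ is already prime to $p$ since $b=1$ and $p\nmid m$. This equals $h^*C_X(\cN)$ at $x$, which proves the claim. Besides the off-by-one at $\eta_D$, the delicate point is precisely the node $D\cap E$, where the singular support could a priori degenerate; restricting to curves as in \cref{C-isoclinic} is what keeps it under control.
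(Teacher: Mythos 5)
Your proof is correct, and the curve computations in the three cases ($z\in D\setminus E$, $z\in E\setminus D$, $z\in D\cap E$) are in substance the same as the paper's: expand $f|_S$ in a uniformizer, use that $\langle dy\rangle\cdot Z$-transversality forces the linear term of $y|_S$ to be nonzero, and observe that either the pole order is already prime to $p$ (second and third cases) or the leading $p^2$-th power drops modulo $\wp$ to leave a surviving subleading term of pole order $amp^2-1$ prime to $p$ (first case). What is genuinely different is how you determine the divisor $C_X(j_!\cN)$ itself. The paper never evaluates $c_D$ or $c_E$ at the generic points: it proves the curve equality for all $\langle dy\rangle\cdot Z$-transversal curves and reads off $C_X(j_!\cN)$ via the curve-detection result \cref{equality_DT}, so every computation happens over a curve, where the residue field is perfect and \cref{Laumon_computation} applies directly. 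You instead compute the coefficients at $\eta_D$ and $\eta_E$ head-on: $c_E(\cN)=m+1$ is immediate from \cref{Laumon_computation}, and for $c_D(\cN)$ you sandwich it between the lower bound from a transversal curve and \cref{semi_continuity}, and an upper bound obtained by noting that the $dx/x$-component of $df$ vanishes (since $p\mid mp^2$) while the $dy$-component does not (since $p\nmid m$), so the refined Swan conductor at $\eta_D$ is purely horizontal and $\dimtot_{\eta_D}(\cN)=\sw_{\eta_D}(\cN)=mp^2$. This is correct, and your route has the merit of logically separating the determination of $C_X(j_!\cN)$ from the isoclinicity check, but the upper-bound step uses two inputs not among the paper's cited tools: Kato's computation of the Swan conductor of an Artin--Schreier character over an imperfect residue field, and the comparison of the non-logarithmic Abbes--Saito conductor with the Swan conductor in terms of the refined Swan conductor (your rule ``$\dimtot=\sw+1$ if $p\nmid n$, $\dimtot=\sw$ if $p\mid n$''). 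Both facts are true, but you should supply precise references; alternatively you could recover $c_D(\cN)=mp^2$ from the curve computation via \cref{equality_DT} exactly as the paper does, keeping the whole argument inside the perfect-residue-field situation.
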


\begin{proof}
Let $(S,h:S\to \bA^{2,\circ}_k,z)\in \cQ(\bA^2_k,Z)$ such that $h : S\to X$ is $\langle dy\rangle\cdot Z$-transversal at $s:=h^{-1}(z)$.
By \cref{equality_DT}, it is enough to show that 
\begin{equation}\label{cLS=mp2Dm+1E}
c_s(\cN|_{S-\{s\}})=mp^2\cdot m_s(h^*D)+(m+1)\cdot m_s(h^*E) 
\end{equation}
where $m_s(h^*D)$ (resp. $m_s(h^*E)$) is the multiplicity of $h^*D$ (resp. $h^*E)$ at $s$.
Put $K=\mathrm{Frac}(\widehat{\mathscr O}_{S,s})$ and $\mathscr O_K=\widehat{\mathscr O}_{S,s}$. 
We first assume that $z=(0,b)\in D-\{(0,1)\}$. 
Choose a uniformizer $T$ of $\mathscr O_{S,s}$ such that $h:S\to \bA^{2,\circ}_k$ is given by 
\begin{align*}
x&\mapsto uT^\alpha,\ \ \ \text{$u\in\mathscr O_{S,s}^\times, \alpha\geq 1$}\\
y&\mapsto b+T.
\end{align*}
Then, the restriction $\cN|_{\mathrm{Spec}(K)}$ corresponds to an Artin-Schreier cover defined by 
\begin{equation*}
t^p-t=\frac{\lambda(b+T)^m}{(uT^{\alpha})^{mp^2}(b-1+T)^m} \ .
\end{equation*}
Note that
\begin{equation}
\left(\frac{b+T}{b-1+T}\right)^m= \left(\frac{b}{b-1}\right)^m \left(1- \frac{m}{b(b-1)}  T + \cdots   \right)
\end{equation}
Since $(m,p)=1$, the coefficient of $T$ is a unit of $k$.
Hence, $\cN|_{\mathrm{Spec}(K)}$ corresponds to an Artin-Schreier cover defined by 
\[
t^p-t=\frac{\lambda' + u' T}{(uT^{\alpha})^{mp^2}} \quad\lambda'\in k^\times, u'\in\widehat{\mathscr O}^\times_{S,s}
\]
At the cost of taking a $\alpha mp^2$ root of $\lambda'$ in $k$ and making a change of variable, we can suppose that $\lambda'=1$.
On the other hand, we have 
\begin{align*}
t^p-t-\frac{1+u'T}{(uT^{\alpha})^{mp^2}}&=t^p - t- \frac{1}{(uT^{\alpha})^{mp^2}}- \frac{u'}{u^{^{mp^2}} T^{\alpha mp^2-1}} \\
        & = t'^p -t' -\frac{u''}{T^{\alpha mp^2-1}}
\end{align*}
where we put $t' =t - \frac{1}{(uT^{\alpha})^{mp}}$ and where $u'' \in \widehat{\mathscr O}^\times_{S,s}$.
By \cref{Laumon_computation}, we deduce
\[
c_s(\cN|_{S-\{s\}})=\dimtot_s(\cN|_{S-\{s\}})=\alpha mp^2=mp^2\cdot m_s(h^*E).
\]
We now assume that $z=(a,1)\in E-\{(0,1)\}$. 
Choose a uniformizer $T$ of $\mathscr O_{S,s}$ such that $h:S\to \bA^{2,\circ}_k$ is given by 
\begin{align*}
x&\mapsto a+uT^\alpha,\ \ \ u\in\mathscr O_{S,s}^\times, \alpha\geq 1,\\
y&\mapsto 1+T.
\end{align*}
The restriction $\cN|_{\mathrm{Spec}(K)}$ corresponds to an Artin-Schreier cover defined by 
\begin{equation*}
t^p-t=\frac{\lambda(1+T)^m}{(a+uT^{\alpha})^{mp^2}T^m} \ .
\end{equation*}
Since $(m,p)=1$, there is $v\in{\mathscr O}^\times_{K}$ with $\displaystyle{v^m=\frac{(a+uT^{\alpha})^{mp^2}}{\lambda(1+T)^m}}$. 
Hence $\cN|_{\mathrm{Spec}({K})}$ corresponds to an Artin-Schreier cover defined by 
\begin{equation*}
t^p-t=\frac{1}{(vT)^m} \ .
\end{equation*}
From \cref{Laumon_computation}, we deduce
\[
c_s(\cN|_{S-\{s\}})=\dimtot_s(\cN|_{S-\{s\}})=m+1=(m+1)\cdot m_s(h^*E)   \ .
\]
We now assume $z=(0,1)=D\bigcap E$.
Choose a uniformizer $T$ of $\mathscr O_{S,s}$ such that $h:S\to \bA^{2,\circ}_k$ is given by 
\begin{align*}
x&\mapsto uT^\alpha,\ \ \ u\in\mathscr O_{S,s}^\times, \alpha\geq 1,\\
y&\mapsto 1+T.
\end{align*}
The restriction $\cN|_{\mathrm{Spec}(K)}$ corresponds to an Artin-Schreier cover defined by 
\begin{equation*}
t^p-t=\frac{\lambda(1+T)^m}{u^{mp^2}T^{\alpha mp^2+m}} \ .
\end{equation*}
Since $\alpha mp^2+m$ is coprime to $p$, there is $v\in{\mathscr O}^\times_{K}$ such that $\displaystyle{v^{\alpha mp^2+m}=\frac{u^{mp^2}}{\lambda(1+T)^m}}$. 
Hence 
$\cN|_{\mathrm{Spec}({K})}$ corresponds to an Artin-Schreier cover defined by 
\begin{equation*}
t^p-t=\frac{1}{(vT)^{\alpha mp^2+m}} \ .
\end{equation*}
By \cref{Laumon_computation}, we deduce
\[
c_s(\cN|_{S-\{s\}})=\dimtot_s(\cN|_{S-\{s\}})=\alpha mp^2+m+1=mp^2\cdot m_s(h^*D)+(m+1)\cdot m_s(h^*E) \ .
\]

\end{proof}

\begin{cor}\label{CC_computation}
In the situation from \cref{AS_on_two_lines_complement}, we have 
\begin{align*}
SS(j_!\cN)&=\bT^*_{\bA^{2,\circ}_k}\bA^{2,\circ}_k\bigcup \langle dy\rangle \cdot Z,\\
CC(j_!\cN)&=[\bT^*_{\bA^{2,\circ}_k}\bA^{2,\circ}_k]+mp^2\cdot[D\cdot\langle dy\rangle]+(m+1)\cdot[\langle dy\rangle \cdot E].
\end{align*}
\end{cor}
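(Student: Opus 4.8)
The plan is to deduce both statements from \cref{AS_on_two_lines_complement} together with the formalism of \S3.

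\emph{Singular support.} By \cref{AS_on_two_lines_complement}, $\cN$ is a nonzero locally constant sheaf whose ramification along $Z$ is $\langle dy\rangle\cdot Z$-isoclinic by restricting to curves; moreover $\langle dy\rangle\cdot Z$ is a closed conical subset of $\bT^*\bA^{2,\circ}_k$ with basis $Z$, with pure one-dimensional fibers, and with irreducible components $\langle dy\rangle\cdot D$ and $\langle dy\rangle\cdot E$ mapping bijectively onto $\IrrCom(Z)=\{D,E\}$. Hence \cref{C-isoclinic_computation_SS} applies verbatim and gives $SS(j_!\cN)=\bT^*_{\bA^{2,\circ}_k}\bA^{2,\circ}_k\cup\langle dy\rangle\cdot D\cup\langle dy\rangle\cdot E$. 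Note this already rules out any component of $SS(j_!\cN)$ — hence of $CC(j_!\cN)$ — supported over the crossing point $D\cap E=\{(0,1)\}$, since the fiber of $SS(j_!\cN)$ there is only $\{0\}\cup\langle dy\rangle$ rather than the whole cotangent plane.

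\emph{Characteristic cycle.} Since $CC(j_!\cN)$ is a $\bZ$-linear combination of the irreducible components of its support, we may write $CC(j_!\cN)=a\,[\bT^*_{\bA^{2,\circ}_k}\bA^{2,\circ}_k]+b\,[\langle dy\rangle\cdot D]+c\,[\langle dy\rangle\cdot E]$ for integers $a,b,c$, and I would pin down $a,b,c$ by restricting to curves. Choose $(S,h\colon S\to\bA^{2,\circ}_k,y_0)\in\cQ(\bA^{2,\circ}_k,Z)$ with $h$ a locally closed immersion, $y_0\in D\setminus\{(0,1)\}$, and $h$ transversal to $D$ and $\langle dy\rangle\cdot Z$-transversal at $s:=h^{-1}(y_0)$; concretely one takes for $h(S)$ a line through $y_0$ in a direction which is neither the $D$-direction nor the $\langle dy\rangle$-conormal direction, with the finitely many points of its preimage in $E$ deleted from the source, so that $h(S)\cap Z=\{y_0\}$; then $h$ is moreover $SS(j_!\cN)$-transversal, being an immersion into $U$ away from $s$. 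By \cref{AS_on_two_lines_complement} and \cref{C-isoclinic}, $C_S((j_!\cN)|_S)=h^*C_{\bA^{2,\circ}_k}(j_!\cN)=h^*\bigl(mp^2\cdot D+(m+1)\cdot E\bigr)=mp^2\cdot[s]$, since $h(S)$ avoids $E$ and meets $D$ transversally. So at $s$ the restriction $(j_!\cN)|_S$ is a wildly ramified rank one representation of conductor $mp^2$ with vanishing stalk, whence $\dimtot_s((j_!\cN)|_S)=mp^2$ (for rank one, total dimension equals conductor, cf.\ \cref{inequality_Swan_dimtot}); by \cref{CCcurve}, near $s$ one has $CC((j_!\cN)|_S)=-1\cdot[\bT^*_S S]-mp^2\cdot[\bT^*_s S]$. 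On the other hand the compatibility of the characteristic cycle with pullback along an $SS$-transversal immersion (a consequence of the Milnor formula \cref{Milnor_formula}, cf.\ \cite{cc}) gives, near $s$, $CC((j_!\cN)|_S)=(-1)^{1}\,h^!CC(j_!\cN)=-a\cdot[\bT^*_S S]-b\cdot[\bT^*_s S]$, using $h^![\bT^*_{\bA^{2,\circ}_k}\bA^{2,\circ}_k]=[\bT^*_S S]$, $h^![\langle dy\rangle\cdot D]=[\bT^*_s S]$ near $s$, and $h^![\langle dy\rangle\cdot E]=0$ near $s$. Comparing coefficients yields $a=1$ and $b=mp^2$. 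The symmetric argument with the line $\{x=1\}$, which avoids $D$ and meets $E$ transversally at $(1,1)$, yields $a=1$ again and $c=m+1$, which proves the asserted formula for $CC(j_!\cN)$.

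The hard part is the second step — more precisely, the construction of the test curves, which must be $\langle dy\rangle\cdot Z$-transversal at their point of intersection with one component of $Z$ while entirely avoiding the other, the latter being unavoidable precisely because over $Z$ the singular support lives in the non-conormal direction $\langle dy\rangle$. Everything after that is routine bookkeeping with \cref{AS_on_two_lines_complement}, \cref{C-isoclinic}, \cref{CCcurve}, the pullback compatibility for characteristic cycles, and the sign $(-1)^{\dim X-\dim S}$.
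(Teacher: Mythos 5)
Your argument is correct and takes essentially the same route as the paper: the $SS$ computation is verbatim the paper's combination of \cref{C-isoclinic_computation_SS} with \cref{AS_on_two_lines_complement}, and your determination of the $CC$ coefficients by restricting to properly $SS$-transversal test curves, using \cref{CCcurve} and the compatibility of $CC$ with transversal pullback, is exactly the content of the paper's one-line appeal to \cite[Theorem 7.6]{cc}, spelled out in detail (including the sign $(-1)^{\dim X-\dim S}$ and the choice of curves meeting one component of $Z$ transversally while avoiding the other). The only cosmetic point is that this pullback compatibility is a theorem of Saito rather than a formal consequence of the Milnor formula alone, but you cite the correct source and the bookkeeping is sound.
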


\begin{proof}
The computation of $SS$ is a consequence of \cref{C-isoclinic_computation_SS} and  \cref{AS_on_two_lines_complement}.
Then, the computation of $CC$ follows from \cite[Theorem 7.6]{cc}.
\end{proof}

\begin{construction}\label{Lefschetz_pencil_construction}
Put $\bP^n_k=\mathrm{Proj}(k[x_0,\dots, x_n])$ with $n\geq 2$.
Let $H$  be the hyperplane defined by $x_0=0$ and $H_n$ the hyperplane defined by $x_n=0$.
Put
\begin{align*}
\bA^n_k=\mathrm{Spec}\left(k\left[\frac{x_1}{x_0},\dots,\frac{x_n}{x_0}\right]\right),\ \ \ U_n=\mathrm{Spec}\left(k\left[\frac{x_0}{x_n},\frac{x_1}{x_n}\dots,\frac{x_{n-1}}{x_n}\right]\right).
\end{align*}
Put 
$$
z_0=\frac{x_0}{x_n}, z_1=\frac{x_1}{x_n},\dots, z_{n-1}=\frac{x_{n-1}}{x_n}.
$$
Hence, $H\cap U_n$ is given by $z_0=0$.
Let $\pr: \bA^n_k\to\bA^1_k$ be the projection to the first coordinate.
Let $\Lambda$ be a finite field of characteristic  $\ell\neq p$.
Let $\cN_{\lambda}$ be the Artin-Schreier sheaf of $\Lambda$-modules on $\bA^1_k$ defined by the equation $t^p-t=\lambda x^m$ for some $\lambda\in k^\times$ and  $(m,p)=1$.
Then,  the pull-back $(\pr^*\cN_{\lambda})|_{U_n\cap \bA^n_k}$ is the Artin-Schreier sheaf defined by the equation 
$$
\displaystyle{t^p-t=\lambda\left(\frac{z_1}{z_0} \right)^m} \ .
$$
Consider the closed point $x=(0,1,\dots,1)\in H\cap U_n$. 
Let $X\to \bP^n_k$ be the blow-up of $\bP^n_k$ at $x$.
Let $E$ be the exceptional divisor.
Let $\Gr(n,2)$ be the grassmanniann of projective lines in $\mathbb{P}^n_k$ and let $Q\to \Gr(n,2)$ be the universal projective line.
Let $\mathbb{P}^{n-1}_k\subset \Gr(n,2)$  be the subset of projective lines passing through $x$.
Then, there is a pull-back square
$$
\begin{tikzcd}  
X  \ar{r}   \ar{d} \drar[phantom, "\scalebox{0.8}{$\square$}"] &       Q   \ar{r} \ar{d}    &   \mathbb{P}^n_k  \\
            \mathbb{P}^{n-1}_k           \ar{r}   \arrow[leftarrow, u, "\pi "]      &  \Gr(n,2)
\end{tikzcd}  
$$
exhibiting $X$ as the closed subscheme of $\mathbb{P}^{n}_k \times  \mathbb{P}^{n-1}_k$ formed by couples $(y,L)$ where $x,y\in L$.
Furthermore, the restriction $\pi|_E:E\to\bP^{n-1}_k$ is an isomorphism.
If $H'$ is the strict transform of $H$ and if $D\subset \bP^{n-1}_k$ is the hyperplane $\pi(H')$, there is a commutative square
\begin{equation}\label{diagram_pencil}
\xymatrix{\relax
H'\ar[d]\ar[r]\ar@{}|-{\Box}[rd]&X  \ar[d]^-(0.5){\pi}\ar@{}|-{\Box}[rd]&W\ar[l]\ar[d]_{\pi_W}&\bA^n_k\ar[l]^-(0.5){\jmath_W}\ar[ld]^{\pi_A}\ar@/_1.4pc/[ll]_{\jmath}\\
D\ar[r]&\bP^{n-1}_k&\bA^{n-1}_k\ar[l]^-(0.5){}}  \ .
\end{equation}

\end{construction}

\begin{proposition}\label{local_acyclicity_E_not_H0}
In the setting of \cref{Lefschetz_pencil_construction},  let $\cL\in \LC(\bA^n_k,\Lambda)$ and let $\cN_{\lambda}$ be an Artin-Schreier sheaf of $\Lambda$-modules on $\bA^1_k$ defined by $t^p-t=\lambda x^m$ for some $\lambda\in k^\times$ and  $(m,p)=1$. 
Assume that $m=\lc_\infty(\cN_\lambda)>\lc_H(\cL)+1$. 
Then the map 
 $\pi:X \to \bP^{n-1}_k$ is universally locally acyclic relative to $\jmath_!(\pr^*\cN_{\lambda}\otimes_{\Lambda}\cL)$ at every point of $E$ not in $H'$.
\end{proposition}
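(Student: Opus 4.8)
The plan is to reduce, via the explicit blow-up chart of \cref{Lefschetz_pencil_construction}, to the statement that a suitable total dimension is \emph{constant} along the fibres of a smooth relative curve, and then to conclude by the semi-continuity of the Swan conductor. Since universal local acyclicity at a point is \'etale-local on source and target, I fix $w\in E\setminus H'$ and work near $w$. Tracing through \cref{Lefschetz_pencil_construction} — pull back to the affine chart $U_n\subseteq\bP^n_k$, move $x=(0,1,\dots,1)$ to the origin, and take the chart of the blow-up in which the exceptional divisor is a coordinate hyperplane — I obtain a Zariski-open $X_0\subseteq X$ containing $E\setminus H'$ with $X_0\cong\bA^n_k=\Spec k[w_0,a_1,\dots,a_{n-1}]$ such that $E\cap X_0=\{w_0=0\}$, $H'\cap X_0=\emptyset$, $\jmath(\bA^n_k)\cap X_0=\{w_0\neq 0\}$, the morphism $\pi|_{X_0}$ is the smooth projection $(w_0,a_1,\dots,a_{n-1})\mapsto(a_1,\dots,a_{n-1})$ onto the standard affine chart $\bA^{n-1}_k\subseteq\bP^{n-1}_k$, and the rational function $\pr$ restricts to $z_1/z_0=a_1+w_0^{-1}$. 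Writing $\jmath_0:\{w_0\neq 0\}\hookrightarrow X_0$, this identifies $\jmath_!(\pr^*\cN_\lambda\otimes_\Lambda\cL)|_{X_0}$ with $\jmath_{0!}\cK$, where $\cK$ is the tensor product on $\{w_0\neq 0\}$ of the Artin--Schreier sheaf $\cL_\psi\bigl(\lambda(a_1+w_0^{-1})^m\bigr)$ with $\cL|_{\{w_0\neq 0\}}$. As $X_0$ is a neighbourhood of $E\setminus H'$, it suffices to prove that $\pi|_{X_0}$ is universally locally acyclic relative to $\jmath_{0!}\cK$ at every point of $\{w_0=0\}$.

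Next I would show that the total dimension of $\cK$ along $\{w_0=0\}$ is the same on every fibre of $\pi|_{X_0}$. Fix $a^0\in\bA^{n-1}_k$ and let $F=(\pi|_{X_0})^{-1}(a^0)$, a copy of $\bA^1_k$ with coordinate $w_0$, meeting $\{w_0=0\}$ at the single point $\xi=(0,a^0)$. Under $X\to\bP^n_k$ the curve $F$ is the strict transform of the line $L$ through $x$ in the direction $[1:a^0_1:\dots:a^0_{n-1}]$; this direction is not tangent to $H$, so $L$ meets $H$ transversely at the single point $x$, and \cref{semi_continuity} together with \cref{inequalityLogNonLog} bounds the conductor of $\cL$ at $\xi$ along $F$ by $c_H(\cL)\le\lc_H(\cL)+1$. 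On the other hand $\cL_\psi\bigl(\lambda(a_1+w_0^{-1})^m\bigr)|_F=\cL_\psi\bigl(\lambda(1+a^0_1 w_0)^m\,w_0^{-m}\bigr)$ has, by \cref{Laumon_computation} and the assumption $(m,p)=1$, a single logarithmic slope $m$ at $\xi$, hence it is isoclinic of slope $m+1$ there with total dimension $m+1$. Since $m>\lc_H(\cL)+1$ makes $m+1$ strictly larger than every slope of $\cL|_F$ at $\xi$, applying \cref{tensordtsw}(1) to each piece of the slope decomposition of $\cL|_F$ gives that $\cK|_F$ is isoclinic of slope $m+1$ at $\xi$ and
$$
\dimtot_\xi(\cK|_F)=(m+1)\cdot\rk_\Lambda\cL=(m+1)\cdot\Rk_\Lambda\cK ,
$$
independently of $a^0$; in particular $\cK|_F$ is totally wild at $\xi$.

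Finally, $\pi|_{X_0}$ is a smooth relative curve over $\bA^{n-1}_k$, the divisor $\{w_0=0\}$ is the image of a section (hence finite \'etale over $\bA^{n-1}_k$), $\cK$ is locally constant on its complement, and the total dimension of $\cK$ along $\{w_0=0\}$ read off on the fibres of $\pi|_{X_0}$ is the constant $(m+1)\Rk_\Lambda\cK$. By the semi-continuity of the Swan conductor along a smooth relative curve and the resulting criterion for local acyclicity (Deligne--Laumon; compare the mechanism of the proof of \cref{RjFfN=0}, which combines \cite[Corollary 3.9]{wr} and \cite[Corollary 3.12]{Hu_Leal}), this forces $\pi|_{X_0}$ to be universally locally acyclic relative to $\jmath_{0!}\cK$ along $\{w_0=0\}$; since universal local acyclicity at a point is local on the source, $\pi$ is universally locally acyclic relative to $\jmath_!(\pr^*\cN_\lambda\otimes_\Lambda\cL)$ at every point of $E\setminus H'$.

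The heart of the matter is the conductor bookkeeping of the second step: the transversality of $L$ to $H$ — which holds precisely because the points of $E\setminus H'$ parametrise lines \emph{not} tangent to $H$ — controls the ramification of $\cL$ on the fibre, and the hypothesis $m>\lc_H(\cL)+1$ makes the Artin--Schreier slope $m+1$ dominate, so that tensoring homogenises all slopes to $m+1$ and the total dimension becomes \emph{constant} in the family; this constancy is exactly what yields the ULA statement. If instead the total dimension dropped on some special fibre (as it does for sheaves such as $\cL_\psi(x/y)$ over $\bA^1_k$), universal local acyclicity would fail there; and the point $E\cap H'$, where the line degenerates into $H$, is genuinely different and has to be treated by a separate argument.
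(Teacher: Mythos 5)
Your proof is correct and follows essentially the same route as the paper: both reduce, via the Deligne--Laumon semi-continuity criterion \cite{lau}, to showing that the total dimension of $\jmath_!(\pr^*\cN_{\lambda}\otimes_{\Lambda}\cL)$ at the unique boundary point of each fibre $\pi^{-1}(L)$, $L\in\bP^{n-1}_k-D$, is the constant $(m+1)\cdot\rk_{\Lambda}\cL$, computed from \cref{Laumon_computation}, \cref{semi_continuity} and \cref{tensordtsw}. The only cosmetic differences are that you work in an explicit chart of the blow-up and bound the ramification of $\cL$ on the fibre by $c_H(\cL)\leq\lc_H(\cL)+1$, whereas the paper identifies $\pi^{-1}(L)-\{(x,L)\}$ with $\bA^1_k$ via $\pr$ and uses the logarithmic bound $\lc_E(\cL)\leq\lc_H(\cL)$ together with the Swan conductor.
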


\begin{proof}
Since the restriction  $\pi|_E:E\to\bP^{n-1}_k$ is an isomorphism, it is enough by \cite{lau} to show that for every $L\in\bP^{n-1}_k- D$, the number 
$$
\dimtot_{(x,L)}(\jmath_!(\pr^*\cN_{\lambda}\otimes_{\Lambda}\cL)|_{\pi^{-1}(L)})
$$
does not depend on $L$.
Notice that for every  $L\in\bP^{n-1}_k -  D$, the composition 
$$
\pi^{-1}(L) - \{(x,L)\}\to\bA^{n}_k\xrightarrow{\pr}\bA^1_k$$ 
is an isomorphism. 
Hence, we have 
$$
\lc_{(x,L)}(\jmath_!\pr^*\cN_{\lambda}|_{\pi^{-1}(L)})=\lc_{\infty}(\cN_{\lambda})=m. 
$$
By \cref{semi_continuity} applied to the composition $\pi^{-1}(L)\to X\xrightarrow{\pi}\bP^n_k$ and to $\cL$, we obtain that for $L\in\bP^{n-1}_k -  D$, we have 
$$
\lc_{(x,L)}(\jmath_!\cL|_{\pi^{-1}(L)})\leq \lc_{E}(\cL)\leq \lc_H(\cL)<m-1.
$$
By \cref{inequality_Swan_dimtot} and \cref{tensordtsw}, for any $L\in\bP^{n-1}_k -  D$, we deduce
\begin{align*}
\dimtot_{(x,L)}(\jmath_!(\pr^*\cN_{\lambda}\otimes_{\Lambda}\cL)|_{\pi^{-1}(L)})  & =\sw_{(x,L)}(\jmath_!(\pr^*\cN_{\lambda}\otimes_{\Lambda}\cL)|_{\pi^{-1}(L)}) + \rk_{\Lambda} \cL\\
& =(m+1)\cdot\rk_{\Lambda}\cL.
\end{align*}
This concludes the proof of \cref{local_acyclicity_E_not_H0}.
\end{proof}

\begin{proposition}\label{ULAblow-up}
In the setting of \cref{Lefschetz_pencil_construction}, let $\cL\in \LC(\bA^n_k,\Lambda)$
 and let $\cN_{\lambda}$ be an Artin-Schreier sheaf of $\Lambda$-modules on $\bA^1_k$ defined by $t^p-t=\lambda x^m$ for some $\lambda\in k^\times$ and  $(m,p)=1$. 
Assume that $m=\lc_\infty(\cN_\lambda)>\lc_H(\cL)+1$. 
Then $\pi: X\to \bP^{n-1}_k$ is universally locally acyclic relative to $\jmath_!(\pr^*\cN_{\lambda}\otimes_{\Lambda}\cL)$ at a  dense  open subset of $H'\bigcap E$.
\end{proposition}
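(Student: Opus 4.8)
The plan is to reduce the claimed universal local acyclicity to the vanishing of a nearby cycle complex along the generic point of $H'\cap E$, and to obtain that vanishing by a radicial pullback in the spirit of \cite[Corollary 1.5.7]{SaitoDirectimage}.

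\emph{Reduction to a nearby cycle.} Since $\pi|_E : E\to\bP^{n-1}_k$ is an isomorphism, $\pi$ restricts to an isomorphism $H'\cap E\iso D$, so $H'\cap E$ is irreducible; let $w_0$ be its generic point, lying over the generic point $\eta_D$ of the hyperplane $D$. The locus where $\pi$ is universally locally acyclic relative to $\jmath_!(\pr^*\cN_{\lambda}\otimes_{\Lambda}\cL)$ is open, so it suffices to prove the property at $w_0$. Let $\cS$ be the henselization of $\bP^{n-1}_k$ at $\eta_D$; this is a geometric henselian trait in the sense of \cref{generalH-T}. Put $\cX:=X\times_{\bP^{n-1}_k}\cS$ with structure morphism $\pi_{\cS}:\cX\to\cS$. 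As $\pi$ is a $\bP^1$-bundle, $\pi_{\cS}$ is smooth of relative dimension $1$ and $\cX$ is regular. Two geometric facts are used: the special fibre $\cX_s$ satisfies $\cX_s\cap\bA^n_k=\varnothing$ (because $\eta_D\notin\bA^{n-1}_k$, the image of $\pi_A$, equivalently $\cX_s$ lies in the preimage of $H$), so $\jmath_!(\pr^*\cN_{\lambda}\otimes_{\Lambda}\cL)$ vanishes on a neighbourhood of $w_0$ in $\cX_s$; and $E\times_{\bP^{n-1}_k}\cS$ is a section of $\pi_{\cS}$ meeting $\cX_s$ exactly at $w_0$. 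By the first fact and the trait formalism relating universal local acyclicity to nearby cycles, it is enough to show that $R\Psi(\jmath_!(\pr^*\cN_{\lambda}\otimes_{\Lambda}\cL)|_{\cX_{\eta}},\pi_{\cS})$ vanishes in a neighbourhood of $w_0$.

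\emph{The local picture near $w_0$.} Working in the chart $U_n$ and in blow-up coordinates, one produces local coordinates $(\varpi,u)$ on $\cX$ near $w_0$, with $\varpi$ the pullback of a uniformizer of $\cS$ (so $\cX_s=\{\varpi=0\}$) and $u$ a relative coordinate cutting out $E\times_{\bP^{n-1}_k}\cS=\{u=0\}$, such that $\bA^n_k\cap\cX=\{\varpi\neq 0,\,u\neq 0\}$ near $w_0$ and, up to a unit, $\pr^*\cN_{\lambda}$ is the Artin--Schreier sheaf attached to $t^p-t=\lambda(1+u)^m u^{-m}\varpi^{-m}$. Over the geometric generic point $\varpi$ is invertible, so along $\{u=0\}$ this function has a pole of order $m$ with $(m,p)=1$; invoking \cref{tensordtsw} together with the hypothesis $m=\lc_{\infty}(\cN_{\lambda})>\lc_H(\cL)+1$ — which forces every logarithmic slope of $\cL$ in the direction transverse to $H$ to be $\leq\lc_H(\cL)<m$ — the sheaf $\pr^*\cN_{\lambda}\otimes_{\Lambda}\cL$ is logarithmic isoclinic of slope $m>0$ in the fibre direction, hence totally wildly ramified there.

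\emph{The main obstacle and its resolution.} The subtlety is that the residue field $k(D)$ of $\cS$ is imperfect, so the comparison of the transverse conductor of $\cL$ with $\lc_H(\cL)$ via restriction to curves and semi-continuity (\cref{semi_continuity}) is not directly licit at the geometric generic fibre; moreover the wild-vanishing input below requires a clean, non-degenerate sheaf. To circumvent this I would, following the radicial pullback argument of \cite[Corollary 1.5.7]{SaitoDirectimage}, replace $\pi_{\cS}:\cX\to\cS$ by its pullback along a suitable finite radicial morphism. Such a morphism is a universal homeomorphism, hence does not change the nearby cycle complex (it only modifies its monodromy), while after it the local picture above becomes defined over a perfect residue field: \cref{semi_continuity} applies to a line through $x$ meeting $H$ transversally, giving that the transverse conductor of $\cL$ is at most $c_H(\cL)\leq\lc_H(\cL)+1<m+1$, and $\pr^*\cN_{\lambda}\otimes_{\Lambda}\cL$ becomes isoclinic and clean along the exceptional divisor near $w_0$, dominated by the contribution of $\cN_{\lambda}$. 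The desired vanishing of $R\Psi$ near $w_0$ then follows from the vanishing of the stalks of $Rj_*$ of a totally wildly ramified clean sheaf along a curve, exactly as \cite[Corollary 3.12]{Hu_Leal} is used in the proof of \cref{RjFfN=0}; spreading out from $w_0$ yields universal local acyclicity on a dense open subset of $H'\cap E$. The step I expect to be delicate is precisely the choice of the radicial pullback and the verification that it simultaneously repairs the imperfection of $k(D)$ and produces the cleanness needed for the wild-vanishing statement; the hypothesis $m>\lc_H(\cL)+1$ is what keeps the ramification isoclinic and totally wild throughout, and is used there in an essential way.
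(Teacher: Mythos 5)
Your proposal identifies the correct high-level tools (a radicial pullback in the spirit of \cite[Corollary 1.5.7]{SaitoDirectimage}, the local Artin--Schreier formula $t^p-t=\lambda(1+u)^m u^{-m}\varpi^{-m}$, domination of $\cL$ by $\cN_\lambda$) but misdiagnoses the obstacle and, as a result, does not arrive at a correct argument. The obstacle is \emph{not} the imperfection of the residue field $k(D)$: a radicial pullback is a universal homeomorphism of $k$-schemes and does \emph{not} replace $k(D)$ by a perfect field, so the sentence ``after it the local picture above becomes defined over a perfect residue field'' is simply false, and nothing in the argument repairs it. The real difficulty sits at the corner $H'\cap E$: for a transversal curve meeting $H'$ with multiplicity $\alpha$ and $E$ transversally, the pole order of the original $\pr^*\cN_\lambda$ is $\alpha m+m=(\alpha+1)m$, which can be divisible by $p$, so the Artin--Schreier sheaf may fail to be clean/$C$-isoclinic at the corner and the divisorial conductor formula fails to be additive. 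The specific radicial pullback $\Fr:s_0\mapsto s_0^{p^2}$ is designed precisely to fix this: it turns the pole order into $\alpha mp^2+m\equiv m\pmod p$, coprime to $p$ since $(m,p)=1$, so Laumon's computation (\cref{Laumon_computation}) applies uniformly and one obtains the explicit $C$-isoclinicity of \cref{AS_on_two_lines_complement} and the $SS$/$CC$ computation of \cref{CC_computation}.

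The paper's proof then never passes through nearby cycles: after the pullback, one computes $SS$ and $CC$ of $\Fr^*j_!(\pr^*\cN_\lambda|_\Omega)$ via \cref{CC_computation}, compares conductors of $\Fr^*\cL$ and $\Fr^*\cN_\lambda$ along $H'$ and $E$ using \cref{semi_continuity} (the factor $p^2$ from $\Fr$ enters here, and both inequalities $p^2(\lc_H(\cL)+1)<mp^2$ and $\lc_H(\cL)+1<m+1$ follow from $m>\lc_H(\cL)+1$), invokes \cref{C-isoclinic_tensor_product} to get $SS(\Fr^*j_!((\cL\otimes\pr^*\cN_\lambda)|_\Omega))\subseteq\bT^*_{V_1}V_1\cup\langle ds\rangle\cdot Z$, and concludes ULA of $V_1\to W_1$ directly from transversality of this $SS$ to the projection (which forgets the coordinate $s$). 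ULA then descends along the universal homeomorphism $\Fr$. Your detour through vanishing of $R\Psi$ over the trait $\cS$ requires both a reduction of ULA to nearby-cycle vanishing (not automatic in the ``universal'' form) and a vanishing argument over an imperfect residue field; and the reference to \cite[Corollary 3.12]{Hu_Leal} and the proof of \cref{RjFfN=0} concerns vanishing of stalks of $Rj_*$, not nearby cycles, so the final step is not in fact supplied. In short: the key idea you flagged as ``delicate'' --- which radicial pullback to take and why it works --- is exactly the substance of the proof, and your stated rationale for it would not lead to the right choice.
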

\begin{proof}
We are going to achieve universal local acyclicity after a suitable radicial pullback.
We learned this argument in \cite[Corollary 1.5.7]{SaitoDirectimage}.
Consider the pull-back square
$$
\begin{tikzcd}  
   Y_n \ar{r} \ar{d}   \drar[phantom, "\scalebox{0.8}{$\square$}"]   &  X  \ar{d}{\pi}    \\
   U_n \ar{r}    &      \mathbb{P}^{n}_k        
\end{tikzcd}  
$$
so that $Y_n$ is the blow-up of $U_n$ at $x$.
Hence, 
$Y_n=\{(y,L)\in U_n \times  \mathbb{P}^{n-1}_k \text{ such that } y\in L\}$.
Using the coordinates of $U_n$ from \cref{Lefschetz_pencil_construction}, the scheme $Y_n$ is the subscheme of $U_n \times \mathbb{P}^{n-1}_k$  defined by the equations
$$
z_0u_i=(z_i-1)u_0, 
(z_i-1)u_j=(z_j-1)u_i\ \ \textrm{for}\ \  i\neq j,\ \ i,j\in\{1,2,\dots, n-1\}  \ .
$$
Let $W_1\subset \mathbb{P}^{n-1}_k$ be the complement of the hyperplane defined by $u_1=0$.
Consider the pull-back square
$$
\begin{tikzcd}  
   V_1 \ar{r} \ar{d}   \drar[phantom, "\scalebox{0.8}{$\square$}"]  &  Y_n  \ar{d}   \\
   W_1\ar{r}    &    \mathbb{P}^{n-1}_k 
\end{tikzcd}  
$$
In particular $V_1$ is a locally closed subset of $U_n \times W_1$.
If we put $s_i = u_i/u_1$ for $i\neq 1$, then $V_1\simeq \Spec(k[s_0,s_2,\dots, s_{n-1},s])$ such that $V_1 \to U_n$ is given by 
\begin{align*}
z_0&\mapsto s_0(s-1),\\
z_1&\mapsto s,\\
z_i&\mapsto s_i(s-1)+1,\ \ \ (2\leq i\leq n-1).
\end{align*}
and $V_1 \to W_1$ is the projection on the first $(n-1)$-coordinates.
Put $E_1=V_1\cap E$ and $H_{1}' = V_1\cap H'$.
Then, $V_1 \cap \pi^{-1}(H)=  H'_{1} \cup E_1$ is defined by $s_0(s-1)=0$ and if we denote by $\Omega$ its complement in $V_1$, the pull-back $(\pr^*\cN_{\lambda})|_{\Omega}$ is the Artin-Schreier sheaf defined by the equation 
$$
\displaystyle{t^p-t=\lambda\left(\frac{s}{s_0(s-1)} \right)^m} \ .
$$
Consider the cube with pull-back faces
\[
\begin{tikzcd}
		V_1\arrow{rr} \arrow{dr}{\Fr} \arrow{dd} & & \bA^2_k\arrow{dr}\arrow{dd} \\
		{} & V_1 \arrow[crossing over]{rr} & & \bA^2_k\arrow{dd}{} \\
		 W_1 \arrow{rr} \arrow{dr}& & \bA^1_k\arrow{dr}{\Fr_1} \\
		{} & W_1 \arrow{rr} \arrow[leftarrow, crossing over]{uu} & & \bA^1_k
	\end{tikzcd} 
\]
where the top horizontal arrows are $(s_0,s_2,\dots, s_{n-1},s) \to (s_0,s)$, the lower horizontal arrows are $(s_0,s)\to s_0$ and where $\Fr_1 : s_0 \to s_0^{p^2}$.
Let $Z\subset H'_{1}$ defined by the ideal $(s,s_0)$ and let $j : \Omega \to V_1-Z$ be the canonical inclusion.
Then, $\Fr^*j_{!}(\pr^*\cN_{\lambda})|_{\Omega}$ is the Artin-Schreier sheaf defined by the equation 
$$
\displaystyle{t^p-t=\lambda\left(\frac{s}{s_0^{p^2}(s-1)} \right)^m} \ .
$$
Since $ V_1 \to  \bA^2_k$ is smooth,  \cref{CC_computation} and \cite[Theorem 7.6]{cc} give 
\[
SS(\Fr^* j_{!}(\pr^*\cN_{\lambda})|_{\Omega})=\bT^*_{(V_1-Z)}(V_1-Z)\bigcup((H_{1}'-Z)\cup E_1)\cdot\langle ds\rangle
\]
and 
\begin{align*}
CC(\Fr^* j_{!}(\pr^*\cN_{\lambda})|_{\Omega})&=(-1)^n[\bT^*_{(V_1-Z)}(V_1-Z)]+(-1)^nmp^2\cdot[(H_{1}'-Z)\cdot\langle du\rangle] \\
                & +(-1)^n(m+1)\cdot[E_1\cdot\langle du\rangle]  .
\end{align*}
By \cref{semi_continuity}, we have 
\begin{align*}
c_{H'}(\Fr^* j_{!}(\cL|_{\Omega}))&\leq p^{2}\cdot c_H(\cL)\leq p^{2}\cdot (\lc_H(\cL)+1)<mp^2=c_{ H'}(\Fr^* j_{!}(\pr^*\cN_{\lambda}|_{\Omega}))
\end{align*}
and 
\begin{align*}
c_{E}(\Fr^* j_{!}(\cL|_{\Omega}))&\leq c_H(\cL)\leq \lc_H(\cL)+1<m+1=c_{E}(\Fr^* j_{!}(\pr^*\cN_{\lambda}|_{\Omega})).
\end{align*}
By \cref{AS_on_two_lines_complement} and \cref{C-isoclinic_tensor_product}, we have 
\begin{align*}
SS(\Fr^*  j_{!}((\cL\otimes_{\Lambda}\pr^*\cN_{\lambda})|_{\Omega}))&=SS(\Fr^* j_{!}(\pr^*\cN_{\lambda})|_{\Omega}))
\end{align*}
and 
\begin{align*}
CC(\Fr^*  j_{!}((\cL\otimes_{\Lambda}\pr^*\cN_{\lambda})|_{\Omega}))&=\rk_{\Lambda}\cL \cdot CC(\Fr^* j_{!}(\pr^*\cN_{\lambda})|_{\Omega}).
\end{align*}
Hence $V_1\to W_1$ is $SS(\Fr^*  j_{!}((\cL\otimes_{\Lambda}\pr^*\cN_{\lambda})|_{\Omega}))$-transversal.
Thus, $V_1\to W_1$ is universally locally acyclic with respect to $\Fr^*  j_{!}((\cL\otimes_{\Lambda}\pr^*\cN_{\lambda})|_{\Omega})$. 
Since $\Fr$ is a universal homeomorphism, we deduce that  $V_1\to W_1$ is universally locally acyclic with respect to $j_{!}((\cL\otimes_{\Lambda}\pr^*\cN_{\lambda})|_{\Omega})$. 
Since $Z$ is disjoint from $H' \cap E$, we deduce that  $\pi: X\to \bP^{n-1}_k$ is universally locally acyclic with respect to $\jmath_!(\pr^*\cN_{\lambda}\otimes_{\Lambda}\cL)$ at every point of  $V_1 \cap (H'\bigcap E)$.
This concludes the proof of \cref{ULAblow-up}.
\end{proof}

\begin{proposition}\label{computation_conductor_twisted}
Let $n\geq 2$ be an integer. 
In the setting of \cref{Lefschetz_pencil_construction}, let $\cL\in \LC(\bA^n_k,\Lambda)$ and let $\cN_{\lambda}$ be an Artin-Schreier sheaf of $\Lambda$-modules on $\bA^1_k$ defined by $t^p-t=\lambda x^m$ for some $\lambda\in k^\times$ and  $(m,p)=1$. 
Assume that $m=\lc_\infty(\cN_\lambda)>\lc_H(\cL)+1$. 
Then, the following holds :
\begin{enumerate}\itemsep=0.2cm
\item the complex $R\pi_{A!}(\pr^*\cN_{\lambda}\otimes_{\Lambda}\cL)$ 
is concentrated in degree $1$, has locally constant constructible cohomology sheaves and its formation commutes with base change.

\item We have 
$$
\lc_D(R^1\pi_{A!}(\pr^*\cN_{\lambda}\otimes_{\Lambda}\cL))\leq m.
$$
\end{enumerate}
\end{proposition}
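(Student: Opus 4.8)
The plan is to exploit the geometry of the pencil from \cref{Lefschetz_pencil_construction}. The facts I would record first: the map $\pi:X\to\bP^{n-1}_k$, being the projection of the incidence variety $\{(y,L):x,y\in L\}$, is a $\bP^1$-bundle; one has $\pi^{-1}(D)=H'$, so $W=X-H'=\pi^{-1}(\bA^{n-1}_k)$ and $\pi_W:W\to\bA^{n-1}_k$ is smooth and proper with $\bP^1$-fibres; $E$ is a section of $\pi$; and the open immersion $\bA^n_k\hookrightarrow X$ identifies $\bA^n_k$ with $W\setminus(E\cap W)$, where $E\cap W$ is a section of $\pi_W$. Writing $\cG=\pr^*\cN_{\lambda}\otimes_{\Lambda}\cL$, one has $R\pi_{A!}\cG=R\pi_{W*}\jmath_{W!}\cG$. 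Base change compatibility is then automatic, since $\jmath_{W!}$ and $R\pi_{W*}$ (proper base change) commute with it. For local constancy I would show that $\pi_W$ is universally locally acyclic relative to $\jmath_{W!}\cG$: over $\bA^n_k$ this holds because $\pi_A$ is smooth and $\cG$ lisse, and over $E\cap W$, i.e. at the points of $E$ off $H'$, this is precisely \cref{local_acyclicity_E_not_H0}; with $\pi_W$ proper this makes $R\pi_{W*}\jmath_{W!}\cG$ have locally constant constructible cohomology. For the concentration in degree $1$ I would evaluate a single geometric fibre: $\pi_W^{-1}(\overline L)\cong\bP^1$ has $\pi_A^{-1}(\overline L)\cong\bA^1$ as the complement of one point, $\pr$ restricts to an isomorphism $\pi_A^{-1}(\overline L)\iso\bA^1$ carrying that point to $\infty$, and — using \cref{semi_continuity} to bound the conductor of $\cL$ there by $\lc_E(\cL)\le\lc_H(\cL)<m-1$, together with \cref{tensordtsw} — the restriction of $\cG$ is lisse on $\bA^1$ and totally wildly ramified at $\infty$; hence its compactly supported cohomology is concentrated in degree $1$, and so is $R\pi_{A!}\cG$.

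\textbf{Proof of (2): localization.} I would work locally at the generic point $\xi$ of $D$. Let $\mathcal S$ be the henselization of $\bP^{n-1}_k$ at $\xi$ — a geometric trait, since $D$ is a smooth divisor of the smooth scheme $\bP^{n-1}_k$ — with function field $K$. Then $X_{\mathcal S}=X\times_{\bP^{n-1}_k}\mathcal S\to\mathcal S$ is a smooth proper relative $\bP^1$ with special fibre $H'_{\mathcal S}$, and $E_{\mathcal S}$ is a section of $\pi_{\mathcal S}$ meeting that special fibre at a single point $w$ lying over the generic point of $E\cap H'$. By proper base change and the nearby cycle spectral sequence, $R\Gamma\bigl((X_{\mathcal S})_{\overline s},R\Psi(\jmath_!\cG,\pi_{\mathcal S})\bigr)$ is concentrated in degree $1$ (by (1)) and is $(R^1\pi_{A!}\cG)|_{\overline\eta}$ as a $G_K$-module, so $\lc_D(R^1\pi_{A!}\cG)$ is at most the largest logarithmic slope of $R\Psi(\jmath_!\cG,\pi_{\mathcal S})$ over the points of $(X_{\mathcal S})_{\overline s}\cong\bP^1$.

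\textbf{Proof of (2): bounding the nearby cycle.} I would bound these slopes in two steps. At $w$: \cref{ULAblow-up} makes $\pi$, hence $\pi_{\mathcal S}$, universally locally acyclic relative to $\jmath_!\cG$ at the generic point of $E\cap H'$, and since $\jmath_!\cG$ has zero stalk at $w$ this forces $R\Psi(\jmath_!\cG,\pi_{\mathcal S})_w=0$. Away from $w$: I would delete the section, setting $\mathcal X'=X_{\mathcal S}-E_{\mathcal S}$, whose special fibre is an affine line; then $(\mathcal X',\mathcal X'_s)$ with $\mathcal Z_f=\emptyset$ is a semi-stable pair, $\mathcal X'$ is smooth over $\mathcal S$, the complement of the special fibre is $\bA^n_k\times_{\bP^{n-1}_k}\mathcal S$ on which $\cG$ is lisse, and — the tameness hypothesis being vacuous since $\mathcal Z_f=\emptyset$ — \cref{generalH-T} bounds the logarithmic slopes of $R\Psi(\jmath_!\cG,\pi_{\mathcal S})$ on $\mathcal X'_{\overline s}$ by the logarithmic conductor of $\cG$ at the generic point $\eta_{H'}$ of $H'$. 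That last number is $m$: near $\eta_{H'}$ the sheaf $\pr^*\cN_{\lambda}$ is the Artin--Schreier sheaf $t^p-t=\lambda(x_1/x_0)^m$ with $x_0/x_1$ a uniformizer (as $X\to\bP^n_k$ is an isomorphism there), so \cref{Laumon_computation} gives $\lc_{H'}(\jmath_!\pr^*\cN_{\lambda})=m$, while $\lc_{H'}(\jmath_!\cL)=\lc_H(\cL)<m-1$, and \cref{tensordtsw} then shows $\cG$ is logarithmic isoclinic of logarithmic slope $m$ at $\eta_{H'}$. Combining the two steps gives $\lc_D(R^1\pi_{A!}\cG)\le m$.

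\textbf{Main obstacle.} The delicate point is the organization of part (2): \cref{generalH-T} cannot be applied directly to $X_{\mathcal S}\to\mathcal S$, because $\cG=\pr^*\cN_{\lambda}\otimes\cL$ is wildly ramified along the section $E_{\mathcal S}$, which is the only candidate for a horizontal boundary divisor $\mathcal Z_f$, whereas the theorem requires $\cG$ to be tame along $\mathcal Z_f$. The device — the one used already in \cref{ULAblow-up}, following \cite{SaitoDirectimage} — is to isolate that section: \cref{ULAblow-up} kills the nearby cycles exactly at the section point over $\xi$, and after deleting the section the remaining relative curve has an affine line as special fibre and no horizontal boundary, so \cref{generalH-T} does apply to it. The remaining, essentially bookkeeping, ingredients are the geometric claims that $\pi$ is a $\bP^1$-bundle with $\pi^{-1}(D)=H'$ and $E$ a section, and the identification of $(X_{\mathcal S})_s$ with $H'_{\mathcal S}$ and of its generic point with $\eta_{H'}$.
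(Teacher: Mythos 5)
Your proof is correct and follows essentially the same route as the paper's. For (1) the paper simply invokes \cref{local_acyclicity_E_not_H0}, which together with smoothness of $\pi_A$ over $\bA^n_k$ gives universal local acyclicity of $\pi_W$, whereas you also spell out the evaluation of a geometric fibre to get the concentration in degree one; both are using the same ULA input. For (2) the two arguments are identical: localize at the generic point $\xi$ of $D$, pass to the henselian trait, kill the nearby cycles at the unique intersection point of the section with the special fibre via \cref{ULAblow-up}, and bound them elsewhere via \cref{generalH-T}. The paper applies \cref{generalH-T} tacitly (relying on the étale-local nature of nearby cycles to work in a neighbourhood of $y\neq\overline\eta$ that misses $E_{\cS}$, where $\cZ_f=\emptyset$); you make this explicit by deleting the section to form $\cX'=X_{\cS}-E_{\cS}$ before invoking it, which is a clean way to phrase the same step. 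Your observation about why \cref{generalH-T} cannot be applied directly — that $\cG$ is wild along $E_{\cS}$ — is precisely the point the paper is maneuvering around with \cref{ULAblow-up}.
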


\begin{proof}
With the notations from the diagram (\ref{diagram_pencil}), we have  
$$
R\pi_{A!}(\pr^*\cN_{\lambda}\otimes_{\Lambda}\cL)=R\pi_{W*} j_{W!}(\pr^*\cN_{\lambda}\otimes_{\Lambda}\cL).
$$ 
Hence, (1) follows by \cref{local_acyclicity_E_not_H0}.
We now bound the  logarithmic conductor of $R\pi_{A!}(\pr^*\cN_{\lambda}\otimes_{\Lambda}\cL)$ at the generc point $\xi$ of $D$.
To do this, observe that the proper base change gives 
$$
(R\pi_*\jmath_!(\pr^*\cN_{\lambda}\otimes_{\Lambda}\cL))|_{\bA^{n-1}_k}\simeq R\pi_{A!}(\pr^*\cN_{\lambda}\otimes_{\Lambda}\cL) \ . 
$$
Hence, we have to bound the logarithmic conductor of $R\pi_*\jmath_!(\pr^*\cN_{\lambda}\otimes_{\Lambda}\cL)$ at $\xi$.
Put $\cS:=\mathrm{Spec}(\mathscr O_{{\bP}^{n-1}_{k},\overline\xi}^{\mathrm{sh}})$ and consider the following pull-back squares
\begin{equation*}
\xymatrix{\relax
\cX_{\overline\xi}\ar[d]\ar[r]\ar@{}|-{\Box}[rd]&\cX\ar[d]^-(0.5){\pi_{\cS}}\ar@{}|-{\Box}[rd]\ar[r]&X\ar[d]^{\pi}\\
\overline\xi\ar[r]&\cS\ar[r]&\bP^{n-1}_k \ . }
\end{equation*}
Since the logarithmic conductor of $R\pi_*\jmath_!(\pr^*\cN_{\lambda}\otimes_{\Lambda}\cL)$ at $\xi$ is smaller than the logarithmic conductors of the 
$$
(R^*\Psi_{\pi_S}(\jmath_!(\pr^*\cN_{\lambda}\otimes_\Lambda\cL)|_{\cS}))_{y}  ,   y\in \cX_{\overline\xi}
$$
we are left to bound the logarithmic conductors of the above Galois modules.
Put $\cE := \cS \times_{\bP^{n-1}_k} E$.
Since $\pi|_E:E\to\bP^{n-1}_k$ is an isomorphism, so is the pullback $\pi_{\cS}|_{\cE}:\cE\to\cS $.
Hence, the closed subscheme $\cE\subset \cX$ meets the special fibre of $\pi_{\cS} : \cX \to \cS$ at a unique point $\overline{\eta}$ lying over the generic point of $H'\cap E$.
By \cref{ULAblow-up}, the map $\pi : X\to \bP^{n-1}_k$ is universally locally acyclic with respect to  $\jmath _!(\pr^*\cN_{\lambda}\otimes_{\Lambda}\cL)$ in an open neighborhood of the generic point of $H'\bigcap E$.
Hence, the pull-back $\pi_{\cS} : \cX \to \cS$ is universally locally acyclic at $\overline{\eta}$ with respect to  $\jmath_!(\pr^*\cN_{\lambda}\otimes_{\Lambda}\cL)|_{\cS}$.
In particular, 
$$
(R^*\Psi_{\pi_S}(\jmath_!(\pr^*\cN_{\lambda}\otimes_\Lambda\cL)|_{\cS}))_{\overline{\eta}}\simeq 0    .
$$
By Theorem \ref{generalH-T}, the logarithmic conductor of $(R^*\Psi_{\pi_S}(j_!(\pr^*\cN_{\lambda}\otimes_\Lambda\cL)))_y$ is smaller than $m$ for every $y\in X_{\overline\xi}$ with $y\neq \overline{\eta}$. 
This concludes the proof of \cref{computation_conductor_twisted}.
\end{proof}

\section{Estimates for Betti numbers on affine spaces}
In this section, we assume that  $k$ is algebraically closed  of characteristic $p>0$ and that $\Lambda$ is a finite field of characteristic $\ell \neq p$.
Let $\bA^n_k=\mathrm{Spec}(k[x_1,\dots, x_n])$ and let $\bP^n_k=\mathrm{Proj}(k[x_0,\dots, x_n])$ be the canonical completion of $\bA^n_k$ obtained by adding a homogeneous coordinate $x_0$.
Let $j:\bA^n_k\hookrightarrow \bP^n_k$ be the inclusion and let $H:=\bP^n_k-\bA^n_k$ be the hyperplane at infinity.
The goal of this subsection is to prove the following

\begin{theorem}\label{maintheoremAn}
Let $k$ be an algebraically closed field  of characteristic $p>0$ and let $\Lambda$ be a finite field of characteristic $\ell \neq p$.
For every $0\leq i\leq n$ and every $\cL\in \LC(\bA^n_k,\Lambda)$, we have 
$$
h^i(\bA^n_k,\cL)\leq b_i(\lc_H(\cL))\cdot\rk_{\Lambda}\cL 
$$
where the $b_i$ are defined in \cref{defin_bi}.
\end{theorem}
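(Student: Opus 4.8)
The plan is to argue by induction on $n$, with the curve case $n=1$ following directly from \cref{GOcor_affine} (which gives $h^0\le \rk_\Lambda\cL$, $h^1\le(1+\lc_H(\cL))\cdot\rk_\Lambda\cL$ and $h^2=0$ — matching $b_0=1$, $b_1(x)=x$ since for $\bA^1$ the curve $\bP^1$ has genus $0$ and $|D|=1$; one checks the bound $2g-1+|D|+|D|\lc_D(\cL)=\lc_H(\cL)$). For the inductive step I would first use a weak Lefschetz type argument (the referenced \cref{hn-1<bn-1}) to reduce bounding $h^i(\bA^n_k,\cL)$ for $i<n$ to the case of a general hyperplane section $\bA^{n-1}_k\subset\bA^n_k$, so the only genuinely new cohomology to control in dimension $n$ is $h^n$, and by duality/Artin vanishing it suffices to bound the Euler--Poincar\'e characteristic $\chi(\bA^n_k,\cL)$ (all other Betti numbers being under control by induction and weak Lefschetz).

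To estimate $\chi(\bA^n_k,\cL)$ the idea is to twist $\cL$ by a generic Artin--Schreier sheaf $\cN_\lambda$ pulled back along the first projection $\pr:\bA^n_k\to\bA^1_k$, with wild conductor $m=\lc_\infty(\cN_\lambda)$ chosen coprime to $p$ and strictly larger than $\lc_H(\cL)+1$. The key point is that $\chi(\bA^n_k,\cL)$ and $\chi(\bA^n_k,\pr^*\cN_\lambda\otimes_\Lambda\cL)$ differ only by a controlled amount (the referenced \cref{chi_special_F}), because the difference is concentrated on the locus where the twist changes the ramification, which one can track via the characteristic cycle and the index formula \cref{CC_and_chi}. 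Then I would apply the Lefschetz pencil of \cref{Lefschetz_pencil_construction}: blowing up $\bP^n_k$ at the point $x=(0,1,\dots,1)$ and pushing forward along $\pi_A:\bA^n_k\to\bA^{n-1}_k$. By \cref{computation_conductor_twisted}, $R\pi_{A!}(\pr^*\cN_\lambda\otimes_\Lambda\cL)$ is concentrated in degree $1$, has locally constant cohomology of controlled rank $(m+1)\rk_\Lambda\cL$, and its logarithmic conductor at the hyperplane $D\subset\bP^{n-1}_k$ at infinity is at most $m$. Hence by the Leray spectral sequence $\chi(\bA^n_k,\pr^*\cN_\lambda\otimes_\Lambda\cL)=-\chi(\bA^{n-1}_k,R^1\pi_{A!}(\cdots))$, and the right-hand side is estimated by the inductive hypothesis applied in dimension $n-1$ to a sheaf of rank $(m+1)\rk_\Lambda\cL$ with $\lc_D\le m$.

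The bookkeeping is then to choose $m$ as small as possible subject to $m>\lc_H(\cL)+1$ and $(m,p)=1$ — one can always take $m\le\lc_H(\cL)+3$ — and to feed the resulting rank $(m+1)\rk_\Lambda\cL\le(\lc_H(\cL)+4)\rk_\Lambda\cL$ and conductor $m\le\lc_H(\cL)+3$ into the degree-$(n-1)$ estimates, carefully summing the contributions from $R^1\pi_{A!}$ together with the weak-Lefschetz contributions from the hyperplane section (whose conductor gets bounded by $\lc_H(\cL)$ after a suitable choice, but whose ambient geometry may shift things by a bounded amount). Matching these contributions against the recursion defining $b_n$ in \cref{defin_bi} — where the parity condition $i\not\equiv n\pmod 2$ governs which terms are hit by the $h^{n-1}$-from-below versus the $\chi$-from-above mechanism, and the shifts $x\mapsto x+2$, $x\mapsto x+3$ encode respectively the conductor bound $m\le x+3$ and the rank growth factor — is the combinatorial heart of the argument.

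The main obstacle, I expect, is the interplay between the \emph{two} recursions happening simultaneously: one on the dimension and one on the conductor/rank pair, with the conductor of the twist feeding into the rank of the pushforward in the next dimension down. Keeping the polynomial degrees optimal (degree exactly $i$ for $b_i$, as required) forces one to be economical at every step — in particular to avoid any loss when passing between $h^n$, $\chi$, and the twisted $\chi$, and to handle the contributions of $h^0,\dots,h^{n-1}$ to $h^n$ via Poincar\'e duality on the compactly-supported side without the degrees creeping up. The delicate point where this could fail is precisely the locus $H'\cap E$ in the pencil, where the estimate of \cref{HT} for the ramification of nearby cycles breaks down; this is why the radicial-pullback trick in \cref{ULAblow-up} is needed to kill the nearby cycle there, and one must make sure the degree-$n$ statement does not silently invoke anything stronger than what \cref{computation_conductor_twisted} actually delivers.
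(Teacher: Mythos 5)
Your proposal follows the paper's strategy essentially point for point: induction on $n$ with the $n=1$ case from \cref{GOcor_affine}, \cref{hn-1<bn-1} (weak Lefschetz plus Artin vanishing) to handle $h^i$ for $i<n$, reduction to controlling $\chi(\bA^n_k,\cL)$ for $h^n$, twisting by a generic Artin--Schreier sheaf $\pr^*\cN_\lambda$, and the pencil $\pi_A:\bA^n_k\to\bA^{n-1}_k$ with \cref{computation_conductor_twisted} and \cref{chi_special_F}. Two minor points in your bookkeeping are off the mark. First, the rank of $R^1\pi_{A!}(\pr^*\cN_\lambda\otimes\cL)$ is $(m-1)\cdot\rk_\Lambda\cL$, not $(m+1)\cdot\rk_\Lambda\cL$: the total dimension at the point $(x,L)\in E$ on the generic pencil line is indeed $(m+1)\rk_\Lambda\cL$, but Grothendieck--Ogg--Shafarevich on the fibre $\bP^1$ of genus $0$ then gives $\chi_c = 2\rk - (m+1)\rk = (1-m)\rk$, and since the pushforward is concentrated in degree $1$ its rank is $(m-1)\rk_\Lambda\cL$. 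You appear to have carried the dimtot from \cref{local_acyclicity_E_not_H0} directly into the rank. Second, the interpretation of the shifts in \cref{defin_bi} is then reversed: the factor $(x+2)$ in the recursion arises as the bound $m-1\leq\lc_H(\cL)+2$ on the rank multiplier, while $(x+3)$ encodes both $m\leq\lc_H(\cL)+3$ as fed into the inductive $b_i$ at the next level down and as the Swan-discrepancy multiplier in \cref{chi_special_F}. Using $(m+1)$ would inflate $b_n$ (e.g.\ $b_2$ becomes $x^2+9x+15$ instead of $x^2+7x+9$); the degrees remain correct but the statement as written, with the $b_i$ of \cref{defin_bi}, would not follow. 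Also, the conductor of the hyperplane restriction in the weak Lefschetz step is bounded by $\lc_H(\cL)$ with no shift by \cref{semi_continuity}; there is no ``ambient geometry'' correction there.
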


\begin{rem}
The estimates from \cref{maintheoremAn} do not depend on the base field $k$ nor the coefficients.
\end{rem}

\cref{maintheoremAn} will be proved in subsection \ref{pfmaintheoremAn}.
Before this, let us draw some immediate consequences.

\begin{cor}\label{maintheoremAn_explicit}
Let $k$ be an algebraically closed field  of characteristic $p>0$ and let $\Lambda$ be a finite field of characteristic $\ell \neq p$.
For every $\cL\in \LC(\bA^n_k,\Lambda)$, we have 
\begin{align*}
h^0(\bA^n_k,\cL)&\leq \rk_{\Lambda}\cL,\\
h^1(\bA^n_k,\cL)&\leq \lc_H(\cL)\cdot\rk_{\Lambda}\cL\\
h^2(\bA^n_k,\cL)&\leq (\lc_H(\cL)^2+7\lc_H(\cL)+9)\cdot\rk_{\Lambda}\cL\\
h^{i}(\bA^n_k,\cL)&\leq (\lc_H(\cL)+3i-3)\prod_{j=1}^{i-1}(\lc_H(\cL)+3j+1)\cdot\rk_{\Lambda}\cL\; \text{for every $3\leq i\leq n$.}
\end{align*}
\end{cor}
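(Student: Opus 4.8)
The plan is to deduce everything from \cref{maintheoremAn}: it suffices to check that the polynomials $b_i$ of \cref{defin_bi} satisfy $b_0(x)=1$, $b_1(x)=x$, $b_2(x)=x^2+7x+9$, and $b_i(x)\leq (x+3i-3)\prod_{j=1}^{i-1}(x+3j+1)$ for all $i\geq 3$ and all $x\in\bQ_{\geq 0}$, since then $h^i(\bA^n_k,\cL)\leq b_i(\lc_H(\cL))\cdot\rk_{\Lambda}\cL$ yields each stated estimate. The first two identities are the definitions of $b_0$ and $b_1$, and expanding the recursion in \cref{defin_bi} once,
\[
b_2(x)=(x+2)b_1(x+3)+(x+3)b_0(x)+b_1(x)=(x+2)(x+3)+(x+3)+x=x^2+7x+9,
\]
which is the bound claimed for $h^2$.

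For $i\geq 3$ I would argue by induction, introducing $Q_i(x):=(x+3i-3)\prod_{j=1}^{i-1}(x+3j+1)$ together with $Q_0:=1$ and $Q_1(x):=x$; note $Q_0=b_0$, $Q_1=b_1$ and $Q_2(x)=(x+3)(x+4)\geq b_2(x)$, so the induction starts. Assuming $b_\ell\leq Q_\ell$ on $[0,\infty)$ for all $\ell<i$, and using that the coefficients $x+2$, $1$, $x+3$ occurring in \cref{defin_bi} are nonnegative on $[0,\infty)$ while the $Q_\ell$ are increasing there, substituting these bounds into the recursion reduces the claim to the purely polynomial inequality
\[
\sum_{\substack{0\leq \ell<i\\ \ell\neq i\modulo 2}}\bigl[(x+2)Q_\ell(x+3)+Q_\ell(x)\bigr]+\sum_{\substack{0\leq\ell<i\\ \ell= i\modulo 2}}(x+3)Q_\ell(x)\ \leq\ Q_i(x),\qquad x\geq 0.
\]
Using the shift identity $Q_\ell(x+3)=(x+3\ell)\prod_{j=2}^{\ell}(x+3j+1)$ (for $\ell\geq 1$), one checks that both sides are monic of degree $i$ and share the same coefficient in degree $i-1$; this matches the recursion $a_i=a_{i-1}+3i+1$ satisfied by the degree-$(i-1)$ coefficient $a_i$ of $Q_i$. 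Hence the defect $Q_i(x)-(\text{left-hand side})$ is a polynomial of degree $\leq i-2$, and it remains to see that it is nonnegative on $[0,\infty)$, e.g. by checking that all of its coefficients are nonnegative. The case $i=3$ is the base case and reads $b_3(x)=x^3+17x^2+76x+90\leq (x+4)(x+6)(x+7)=Q_3(x)$.

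The real work is in that last step: verifying, uniformly in $i$, that the defect polynomial is nonnegative on $[0,\infty)$. This is elementary --- the dominant term of the left-hand side is $(x+2)Q_{i-1}(x+3)=\tfrac{x+2}{x+4}\,Q_i(x)$, which already undershoots $Q_i(x)$, while every other summand is suppressed by the linear factors $x+3j+1\geq 4$ occurring in $Q_i(x)$ --- but it does require carrying the recursion of \cref{defin_bi} carefully through the product formula to pin down the lower-order coefficients. An alternative, and perhaps cleaner, route is to invoke the explicit computation of the $b_i$ performed in the appendix and compare coefficients directly. Either way, combining the resulting bound $b_i\leq Q_i$ with \cref{maintheoremAn} gives $h^i(\bA^n_k,\cL)\leq Q_i(\lc_H(\cL))\cdot\rk_{\Lambda}\cL$, which is the asserted inequality.
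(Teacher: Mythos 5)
Your reduction to \cref{maintheoremAn} and the computation $b_2(x)=(x+2)b_1(x+3)+(x+3)b_0(x)+b_1(x)=x^2+7x+9$ are correct and match the paper, and your coefficient bookkeeping in degrees $i$ and $i-1$ is accurate (with $Q_i(x)=(x+4)Q_{i-1}(x+3)$, so $(x+2)Q_{i-1}(x+3)=\tfrac{x+2}{x+4}Q_i(x)$). But the decisive step --- that the degree $\leq i-2$ defect polynomial $Q_i(x)-\bigl(\sum_{\ell\not\equiv i}[(x+2)Q_\ell(x+3)+Q_\ell(x)]+\sum_{\ell\equiv i}(x+3)Q_\ell(x)\bigr)$ is nonnegative on $[0,\infty)$ for all $i\geq 3$ --- is exactly what you do not prove, and the heuristic you offer is not adequate. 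The deficit of the dominant term is $2Q_{i-1}(x+3)$, of degree $i-1$ with leading coefficient $2$, while the next two summands $Q_{i-1}(x)+(x+3)Q_{i-2}(x)$ also have degree $i-1$ with total leading coefficient $2$; so the claim that the remaining summands are ``suppressed by the linear factors $x+3j+1\geq 4$'' does not settle anything --- the comparison is tight at top order and is decided only at the next order, which is precisely the uniform-in-$i$ verification you defer as ``the real work.'' Your fallback of ``invoking the explicit computation of the $b_i$ performed in the appendix'' is not available either: the appendix does not compute the $b_i$ explicitly.

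For comparison, the paper (\cref{explicit_bound_b}) avoids this delicate defect analysis by first coarsening the recursion: using $b_\ell(x)\leq b_\ell(x+3)$ and $(x+2)+1\leq x+3$ it gets $b_n(x)\leq (x+3)\sum_{\ell<n}b_\ell(x+3)$, then introduces the auxiliary sequence $c_n$ defined by equality in this relation (with $c_0=1$, $c_1=x$, so $c_2=(x+3)(x+4)$), shows $b_n\leq c_n$ by an easy induction, proves $c_n(x)\leq c_{n-1}(x+3)\cdot(x+4)$ for $n\geq 3$ by a two-line manipulation of the single-sum recursion, and iterates this down to $c_2$ to obtain $c_n(x)\leq (x+3n-3)\prod_{j=1}^{n-1}(x+3j+1)$. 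Your target inequality $b_i\leq Q_i$ appears to be true (it checks out coefficientwise for $i=3,4,5$), so your route is likely salvageable, but as written it asserts rather than establishes the inductive inequality; you would need either to carry out the uniform estimate on the lower-order coefficients, or to restructure the induction along the lines above, where the telescoping product does the work for you.
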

\begin{proof}
It is a direct consequence of Lemma \ref{explicit_bound_b} and Theorem \ref{maintheoremAn}.
\end{proof}

\begin{cor}\label{maintheoremAn_compact}
Let $k$ be an algebraically closed field  of characteristic $p>0$ and let  $\Lambda$ be a finite field of characteristic $\ell \neq p$.
For every $0\leq i\leq n$ and every $\cL\in \LC(\bA^n_k,\Lambda)$, we have 
$$
h^{2n-i}_c(\bA^n_k,\cL)\leq b_i(\lc_H(\cL))\cdot\rk_{\Lambda}\cL \ .
$$
\end{cor}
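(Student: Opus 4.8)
The plan is to reduce this to \cref{maintheoremAn} by Poincar\'e duality. Since $\bA^n_k$ is smooth of pure dimension $n$ over an algebraically closed field and $\cL\in\LC(\bA^n_k,\Lambda)$ with $\Lambda$ a field, the dualizing complex of $\bA^n_k$ is $\Lambda(n)[2n]$ and the Verdier dual of $\cL$ is $\cL^{\vee}(n)[2n]$, where $\cL^{\vee}:=\cHom(\cL,\Lambda)$ is again locally constant constructible of the same rank. Poincar\'e--Verdier duality then gives, for every $j$, a perfect pairing between $H^j_c(\bA^n_k,\cL)$ and $H^{2n-j}(\bA^n_k,\cL^{\vee}(n))$. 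Taking $j=2n-i$ and noting that a Tate twist does not affect dimensions, we obtain
$$
h^{2n-i}_c(\bA^n_k,\cL)=h^i(\bA^n_k,\cL^{\vee}) \ .
$$

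It then remains to check that the right-hand side of \cref{maintheoremAn} is unchanged when $\cL$ is replaced by $\cL^{\vee}$. Clearly $\cL^{\vee}\in\LC(\bA^n_k,\Lambda)$ and $\rk_{\Lambda}\cL^{\vee}=\rk_{\Lambda}\cL$. For the logarithmic conductor, fix an irreducible component $Z$ of $H$, let $K$ be the associated henselian valued field and set $M:=\cL|_{\Spec K}$, so that $\cL^{\vee}|_{\Spec K}=M^{\vee}$ as $G_K$-modules. Since $\Lambda$ has characteristic $\ell\neq p$, the continuous action of the pro-$p$ group $P_K$ on the finite $\Lambda$-module $M$ factors through a finite $p$-group of order invertible in $\Lambda$, hence $M$ is a semisimple $\Lambda[P_K]$-module; combined with the $P_K$-equivariant description of the graded pieces of the logarithmic ramification filtration from \cref{propramfil}, this shows that the logarithmic slope decomposition \eqref{twodecomp} of $M^{\vee}$ is the $\Lambda$-linear dual of that of $M$. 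In particular $M$ and $M^{\vee}$ have the same logarithmic slopes, so $\lc_Z(\cL^{\vee})=\lc_Z(\cL)$ for every such $Z$, and therefore $\lc_H(\cL^{\vee})=\lc_H(\cL)$.

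Combining the two observations with \cref{maintheoremAn} applied to $\cL^{\vee}$ yields
$$
h^{2n-i}_c(\bA^n_k,\cL)=h^i(\bA^n_k,\cL^{\vee})\leq b_i(\lc_H(\cL^{\vee}))\cdot\rk_{\Lambda}\cL^{\vee}=b_i(\lc_H(\cL))\cdot\rk_{\Lambda}\cL \ ,
$$
which is the claimed estimate. The only mildly delicate point is the stability of the logarithmic conductor under $\Lambda$-linear duality; this is where I would spend most of the writing, although it is routine once one invokes the semisimplicity of the $P_K$-action for $\ell\neq p$.
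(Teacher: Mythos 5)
Your proposal is correct and follows exactly the paper's argument: the paper also deduces the corollary from \cref{maintheoremAn} by Poincar\'e duality together with the fact that a $\Lambda$-module and its dual have the same logarithmic slopes. Your extra verification of the slope-invariance under duality (via semisimplicity of the $P_K$-action, since $\ell\neq p$) is a sound way to fill in the detail the paper leaves implicit.
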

\begin{proof}
This follows from \cref{maintheoremAn} by Poincaré duality and the fact that a $\Lambda$-module and its dual have the same logarithmic slopes.
\end{proof}

\begin{example}\label{sharp_degree}
The degrees of the polynomials appearing in \cref{maintheoremAn} are optimal.
Indeed,  if $m \geq 1$ is an integer coprime to $p$ and if $\cN$ is the Artin-Schreier sheaf on $\bA^1_k$ corresponding to $t^p-t=x^m$,  then $h^i(\bA^1_k,\cN)=0$ for $i\neq 1$ and $h^1(\bA^1_k,\cN)=m-1$ by \cref{GOS}.
For $1\leq i\leq n$, let $\pr_i :  \bA_k^n\to \bA_k^1$ be the projection on the $i$-th coordinate and put 
$$
\cM:= \pr^*_1 \cN  \otimes_{\Lambda} \cdots  \otimes_{\Lambda} \pr^*_i \cN \ .
$$
Then, $\cM$ is the Artin-Schreier sheaf on $\bA^n_k$ corresponding to  $t^p-t=x_1^m+ \cdots +x_i^m$.
By \cref{Laumon_computation}, we have $\lc_H(\cM)=m$.
On the other hand, the Künneth formula for the étale cohomology \cite[0F1P]{SP} yields
$$
h^i( \bA_k^n,\cM)= h^1( \bA_k^1,\cN)^i=(m-1)^i\ .
$$
\end{example}

\begin{proposition}\label{hn-1<bn-1}
Let $n\geq 2$ be an integer. 
Assume that \cref{maintheoremAn} is valid for $\bA^{n-1}_k$. 
For every  $\cL\in \LC(\bA^n_k,\Lambda)$ and every $i=0,1,\dots, n-1$, we have  
$$
h^i(\bA^n_k,\cL)\leq b_i(\lc_H(\cL))\cdot\rk_{\Lambda}\cL \ .
$$
\end{proposition}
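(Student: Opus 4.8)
The plan is to bound the cohomology of $\cL$ on $\bA^n_k$ by that of its restriction to a generic hyperplane section, which is a copy of $\bA^{n-1}_k$, and then to feed in the inductive hypothesis. Fix $\cL\in \LC(\bA^n_k,\Lambda)$ and pick a generic hyperplane $H_0\subset \bP^n_k$ with $H_0\neq H$. Put $\overline{Y}:=H_0\cong \bP^{n-1}_k$ and $Y:=H_0\cap \bA^n_k\cong \bA^{n-1}_k$, so that $\overline{Y}\cap H$ is the hyperplane at infinity of $Y$. For $H_0$ generic, the classical weak Lefschetz theorem for constructible sheaves on the smooth affine varieties $\bA^n_k$ (of dimension $n$) and $Y\cong\bA^{n-1}_k$ (of dimension $n-1$) shows that the restriction map
\[
H^i(\bA^n_k,\cL)\longrightarrow H^i(Y,\cL|_Y)
\]
is bijective for $i<n-1$ and injective for $i=n-1$. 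In particular $h^i(\bA^n_k,\cL)\le h^i(Y,\cL|_Y)$ for every $i=0,\dots,n-1$.

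Next I would bound the logarithmic conductor of $\cL|_Y$ at infinity via the semi-continuity theorem \cref{semi_continuity}, applied to the closed immersion $f:\overline{Y}\hookrightarrow \bP^n_k$ of smooth schemes with $D:=H$. The divisor $H$ is smooth, hence has normal crossings, and $\overline{Y}\times_{\bP^n_k}H=\overline{Y}\cap H$ is an effective Cartier divisor on $\overline{Y}$ since $\overline{Y}\neq H$, so \cref{semi_continuity} applies. As $\cL$ is locally constant on $\bA^n_k$, the logarithmic conductor divisor of $j_!\cL$ on $\bP^n_k$ equals $\lc_H(\cL)\cdot H$; and by compatibility of $j_!$ with base change, $(j_!\cL)|_{\overline{Y}}=(j_Y)_!(\cL|_Y)$ for $j_Y:Y\hookrightarrow\overline{Y}$, whose logarithmic conductor divisor on $\overline{Y}$ equals $\lc_{\overline{Y}\cap H}(\cL|_Y)\cdot(\overline{Y}\cap H)$. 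Comparing the two sides of $LC_{\overline{Y}}((j_!\cL)|_{\overline{Y}})\le f^*LC_{\bP^n_k}(j_!\cL)$ yields $\lc_{\overline{Y}\cap H}(\cL|_Y)\le \lc_H(\cL)$.

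Finally I would invoke \cref{maintheoremAn}, which is assumed to hold for $\bA^{n-1}_k\cong Y$: for each $i=0,\dots,n-1$,
\[
h^i(Y,\cL|_Y)\le b_i\bigl(\lc_{\overline{Y}\cap H}(\cL|_Y)\bigr)\cdot\rk_\Lambda(\cL|_Y).
\]
Since $\bA^n_k$ is connected and $Y\neq\emptyset$, $\rk_\Lambda(\cL|_Y)=\rk_\Lambda\cL$; and since $b_i\in\bN[x]$ has nonnegative coefficients it is non-decreasing on $\bQ_{\ge 0}$, so the bound of the previous paragraph gives $b_i(\lc_{\overline{Y}\cap H}(\cL|_Y))\le b_i(\lc_H(\cL))$. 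Combining with the weak Lefschetz inequality of the first paragraph, we obtain $h^i(\bA^n_k,\cL)\le b_i(\lc_H(\cL))\cdot\rk_\Lambda\cL$ for $i=0,\dots,n-1$, as wanted. I expect the only genuine point to verify is that one $H_0$ can be chosen serving all purposes at once — genericity for the weak Lefschetz comparison, and $H_0\neq H$ for the conductor bound — which is fine because both conditions define dense open subsets of the space of hyperplanes of $\bP^n_k$; everything else is formal.
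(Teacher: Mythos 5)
Your outline matches the paper's at the level of strategy: reduce to a hyperplane section, control the conductor of the restriction via \cref{semi_continuity}, and feed in the inductive hypothesis. Steps two and three of your argument are exactly what the paper does, and they are correct.

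The issue is the first step, where you invoke a ``classical weak Lefschetz theorem for constructible sheaves on smooth affine varieties'' applied to a generic hyperplane. In characteristic $p>0$ with possibly wild ramification at infinity, this statement is not a citable classical fact; it is precisely what the paper has to establish, and the proof is not formal. Concretely, the right notion of ``generic'' is not Zariski-generic in the naive sense but \emph{properly $SS(Rj_*\cL)$-transversal}, and the existence of such a hyperplane is a theorem of Saito ({\cite[Lemma 1.3.7]{SaitoDirectimage}}). Granted such a hyperplane $D$, the paper then proves the comparison $h^i(\bA^n_k,\cL)\le h^i(D_0,\iota_0^*\cL)$ for $i\le n-1$ by combining three things: the localization triangle on $\bP^n_k$ associated with the complementary open $U:=\bP^n_k- D$; the non-characteristic base-change isomorphism $\iota^*Rj_*\cL\simeq Rj_{D*}\iota_0^*\cL$ (\cref{basechange}), which is where $SS$-transversality is actually used; and Artin vanishing applied to the perverse sheaf $Rj_{U*}\jmath_0^*\cL[n]$ on the affine scheme $U\simeq\bA^n_k$. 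None of these ingredients is free, and the singular-support/base-change part is exactly where wild ramification can spoil a too-naive choice of hyperplane. So your argument has the right shape but hides the substantive step inside the phrase ``classical weak Lefschetz theorem''; to make the proof complete you would need to replace that phrase with the $SS$-transversality input and the Artin-vanishing/base-change argument, which is what the paper does.
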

\begin{proof}
By \cite[Lemma 1.3.7]{SaitoDirectimage}, there is a hyperplane $D\neq H$ such that the map $\iota :D\hookrightarrow \bP^n_k$  is properly $SS(Rj_*\cL)$-transversal. 
Put $U:= \bP^n_k - D$.
The cartesian squares
\begin{equation*}
\xymatrix{\relax
D_0\ar[d]_{j_D}\ar[r]^-(0.5){\iota_0}\ar@{}|-{\Box}[rd]&\bA^n_k\ar[d]^-(0.5){j}\ar@{}|-{\Box}[rd]&U_0\ar[d]^{j_U}\ar[l]_-(0.5){\jmath_0}\\
D\ar[r]_-(0.5)\iota &\bP^n_k&U\ar[l]^-(0.5)\jmath}
\end{equation*}
give rise to the  distinguished triangle
$$
\jmath_!\jmath^*Rj_*\cL\to Rj_*\cL\to \iota_*\iota^*Rj_*\cL\xrightarrow{+1}
$$
Since $\jmath$ is an open immersion, we have 
$$
\jmath^*Rj_*\cL\simeq Rj_{U*}\jmath^*_0\cL \ .
$$
Since $\iota :D\to\bP^n_k$ is properly $SS(Rj_*\cL)$-transversal, \cref{basechange} gives
$$
\iota ^*Rj_*\cL\simeq Rj_{D*}\iota _0^*\cL  \ .
$$ 
Hence, applying $R\Gamma(\bP^n_k,-)$ to the above triangle yields a  distinguished triangle
\begin{equation*}
R\Gamma_c(U, Rj_{U*}\jmath^*_0\cL)\to R\Gamma(\bA^n_k,\cL)\to R\Gamma(D_0,\iota^*_0\cL)\xrightarrow{+1}
\end{equation*}
Note that  $Rj_{U*}\jmath^*_0\cL[n]$ is a perverse sheaf on $U\simeq\bA^n_k$. 
By Artin's vanishing theorem, we deduce
\begin{equation*}
H^i_c(U, Rj_{U*}\jmath^*_0\cL)=0
\end{equation*}
for $i=0,1,\dots,n-1$. 
Hence, 
$$
h^i(\bA^n_k,\cL)\leq h^i(D_0,\iota^*_0\cL)
$$
for $i=0,1,\dots,n-1$. 
Note that $D\simeq \bP^{n-1}_k$, that $D\bigcap H$ is a hyperplane of $D$ with complement $D_0\simeq \bA^{n-1}_k$ in $D$.
By \cref{semi_continuity}, we have 
$$
\lc_{D\cap H}(\iota^*_0\cL)\leq \lc_{H}(\cL) \ .
$$ 
Since  \cref{maintheoremAn} is valid on $\bA^{n-1}_k$ by assumption, we get for every $i=0,1,\dots, n-1$, 
\begin{align*}
h^i(D_0,\iota^*_0\cL)\leq b_i(\lc_{D\cap H}(\iota^*_0\cL))\cdot\rk_{\Lambda}(\iota^*_0\cL)\leq b_i(\lc_{H}(\cL))\cdot\rk_{\Lambda}\cL.
\end{align*}
This concludes the proof of \cref{hn-1<bn-1}.
\end{proof}

\begin{proposition}\label{<chiprN0F<}
Let $n\geq 2$ be an integer. 
Assume that Theorem \ref{maintheoremAn} is valid for $\bA^{n-1}_k$.
Let $\cL\in \LC(\bA^n_k,\Lambda)$.
Let $\pr:\bA^n_k\to\bA^1_k$ be the first projection. 
Let $\cN_{\lambda}$ be an Artin-Schreier sheaf of $\Lambda$-modules on $\bA^1_k$ defined by $t^p-t=\lambda x^m$ for some $\lambda\in k^\times$ and  $(m,p)=1$. 
Assume that $m=\lc_\infty(\cN_\lambda)>\lc_H(\cL)+1$. 
Then, we have 
$$
\chi(\bA^n_k, \pr^*\cN_{\lambda}\otimes_{\Lambda}\cL) \geq -\left(\sum^{n-1}_{\substack{i=0\\  \text{$i$ $\even$ }}} b_{i}(m)\right)(m-1)\cdot\rk_{\Lambda}\cL
$$
and 
$$
\chi(\bA^n_k, \pr^*\cN_{\lambda}\otimes_{\Lambda}\cL)\leq\left(\sum^{n-1}_{\substack{i=0\\  \text{$i$ $\odd$} }} b_{i}(m)\right)(m-1)\cdot\rk_{\Lambda}\cL \ .
$$
\end{proposition}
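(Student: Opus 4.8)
Write $\cG':=R^1\pi_{A!}(\pr^*\cN_{\lambda}\otimes_{\Lambda}\cL)$ for the diagram \eqref{diagram_pencil}. The plan is to express $\chi(\bA^n_k,\pr^*\cN_{\lambda}\otimes_{\Lambda}\cL)$ in terms of the cohomology of $\cG'$ and then feed in the inductive hypothesis. By \cref{computation_conductor_twisted}(1), the complex $R\pi_{A!}(\pr^*\cN_{\lambda}\otimes_{\Lambda}\cL)$ is concentrated in degree $1$, so it is isomorphic to $\cG'[-1]$ with $\cG'\in \LC(\bA^{n-1}_k,\Lambda)$, and by \cref{computation_conductor_twisted}(2) we have $\lc_D(\cG')\leq m$, where $D$ is the hyperplane at infinity of $\bA^{n-1}_k$ occurring in \eqref{diagram_pencil}. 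Since $k$ is algebraically closed and $\Lambda$ is a finite field, $\chi=\chi_c$ throughout by \cref{chi=chic}, so the Leray spectral sequence for $\pi_A$ degenerates (only the $q=1$ row survives) and yields
\[
\chi(\bA^n_k,\pr^*\cN_{\lambda}\otimes_{\Lambda}\cL)=-\chi(\bA^{n-1}_k,\cG')=-\sum_{j=0}^{n-1}(-1)^j h^j(\bA^{n-1}_k,\cG'),
\]
where the sum stops at $n-1$ by Artin's vanishing theorem on the affine variety $\bA^{n-1}_k$.

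I would then compute $\rk_{\Lambda}\cG'$. As the formation of $R\pi_{A!}(\pr^*\cN_{\lambda}\otimes_{\Lambda}\cL)$ commutes with base change, for a closed point $L$ of $\bA^{n-1}_k$ the stalk $\cG'_L$ equals $H^1_c(\pi_A^{-1}(L),(\pr^*\cN_{\lambda}\otimes_{\Lambda}\cL)|_{\pi_A^{-1}(L)})$. From \cref{Lefschetz_pencil_construction}, the fibre $\pi_A^{-1}(L)$ is the affine line obtained from the strict transform $\pi^{-1}(L)\cong \bP^1_k$ by deleting the single point $(x,L)\in E$, and the proof of \cref{local_acyclicity_E_not_H0} shows $\dimtot_{(x,L)}(\jmath_!(\pr^*\cN_{\lambda}\otimes_{\Lambda}\cL)|_{\pi^{-1}(L)})=(m+1)\rk_{\Lambda}\cL$. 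Applying \cref{GOS} on $\bP^1_k$ then gives $\chi_c(\pi_A^{-1}(L),\cdot)=2\rk_{\Lambda}\cL-(m+1)\rk_{\Lambda}\cL=(1-m)\rk_{\Lambda}\cL$; since this complex is concentrated in degree $1$, we conclude $\rk_{\Lambda}\cG'=(m-1)\rk_{\Lambda}\cL$.

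Finally I would invoke the inductive hypothesis. Since \cref{maintheoremAn} holds on $\bA^{n-1}_k$ and the polynomials $b_j$ have non-negative coefficients, hence are non-decreasing on $\bQ_{\geq 0}$, the bound $\lc_D(\cG')\leq m$ gives $h^j(\bA^{n-1}_k,\cG')\leq b_j(m)\cdot\rk_{\Lambda}\cG'=b_j(m)(m-1)\rk_{\Lambda}\cL$ for every $0\leq j\leq n-1$. Discarding the non-negative contributions of the odd-degree (resp. even-degree) terms in the alternating sum above produces the lower (resp. upper) bound claimed in the statement.

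The computations carried out in \cref{computation_conductor_twisted} and \cref{local_acyclicity_E_not_H0} already contain the only delicate inputs — concentration in a single degree, compatibility with base change, and the conductor estimate $\lc_D(\cG')\leq m$. The point here requiring a little care is simply the identification of the fibres of $\pi_A$ with affine lines and the bookkeeping of the unique boundary point $(x,L)\in E$ via \cref{GOS}; once $\cG'$ is known to be locally constant of rank $(m-1)\rk_{\Lambda}\cL$ with $\lc_D(\cG')\leq m$, the conclusion is a formal consequence of the inductive hypothesis and Artin vanishing, so no genuinely new obstacle is expected.
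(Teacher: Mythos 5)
Your proposal is correct and follows essentially the same route as the paper: apply \cref{computation_conductor_twisted} to get concentration in degree $1$, the conductor bound and base-change compatibility, compute $\chi$ through $\pi_A$ using $\chi=\chi_c$, and bound the alternating sum of $h^j(\bA^{n-1}_k,\cG')$ via the inductive hypothesis. Your explicit fibre computation of $\rk_\Lambda\cG'=(m-1)\rk_\Lambda\cL$ (via the total dimension $(m+1)\rk_\Lambda\cL$ at the punctured point and \cref{GOS} on the line) is exactly the unpacking of the paper's terse appeal to proper base change combined with \cref{GOS}.
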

\begin{proof}
We use the setup from \cref{Lefschetz_pencil_construction}.
Let $X\to \bP_k^n$ be the blow-up of $\bP_k^n$  at $x:=[0:1:\dots : 1]$.
Let $E$ be the exceptional divisor and let $\pi: X\to\bP^{n-1}_k$ be the pencil of lines passing through $x$.
Let $H'$ be  the strict transform of $H$ and put $D:=\pi(H')$. 
We have the following commutative diagram:
\begin{equation*}
\xymatrix{\relax
H'\ar[d]\ar[r]\ar@{}|-{\Box}[rd]&X  \ar[d]^-(0.5){\pi}\ar@{}|-{\Box}[rd]&W\ar[l]\ar[d]_{\pi_W}&\bA^n_k\ar[l]^-(0.5){\jmath_W}\ar[ld]^{\pi_A}\ar@/_1.4pc/[ll]_{\jmath}\\
D\ar[r]&\bP^{n-1}_k&\bA^{n-1}_k\ar[l]^-(0.5){}}
\end{equation*}
By \cref{computation_conductor_twisted}, the complex $R\pi_{A!}(\pr^*\cN_{\lambda}\otimes_{\Lambda}\cL)$
is concentrated in degree $1$, has locally constant constructible cohomology sheaves, its formation commutes with base change and
\begin{equation}\label{bound_log_conductor_R1}
\lc_D(R^1\pi_{A!}(\pr^*\cN_{\lambda}\otimes_{\Lambda}\cL))\leq m \ .
\end{equation}
By proper base change combined with \cref{GOS}, we deduce 
\begin{equation}\label{bound_rank_R1}
\rk_{\Lambda} (R^1\pi_{A!}(\pr^*\cN_{\lambda}\otimes_{\Lambda}\cL))=(m-1)\cdot\rk_{\Lambda}\cL \ .
\end{equation}
On the other hand, we have
\begin{align*}
\chi(\bA^n_k,\pr^*\cN_{\lambda}\otimes_\Lambda\cL)&=\chi_c(\bA^n_k,\pr^*\cN_{\lambda}\otimes_\Lambda\cL)    &  \text{ \cref{chi=chic} }  \\
&=\chi_c(\bA^{n-1}_k, R\pi_{A!}(\pr^*\cN_{\lambda}\otimes_\Lambda\cL)) & \\
&=-\chi_c(\bA^{n-1}_k, R^1\pi_{A!}(\pr^*\cN_{\lambda}\otimes_\Lambda\cL)) & \\
&=-\chi(\bA^{n-1}_k, R^1\pi_{A!}(\pr^*\cN_{\lambda}\otimes_\Lambda\cL))&  \text{ \cref{chi=chic} }  \\
&=\sum^{n-1}_{i=0}(-1)^{i+1}h^i(\bA^{n-1}_k, R^1\pi_{A!}(\pr^*\cN_{\lambda}\otimes_\Lambda\cL)).&   
\end{align*}
Hence, we have 
\begin{align*}
-\sum^{n-1}_{\substack{i=0\\  \text{$i$ even} }}  h^{i}(\bA^{n-1}_k, R^1\pi_{A!}(\pr^*\cN_{\lambda}\otimes_\Lambda\cL))&\leq \chi(\bA^n_k,\pr^*\cN_{\lambda}\otimes_\Lambda\cL)   \\
&\leq \sum^{n-1}_{\substack{i=0\\  \text{$i$ odd} }}  h^{i}(\bA^{n-1}_k, R^1\pi_{A!}(\pr^*\cN_{\lambda}\otimes_\Lambda\cL)).
\end{align*}
Since Theorem \ref{maintheoremAn} is valid for $\bA^{n-1}_k$, we have for every $i=0,1,\dots, n-1$, 
$$
h^i(\bA^{n-1}_k, R^1\pi_{A!}(\pr^*\cN_{\lambda}\otimes_\Lambda\cL))\leq b_i(\lc_D(R^1\pi_{A!}(\pr^*\cN_{\lambda}\otimes_\Lambda\cL))\cdot\rk_{\Lambda}(R^1\pi_{A!}(\pr^*\cN_{\lambda}\otimes_\Lambda\cL)).
$$
From (\ref{bound_log_conductor_R1}) and (\ref{bound_rank_R1}), we deduce 
$$
h^i(\bA^{n-1}_k, R^1\pi_{A!}(\pr^*\cN_{\lambda}\otimes_\Lambda\cL)) \leq b_i(m)\cdot(m-1)\cdot\rk_{\Lambda}\cL    \ .
$$
This concludes the proof of \cref{<chiprN0F<}.
\end{proof}

\begin{proposition}\label{chi_special_F}
Let $n\geq 2$ be an integer.
Assume that \cref{maintheoremAn} is valid for $\bA^{n-1}_k$. 
Let $\pr:\bA^n_k\to\bA^1_k$ be the first projection. 
Then, for every $\cL\in \LC(\bA^n_k,\Lambda)$, the Euler-Poincar\'e characteristic $\chi(\bA^n_k,\cL)$ is smaller than
$$
\left(\sum^{n-1}_{\substack{i=0\\  \text{$i$ $\odd$ }}}(\lc_H(\cL)+2)\cdot b_{i}(\lc_H(\cL)+3)+\sum^{n-1}_{\substack{i=0\\  \text{$i$ $\even$ }}} (\lc_H(\cL)+3)\cdot b_{i}(\lc_H(\cL))\right)\cdot\rk_{\Lambda}\cL
$$
and bigger than 
$$
-\left(\sum^{n-1}_{\substack{i=0\\  \text{$i$ $\even$ }}}(\lc_H(\cL)+2)\cdot b_{i}(\lc_H(\cL)+3)+\sum^{n-1}_{\substack{i=0\\  \text{$i$ $\odd$ }}}(\lc_H(\cL)+3)\cdot b_{i}(\lc_H(\cL))\right)\cdot\rk_{\Lambda}\cL \ .
$$

\end{proposition}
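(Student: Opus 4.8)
Write $c:=\lc_H(\cL)$. The plan is to compare $\chi(\bA^n_k,\cL)$ with $\chi(\bA^n_k,\pr^*\cN_\lambda\otimes_\Lambda\cL)$ for a well-chosen Artin--Schreier sheaf $\cN_\lambda$ and then feed the twisted Euler characteristic into \cref{<chiprN0F<}. First I would fix an integer $m$ with $c+1<m\le c+3$ and $(m,p)=1$: the half-open interval $(c+1,c+3]$ contains at least two consecutive integers, one of which is prime to $p$. Enlarging $\Lambda$ if necessary --- which changes neither $\chi(\bA^n_k,\cL)$, nor $\rk_\Lambda\cL$, nor $c$ --- I may assume $\Lambda$ contains a primitive $p$-th root of unity, so that for $\lambda\in k^\times$ the Artin--Schreier sheaf $\cN_\lambda$ on $\bA^1_k$ given by $t^p-t=\lambda x^m$ exists. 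Since $m>c+1$ and $(m,p)=1$, \cref{<chiprN0F<} applies, and using that the $b_i$ are nondecreasing on $\bR_{\ge 0}$ together with $m-1\le c+2$ and $m\le c+3$ it yields
\[
-(c+2)\!\!\sum_{\substack{0\le i\le n-1\\ i\ \even}}\!\!b_i(c+3)\cdot\rk_\Lambda\cL\ \le\ \chi(\bA^n_k,\pr^*\cN_\lambda\otimes_\Lambda\cL)\ \le\ (c+2)\!\!\sum_{\substack{0\le i\le n-1\\ i\ \odd}}\!\!b_i(c+3)\cdot\rk_\Lambda\cL.
\]

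Next I would compute the discrepancy $\chi(\bA^n_k,\cL)-\chi(\bA^n_k,\pr^*\cN_\lambda\otimes_\Lambda\cL)$. Writing $\cG^i:=R^i\pr_!\cL$ for the first projection $\pr:\bA^n_k\to\bA^1_k$, the identity $\chi=\chi_c$ (\cref{chi=chic}), the composition formula for $\chi_c$ along $\pr$, and the projection formula $R\pr_!(\pr^*\cN_\lambda\otimes_\Lambda\cL)\simeq\cN_\lambda\otimes_\Lambda R\pr_!\cL$ reduce this to $\sum_i(-1)^i\bigl(\chi_c(\bA^1_k,\cG^i)-\chi_c(\bA^1_k,\cN_\lambda\otimes_\Lambda\cG^i)\bigr)$. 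As $\cN_\lambda$ is lisse of rank one on all of $\bA^1_k$ and ramified only at $\infty$, the Grothendieck--Ogg--Shafarevich formula on $\bP^1_k$ (\cref{GOS}) applied to each $\cG^i$ shows that all contributions at the finite points of $\bA^1_k$ cancel, leaving
\[
\chi(\bA^n_k,\cL)-\chi(\bA^n_k,\pr^*\cN_\lambda\otimes_\Lambda\cL)\ =\ \sum_i(-1)^i\bigl(\sw_\infty(\cN_\lambda\otimes_\Lambda\cG^i)-\sw_\infty(\cG^i)\bigr).
\]
By Artin vanishing on the affine fibres of $\pr$ the sheaf $\cG^i$ vanishes for $i\notin[n-1,2n-2]$, so only finitely many terms occur. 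The crucial point is the choice of $\lambda$: each $\cG^i$ is constructible, so its restriction to a punctured disk at $\infty$ is a $G_{K_\infty}$-module whose logarithmic slope-$m$ part involves only finitely many characters of $G^m_{K_\infty,\log}/G^{m+}_{K_\infty,\log}$; since $(m,p)=1$, distinct $\lambda\in k^\times$ give distinct such characters, so I can pick $\lambda$ with the character of $\cN_\lambda$ at $\infty$ having inverse not occurring in any of them. By \cref{swan_inequality} this forces $0\le\sw_\infty(\cN_\lambda\otimes_\Lambda\cG^i)-\sw_\infty(\cG^i)\le m\cdot\rk_\Lambda\cG^i$ for all $i$; in particular every summand above is $\ge 0$.

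The nonnegativity lets me split by parity: the discrepancy is at most $m\sum_{i\ \even}\rk_\Lambda\cG^i$ and at least $-m\sum_{i\ \odd}\rk_\Lambda\cG^i$. To estimate $\rk_\Lambda\cG^i$, note that by base change for $R\pr_!$ it equals $h^i_c(\bA^{n-1}_k,\cL|_{\pr^{-1}(t_0)})$ for a general closed point $t_0\in\bA^1_k$; the closure of $\pr^{-1}(t_0)$ in $\bP^n_k$ is a hyperplane meeting $H$ in the hyperplane at infinity of $\pr^{-1}(t_0)\cong\bA^{n-1}_k$, so $\lc(\cL|_{\pr^{-1}(t_0)})\le c$ there by \cref{semi_continuity}, and \cref{maintheoremAn_compact} in dimension $n-1$ (valid by the standing hypothesis) gives $\rk_\Lambda\cG^i\le b_{2n-2-i}(c)\cdot\rk_\Lambda\cL$. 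As $i$ runs over the even (resp. odd) integers in $[n-1,2n-2]$, the index $2n-2-i$ runs over the even (resp. odd) integers in $[0,n-1]$, so, using $m\le c+3$, the discrepancy lies between $-(c+3)\sum_{i\ \odd}^{n-1}b_i(c)\,\rk_\Lambda\cL$ and $(c+3)\sum_{i\ \even}^{n-1}b_i(c)\,\rk_\Lambda\cL$. Adding the bounds of the first paragraph recovers exactly the two inequalities of the statement.

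The main obstacle I anticipate is precisely the genericity step: one must control the ramification character of the chosen Artin--Schreier sheaf at $\infty$ so that \cref{swan_inequality} applies with the Swan conductor moving in the favourable direction for every relevant $\cG^i$ at once. This is what allows the error term to be compared against $\sum_{i\ \even}b_i(c)$ (resp. $\sum_{i\ \odd}b_i(c)$) rather than the full sum $\sum_i b_i(c)$, which would not match the stated polynomials. The remaining ingredients --- the $\chi=\chi_c$ bookkeeping, the Grothendieck--Ogg--Shafarevich cancellation at the finite points, and the rank estimate via base change and the inductive hypothesis --- are routine.
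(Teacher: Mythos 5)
Your proof is correct and follows essentially the same route as the paper: twist by a generically chosen Artin--Schreier sheaf, bound the twisted Euler characteristic via \cref{<chiprN0F<}, and control the discrepancy purely at infinity using \cref{GOS} together with \cref{swan_inequality} for a $\lambda$ avoiding the finitely many relevant characters, then split by parity. The only (harmless) deviation is that you work with $R\pr_!$, proper base change and \cref{maintheoremAn_compact}, where the paper uses $R\pr_*$, generic universal local acyclicity and \cref{maintheoremAn} on $\bA^{n-1}_k$; the parity bookkeeping agrees because $2n-2-i\equiv i \pmod 2$.
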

\begin{proof}
Note that enlarging the finite field $\Lambda$ does not change the Betti numbers. 
Hence, we can suppose that $\Lambda$ contains a $p$-root of unity.
Let $\cL\in \LC(\bA^n_k,\Lambda)$. 
Observe that at least one of the consecutive integers $[\lc_H(\cL)]+2$, $[\lc_H(\cL)]+3$ is  prime to $p$. 
Pick one of them and denote it by $m$.
In particular, 
$$
\lc_H(\cL)+1< m\leq \lc_H(\cL)+3 \ .
$$
Let $\cN_{\lambda}$ be an Artin-Schreier sheaf of $\Lambda$-modules on $\bA^1_k$ defined by the equation $t^p-t=\lambda x^m$ for some $\lambda\in k^\times$. 
By the projection formula and \cref{GOS}, we have 
\begin{align*}
\chi(\bA^n_k,\pr^*\cN_{\lambda}\otimes_\Lambda\cL)&=\chi(\bA^1_k,R\pr_*(\pr^*\cN_{\lambda}\otimes_\Lambda\cL))=\chi(\bA^1_k,\cN_{\lambda}\otimes_\Lambda R\pr_*\cL))\\
\chi(\bA^n_k,\cL)&=\chi(\bA^1_k,R\pr_*\cL)\ .
\end{align*}
Since $\cN_\lambda$ is a rank $1$ locally constant sheaf on $\bA^1_k$, \cref{GOS} gives
\begin{align*}
\chi(\bA^n_k,\cL)-\chi(\bA^n_k,\pr^*\cN_{\lambda}\otimes_\Lambda\cL)=\sw_{\infty}(\cN_{\lambda}\otimes_\Lambda R\pr_*\cL)-\sw_{\infty}(R\pr_*\cL)\ .
\end{align*}
By generic universally locally acyclicity, there is a dense open subset $U\subset \bA^1_k$ such that 
\begin{itemize}\itemsep=0.2cm
\item[1.]
For every geometric point $\bar y\to U$, we have $R\Gamma((\bA^n_k)_{\bar y},\cL|_{(\bA^n_k)_{\bar y}})\cong (R\pr_*\cL)|_{\bar y}$
\item[2.]
For every $i\in \bZ$, the sheaf $R^i\mathrm{pr}_*\cL$ is locally constant and constructible in $U$.
\end{itemize}
Let $\eta_{\infty}=\mathrm{Spec}\,(\mathrm{Frac}(\mathscr O^{\mathrm{sh}}_{\bP^1_k,\infty}))$ and let $\overline{\eta}_{\infty}$ be a geometric point over $\eta_{\infty}$. 
Put  $G_{\infty}=\mathrm{Gal}(\overline{\eta}_\infty/\eta_\infty)$ and let $P_\infty\subset  G_\infty$ be the wild inertia subgroup. 
For $s<0$ and $s>n-1$, we have 
$$
(R^s\pr_*\cL)|_{\overline\eta_\infty}=0 \ .
$$
Let $y\in U$.
Since \cref{maintheoremAn} is valid for $\bA^{n-1}_k$, we have 
$$
\rk_\Lambda((R^s\pr_*\cL)|_{\overline\eta_\infty})=\rk_\Lambda((R^s\pr_*\cL)|_{y})\leq b_s(\lc_H(\cL))\cdot\rk_\Lambda\cL 
$$
for every  $0\leq s\leq n-1$.
For $\lambda_1,\lambda_2\in k^\times$ with $\lambda_1-\lambda_2\not\in\bF_p$, the $P_\infty$-representations $\cN_{\lambda_1}|_{\eta_\infty}$ and $\cN_{\lambda_2}|_{\eta_\infty}$ are not isomorphic. 
Hence, their induced characters (see \cref{ram_character}) 
$$
\chi_{\lambda_1},\chi_{\lambda_2} : G_{\infty,\log}^{(r)}/G_{\infty,\log}^{(r+)} \to \Lambda^{\times}
$$ 
are not isomorphic either.
Since $k$ is infinite, we can thus find $\lambda\in k^\times$ such that $\chi_{\lambda}^{-1}$ does not contribute to  
$$
\bigoplus^{n-1}_{i=0}(R^i\pr_*\cL)|_{\eta_\infty}.
$$ 
By  \cref{swan_inequality}, for every $i=0,1,\dots, n-1$, we thus have 
$$
\sw_\infty(R^i\pr_*\cL) \leq \sw_\infty(\cN_{\lambda}\otimes_\Lambda R^i\pr_*\cL)\leq \sw_\infty(R^i\pr_*\cL)+m\cdot\rk_{\Lambda}((R^i\pr_*\cL)|_{\overline\eta_\infty}).
$$
In particular for $i=0,1,\dots, n-1$, we have 
$$
0\leq \sw_\infty(\cN_{\lambda}\otimes_\Lambda R^i\pr_*\cL)-\mathrm{sw}_\infty(R^i\pr_*\cL) \leq m\cdot\rk_{\Lambda}((R^i\pr_*\cL)|_{\overline\eta_\infty}) \ .
$$

We obtain
\begin{align*}
\chi(\bA^n_k,\cL)-\chi(\bA^n_k,\pr^*\cN_\lambda\otimes_\Lambda\cL)&=\sw_{\infty}(\cN_{\lambda}\otimes_\Lambda R\pr_*\cL)-\sw_{\infty}(R\pr_*\cL)\\
&=\sum^{n-1}_{i=0}(-1)^i(\sw_\infty(\cN_{\lambda}\otimes_\Lambda R^i\pr_*\cL)-\sw_{\infty}(R^i\pr_*\cL))\\
&\leq \sum^{n-1}_{\substack{i=0\\  \text{$i$ $\even$ }}}(\sw_\infty(\cN_{\lambda}\otimes_\Lambda R^{i}\pr_*\cL)-\sw_{\infty}(R^{i}\pr_*\cL))\\
&\leq \sum^{n-1}_{\substack{i=0\\  \text{$i$ $\even$ }}} m\cdot\rk_{\Lambda}(R^{i}\pr_*\cL)\nonumber\\
&\leq\left(\sum^{n-1}_{\substack{i=0\\  \text{$i$ $\even$ }}} b_{i}(\lc_H(\cL))\right)\cdot m\cdot\rk_{\Lambda}\cL \ .
\end{align*}
and
\begin{align*}
\chi(\bA^n_k,\cL)-\chi(\bA^n_k,\pr^*\cN_{\lambda}\otimes_\Lambda\cL)&=\sw_{\infty}(\cN_{\lambda}\otimes_\Lambda R\pr_*\cL)-\sw_{\infty}(R\pr_*\cL)\\
&=\sum^{n-1}_{i=0}(-1)^i(\sw_\infty(\cN_{\lambda}\otimes_\Lambda R^i\pr_*\cL)-\sw_{\infty}(R^i\pr_*\cL))\\
&\geq -\sum^{n-1}_{\substack{i=0\\  \text{$i$ $\odd$ }}}(\sw_\infty(\cN_{\lambda}\otimes_\Lambda R^{i}\pr_*\cL)-\sw_{\infty}(R^{i}\pr_*\cL))\\
&\geq -\sum^{n-1}_{\substack{i=0\\  \text{$i$ $\odd$ }}}m\cdot\rk_{\Lambda}(R^{i}\pr_*\cL)\\
&\geq -\left(\sum^{n-1}_{\substack{i=0\\  \text{$i$ $\odd$ }}}b_{i}(\lc_H(\cL))\right)\cdot m\cdot\rk_{\Lambda}\cL \ .
\end{align*}
Together with  \cref{<chiprN0F<}, we have 
\begin{align*}
&\chi(\bA^n_k,\cL)\geq \chi(\bA^n_k,\pr^*\cN_{\lambda}\otimes_\Lambda\cL)-\left(\sum^{n-1}_{\substack{i=0\\  \text{$i$ $\odd$ }}}b_{i}(\lc_H(\cL))\right)\cdot m\cdot\rk_{\Lambda}\cL \\
&\geq-\left(\sum^{n-1}_{\substack{i=0\\  \text{$i$ $\even$ }}} (m-1)\cdot b_{i}(m)+\sum^{n-1}_{\substack{i=0\\  \text{$i$ $\odd$ }}}m\cdot b_{i}(\lc_H(\cL))\right)\cdot\rk_{\Lambda}\cL \\
&\geq-\left(\sum^{n-1}_{\substack{i=0\\  \text{$i$ $\even$ }}}(\lc_H(\cL)+2)\cdot b_{i}(\lc_H(\cL)+3)+\sum^{n-1}_{\substack{i=0\\  \text{$i$ $\odd$ }}}(\lc_H(\cL)+3)\cdot b_{i}(\lc_H(\cL))\right)\cdot\rk_{\Lambda}\cL\ , 
\end{align*}
and
\begin{align*}
&\chi(\bA^n_k,\cL)\leq \chi(\bA^n_k,\pr^*\cN_{\lambda}\otimes_\Lambda\cL)+\left(\sum^{n-1}_{\substack{i=0\\  \text{$i$ $\even$ }}}b_{i}(\lc_H(\cL))\right)\cdot m\cdot\rk_{\Lambda}\cL \\
&\leq\left(\sum^{n-1}_{\substack{i=0\\  \text{$i$ $\odd$ }}}(m-1)\cdot b_{i}(m)+\sum^{n-1}_{\substack{i=0\\  \text{$i$ $\even$ }}}  m\cdot b_{i}(\lc_H(\cL))\right)\cdot\rk_{\Lambda}\cL \\
&\leq\left(\sum^{n-1}_{\substack{i=0\\  \text{$i$ $\odd$ }}}(\lc_H(\cL)+2)\cdot b_{i}(\lc_H(\cL)+3)+\sum^{n-1}_{\substack{i=0\\  \text{$i$ $\even$ }}} (\lc_H(\cL)+3)\cdot b_{i}(\lc_H(\cL))\right)\cdot\rk_{\Lambda}\cL \ .
\end{align*}
This concludes the proof of \cref{chi_special_F}.
\end{proof}

\subsection{Proof of  \cref{maintheoremAn}}\label{pfmaintheoremAn} 
We proceed by induction on $n$. 
When $n=1$, \cref{maintheoremAn} follows from \cref{GOcor_affine}. 
Let $n\geq 2$ and assume that \cref{maintheoremAn} is valid for $\bA^{n-1}_k$. 
In particular,  \cref{hn-1<bn-1} gives 
\begin{equation*}
h^i(\bA^{n}_k,\cL)\leq b_i(\lc_H(\cL))\cdot\rk_{\Lambda}(\cL)\ ,
\end{equation*}
for every $0\leq i\leq n-1$.
Hence, we are left to show that 
\begin{equation*}
h^n(\bA^{n}_k,\cL)\leq b_n(\lc_H(\cL))\cdot\rk_{\Lambda}(\cL) . 
\end{equation*}
Let $\pr:\bA^{n}_k\to\bA^1_k$ be the projection on the first coordinate. 
When $n$ is odd, we deduce
\begin{align*}
h^{n}(\bA^{n}_k,\cL)&=-\chi(\bA^{n}_k,\cL)+\sum^{n-1}_{i=0}(-1)^ih^i(\bA^{n}_k,\cL),\\
&\leq-\chi(\bA^{n}_k,\cL)+\sum^{n-1}_{\substack{i=0\\  \text{$i$ $\even$ }}} h^{i}(\bA^{n}_k,\cL),\\
&\leq \left(\sum^{n-1}_{\substack{i=0\\  \text{$i$ $\even$ }}}(\lc_H(\cL)+2)\cdot b_{i}(\lc_H(\cL)+3)+\sum^{n-1}_{\substack{i=0\\  \text{$i$ $\odd$ }}}(\lc_H(\cL)+3)\cdot b_{i}(\lc_H(\cL))\right)\cdot\rk_{\Lambda}\cL\\
&\ \ \ +\left(\sum^{n-1}_{\substack{i=0\\  \text{$i$ $\even$ }}}  b_{i}(\lc_H(\cL))\right)\cdot\rk_\Lambda\cL\\
&= b_n(\lc_H(\cL))\cdot\rk_\Lambda \cL\ .
\end{align*}
If $n$ is even, we have 
\begin{align*}
h^{n}(\bA^{n}_k,\cL)&=\chi(\bA^{n}_k,\cL)-\sum^{n-1}_{i=0}(-1)^i h^i(\bA^{n}_k,\cL)\\
&\leq\chi(\bA^{n}_k,\cL)+\sum^{n-1}_{\substack{i=0\\  \text{$i$ $\odd$ }}} h^{i}(\bA^{n}_k,\cL)\\
&\leq \left(\sum^{n-1}_{\substack{i=0\\  \text{$i$ $\odd$ }}}(\lc_H(\cL)+2)\cdot b_{i}(\lc_H(\cL)+3)+\sum^{n-1}_{\substack{i=0\\  \text{$i$ $\even$ }}} (\lc_H(\cL)+3)\cdot b_{i}(\lc_H(\cL))\right)\cdot\rk_{\Lambda}\cL \\
&\ \ \  + \left(\sum^{n-1}_{\substack{i=0\\  \text{$i$ $\odd$ }}}b_{i}(\lc_H(\cL))\right)\cdot\rk_\Lambda\cL\\
&\leq b_n(\lc_H(\cL))\cdot\rk_\Lambda\cL  \ .
\end{align*}
In any case  we have $h^{n}(\bA^{n}_k,\cL)\leq b_{n}(\lc_H(\cL))\cdot\rk_\Lambda\cL $. 
This concludes the proof of \cref{maintheoremAn}.

\section{Estimates for Betti numbers of étale sheaves}\label{estimes_general}

\subsection{Bounding the ramification with coherent sheaves}\label{bounded_ramification_section}

Let $X$ be a scheme of finite type over $k$.
We denote by $\bQ[\Coh(X)]$ the free $\bQ$-vector space on the set of isomorphism classes of coherent sheaves on $X$.
Observe that the pullback along every morphism $f : Y\to X$ of schemes of finite type over $k$ induces a morphism of $\bQ$-vector spaces
$$
f^* : \bQ[\Coh(X)]\to \bQ[\Coh(Y)]\ .
$$
Assume now that $X$ is normal and let $\cE\in \Coh(X)$.
If $X^1\subset X $ denotes the set of codimension $1$ points of $X$, 
we define a Weil divisor on $X$ by the formula
$$
T(\cE):= \sum_{\eta\in X^1}    \length_{\cO_{X,\eta}}( \cE|_{X_{\eta}}^{\tors})\cdot \overline{\{\eta\}}
$$
where $X_{\eta}= \Spec \cO_{X,\eta}$ and where $\cE|_{X_{\eta}}^{\tors}$  is the torsion part of $\cE|_{X_{\eta}}$.

\begin{example}
If $R$ is an effective Cartier divisor of $X$ with ideal sheaf $\cI_R$ and if $\cE=\cO_X/\cI_R$, then $T(\cE)=R$.
\end{example}

If $\Weil(X)_{\bQ}$ is the space of $\bQ$-Weil divisors on $X$, the map $T : \Coh(X)\to \Weil(X)_{\bQ}$ induces a map of $\bQ$-vector spaces
$$
T : \bQ[\Coh(X)]\to \Weil(X)_{\bQ} \ .
$$

\begin{definition}\label{bounded_conductor}
Let $X$ be a scheme of finite type over $k$.
Let $\cK\in D_{ctf}^b(X,\Lambda)$ and  $\cE\in \bQ[\Coh(X)]$.
We say that \textit{$\cK$ has log conductors bounded by $\cE$} if for every morphism $f : C\to X$ over $k$ where $C$ is a smooth curve over $k$, we have
$$
LC(\cH^i\cK|_C)\leq T(f^*\cE) 
$$
for every $i\in \bZ$.
We denote by $D_{ctf}^b(X,\cE,\Lambda)$ the full subcategory of $D_{ctf}^b(X,\Lambda)$ spanned by objects having log conductors bounded by $\cE$.
\end{definition}

The following is our main example of sheaf with explicit bound on the log conductors.

\begin{prop}[{\cite[Proposition 5.7]{HuTeyssierCohBoundedness}}]\label{bounded_ramification_ex}
Let $X$ be a normal scheme of finite type over $k$.
Let $D$ be an effective Cartier divisor of $X$ and put $j : U:=X-D\hookrightarrow X$.
Let  $\cL\in \Loc_{\ft}(U,\Lambda)$ and $\cE\in \bQ[\Coh(X)]$.
Then
\begin{enumerate}\itemsep=0.2cm
\item If $j_!\cL$ has log conductors bounded by $\cE$, then $LC_X(j_!\cL) \leq T(\cE)$.
\item If $X$ is smooth over $k$, then $j_!\cL$ has log conductors bounded by $(\lc_D(\cL)+1)\cdot \cO_D$.
\end{enumerate}

\end{prop}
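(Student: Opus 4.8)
I would prove the two parts separately, starting with the easier (2). Fix a morphism $f\colon C\to X$ from a smooth curve; if $f(C)\subseteq D$ then $(j_!\cL)|_C=0$ and there is nothing to check, so assume $f(C)\not\subseteq D$. Then $E:=C\times_X D$ is an effective Cartier divisor on $C$, and the non-logarithmic half of \cref{semi_continuity} (which carries no normal-crossings hypothesis) gives $C_C((j_!\cL)|_C)\leq f^*C_X(j_!\cL)$. Reading off the coefficient at a point $x\in C$, only the components $Z$ of $D$ contribute, and $c_Z(j_!\cL)\leq c_D(\cL)\leq \lc_D(\cL)+1$ by \cref{inequalityLogNonLog}; hence $c_x((j_!\cL)|_C)\leq (\lc_D(\cL)+1)\cdot m_x(f^*D)$. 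Since $f^*\cO_D=\cO_C/f^*\cI_D$ is of finite length $m_x(f^*D)$ at $x$ and $T$ is $\bQ$-linear, the coefficient of $x$ in $T(f^*((\lc_D(\cL)+1)\cdot\cO_D))$ equals $(\lc_D(\cL)+1)\cdot m_x(f^*D)$. Using $\lc_x\leq c_x$ once more, we get $LC((j_!\cL)|_C)\leq T(f^*\cE)$ for $\cE=(\lc_D(\cL)+1)\cdot\cO_D$, which is exactly the assertion.

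For (1) it suffices to bound, for each integral Weil divisor $Z\subseteq X$ with generic point $\eta$, the coefficient $\lc_Z(j_!\cL)$ of $Z$ in $LC_X(j_!\cL)$ by the coefficient $\ell_\eta:=\length_{\cO_{X,\eta}}(\cE|_{X_\eta}^{\tors})$ of $Z$ in $T(\cE)$. If $Z$ is not a component of $D$ the left-hand side vanishes, so assume $Z\subseteq D$. To bound the coefficient at $Z$ I may work over a dense open neighbourhood of $\eta$; since $X$ is normal it is regular, hence (as $k$ is perfect) smooth, at all points of codimension $\leq 1$, so I may assume $X$ smooth, $Z$ smooth, and the other components of $D$ disjoint from a dense open of $Z$. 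A routine generic-flatness argument also lets me assume that the torsion of $\cE$ along $Z$ is the restriction of a sheaf flat over $Z$, so that the torsion length of $f^*\cE$ at a closed point of a transversal curve equals $\ell_\eta$ for a general such curve.

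I would then pick a general closed point $z\in Z$ and an immersion $f\colon C\hookrightarrow X$ of a smooth curve such that: $C$ meets $Z$ transversally at $z$ with $m_z(f^*Z)=1$; $f$ is $SS(j_!\cL)$-transversal; the ramification of $j_!\cL$ is non-degenerate at $z$; and $\length_{\cO_{C,z}}((f^*\cE)_z^{\tors})=\ell_\eta$. All four are conditions of general position and can be imposed simultaneously by a Bertini-type argument. For such a curve the logarithmic refinement of the \enquote{conductor detected by a curve} formula (the logarithmic counterpart of \cref{equality_DT}, available in \cite{wr,Hu_Leal}) gives the \emph{equality} $\lc_z((j_!\cL)|_C)=\lc_Z(j_!\cL)$. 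On the other hand the hypothesis that $j_!\cL$ has log conductors bounded by $\cE$ gives $LC((j_!\cL)|_C)\leq T(f^*\cE)$, whose coefficient at $z$ reads $\lc_z((j_!\cL)|_C)\leq \length_{\cO_{C,z}}((f^*\cE)_z^{\tors})=\ell_\eta$. Combining the last two displays yields $\lc_Z(j_!\cL)\leq \ell_\eta$, as desired.

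The crux — and the step I expect to be the main obstacle — is the \emph{logarithmic} curve-detection equality used in (1). \cref{equality_DT} only controls the non-logarithmic conductor divisor, and deducing the logarithmic statement from it would lose a \enquote{$+1$} caused by the imperfection of the residue field $\kappa(\eta)$ (via \cref{inequalityLogNonLog}); one therefore genuinely needs the stability of the logarithmic slopes — equivalently, of their Newton polygon — under restriction to a sufficiently general transversal curve. Pinning down this input, and verifying that $SS$-transversality, non-degeneracy of the ramification, and the correct torsion length of $f^*\cE$ at $z$ can all be arranged at once, is the real content of the argument; everything else is bookkeeping with Weil divisors and the elementary inequalities $\lc_K\leq c_K\leq \lc_K+1$.
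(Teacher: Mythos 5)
The paper itself gives no proof of this proposition (it is imported from \cite[Proposition 5.7]{HuTeyssierCohBoundedness}), so I can only judge your argument on its own terms. Your part (2) is correct and complete: the non-logarithmic half of \cref{semi_continuity} carries no normal-crossings hypothesis, $c_Z(j_!\cL)\leq c_D(\cL)\leq \lc_D(\cL)+1$ by \cref{inequalityLogNonLog}, and $T(f^*\cO_D)=f^*D$, which is exactly what \cref{bounded_conductor} requires; this is surely the intended argument, and the deliberate use of the non-log conductor plus the ``$+1$'' is precisely what makes the arbitrary (non-NC) divisor $D$ harmless.

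In part (1) the skeleton (reduce to the generic point $\eta$ of each $Z$, pass to the smooth locus, generic flatness for the torsion of $\cE$, then test on a transversal curve through a general closed point) is sound, but the step you yourself flag is a genuine gap, not bookkeeping: the ``logarithmic counterpart of \cref{equality_DT}'' is not available from the results you point to. \cite[Corollary 3.9]{wr} and the statements of \cite{Hu_Leal} quoted in this paper control the \emph{non-logarithmic} conductor divisor $C_X$ (for $LC$ one only gets the inequality of \cref{semi_continuity}, under a normal-crossings hypothesis), and converting them costs exactly the $+1$ you cannot afford; a log-detection equality in the clean/non-degenerate case may well exist, but as written you neither prove it nor locate it, so the crux of (1) is left open. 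You can close the gap using only tools already quoted here. Work in a small smooth neighbourhood $V$ of $\eta$ where $Z$ is smooth and principal and where $\cE$ has the generic local shape you describe, and for $n$ prime to $p$ take the Kummer cover $V_n\to V$, $u^n=t$, ramified along $Z$. The pullback $\cL_n$ has $\lc_{Z_n}(\cL_n)=n\,\lc_Z(\cL)$ (tame scaling of the log filtration, as used in the proof of \cref{generalH-T}), hence $c_{Z_n}(\cL_n)\geq n\,\lc_Z(\cL)$. Apply \cref{equality_DT} on the smooth scheme $V_n$ to a transversal, $SS$-transversal curve $S$ through a general non-degenerate point of $Z_n$: then $c_x(\cL_n|_S)=c_{Z_n}(\cL_n)$. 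The composite $S\to V_n\to X$ is a legitimate test curve for \cref{bounded_conductor}; along it the local equation $t$ pulls back to a unit times $\varpi^n$, so the generic-flatness computation gives that the coefficient of $T$ of the pulled-back $\cE$ at $x$ equals $n\,\ell_\eta$, where $\ell_\eta$ is the coefficient of $T(\cE)$ at $Z$. Combining with \cref{inequalityLogNonLog}, the hypothesis yields $n\,\lc_Z(\cL)-1\leq \lc_x(\cL_n|_S)\leq n\,\ell_\eta$, and letting $n\to\infty$ gives $\lc_Z(\cL)\leq\ell_\eta$. (The same generic transversal curves with $n=1$ also show that the coefficient of $T(\cE)$ at a divisor $Z\not\subseteq D$ is nonnegative, a point you need to address since $\cE\in\bQ[\Coh(X)]$ may have negative coefficients, and ``the left-hand side vanishes'' alone does not settle it.)
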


\begin{lem}[{\cite[Lemma 5.6]{HuTeyssierCohBoundedness}}]\label{many_properties}
Let $X$ be a scheme of finite type over $k$ and let  $\cE\in \bQ[\Coh(X)]$.
\begin{enumerate}\itemsep=0.2cm

\item\label{pullback} For every  $\cK\in D_{ctf}^b(X,\cE,\Lambda)$ and every $f : Y\to X$ morphism of  schemes of finite type over $k$, 
we have $f^*\cK\in D_{ctf}^b(Y,f^*\cE,\Lambda)$.

\item\label{sequence}  Consider an exact sequence  in $\Cons_{tf}(X,\Lambda)$
$$
0\to \cF_1\to \cF_2 \to \cF_3\to 0 \ .
$$
Then $\cF_2$ lies has log conductors bounded by $\cE$ if and only if so do $\cF_1$ and $\cF_3$.
\end{enumerate}
\end{lem}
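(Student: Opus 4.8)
The plan is to reduce both assertions to the defining property of \cref{bounded_conductor}, which is phrased via restriction to smooth curves over $k$, and then to isolate the purely local Galois-theoretic content at the closed points of those curves. For the first assertion, I would begin by recalling that $f^*$ is exact on étale sheaves: it commutes with the cohomology functors $\cH^i$, sends constructible sheaves to constructible sheaves and preserves finite tor-dimension over $\Lambda$, so $f^*$ restricts to a functor $D^b_{ctf}(X,\Lambda)\to D^b_{ctf}(Y,\Lambda)$. Fix now a morphism $h:D\to Y$ with $D$ a smooth curve over $k$. Exactness of pullback gives $\cH^i(f^*\cK)|_D=(f\circ h)^*\cH^i\cK$, and since $f\circ h:D\to X$ is again a morphism from a smooth curve over $k$, the hypothesis $\cK\in D^b_{ctf}(X,\cE,\Lambda)$ yields $LC\bigl((f\circ h)^*\cH^i\cK\bigr)\le T\bigl((f\circ h)^*\cE\bigr)=T\bigl(h^*f^*\cE\bigr)$. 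This is exactly the inequality required by \cref{bounded_conductor} to conclude $f^*\cK\in D^b_{ctf}(Y,f^*\cE,\Lambda)$.

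For the second assertion, I would fix a morphism $g:C\to X$ from a smooth curve over $k$ and a closed point $x\in C$, and write $K$ for the fraction field of $\wh{\cO}_{C,x}$ and $d_x$ for the coefficient of $[x]$ in the $\bQ$-divisor $T(g^*\cE)$; the statement then amounts, point by point, to comparing $\lc_x(\cF_i|_C)$ with $d_x$. As pullback of étale sheaves is exact, the short exact sequence $0\to\cF_1\to\cF_2\to\cF_3\to0$ restricts to a short exact sequence $0\to M_1\to M_2\to M_3\to0$ of finite $\Lambda$-modules with continuous $G_K$-action, where $M_i$ is the restriction of $\cF_i|_C$ to $\Spec K$. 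By \cref{local_fields}, the inequality $\lc_K(M)\le d_x$ is equivalent to triviality of the action of $H:=G^{d_x+}_{K,\log}$ on $M$. If $H$ acts trivially on $M_2$ it acts trivially on the submodule $M_1$ and on the quotient $M_3$, which settles one direction. Conversely, suppose $H$ acts trivially on $M_1$ and on $M_3$. By \cref{propramfil} we have $H\subseteq P_K$, so $H$ is pro-$p$, while $M_1$ is a finite $\Lambda$-module and hence of order a power of $\ell\ne p$; therefore $H^1_{\mathrm{cont}}(H,M_1)=\Hom_{\mathrm{cont}}(H,M_1)=0$, as there is no nonzero continuous homomorphism from a pro-$p$ group to a finite $\ell$-group. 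The long exact cohomology sequence then forces $M_2^H=M_2$, i.e. $H$ acts trivially on $M_2$. Letting $g$ and $x$ vary gives the claimed equivalence.

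I expect no serious obstacle here. The one non-formal ingredient is the vanishing just used, and it is exactly what upgrades the second assertion from the one-sided implication (boundedness of $\cF_1$ and $\cF_3$ forces that of $\cF_2$) to a biconditional; everything else is bookkeeping --- matching the local conductors $\lc_x(\cF_i|_C)$, the divisor $LC(\cF_i|_C)$ and the divisor $T(g^*\cE)$ coefficient by coefficient, and checking that restriction along $\Spec K\to C$ genuinely produces a short exact sequence of $G_K$-modules. An alternative route to the second assertion invokes the functoriality and exactness of the logarithmic slope decomposition $M=\bigoplus_r M^{(r)}_{\log}$: a short exact sequence of $P_K$-modules decomposes slope by slope, so the set of logarithmic slopes of $M_2$ is the union of those of $M_1$ and $M_3$, whence $\lc_K(M_2)=\max\bigl(\lc_K(M_1),\lc_K(M_3)\bigr)$.
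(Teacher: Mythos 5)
Your argument is correct. Note that this paper does not prove the lemma itself but quotes it from \cite[Lemma 5.6]{HuTeyssierCohBoundedness}, so there is no in-paper proof to compare against; your reductions are, however, exactly the expected ones: part (1) is just exactness of $f^*$ together with composing a test curve $D\to Y$ with $f$ and the identification $(f\circ h)^*\cE=h^*f^*\cE$ in $\bQ[\Coh(D)]$, and part (2) reduces, coefficient by coefficient on a test curve, to the local statement that $\lc_K(M)\leq c$ is equivalent to triviality of the $G^{c+}_{K,\log}$-action, which your slope-decomposition recollection does give. For the converse direction in (2), both of your routes are valid and are really the same phenomenon: since $H=G^{c+}_{K,\log}\subseteq P_K$ is pro-$p$ and the $M_i$ are finite $\Lambda$-modules of $\ell$-power order, the continuous action factors through a finite $p$-group, so either $\Hom_{\mathrm{cont}}(H,M_1)=0$ kills the obstruction class, or (equivalently, by Maschke) the sequence splits $P_K$-equivariantly and the logarithmic slope decomposition is exact, yielding $\lc_K(M_2)=\max\bigl(\lc_K(M_1),\lc_K(M_3)\bigr)$ and hence the biconditional.
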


\begin{lem}[{\cite[Proposition 5.9]{HuTeyssierCohBoundedness}}]\label{bounded_ramification_generic_fiber}
Let $f : X\to S$ be a  morphism between schemes of finite type over $k$.
Let $\cE\in \bQ[\Coh(X)]$ and $\cK\in D_{ctf}^b(X,\cE,\Lambda)$.
For every algebraic geometric point $\sbar\to S$, the complex $\cK|_{X_{\sbar}}$ has log conductors bounded by $i_{\sbar}^*\cE$ where $i_{\sbar} : X_{\sbar} \to X$ is the canonical morphism.

\end{lem}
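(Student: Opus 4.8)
The plan is to deduce the statement from the case of test curves over $k$, which is essentially \cref{many_properties}, by a spreading-out and generic-constancy argument. Since $i_{\sbar}^*$ is exact on étale $\Lambda$-sheaves we have $\cH^j(\cK|_{X_{\sbar}})\cong(\cH^j\cK)|_{X_{\sbar}}$ for every $j$, so fixing $j$ and setting $\cF:=\cH^j\cK$ it suffices to prove: if a constructible sheaf $\cF$ on $X$ satisfies $LC(\cF|_C)\le T(h^*\cE)$ for every $k$-morphism $h:C\to X$ with $C$ a smooth curve over $k$, then for every morphism $g:C'\to X_{\sbar}$ with $C'$ a smooth curve over $\Omega:=\kappa(\sbar)$ one has $LC(\cF|_{C'})\le T(g^*i_{\sbar}^*\cE)$, where $\cF|_{C'}$ denotes the pullback along $C'\to X_{\sbar}\to X$. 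Here the conductor test for $X_{\sbar}$ is of course taken over the (perfect) field $\Omega$.

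Let $s\in S$ be the image of $\sbar$, so $\Omega$ is an algebraically closed algebraic extension of $\kappa(s)$, and write $\kappa(s)=\mathrm{Frac}(A)$ with $A$ a finitely generated $k$-domain whose spectrum is the reduced closure of $s$ in an affine open of $S$. As $\Omega/\kappa(s)$ is a filtered union of finite extensions, the data $(C',g)$, the sheaf $\cF|_{C'}$ and the coherent class $g^*i_{\sbar}^*\cE$ are all defined over a finite extension $L$ of $\kappa(s)$ and, after further spreading out and shrinking, over a finitely generated $k$-domain $B$ with $\mathrm{Frac}(B)=L$ that is module-finite over $A$ after localizing $A$. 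We thus obtain a smooth morphism $\pi:\mathcal C\to T:=\Spec B$ of relative dimension one, a $k$-morphism $\gamma:\mathcal C\to X$, the coherent class $\cE_{\mathcal C}:=\gamma^*\cE$ and the constructible sheaf $\cF_{\mathcal C}:=\gamma^*\cF$, such that base change along $B\hookrightarrow L\hookrightarrow\Omega$ recovers $C'\to X$ together with $\cF|_{C'}$ and $g^*i_{\sbar}^*\cE$. For any smooth $k$-curve $D$ and any $k$-morphism $\rho:D\to\mathcal C$, composing with $\gamma$ and applying the hypothesis on $\cF$ gives $LC(\cF_{\mathcal C}|_D)\le T(\rho^*\cE_{\mathcal C})$, so $\cF_{\mathcal C}$ inherits the bound by $\cE_{\mathcal C}$ as in \cref{many_properties}.

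The next step is generic constancy along $\pi$. Choosing a smooth projective compactification $\overline{\mathcal C}$ of $\mathcal C$ over $T$ and combining the boundary, the non-local-constancy locus of $\cF_{\mathcal C}$, and the torsion support of $\cE_{\mathcal C}$, one finds after shrinking $T$ a divisor $\mathcal D\subset\overline{\mathcal C}$ finite étale over $T$ such that for every $t\in T$ the divisors $LC_{\mathcal C_t}(\cF_{\mathcal C}|_{\mathcal C_t})$ and $T(\cE_{\mathcal C}|_{\mathcal C_t})$ are supported on $\mathcal D_t$; shrinking $T$ once more, their coefficients along the (finitely many) components of $\mathcal D$ do not depend on $t$. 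For the étale invariant this rests on the semicontinuity of logarithmic conductors (\cref{semi_continuity}) together with a Noetherian induction on $T$; for the coherent one it is the constructibility of torsion lengths of a coherent sheaf in a family. Now pick a closed point $t_0\in T$. Since $T$ is of finite type over the perfect field $k$, the extension $\kappa(t_0)/k$ is finite separable, so $\mathcal C_{t_0}$, being smooth of dimension one over $\kappa(t_0)$, is a smooth curve over $k$, and $\mathcal C_{t_0}\hookrightarrow\mathcal C\xrightarrow{\gamma}X$ is a $k$-morphism. The hypothesis on $\cF$ gives $LC_{\mathcal C_{t_0}}(\cF_{\mathcal C}|_{\mathcal C_{t_0}})\le T(\cE_{\mathcal C}|_{\mathcal C_{t_0}})$; by the constancy just established, this comparison of coefficients along the components of $\mathcal D$ holds on every fibre, in particular on the generic fibre $\mathcal C_{\eta_T}$, a smooth curve over $L$.

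Finally, base change along $\Spec\Omega\to\Spec L$: since $C'=\mathcal C_{\eta_T}\times_L\Omega$ and $\cF|_{C'}$, $g^*i_{\sbar}^*\cE$ are the pullbacks of $\cF_{\mathcal C}|_{\mathcal C_{\eta_T}}$ and $\cE_{\mathcal C}|_{\mathcal C_{\eta_T}}$, it remains to know that, on a smooth curve, the logarithmic conductor divisor of an étale sheaf and the divisor $T(-)$ of a coherent class are compatible with extension of the ground field; granting this, pulling the inequality on $\mathcal C_{\eta_T}$ back along $C'\to\mathcal C_{\eta_T}$ yields $LC_{C'}(\cF|_{C'})\le T(g^*i_{\sbar}^*\cE)$. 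I expect the two delicate points to be (i) the generic constancy of the logarithmic conductor in the family $\pi:\mathcal C\to T$, which is where the semicontinuity input is genuinely used, and (ii) the compatibility in the last step of the Abbes--Saito logarithmic conductor with the possibly inseparable ground-field extension $L\subset\Omega$; both are routine in principle but deserve care.
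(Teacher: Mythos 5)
The statement in question is quoted in the paper from \cite[Proposition 5.9]{HuTeyssierCohBoundedness}; the paper supplies no proof of its own, so there is nothing internal to compare against. Assessed on its own terms, your spreading-out strategy is the right one: a smooth curve over $\kappa(\sbar)$ is not a curve over $k$, so one must descend the test curve to a finitely generated $k$-domain $B$, observe that closed fibres $\mathcal{C}_{t_0}$ are smooth curves over $k$ (because $k$ is perfect, so $\kappa(t_0)/k$ is finite separable), test the hypothesis there, and then transfer the inequality to the generic fibre and finally to $\Omega$.

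The one substantive gap is the generic constancy step. You appeal to \cref{semi_continuity}, which is a pullback inequality $LC_Y\le f^*LC_X$ and by itself only gives a one-sided comparison; it does not yield that $t\mapsto\lc_{\mathcal D_{i,t}}(\cF|_{\mathcal C_t})$ is constant on a dense open of $T$. What is really needed is a constructibility statement for logarithmic conductors along the fibres of a relative curve (so that the generic value is attained on a dense open containing closed points), which is precisely the content of \cite{HuTeyssierSemicontinuity} rather than \cite{Hu_Leal}. With that input the Noetherian-induction argument you sketch goes through; without it the transfer from the closed fibre $\mathcal C_{t_0}$ to the generic fibre $\mathcal C_{\eta_T}$ is not justified. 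The second point you flag — base change along $L\subset\Omega$ — is indeed delicate because $L$ need not be perfect, so a closed point of $\mathcal C_{\eta_T}$ with inseparable residue extension becomes ramified in $C'$; but both $\lc$ and $T(-)$ scale by the same ramification index under such a pullback, so the inequality persists, and it is worth saying this explicitly rather than leaving it as a caveat.
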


\subsection{Betti bound}

In this paragraph, we formalize the structure of the estimates appearing in the statement of \cref{general_boundedness_Betti}.

\begin{defin}\label{admissible}
Let $X$ be a scheme of finite type over $k$.
We say that a $\bQ$-linear map $\fc : \bQ[\Coh(X)]\to \bQ$ is \textit{admissible} if the following conditions are satisfied :
\begin{enumerate}\itemsep=0.2cm
\item For every $\cE\in \Coh(X)$, we have $\fc(\cE)\in \bN$.
\item For every $\cE_1,\cE_2\in \Coh(X)$, we have $\fc(\cE_1\bigoplus \cE_2)\leq \fc(\cE_1)+\fc(\cE_2)$.
\end{enumerate}
\end{defin}

\begin{defin}
Following \cite[0391]{SP},  let us recall that a morphism $f : X\to S$ between schemes of finite type over $k$ is \textit{normal} if it is flat with geometrically normal fibres.
\end{defin}

\begin{example}\label{ex_subaddittive}
Let $f: X\to S$ be a normal morphism between schemes of finite type over $k$.
For  $\cE\in \Coh(X)$,  we define a function $\chi_{\cE}\colon S\to\bN$ by
$$
\chi_{\cE}:S\to\bN,\ \ \ s\mapsto m(T(\cE_{\sbar})),
$$
where $\overline s\to S$ is an algebraic geometric point above $s\in S$ and where $m(T(\cE_{\sbar}))$ is the maximal multiplicity of $T(\cE_{\sbar})$.
By \cite[Corollary 4.8]{HuTeyssierCohBoundedness}, the quantity
$$
\mu_{f}(\cE):= \sup \chi_{\cE}(S)
$$
is finite.
Then, the induced $\bQ$-linear map
$$
\fc_f : \bQ[\Coh(X)]\to \bQ
$$ 
 is admissible.
\end{example}

\begin{lem}\label{pullback_subadditive}
Let $f: X\to Y$ be a morphism of schemes of finite type over $k$ and let  $\fc : \bQ[\Coh(X)]\to \bQ$ be an admissible function.
Then, $\fc\circ f^* : \bQ[\Coh(Y)]\to \bQ$ is admissible.
\end{lem}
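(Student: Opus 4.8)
The plan is simply to unwind the definitions: there is no genuine content here beyond the additivity of $f^{*}$ on coherent sheaves. First I would observe that $f^{*}\colon\bQ[\Coh(Y)]\to\bQ[\Coh(X)]$ is $\bQ$-linear by construction (it is defined as the $\bQ$-linear extension of $\cG\mapsto f^{*}\cG$), so the composite $\fc\circ f^{*}\colon\bQ[\Coh(Y)]\to\bQ$ is again a $\bQ$-linear map. It then remains to verify the two conditions of \cref{admissible} for $\fc\circ f^{*}$.

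For condition (1), I would note that if $\cG\in\Coh(Y)$ then $f^{*}\cG$ is again a coherent sheaf, because $X$ and $Y$ are Noetherian (being of finite type over $k$) and pullback of a coherent sheaf along a morphism of Noetherian schemes is coherent; hence $(\fc\circ f^{*})(\cG)=\fc(f^{*}\cG)\in\bN$ by admissibility of $\fc$. For condition (2), I would use that pullback of quasi-coherent sheaves commutes with finite direct sums, giving a canonical isomorphism $f^{*}(\cG_{1}\oplus\cG_{2})\cong f^{*}\cG_{1}\oplus f^{*}\cG_{2}$ in $\Coh(X)$, and then combine this with the subadditivity of $\fc$:
\[
(\fc\circ f^{*})(\cG_{1}\oplus\cG_{2})=\fc\bigl(f^{*}\cG_{1}\oplus f^{*}\cG_{2}\bigr)\leq\fc(f^{*}\cG_{1})+\fc(f^{*}\cG_{2})=(\fc\circ f^{*})(\cG_{1})+(\fc\circ f^{*})(\cG_{2}).
\]

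There is no real obstacle to overcome; the only point worth keeping in mind is the preservation of coherence under pullback, which relies on the Noetherian hypothesis built into "finite type over $k$", and the compatibility of $f^{*}$ with direct sums. Both are standard, so the proof will be a short two or three lines once the definitions are spelled out.
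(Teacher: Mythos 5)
Your proof is correct and is exactly the argument the paper leaves implicit (no proof is given in the text, the lemma being treated as an immediate consequence of the definitions). The two points you isolate — preservation of coherence under pullback between Noetherian schemes, and compatibility of $f^{*}$ with finite direct sums — are precisely what makes the verification go through.
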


\begin{defin}
Let $f: X\to S$ be a proper morphism between schemes of finite type over $k$ with fibres of dimension $\leq n$.
Let $\Lambda$ be a finite field or a finite extension of $\bQ_{\ell}$ with $\ell \neq p$.
Let $\sbar\to S$ be an algebraic geometric point and let $\cC \subset D_c^b(X_{\sbar},\Lambda)$ be a full subcategory.
Let  $\fc : \bQ[\Coh(X)]\to \bQ$ be an admissible function  and let $\fd\in \mathds{N}[x]^{n+1}$.
We say that \textit{$(\fc,\fd)$ is a Betti bound for $\cC$} if for every $j=0, \dots, 2n$, every  $\cE\in \bQ[\Coh(X)]$  and every  $\cK\in \cC$ with log conductors bounded by $\cE_{\sbar}$, we have 
\begin{equation}\label{betti_bound}
h^j(X_{\sbar},\cK)\leq \fd_{\min(j,2n-j)}(\fc(\cE))\cdot \Rk_{\Lambda}\cK  \ .
\end{equation}
\end{defin}

\begin{rem}
If \eqref{betti_bound} is satisfied for some algebraic geometric point $\sbar\to S$ localized at $s\in S$, then it is satisfied for every algebraic geometric point $\overline{t}\to S$ localized at $s$.
\end{rem}

\begin{defin}
Let $P$ be a property of morphisms of schemes over $k$ and let $n\geq 0$. 
A $P$-relative stratified scheme $(X/S,\Sigma)$ of relative dimension $\leq n$ will refer to a morphism $X\to S$ between schemes of finite type over $k$ satisfying $P$ such that the fibres of $X\to S$ have dimensions $\leq n$ and where $X$ is endowed with a finite stratification $\Sigma$.
\end{defin}

\begin{theorem}
\label{general_boundedness_Betti}
Let $(X/S,\Sigma)$ be a proper relative stratified scheme of relative dimension $\leq n$.
Then, there is an admissible function $\fc : \bQ[\Coh(X)]\to \bQ$ and  $\fd\in \mathds{N}[x]^{n+1}$ with  $P_i$ of degree  $i$ for every $i=0,\dots, n$ and such that for every algebraic geometric point $\sbar\to S$, every finite field $\Lambda$ of characteristic $\ell\neq p$, the couple $(\fc,\fd)$ is a Betti bound for   
$\Cons_{\Sigma_{\sbar}}(X_{\sbar},\Lambda)$.
\end{theorem}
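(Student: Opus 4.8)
The plan is to reduce the assertion, one geometric fibre at a time, to the affine–space estimate \cref{maintheoremAn}. First I would note that by \cref{many_properties} and \cref{bounded_ramification_generic_fiber} it suffices to produce, for each algebraic geometric point $\sbar\to S$, a Betti bound for $\Cons_{\Sigma_{\sbar}}(X_{\sbar},\Lambda)$ that is \emph{uniform} in $\sbar$ once $\cE$ is pulled back to the fibre. The uniform admissible function will be $\fc=\fc_f$ of \cref{ex_subaddittive}: the finiteness of $\mu_f(\cE)$ (\cite[Corollary~4.8]{HuTeyssierCohBoundedness}) says precisely that a single function controls the codimension–one lengths of $\cE$ on every fibre, and by \cref{bounded_ramification_ex}(1) a sheaf with log conductors bounded by $\cE_{\sbar}$ then has all of its conductor divisors on $X_{\sbar}$ bounded by the integer $\fc(\cE)$. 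After the reduction to projective morphisms with geometrically normal fibres (\cref{normal_reduction}) — which merely replaces $\fc$ by $\fc\circ(\text{pullback})$, still admissible by \cref{pullback_subadditive} — I am reduced to the following: for $Y$ a normal projective variety of dimension $\le n$ over $k=\bar k$ stratified by $\Sigma$, there are universal polynomials $\fd_0,\dots,\fd_n$ with $\deg\fd_i=i$ such that $h^j(Y,\cF)\le\fd_{\min(j,2n-j)}(\fc(\cE))\cdot\Rk_\Lambda\cF$ for every $\Sigma$-constructible $\cF$ with log conductors bounded by $\cE$.

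\textbf{D\'evissage and induction on dimension.} I would proceed by induction on $n$, the case $n=1$ being \cref{GOcor_affine} (through \cref{GOS} and \cref{chi=chic}). After refining $\Sigma$ to a stratification with smooth strata, a filtration of $\cF$ along the closed strata yields short exact sequences whose graded pieces are of the form $a_!\cL$ with $a\colon T\hookrightarrow Y$ the inclusion of a smooth locally closed stratum and $\cL:=\cF|_T$ locally constant. Since $Y$ is proper, $R\Gamma(Y,a_!\cL)=R\Gamma_c(T,\cL)$; choosing a normal projective compactification $\bar T$ of $T$ and using properness once more gives $R\Gamma_c(T,\cL)=R\Gamma(\bar T,k_!\cL)$ with $k\colon T\hookrightarrow\bar T$ and $\bar T\setminus T$ of strictly smaller dimension. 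When $\dim\bar T<n$ this is covered by the inductive hypothesis (the conductors along the boundary being controlled by \cref{semi_continuity}); the only genuinely new case is $\dim\bar T=n$, i.e.\ $Y$ normal projective of dimension exactly $n$ with $\cF=j_!\cL$ for a local system $\cL$ on a dense open $U=Y\setminus D$. Here it is essential that arbitrary singular $Y$ is permitted and the recursion is run directly in every dimension: no de Jong alteration \cite{DeJong} and no cohomological descent spectral sequence \cite{Conrad} is used, which is what keeps $\deg\fd_i$ from exceeding $i$.

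\textbf{Reduction to $\bA^n_k$.} To bound $h^j(Y,j_!\cL)=h^j_c(U,\cL)$, embed $Y$ in a projective space and take a generic linear projection to obtain a finite surjection $\varphi\colon Y\to\bP^n_k$; over a dense open $W\subset\bA^n_k$ the morphism $\varphi$ and its restriction over $U$ are finite \'etale, so $\cM:=\varphi_*(\cL|_{\varphi^{-1}(W)})$ is a local system on $W$ with $h^\bullet_c(\varphi^{-1}(W),\cL)=h^\bullet_c(W,\cM)$, while $U\setminus\varphi^{-1}(W)$ has dimension $\le n-1$ and its contribution is absorbed by the inductive hypothesis. Poincar\'e duality on the smooth $n$-fold $W$ gives $h^j_c(W,\cM)=h^{2n-j}(W,\cM^\vee)$. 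Now Kedlaya's theorem \cite{Ked} provides a finite \'etale morphism $\psi\colon W\to\bA^n_k$ whose degree is bounded uniformly over the family $X\to S$; hence $h^i(W,\cM^\vee)=h^i(\bA^n_k,\psi_*\cM^\vee)$, the rank of $\psi_*\cM^\vee$ is $\deg(\psi)\cdot\rk_\Lambda\cL$, and two applications of \cref{remark_bound_conductor_direct_image} (first along $\varphi$, then along $\psi$) together with \cref{semi_continuity} bound the logarithmic conductor of $\psi_*\cM^\vee$ at the hyperplane at infinity by an affine function of $\fc(\cE)$. Feeding this into \cref{maintheoremAn} bounds $h^i(\bA^n_k,\psi_*\cM^\vee)$ by $b_i$ of that affine expression times $\deg(\psi)\cdot\rk_\Lambda\cL$, i.e.\ by a polynomial of degree $i$ in $\fc(\cE)$ times $\Rk_\Lambda\cF$. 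Reassembling the finitely many graded and boundary contributions, and keeping track of the duality swap $j\leftrightarrow 2n-j$, yields polynomials $\fd_i$ of degree $i$ for $0\le i\le n$; these degrees are optimal by \cref{sharp_degree}.

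\textbf{Main obstacle.} The real difficulty is not a single homological input but the bookkeeping that keeps $\deg\fd_i=i$ through the double recursion on the relative dimension and on the strata: each d\'evissage step must be arranged so as not to raise the degree, which forces one to work with ordinary cohomology and compact supports on the nose — using the properness of $X\to S$ — and to dispose of lower-dimensional boundary strata purely through the inductive hypothesis. This is exactly where arbitrary singular base schemes are needed: running the argument only for smooth $X$ would require de Jong's alterations, whose cohomological descent spectral sequence feeds middle-dimensional cohomology on the $E_1$-page into beyond-middle cohomology of the abutment and so would destroy the sharp degrees of \cref{thm_1}. A secondary, more technical point is controlling the logarithmic conductor of the finite pushforward $\psi_*\cM^\vee$ at infinity, which is precisely what \cref{remark_bound_conductor_direct_image} and the semicontinuity of \cref{semi_continuity} are for.
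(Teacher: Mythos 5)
Your proposal identifies the chassis of the argument correctly: reduce to a projective normal $X\to S$, push forward to projective space by a Kedlaya-type finite covering, invoke \cref{maintheoremAn} together with the conductor estimates \cref{remark_bound_conductor_direct_image} and \cref{semi_continuity}, and run a dimension induction allowing arbitrary singular schemes so as to keep the degrees of the $\fd_i$ sharp. Your explanation of why de Jong's alterations must be avoided is exactly the point made in the paper. Nevertheless, the reduction to $\bA^n_k$ as you have written it has a genuine gap and is not what the paper does.

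You first project $Y$ to $\bP^n_k$ by a \emph{generic linear projection}, restrict to a dense open $W\subset\bA^n_k$ over which this projection is \'etale, and only then apply Kedlaya to manufacture a second finite \'etale cover $\psi\colon W\to\bA^n_k$. This misreads \cite[Theorem~1]{Ked}: as invoked in \cref{spread_out_Kedlaya}, Kedlaya's theorem already produces, for a normal projective $Y$ of dimension $n$ and any divisor $D\subset Y$, a \emph{single} finite surjective $h\colon Y\to\bP^n_k$ sending $D$ into the hyperplane at infinity and \'etale above $\bA^n_k$. With this, the finite exact pushforward $h_\ast j_!\cL$ is already the extension by zero of a local system on $\bA^n_k$, and \cref{maintheoremAn_compact} applies directly; the relevant degree is the generic degree of $h$, which is controlled once and for all over a dense open of $S$. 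In your two-step version, the degree of $\psi$ depends on the open $W$, hence on the chosen projection and on the fibre $\sbar$; your assertion that it ``is bounded uniformly over the family $X\to S$'' is not justified by anything you quote, and since that degree multiplies both the rank and the conductor bound in the final estimate, this is a real obstruction rather than a bookkeeping nuisance.

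There is a second issue with where you run the d\'evissage. You carry out the filtration by strata, the compactifications $\bar T$, and the passage to $\bA^n_k$ one geometric fibre at a time and then assert uniformity in $\sbar$. The paper instead keeps the entire recursion at the level of the relative stratified scheme $(X/S,\Sigma)$: it reduces to $S$ integral affine with $X\to S$ projective normal (\cref{normal_reduction}), spreads out Kedlaya's morphism over a dense open of $S$ (\cref{spread_out_Kedlaya}), takes the divisor $D$ to contain all lower-dimensional strata, and concludes by a single application of \cref{dimension_recursion} with $Z:=h^{-1}(H_S)$. Noetherian induction on $S$, packaged in \cref{general_boundedness_Betti_dominant}, \cref{equivalence_coho_bound}, \cref{dim_reduction}, \cref{proj_reduction} and \cref{lem_equivalence_coho_bound}, is what converts ``over a dense open of $S$'' into ``for every $\sbar\to S$.'' Your per-fibre sketch cannot be completed without being repackaged in this relative form, because each geometric choice (projection, Kedlaya cover, compactification of a stratum) must be made once over the base, not separately on each fibre, for the admissible function $\fc$ of \cref{ex_subaddittive} and the polynomials $\fd_i$ to be uniform.
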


\begin{defin}\label{Betti_function}
A Betti bound $(\fc,\fd)$ as in \cref{general_boundedness_Betti} will be referred as a \textit{Betti bound for $(X/S,\Sigma)$}.
\end{defin}

For the sake of the proof, we need to formulate a priori weaker

\begin{theorem}
\label{general_boundedness_Betti_dominant}
Let $(X/S,\Sigma)$ be a proper relative  stratified scheme of relative dimension $\leq n$.
Then, there is a dominant quasi-finite map $T\to S$ between schemes of finite type over $k$ such that the pullback $(X_T/T,\Sigma_T)$ admits a Betti bound.
\end{theorem}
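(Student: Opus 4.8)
\emph{Proof proposal.}
The plan is a double induction --- a Noetherian induction on $S$ and an induction on the relative dimension $n$ --- with the affine-space estimate \cref{maintheoremAn} as the engine. Throughout we use freely that the conclusion lets us shrink $S$ to a dense open and pass to a dominant quasi-finite cover, and that refining $\Sigma$ only enlarges the class of $\Sigma_{\sbar}$-constructible sheaves. If a dense open $S^\circ\subseteq S$ admits a cover with a Betti bound after pullback, then $\overline{S\setminus S^\circ}$ --- still a proper relative stratified scheme of relative dimension $\le n$ --- is handled by Noetherian induction, and the two covers together with their pairs $(\fc,\fd)$ recombine (sum the admissible functions, take pointwise maxima of the polynomials); so we may assume $S$ integral, even normal. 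The base case $n=0$ is immediate: $X\to S$ is then finite, $h^j(X_{\sbar},\cF)=0$ for $j\ne 0$, $h^0(X_{\sbar},\cF)\le\#X_{\sbar}\cdot\Rk_{\Lambda}\cF$ with $\#X_{\sbar}$ bounded over $S$ by constructibility, and the log-conductor hypothesis is vacuous.

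For the inductive step, standard reductions bring us to: $X\to S$ projective, $X$ normal with geometrically normal fibres of pure dimension $n$, every stratum of $\Sigma$ smooth over $S$, and the open dense stratum $U_0$ fibrewise affine. One uses the relative Chow lemma to ensure a dense open of $X$ (hence $U_0$) is fibrewise quasi-projective, replaces $X$ by its normalization, invokes generic flatness, generic geometric reducedness of the fibres ($k$ is perfect), openness of the geometrically-normal-fibre locus and generic smoothness of the strata --- all after shrinking $S$ --- and removes a relatively ample divisor. At each step the excised locus is a closed subscheme of $X$ which is proper over $S$ with fibres of dimension $<n$; by the induction on $n$ (extending sheaves by zero) its contribution to every Betti number is already bounded, so the excision triangles $j_!(\cF|_{X_\alpha})\to\cF\to i_\ast(\cF|_Z)$ reduce us to bounding $h^j_c(U_{0,\sbar},\cL)$ for $\cL:=\cF|_{U_0}$ locally constant. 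We set $\fc:=\fc_f$ from \cref{ex_subaddittive}, attached to the normal morphism $f\colon X\to S$ and pulled back to $U_0$ (admissibility preserved by \cref{pullback_subadditive}); by \cref{bounded_ramification_generic_fiber} and \cref{bounded_ramification_ex} the hypothesis that $\cF$ has log conductors bounded by $\cE$ forces $LC_{X_{\sbar}}(j_!\cL)\le T(\cE_{\sbar})$, whose multiplicities are $\le\fc(\cE)$.

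To estimate $h^j_c(U_{0,\sbar},\cL)$ we invoke Kedlaya's lemma \cite{Ked}: after shrinking $S$ (and, if needed to make the generic residue field amenable, a purely inseparable base change) there is a finite $S$-morphism $g\colon\mathcal U_0\to\bA^n_S$ that is generically étale on geometric fibres, with generic degree and branch divisor spreading out over $S$. Passing to fibrewise compactifications one gets a finite $\bar g\colon\bar{\mathcal U}_0\to\bP^n_S$, whence $h^j_c(U_{0,\sbar},\cL)=h^j(\bP^n_{\sbar},\bar g_\ast\bar\jmath_!\cL)$; this complex is locally constant off a divisor of $\bP^n_{\sbar}$ of dimension $<n$ (the hyperplane at infinity together with the branch locus), so one more excision peels off a lower-dimensional relative contribution handled by the induction on $n$, leaving a locally constant sheaf on $\bA^n_{\sbar}$ to which \cref{maintheoremAn} applies, bounding the Betti numbers by $b_\bullet(\lc_H(g_\ast\cL))\cdot\deg(g)\cdot\Rk_{\Lambda}\cL$. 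By \cref{remark_bound_conductor_direct_image}, $\lc_H(g_\ast\cL)\le\lc_H(g_\ast\Lambda)+\deg(g)\cdot\lc_{g^{-1}(H)}(\cL)\le\lc_H(g_\ast\Lambda)+\deg(g)\cdot\fc(\cE)$, and $\deg(g)$, $\lc_H(g_\ast\Lambda)$ are bounded independently of $\sbar$ after shrinking $S$; this is a polynomial in $\fc(\cE)$ times $\Rk_{\Lambda}\cF$. Assembling over all dévissage pieces yields a pair $(\fc,\fd)$ (no degree constraints on $\fd$ are asked here), valid over the common quasi-finite refinement $T$ of all covers produced.

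The main obstacle is uniformity in $\sbar$: the bound must issue from a single pair on $X/S$, so one must check that Kedlaya's construction --- the projection to $\bA^n$, its degree, and the wild ramification of $g_\ast\Lambda$ at infinity --- genuinely spreads out over a dense open of $S$, that the excision steps terminate, and that the lower-dimensional relative Betti bounds and the successive covers recombine coherently; together with the uniform bound on fibrewise conductors underlying $\fc_f$ (the content of \cite[Corollary 4.8]{HuTeyssierCohBoundedness}), this constructible spreading-out is where the real work lies. The bounds are automatically independent of the finite coefficient field $\Lambda$, since all the cited inputs are.
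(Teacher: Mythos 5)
Your plan follows the paper's strategy closely: reduce to a projective, normal, fibrewise-integral situation over an integral base, apply Kedlaya's theorem to map finitely to $\bP^n_S$ carrying the bad strata into the hyperplane at infinity, peel off the divisor $h^{-1}(H_S)$ by excision, push forward to $\bA^n$ and invoke \cref{maintheoremAn} (via \cref{maintheoremAn_compact} and \cref{remark_bound_conductor_direct_image}) to close the loop. So the core ideas and all the key ingredients coincide. The one structural difference is that the paper runs a single recursion on the \emph{absolute} dimension $\dim X$, whereas you set up a double induction on (Noetherian rank of $S$, relative dimension $n$). These are morally equivalent, but the single-parameter version is cleaner precisely because the Chow/normalization steps produce exceptional loci $Z\subset X$ whose fibres over special $s\in S$ can still have full dimension $n$; the paper only needs $\dim Z < \dim X$, while your argument needs the Noetherian shrinking and the $n$-descent to interlock at every step, which is correct but more delicate than the two lines you devote to it.

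There is, however, one concrete error. You assert that ``no degree constraints on $\fd$ are asked here.'' That is false: by \cref{Betti_function}, a Betti bound is a pair $(\fc,\fd)$ with $\fd_i$ of degree exactly $i$, exactly as in \cref{general_boundedness_Betti}, and the relative statement inherits this requirement. The degree control is not automatic. After Kedlaya, the cohomology of the pushforward is concentrated in degrees $[N,2N]$ and bounded by $b_{2N-j}$, of degree $2N-j \leq \min(j,2n-j)$, which is in budget; but this must be reconciled with the contributions from the lower-dimensional excision pieces, for which the inductive Betti bound has its own indexing. The paper does this reconciliation explicitly via \cref{bette_polynomials} and the careful bookkeeping inside \cref{dimension_recursion}. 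Your outline, as written, would produce a polynomial bound of degree $\leq n$ in each cohomological degree, but not the normalized shape the definition demands, and by denying the constraint exists you have not left room to fix it. Two smaller points: ``removing a relatively ample divisor'' to make the open stratum fibrewise affine is not a separate reduction step --- it is a by-product of Kedlaya's map, since $h^{-1}(\bA^n_S)$ is automatically affine, being finite over an affine scheme; and the ``purely inseparable base change to make the generic residue field amenable'' is unnecessary, since \cite{Ked} is applied directly to $X_\eta$ over the residue field of the generic point of $S$ (as in \cref{spread_out_Kedlaya}), with no base-field hypothesis beyond what is already available.
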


\subsection{Dévissage}
The goal of what follows is to show that \cref{general_boundedness_Betti} and \cref{general_boundedness_Betti_dominant} are equivalent and to reduce the proof of \cref{general_boundedness_Betti_dominant} to the case where $X\to S$ is a normal morphism.

\begin{lem}\label{lem_equivalence_coho_bound}
Let $(X/S,\Sigma)$ be a proper relative  stratified scheme  of relative dimension $\leq n$.
Assume the existence of a finite family $A$ of jointly surjective quasi-finite morphisms $T_\alpha \to S$ over $k$ such that for every $\alpha \in A$, the pullback $(X_{\alpha}/S_{\alpha},\Sigma_{\alpha})$ admits a Betti bound.
Then, $(X/S,\Sigma)$ admits a Betti bound.
\end{lem}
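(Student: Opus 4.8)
The plan is to use the fact that the Betti-bound inequality \eqref{betti_bound} is a purely fibrewise statement: for a fixed algebraic geometric point $\sbar\to S$ it only involves the fibre $X_{\sbar}$, its induced stratification $\Sigma_{\sbar}$, the sheaf $\cK$ and the restriction $\cE_{\sbar}$ of a coherent class. The hypothesis says that each fibre of $X\to S$ occurs, after an algebraic extension of the residue field, as a fibre of some $X_\alpha:=X\times_S S_\alpha$, for which a Betti bound already exists; so I would first make this fibre comparison precise and then patch the finitely many bounds together.

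Write $g_\alpha\colon X_\alpha\to X$ for the projection and $\Sigma_\alpha$ for the pullback stratification. Given $\sbar\to S$ over $s\in S$, joint surjectivity yields $\alpha\in A$ and a point $t\in S_\alpha$ over $s$; since $S_\alpha\to S$ is quasi-finite, $\kappa(t)/\kappa(s)$ is a finite field extension, hence embeds over $\kappa(s)$ into the algebraically closed field underlying $\sbar$, producing an algebraic geometric point $\overline t\to S_\alpha$ lying over $\sbar\to S$. Base change along $\overline t\to S_\alpha\to S$ then identifies $(X_\alpha)_{\overline t}\cong X_{\sbar}$ compatibly with the maps to $X$, that is, $(X_\alpha)_{\overline t}\to X_\alpha\xrightarrow{g_\alpha}X$ becomes the canonical $X_{\sbar}\to X$. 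I would then check the three ensuing compatibilities: the stratification of $(X_\alpha)_{\overline t}$ induced by $\Sigma_\alpha$ is $\Sigma_{\sbar}$; one has $(g_\alpha^*\cE)_{\overline t}=\cE_{\sbar}$ for every $\cE\in\bQ[\Coh(X)]$; and, since being $\Sigma$-constructible and having log conductors bounded by a coherent class (\cref{bounded_conductor}) are tested only on curves mapping into the fibre, both conditions transport verbatim between $X_{\sbar}$ and $(X_\alpha)_{\overline t}$. Moving between algebraic geometric points over the same $s$ is harmless by the remark following the definition of a Betti bound.

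Next I would assemble the global invariants. For each $\alpha\in A$ fix a Betti bound $(\fc_\alpha,\fd_\alpha)$ for $(X_\alpha/S_\alpha,\Sigma_\alpha)$ and set $\fc:=\sum_{\alpha\in A}\fc_\alpha\circ g_\alpha^*$. By \cref{pullback_subadditive} each summand is admissible, and admissibility (\cref{admissible}) is preserved under finite sums, since $\bQ$-linearity, $\bN$-valuedness on $\Coh(X)$ and subadditivity on direct sums all add up; so $\fc$ is admissible, and as the summands are non-negative on isomorphism classes of coherent sheaves one has $\fc(\cE)\geq\fc_\alpha(g_\alpha^*\cE)$ for every effective $\cE$ and every $\alpha$. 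Define $\fd\in\mathds{N}[x]^{n+1}$ by coefficient-wise maxima, $\fd_i:=\max_{\alpha\in A}(\fd_\alpha)_i$; this keeps non-negative coefficients, satisfies $\fd_i\geq(\fd_\alpha)_i$ on $\bR_{\geq 0}$, and $\fd_i$ has degree $i$ because each $(\fd_\alpha)_i$ does. Then, for a given $\sbar$, a finite field $\Lambda$ of characteristic $\ell\neq p$, an effective $\cE$, an index $j$, and $\cK\in\Cons_{\Sigma_{\sbar}}(X_{\sbar},\Lambda)$ with log conductors bounded by $\cE_{\sbar}$, I would pick $\alpha$ and $\overline t$ as above, transport $\cK$ across $(X_\alpha)_{\overline t}\cong X_{\sbar}$, apply the Betti bound of $(X_\alpha/S_\alpha,\Sigma_\alpha)$ at $\overline t$ to get $h^j(X_{\sbar},\cK)\leq(\fd_\alpha)_{\min(j,2n-j)}(\fc_\alpha(g_\alpha^*\cE))\cdot\Rk_\Lambda\cK$, and conclude $h^j(X_{\sbar},\cK)\leq\fd_{\min(j,2n-j)}(\fc(\cE))\cdot\Rk_\Lambda\cK$ using monotonicity of $\fd_i$ on $\bR_{\geq 0}$ together with $\fc_\alpha(g_\alpha^*\cE)\leq\fc(\cE)$ and $(\fd_\alpha)_i\leq\fd_i$. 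This is exactly \eqref{betti_bound}.

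The main obstacle is not an estimate — there is none to prove here — but the bookkeeping: one must verify carefully that the stratification, the pullbacks of coherent sheaves, and the log-conductor-boundedness condition all transport correctly along the quasi-finite surjective base change, so that the reduction to the fibres of the $X_\alpha$ is legitimate, and that admissibility of $\fc$ together with the degree normalisation of $\fd$ survive the finite assembly over $A$. The single genuinely used fact is that a jointly surjective family of quasi-finite morphisms reaches every fibre of $X\to S$ after only an algebraic extension of the residue field, which leaves the Betti numbers untouched.
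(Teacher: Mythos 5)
Your proof is correct and takes essentially the same approach as the paper: both aggregate the individual bounds via $\fc=\sum_\alpha\fc_\alpha\circ h_\alpha^*$ and pass from a fibre of $X\to S$ to an isomorphic fibre of some $X_\alpha\to S_\alpha$ via the quasi-finite surjectivity. The only cosmetic difference is that you take coefficient-wise maxima of the polynomial tuples $\fd_\alpha$ where the paper simply sums them; both dominate each $\fd_\alpha$ on $\bR_{\geq 0}$ and preserve the degree normalisation.
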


\begin{proof}
For $\alpha \in A$,  let $h_{\alpha} : X_{ \alpha }\to X$ be the induced morphism and let $(\fc_{ \alpha },\fd_{ \alpha })$ be a Betti bound for $(X_{\alpha}/S_{\alpha},\Sigma_{\alpha})$.
By \cref{pullback_subadditive}, the function 
$$
\fc:=\sum_{\alpha \in A}\fc_{ \alpha }\circ h^*_\alpha : \bQ[\Coh(X)]\to \bQ
$$ 
is admissible.
If we put $\fd:=\sum_{\alpha \in A}\fd_{ \alpha }$, one easily checks that $(\fc,\fd)$ is a Betti bound for $(X/S,\Sigma)$.
\end{proof}

\begin{defin}
For $d\geq 0$, we say that \cref{general_boundedness_Betti} (resp. \cref{general_boundedness_Betti_dominant}) holds in absolute dimension $\leq d$ if for every $n\geq 0$, \cref{general_boundedness_Betti} (resp. \cref{general_boundedness_Betti_dominant})  holds for every  proper relative stratified scheme $(X/S,\Sigma)$  of relative dimension $\leq n$ with $\dim X\leq d$.
\end{defin}

\begin{lem}\label{equivalence_coho_bound}
Let $d\geq 0$.
Then \cref{general_boundedness_Betti} holds in absolute dimension $\leq d$ if and only if \cref{general_boundedness_Betti_dominant} holds in absolute dimension $\leq d$.
\end{lem}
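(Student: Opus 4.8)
The plan is to deduce the equivalence from \cref{lem_equivalence_coho_bound} by a short dévissage. One implication is essentially a tautology: if \cref{general_boundedness_Betti} holds in absolute dimension $\leq d$, then every proper relative stratified scheme $(X/S,\Sigma)$ with $\dim X\leq d$ admits a Betti bound, and taking $T=S$ with the identity morphism — which is dominant and quasi-finite — gives exactly the conclusion of \cref{general_boundedness_Betti_dominant} in absolute dimension $\leq d$. So the real content is the reverse implication, and I would spend the proof on that.

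Assuming \cref{general_boundedness_Betti_dominant} holds in absolute dimension $\leq d$, fix $(X/S,\Sigma)$ with $\dim X\leq d$; the goal is to exhibit a Betti bound. The first move is to reduce to the case where $f\colon X\to S$ is surjective. Since $f$ is proper, $f(X)\subseteq S$ is closed; endowing it with its reduced structure, $X\to f(X)$ is again proper (cancellation), surjective, and has fibres of dimension $\leq n$, while $\Coh(X)$ is unchanged as a scheme-theoretic notion. A Betti bound $(\fc_0,\fd_0)$ for $(X/f(X),\Sigma)$ then serves $(X/S,\Sigma)$ verbatim: over geometric points landing in $f(X)$ the fibres and strata coincide, and over geometric points outside $f(X)$ the fibre $X_{\sbar}$ is empty, so all Betti numbers vanish. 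After this reduction $\dim S\leq \dim X\leq d$, and I would argue by induction on $\dim S$.

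For the base case $\dim S=0$, the scheme $S$ is a finite discrete set of points, so any dominant quasi-finite $T\to S$ produced by \cref{general_boundedness_Betti_dominant} is automatically surjective; \cref{lem_equivalence_coho_bound} applied to the one-element family $\{T\to S\}$, whose pullback $(X_T/T,\Sigma_T)$ admits a Betti bound by hypothesis, yields one for $(X/S,\Sigma)$. For the inductive step with $\dim S=m>0$, \cref{general_boundedness_Betti_dominant} gives a dominant quasi-finite $g\colon T\to S$ with $(X_T/T,\Sigma_T)$ admitting a Betti bound; since $g$ is of finite type, $g(T)$ is constructible and dense, hence contains a dense open $U\subseteq S$. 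Set $Z:=S\setminus U$ with its reduced structure, a closed subscheme with $\dim Z<m$. Then $(X_Z/Z,\Sigma_Z)$ is a proper relative stratified scheme with $\dim X_Z\leq\dim X\leq d$ and $\dim Z<m$, so by the inductive hypothesis it admits a Betti bound. Finally $g\colon T\to S$ and the closed immersion $Z\hookrightarrow S$ form a finite family of quasi-finite morphisms that is jointly surjective, because $g(T)\cup Z\supseteq U\cup Z=S$; applying \cref{lem_equivalence_coho_bound} to this family produces a Betti bound for $(X/S,\Sigma)$ and closes the induction.

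The argument is formal, and the only point that genuinely requires attention is the choice of induction parameter. Inducting on $\dim X$ would be natural but fails, since $X_Z=f^{-1}(Z)$ need not have smaller dimension than $X$ (components of $X$ lying over low-dimensional components of $S$ survive), whereas $\dim Z<\dim S$ is automatic for $Z$ the complement of a dense open. Inducting on $\dim S$ after reducing to $f$ surjective circumvents this, and it has the additional benefit of never needing to restrict $T$ over $U$: the pair $\{T\to S,\ Z\hookrightarrow S\}$ is already jointly surjective, so one avoids any discussion of how Betti bounds behave under restriction along the open immersion $T\times_S U\hookrightarrow T$, which would otherwise force an awkward extension-of-coherent-sheaves step.
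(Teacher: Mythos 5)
Your proof is correct and takes essentially the same route as the paper, which handles the nontrivial direction by noetherian induction on $S$ combined with \cref{lem_equivalence_coho_bound}: a dominant quasi-finite $T\to S$ has constructible dense image containing a dense open $U$, the complement $Z:=S\setminus U$ has strictly smaller dimension, and $\{T\to S,\ Z\hookrightarrow S\}$ is a jointly surjective family of quasi-finite morphisms. Your preliminary reduction to $f$ surjective is harmless but not actually needed: the induction on $\dim S$ already terminates since $S$ is a finite-type scheme over a field (hence noetherian of finite dimension), and the inductive hypothesis applies to $(X_Z/Z,\Sigma_Z)$ regardless of whether $X_Z\to Z$ is surjective.
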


\begin{proof}
Immediate by \cref{lem_equivalence_coho_bound} and the fact that the base scheme $S$ is noetherian.
\end{proof}

\begin{lem}\label{dimension_recursion}
Let $d\geq 1$ and let $(X/S,\Sigma)$ be a proper relative  stratified scheme of relative dimension $\leq n$ with $\dim X\leq d$.
Assume that

\begin{enumerate}\itemsep=0.2cm
\item 
\cref{general_boundedness_Betti} holds in absolute dimension $\leq d-1$. 
\item There is a closed subscheme $i : Z \hookrightarrow X$ of dimension $\leq d-1$ with complement $j : U\hookrightarrow X$, an admissible function $\fc' : \bQ[\Coh(X)]\to \bQ$ and  $\fd'\in 
\bN[x]^{n+1}$ with $\fd'_i$ of degree  $i$ for $i=0,\dots, n$ such that for every algebraic geometric point $\sbar\to S$, every finite field $\Lambda$ of characteristic $\ell\neq p$,  every $\cE\in \bQ[\Coh(X)]$ and every $\cF\in \Cons(X_{\sbar},\cE_{\sbar},\Lambda)$ extension by 0 of an object of $\Cons_{(j^*\Sigma)_{\sbar}}
(U_{\sbar},\Lambda)$, we have 
$$
h^j(X_{\sbar},\cF)\leq \fd'_{\min(j,2n-j)}(\fc'(\cE))\cdot \Rk_{\Lambda}\cF  
$$
for every $j=0, \dots, 2n$.
\end{enumerate}
Then, $(X/S,\Sigma)$ admits a Betti bound.
\end{lem}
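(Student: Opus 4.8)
The plan is to run a dévissage along the closed–open decomposition of $X$ given by hypothesis (2) and to assemble the two resulting estimates into a single Betti bound. Fix an algebraic geometric point $\sbar\to S$, a finite field $\Lambda$ of characteristic $\ell\neq p$, a class $\cE\in\bQ[\Coh(X)]$ and an object $\cF\in\Cons_{\Sigma_{\sbar}}(X_{\sbar},\Lambda)$ with log conductors bounded by $\cE_{\sbar}$. Denote by $\overline{\imath}\colon Z_{\sbar}\hookrightarrow X_{\sbar}$ and $\overline{\jmath}\colon U_{\sbar}\hookrightarrow X_{\sbar}$ the base changes of $i$ and $j$, and start from the excision short exact sequence
\[
0\to \overline{\jmath}_!\,\overline{\jmath}^{*}\cF\to \cF\to \overline{\imath}_*\,\overline{\imath}^{*}\cF\to 0
\]
in $\Cons_{tf}(X_{\sbar},\Lambda)$. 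Its long exact cohomology sequence gives $h^{j}(X_{\sbar},\cF)\leq h^{j}(X_{\sbar},\overline{\jmath}_!\,\overline{\jmath}^{*}\cF)+h^{j}(Z_{\sbar},\overline{\imath}^{*}\cF)$ for every $j$ (using that $\overline{\imath}$ is a closed immersion), so it suffices to bound the two terms on the right separately.

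For the open term, $\overline{\jmath}_!\,\overline{\jmath}^{*}\cF$ is a subsheaf of $\cF$ with quotient $\overline{\imath}_*\,\overline{\imath}^{*}\cF$, so by the second assertion of \cref{many_properties} it again has log conductors bounded by $\cE_{\sbar}$, and by construction it is the extension by zero of $\overline{\jmath}^{*}\cF\in\Cons_{(j^{*}\Sigma)_{\sbar}}(U_{\sbar},\Lambda)$. Hypothesis (2) then applies verbatim and yields $h^{j}(X_{\sbar},\overline{\jmath}_!\,\overline{\jmath}^{*}\cF)\leq \fd'_{\min(j,2n-j)}(\fc'(\cE))\cdot\Rk_{\Lambda}\cF$, using $\Rk_{\Lambda}(\overline{\jmath}_!\,\overline{\jmath}^{*}\cF)\leq\Rk_{\Lambda}\cF$. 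For the closed term, endow $Z$ with the stratification $i^{*}\Sigma$ induced by $\Sigma$ (refining $\Sigma$ beforehand if necessary so that this is a genuine stratification); then $(Z/S,i^{*}\Sigma)$ is a proper relative stratified scheme of relative dimension $\leq n$ with $\dim Z\leq d-1$, so hypothesis (1) provides a Betti bound $(\fc_{Z},\fd_{Z})$ with $\fc_{Z}\colon\bQ[\Coh(Z)]\to\bQ$ admissible and $(\fd_{Z})_{i}$ of degree $i$ for $i=0,\dots,n$. Since $\overline{\imath}^{*}\cF$ is $(i^{*}\Sigma)_{\sbar}$-constructible and, by the first assertion of \cref{many_properties}, has log conductors bounded by $\overline{\imath}^{*}\cE_{\sbar}=(i^{*}\cE)_{\sbar}$, this gives $h^{j}(Z_{\sbar},\overline{\imath}^{*}\cF)\leq(\fd_{Z})_{\min(j,2n-j)}(\fc_{Z}(i^{*}\cE))\cdot\Rk_{\Lambda}\cF$.

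To package the result I would set $\fc:=\fc'+\fc_{Z}\circ i^{*}$, which is admissible by \cref{pullback_subadditive} and the stability of admissibility under sums, and $\fd:=\fd'+\fd_{Z}$ componentwise, so that $\fd_{i}$ has degree $i$ for $i=0,\dots,n$ since the positive leading coefficients do not cancel. As the polynomials $\fd'_{k}$ and $(\fd_{Z})_{k}$ have nonnegative coefficients and $\fc(\cE)\geq\fc'(\cE)$, $\fc(\cE)\geq\fc_{Z}(i^{*}\cE)$ for every $\cE\in\Coh(X)$, adding the two estimates yields $h^{j}(X_{\sbar},\cF)\leq\fd_{\min(j,2n-j)}(\fc(\cE))\cdot\Rk_{\Lambda}\cF$ for all $j$, so $(\fc,\fd)$ is a Betti bound for $(X/S,\Sigma)$.

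I do not expect a serious obstacle here: all the cohomological content is already carried by hypotheses (1) and (2), and the remainder is bookkeeping. The points that need a little care are that $i^{*}\Sigma$ is a genuine stratification of $Z$ compatible with the restriction of $\Sigma_{\sbar}$-constructible sheaves (handled by refining $\Sigma$ first if needed), that $Z\to S$ is still proper (immediate, being a closed immersion followed by the proper morphism $X\to S$), that the fibres of $Z\to S$ have dimension $\leq n$ so that $\fd_{Z}$ may be taken in $\bN[x]^{n+1}$, and that admissibility together with the degree constraints on $\fd$ survive the sums above.
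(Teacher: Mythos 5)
Your proposal is correct and follows essentially the same route as the paper: the same localization exact sequence $0\to j_{\sbar!}\cF|_{U_{\sbar}}\to\cF\to i_{\sbar*}\cF|_{Z_{\sbar}}\to 0$, the same appeal to \cref{many_properties} to control the log conductors of the two pieces, hypothesis (1) applied to $(Z/S,i^*\Sigma)$ and hypothesis (2) to the extension by zero, and the same assembly $\fc=\fc'+\fc_Z\circ i^*$, $\fd=\fd'+\fd_Z$. The extra remarks about refining $\Sigma$ and checking the relative dimension of $Z\to S$ are harmless bookkeeping that the paper handles implicitly.
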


\begin{proof}
Note that the fibres of $Z\to S$ have dimension $\leq n$.
Let $(\fc_Z,\fd_Z)$ be a Betti bound for $(Z/S,i^*\Sigma)$.
Put
$$
\fc=\fc' + \fc_Z \circ i^* : \bQ[\Coh(X)]\to \bQ
 \text{ and } \fd=\fd'  + \fd_Z \in \mathds{N}^{n+1}[x]\
$$
and let us show that $(\fc,\fd)$ is a Betti bound for $(X/S,\Sigma)$.
Let $\sbar\to S$ be an algebraic geometric point and let $\Lambda$  be a finite field of characteristic $\ell\neq p$.
Let  $\cE\in \bQ[\Coh(X)]$ and let $\cF\in \Cons_{\Sigma_{\sbar}}(X_{\sbar},\cE_{\sbar},\Lambda)$.
We want to show that for every $j=0,\dots, 2n$, we have 
\begin{equation}\label{eq_dim_reduction}
h^j(X_{\sbar},\cF)\leq \fd_{\min(j,2n-j)}(\fc(\cE))\cdot \Rk_{\Lambda} \cF\ . 
\end{equation}
Consider the localization exact sequence 
$$
0  \to j_{\sbar!} \cF|_{U_{\sbar}}\to \cF \to i_{\sbar*} \cF|_{Z_{\sbar}}\to 0  \ .
$$
By \cref{many_properties}-(\ref{pullback}), the sheaf $\cF|_{Z_{\sbar}}$ is an object of $\Cons_{(i^*\Sigma)_{\sbar}}(Z_{\sbar},\Lambda)$ with log conductors bounded by $(i^*\cE)_{\sbar}$. 
Hence, for every $j=0,\dots, 2n$, we have
$$
h^j(Z_{\sbar},\cF|_{Z_{\sbar}})\leq \fd_{Z,\min(j,2n-j)}(\fc_Z(i^*\cE))\cdot \Rk_{\Lambda} \cF|_{Z_{\sbar}} \leq \fd_{Z,\min(j,2n-j)}(\fc(\cE))\cdot \Rk_{\Lambda} \cF
\ .
$$
Note that $j_{\sbar!} \cF|_{U_{\sbar}}$ has log conductors bounded by $\cE_{\sbar}$ by \cref{many_properties}-(\ref{sequence}).
By assumption, we deduce that 
$$
h^j(X_{\sbar},j_{\sbar!} \cF|_{U_{\sbar}})\leq \fd'_{\min(j,2n-j)}(\fc'(\cE))\cdot \Rk_{\Lambda} j_{\sbar!} \cF|_{U_{\sbar}} \leq \fd'_{\min(j,2n-j)}(\fc(\cE))\cdot \Rk_{\Lambda} \cF
\ .
$$
The conclusion thus follows.
\end{proof}

\begin{lem}\label{dim_reduction}
Let $d\geq 1$ and let $(X/S,\Sigma)$ be a proper relative  stratified scheme of relative dimension $\leq n$ with $\dim X\leq d$.
Assume that

\begin{enumerate}\itemsep=0.2cm
\item 
\cref{general_boundedness_Betti_dominant} holds in absolute dimension $\leq d-1$. 
\item There is a proper morphism $h : Y \to X$ of schemes of finite type over $S$ such that the fibres of $Y\to S$ have dimension $\leq n$ and \cref{general_boundedness_Betti_dominant} holds for  $(Y/S,\Sigma')$ where $\Sigma'$ is any stratification on $Y$.

\item $h$ induces an isomorphism above a dense open subset $U\subset X$.
\end{enumerate}
Then, \cref{general_boundedness_Betti_dominant} holds for $(X/S,\Sigma)$.
\end{lem}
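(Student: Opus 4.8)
The plan is to deduce the statement from \cref{dimension_recursion} applied to a suitable quasi-finite base change of $(X/S,\Sigma)$, using the map $h$ to transport the cohomology of the ``open part'' of $X$ to $Y$, where we have control by hypothesis. First I would put $Z:=X\setminus U$ with its reduced structure; since $U$ is dense, $\dim Z\leq \dim X-1\leq d-1$. Write $j:U\hookrightarrow X$ and $i:Z\hookrightarrow X$ for the inclusions, $V:=h^{-1}(U)$, $j':V\hookrightarrow Y$, and $h_V:=h|_V:V\to U$, which is an isomorphism by hypothesis (3). I would choose a finite stratification $\Sigma'$ on $Y$ such that $V$ is a union of strata and $\Sigma'$ refines the partition $\{h^{-1}(\tau)\}_{\tau\in\Sigma}$; then $\Sigma'|_V$ refines the transport of $j^*\Sigma$ along $h_V$. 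By hypothesis (2) applied to $(Y/S,\Sigma')$, there is a dominant quasi-finite morphism $T\to S$ such that $(Y_T/T,\Sigma'_T)$ admits a Betti bound $(\fc_Y,\fd_Y)$. Since $T\to S$ is quasi-finite, $X_T\to X$ and $Z_T\to Z$ are quasi-finite, so $\dim X_T\leq d$, $\dim Z_T\leq d-1$, and all relative dimensions over $T$ remain $\leq n$. It will suffice to show that $(X_T/T,\Sigma_T)$ admits a Betti bound, for then $T\to S$ witnesses \cref{general_boundedness_Betti_dominant} for $(X/S,\Sigma)$.

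The key computation will be the following. Fix an algebraic geometric point $\sbar\to T$ and a finite field $\Lambda$ of characteristic $\ell\neq p$, and identify the fibres $X_{\sbar},Y_{\sbar},U_{\sbar},V_{\sbar}$ (with $\sbar$ viewed over $S$). Let $\cL\in\Cons_{(j_T^*\Sigma_T)_{\sbar}}(U_{\sbar},\Lambda)$ and set $\cF:=j_{\sbar!}\cL$. Using base change along the open immersion $j$ (for the cartesian square $V\to Y$, $U\to X$), then proper base change for $h$, together with the fact that $h_V$ is an isomorphism, I would obtain $Rh_{\sbar*}h_{\sbar}^{*}\cF=Rh_{\sbar*}j'_{\sbar!}h_{V,\sbar}^{*}\cL=j_{\sbar!}\cL=\cF$, and hence $h^{j}(X_{\sbar},\cF)=h^{j}(Y_{\sbar},h_{\sbar}^{*}\cF)$ for all $j$. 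Moreover $h_{\sbar}^{*}\cF=j'_{\sbar!}h_{V,\sbar}^{*}\cL$ is $(\Sigma'_T)_{\sbar}$-constructible by the choice of $\Sigma'$ (locally constant on each stratum inside $V_{\sbar}$, and zero on the other strata) and satisfies $\Rk_{\Lambda}h_{\sbar}^{*}\cF\leq\Rk_{\Lambda}\cF$; and if $\cF$ has log conductors bounded by $\cE_{\sbar}$ for some $\cE\in\bQ[\Coh(X_T)]$, then $h_{\sbar}^{*}\cF$ has log conductors bounded by $h_{\sbar}^{*}(\cE_{\sbar})=(h_T^{*}\cE)_{\sbar}$ by \cref{many_properties}. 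Feeding $h_{\sbar}^{*}\cF$ into the Betti bound $(\fc_Y,\fd_Y)$ for $(Y_T/T,\Sigma'_T)$ and combining with the cohomology identity then gives $h^{j}(X_{\sbar},\cF)\leq\fd_{Y,\min(j,2n-j)}(\fc'(\cE))\cdot\Rk_{\Lambda}\cF$, where $\fc':=\fc_Y\circ h_T^{*}\colon\bQ[\Coh(X_T)]\to\bQ$ is admissible by \cref{pullback_subadditive}.

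Finally I would invoke \cref{dimension_recursion} for $(X_T/T,\Sigma_T)$: its hypothesis (1) — that \cref{general_boundedness_Betti} holds in absolute dimension $\leq d-1$ — follows from our hypothesis (1) via \cref{equivalence_coho_bound}, and its hypothesis (2) is exactly the estimate obtained in the previous paragraph, with $Z_T\hookrightarrow X_T$ (of dimension $\leq d-1$) as the distinguished closed subscheme, complement $U_T$, admissible function $\fc'$ as above and $\fd':=\fd_Y$ (which has the required degrees). This yields a Betti bound for $(X_T/T,\Sigma_T)$, and hence the conclusion. The main obstacle is the cohomology identity $Rh_{*}h^{*}(j_!\cL)\cong j_!\cL$, which rests essentially on $h$ being proper and an isomorphism over $U$; once it is in place, the rest is bookkeeping of the bounded-ramification condition under $h^{*}$ and of the degrees of the bounding polynomials, fitting into the template of \cref{dimension_recursion}.
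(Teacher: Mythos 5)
Your proof is correct and follows essentially the same route as the paper's: choose a stratification on $Y$ refining the pullback of $\Sigma$ with $V=h^{-1}(U)$ a union of strata, pass to a quasi-finite base change of $S$ where $(Y,\Sigma')$ admits a Betti bound, use the proper-base-change isomorphism $\cF\xrightarrow{\sim}Rh_{\sbar*}h_{\sbar}^*\cF$ for $\cF=j_!\cL$ to transport the cohomology estimate from $Y$ to $X$, and feed this into \cref{dimension_recursion}. The only cosmetic difference is that the paper first refines $\Sigma$ on $X$ so that $U$ and $Z$ are unions of strata and then uses $h^*\Sigma$ on $Y$, whereas you refine directly on $Y$; both amount to the same bookkeeping.
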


\begin{proof}
Let $\Sigma'$ be a refinement of $\Sigma$ such that $U$ and $Z:= X-U$ are unions of strata of $\Sigma'$.
It is enough to show that \cref{general_boundedness_Betti_dominant} holds for $(X/S,\Sigma')$.
Hence, we are left to prove \cref{dim_reduction} in the case where  $U$ and $Z:= X-U$ are unions of strata of $\Sigma$. \\ \indent
Let $g:T\to S$ be a quasi-finite dominant map as given by \cref{general_boundedness_Betti_dominant} for $(Y/S,h^*\Sigma)$.
At the cost of pulling everything back to $T$, we can suppose that $(Y/S,h^*\Sigma)$ admits a Betti bound.
In that case, we are going to show that so does $(X,\Sigma)$.\\ \indent
By (1) combined with \cref{equivalence_coho_bound}, we know that \cref{general_boundedness_Betti} holds in absolute dimension $\leq d-1$. 
Hence, we are left to show that  $Z$ satisfies the condition (2) of \cref{dimension_recursion}.
Since $U$ is dense open in $X$, we have $\dim Z<\dim X=d$.
Let $\sbar\to S$ be an algebraic geometric point, let $\Lambda$  be a finite field of characteristic $\ell\neq p$, let $\cE\in \bQ[\Coh(X)]$  and let $\cF\in \Cons(X_{\sbar},\cE_{\sbar},\Lambda)$ extension by 0 of an object of $\Cons_{(j^*\Sigma)_{\sbar}}
(U_{\sbar},\Lambda)$.
Since $U$ and $Z$ are unions of strata, $\cF$ is an object of $\Cons_{\Sigma_{\sbar}}(X_{\sbar},\cE_{\sbar},\Lambda)$.
Hence,   $h^*_{\sbar}\cF$ is an object of  $\Cons_{(h^{\ast}\Sigma)_{\sbar}}(Y_{\sbar},(h^*\cE)_{\sbar},\Lambda)$.
Since $h_{\sbar} : Y_{\sbar}\to X_{\sbar}$ is an isomorphism above $U_{\sbar}$, the proper base change implies that the unit map 
$$
\cF \to Rh_{\sbar*}   h^*_{\sbar}\cF
$$ 
is an isomorphism.
Thus, 
$$
h^j(X_{\sbar},\cF) =  h^j(Y_{\sbar},h^*_{\sbar}\cF) \ .
$$ 
Hence if  $(\fc_Y,\fd_Y)$ is a Betti bound for  $(Y/S,h^*\Sigma)$,  we deduce
$$
h^j(X_{\sbar},\cF)\leq \fd_{Y,\min(j,2n-j)}(\fc_Y(h^*\cE))\cdot \Rk_{\Lambda}\cF  \ .
$$
Hence, \cref{dimension_recursion} is satisfied with $\fc'=\fc_Y \circ h^*$ and $\fd'=\fd_Y$.
\end{proof}

\begin{lem}\label{getting_projective}
Let $f : X\to S$ be a proper morphism of schemes of finite type over $k$ of relative dimension $\leq n$.
Then there is a dense open subset $V\subset S$ and a commutative triangle
\begin{equation}\label{cd_getting_projective}
	\begin{tikzcd}
		Y\arrow{rr}{h} \arrow{dr}{g} && X_V  \arrow{ld}{f_V} \\
	&V& 
	\end{tikzcd}
\end{equation}
 where $h : Y\to X_V$ is surjective projective and induces an isomorphism over a dense open subset $U\subset X_V$  with $h^{-1}(U)$ dense in $Y$ and  $g:Y\to V$ projective  of relative dimension $\leq n$.
\end{lem}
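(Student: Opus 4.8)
The plan is to obtain $h\colon Y\to X_V$ from Chow's lemma and then to shrink the base so that the total space of the resulting modification has controlled fibre dimension over $V$.

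\textit{Step 1 (Chow's lemma).} Since $f\colon X\to S$ is proper, in particular separated and of finite type over the noetherian scheme $S$, Chow's lemma \cite[0200]{SP} produces a projective surjective morphism $h_0\colon Y_0\to X$ together with a locally closed immersion $Y_0\hookrightarrow\bP^N_S$, such that $h_0$ restricts to an isomorphism over a dense open $U_0\subseteq X$. I would first replace $Y_0$ by the scheme-theoretic closure of $h_0^{-1}(U_0)$: this remains projective over $X$, still surjects onto $X=\overline{U_0}$ (its image being closed and containing $U_0$), is still an isomorphism over $U_0$, and now $h_0^{-1}(U_0)$ is scheme-theoretically dense in $Y_0$. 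As $h_0$ and $f$ are proper, $g_0:=f\circ h_0\colon Y_0\to S$ is proper; being (after the replacement) closed inside a scheme quasi-projective over $S$, it is projective over $S$.

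\textit{Step 2 (controlling fibre dimension).} Let $W\subseteq Y_0$ be the open locus where $\dim_y (Y_0)_{g_0(y)}\le n$, open by upper semicontinuity of the fibre dimension (the function being $\bZ$-valued, $\{\le n\}=\{<n+1\}$); see \cite{SP}. Over $h_0^{-1}(U_0)\cong U_0$ the morphism $g_0$ agrees with $f|_{U_0}$, whose fibres have dimension $\le n$, so $h_0^{-1}(U_0)\subseteq W$ and hence $W$ is dense. The key point is to check that $Z:=Y_0\setminus W$, a closed set with closed image $g_0(Z)$ (as $g_0$ is proper), meets no fibre of $g_0$ over a generic point of $S$. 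For such a generic point $\eta$, density of $h_0^{-1}(U_0)$ in $Y_0$ passes to the generic fibre component by component, so $(U_0)_\eta\cong(h_0^{-1}(U_0))_\eta$ is dense open in $(Y_0)_\eta$; since a dense open of a finite type scheme over a field has the full dimension, $\dim (Y_0)_\eta=\dim (U_0)_\eta\le\dim X_\eta\le n$, whence $(Y_0)_\eta\subseteq W$ and $\eta\notin g_0(Z)$.

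\textit{Step 3 (shrinking $S$ and conclusion).} Put $V:=S\setminus g_0(Z)$, dense open by Step 2, and let $Y:=g_0^{-1}(V)$ with $g\colon Y\to V$, $h\colon Y\to X_V$, and $U:=U_0\cap X_V$ the restrictions of $g_0$, $h_0$, and the inclusion of $U_0$. Then $h$ is projective and surjective (base changes of such) and an isomorphism over the dense open $U\subseteq X_V$; $h^{-1}(U)=h_0^{-1}(U_0)\cap Y$ is dense in $Y$; $g$ is projective; and $Y=g_0^{-1}(V)\subseteq W$, so every fibre of $g$ has dimension $\le n$. This gives the triangle \eqref{cd_getting_projective} with all required properties. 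The only genuinely delicate step is Step 2: everything else is formal stability of projectivity, surjectivity and density under base change and open restriction, whereas the heart of the matter is transporting global density of $h_0^{-1}(U_0)$ into density on the generic fibres of $g_0$, which is exactly what pins down the relative dimension of $g$.
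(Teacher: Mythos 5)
Your proof is correct. The overall plan agrees with the paper's: invoke Chow's lemma to get a projective modification $h_0\colon Y_0\to X$ that is an isomorphism over a dense open $U_0\subset X$, observe that the total space $Y_0$ has generic fibres of dimension $\le n$, and then shrink $S$ so that this bound holds over all fibres. The difference lies in how the shrinking of the base is performed. The paper first replaces $S$ by a dense open to make it irreducible and to force all generic points of $X$ (hence, by your same density argument, all generic points of $Y_0$) to lie over the generic point $\eta$, and then invokes the spreading-out lemma \cite[Tag 0573]{SP} to find $V$ over which $(h^{-1}(U))_s$ stays dense in $(Y_0)_s$, from which the dimension bound follows fibrewise. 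You instead bypass the irreducibility reduction and the spreading-out-of-density lemma: you apply upper semicontinuity of the fibre-dimension function to define the good locus $W\subseteq Y_0$ directly, show that the generic fibres are entirely contained in $W$ (using the same dimension-via-density computation as the paper), and take $V$ to be the open complement of the proper image $g_0(Z)$ of the bad locus. This is a bit more streamlined, trading one standard spreading-out lemma for another standard semicontinuity result, and it accommodates reducible $S$ without a preliminary reduction. One small remark: the key sentence ``density of $h_0^{-1}(U_0)$ in $Y_0$ passes to the generic fibre component by component'' is stated somewhat tersely; the precise point is that the generic points of $(Y_0)_\eta$ are exactly the generic points of $Y_0$ mapping to $\eta$, and density of $h_0^{-1}(U_0)$ in $Y_0$ forces each such generic point to lie in $h_0^{-1}(U_0)$. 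Once this is spelled out, the argument is airtight.
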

\begin{proof}
At the cost of shrinking $S$, we can suppose that $S$ is irreducible with generic point $\eta$.
At the cost of shrinking $S$ further, we can suppose that the generic points of $X$ lie over $\eta$.
By the Chow lemma \cite[Corollaire 5.6.2]{EGA2-2}, there is a surjective projective morphism $h: Y\to X$ over $S$ such that $h$ is an isomorphism over a dense open subset $U\subset X$ with $h^{-1}(U)$ dense in $Y$ and such that $Y\to S$ is projective.
In particular, for a generic point $\xi \in Y$, we necessarily have $\xi \in h^{-1}(U)$.
Hence, there is a generic point $\nu \in U$ specializing on $h(\xi)$.
Thus, $h^{-1}(\nu)$ specializes on $\xi$ and we get $h(\xi)=\nu$, so that $\xi$ lies over $\eta$ as well.
Hence, $h^{-1}(U)_{\eta}$ is dense in  $Y_{\eta}$.
By \cite[Tag 0573]{SP}, there is a dense open subset $V\subset S$ such that for every $s\in V$, the open set $(h^{-1}(U))_{s}$ is  dense in $Y_{s}$. 
Thus, for every $s\in V$, we have
$$
\dim Y_s= \dim (h^{-1}(U))_{s} = \dim U_{s}\leq n \ .
$$ 
The conclusion thus follows.
\end{proof}

\begin{lem}\label{proj_reduction}
Let $d\geq 1$ and assume that 
\begin{enumerate}\itemsep=0.2cm
\item 
\cref{general_boundedness_Betti_dominant} holds in absolute dimension $\leq d-1$. 
\item For every $n\geq 0$, \cref{general_boundedness_Betti_dominant}  holds for every projective relative stratified scheme $(X/S,\Sigma)$ of relative dimension $\leq n$ with $\dim X\leq d$.
\end{enumerate}
Then, \cref{general_boundedness_Betti_dominant} holds in absolute dimension $\leq d$.
\end{lem}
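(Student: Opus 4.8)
The plan is to obtain \cref{proj_reduction} as a formal consequence of the dévissage already set up in \cref{getting_projective} and \cref{dim_reduction}. The guiding observation is that \cref{general_boundedness_Betti_dominant} only requires the base change $T\to S$ to be \emph{dominant} quasi-finite, not surjective; so once a suitable $T$ is produced over a dense open of $S$ we are finished, and there is no need for a separate Noetherian induction to cover the complement of that open.

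So fix $n\geq 0$ and a proper relative stratified scheme $(X/S,\Sigma)$ of relative dimension $\leq n$ with $\dim X\leq d$; we may assume $S\neq\emptyset$. First I would apply \cref{getting_projective} to $f\colon X\to S$, producing a dense open $V\subseteq S$, a projective morphism $g\colon Y\to V$ of relative dimension $\leq n$, and a surjective projective morphism $h\colon Y\to X_V$ over $V$ that restricts to an isomorphism over a dense open $U\subseteq X_V$ with $h^{-1}(U)$ dense in $Y$. The key numerical point is then
\[
\dim Y=\dim h^{-1}(U)=\dim U\leq\dim X_V\leq\dim X\leq d,
\]
where the first equality uses that $h^{-1}(U)\cong U$ is dense in $Y$. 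Since $g$ is projective of relative dimension $\leq n$ with $\dim Y\leq d$, hypothesis (2) of \cref{proj_reduction} applies to $Y$: for every stratification $\Sigma'$ on $Y$, \cref{general_boundedness_Betti_dominant} holds for $(Y/V,\Sigma')$.

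Next I would invoke \cref{dim_reduction} for the proper relative stratified scheme $(X_V/V,\Sigma_V)$ (still of relative dimension $\leq n$ and with $\dim X_V\leq d$) together with the proper morphism $h\colon Y\to X_V$: its hypothesis (1) is hypothesis (1) of \cref{proj_reduction}; its hypothesis (2) follows from the previous paragraph, the fibres of $g$ having dimension $\leq n$; and its hypothesis (3) holds since $h$ is an isomorphism over the dense open $U$. This yields \cref{general_boundedness_Betti_dominant} for $(X_V/V,\Sigma_V)$, i.e.\ a dominant quasi-finite map $T\to V$ whose pullback $(X_T/T,\Sigma_T)$ admits a Betti bound. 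Composing with the open immersion $V\hookrightarrow S$ gives a dominant quasi-finite map $T\to S$ (quasi-finiteness is stable under composition and open immersions are quasi-finite, while dominance is preserved because $V$ is dense in $S$) along which $(X/S,\Sigma)$ pulls back to $(X_T/T,\Sigma_T)$. As $n$ and $(X/S,\Sigma)$ were arbitrary, \cref{general_boundedness_Betti_dominant} holds in absolute dimension $\leq d$.

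I do not anticipate a genuine obstacle here: the argument is pure bookkeeping on top of \cref{getting_projective} and \cref{dim_reduction}. The only points that need a little attention are verifying $\dim Y\leq d$ — so that hypothesis (2), which is only available up to absolute dimension $d$, genuinely applies to $Y$ rather than merely to schemes of smaller absolute dimension — and keeping in mind that only a dominant quasi-finite base change is demanded, which is exactly what makes it legitimate to discard $S\setminus V$.
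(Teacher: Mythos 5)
Your proof is correct and follows essentially the same route as the paper: shrink $S$ to the dense open $V$ given by \cref{getting_projective}, verify $\dim Y\leq d$ so hypothesis (2) applies to $(Y/V,\Sigma')$, and conclude by \cref{dim_reduction}. You merely spell out more explicitly why replacing $S$ by $V$ is harmless, namely by composing the dominant quasi-finite $T\to V$ with the dense open immersion $V\hookrightarrow S$.
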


\begin{proof}
Let $n\geq 0$ and let $(X/S,\Sigma)$ be a proper relative  stratified scheme of relative dimension $\leq n$ with $\dim X\leq d$.
At the cost of replacing $S$ by a dense open subset, we can assume by \cref{getting_projective} the existence of a proper morphism $h : Y\to X$ inducing an isomorphism over a dense open subset $U\subset X$  with $h^{-1}(U)$ dense in $Y$ and  $g:Y\to S$ projective  of relative dimension $\leq n$.
In particular, 
$$
\dim Y=\dim h^{-1}(U)=\dim U =\dim X \leq d \ .
$$
By (2),  \cref{general_boundedness_Betti_dominant}  holds for $(Y/S,\Sigma')$ where $\Sigma'$ is any stratification on $Y$.
Then \cref{proj_reduction} follows from \cref{dim_reduction}.
\end{proof}

\begin{lem}\label{normal_reduction}
Let $d\geq 1$ and assume that 
\begin{enumerate}\itemsep=0.2cm
\item 
\cref{general_boundedness_Betti_dominant} holds in absolute dimension $\leq d-1$. 
\item For every $n\geq 0$, \cref{general_boundedness_Betti_dominant}  holds for every projective normal relative stratified scheme  $(X/S,\Sigma)$ of relative dimension $\leq n$ with $\dim X\leq d$ and $S$ integral affine over $k$.
\end{enumerate}
Then, \cref{general_boundedness_Betti_dominant} holds in absolute dimension $\leq d$.
\end{lem}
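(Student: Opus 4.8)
The plan is to deduce this from \cref{proj_reduction}: combined with hypothesis (1), that lemma reduces the claim to proving \cref{general_boundedness_Betti_dominant} for an arbitrary \emph{projective} relative stratified scheme $(X/S,\Sigma)$ of relative dimension $\leq n$ with $\dim X\leq d$. Since only the dominant conclusion is needed, I would first normalize the base: passing to the reduced irreducible components of the noetherian scheme $S$ and then to a dense affine open of each, and assembling the resulting Betti bounds as in the proof of \cref{lem_equivalence_coho_bound} (using \cref{pullback_subadditive}), reduces to the case where $S$ is integral affine, with function field $K=\kappa(\eta)$.

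The heart of the argument is to replace $X\to S$ by a nearby \emph{normal} morphism. Let $X_\eta$ be the generic fibre, a projective $K$-scheme of dimension $\leq n$. Using the behaviour of geometric normality under purely inseparable base change for finite-type schemes over a field, I would pick a finite purely inseparable extension $K'/K$ such that the normalization $W$ of $\bigl((X_\eta)_{K'}\bigr)_{\red}$ is geometrically normal over $K'$, let $T\to S$ be the normalization of $S$ in $K'$ (finite by excellence, surjective, dominant, with $T$ integral affine), and replace $(X/S,\Sigma)$ by $(X_T/T,\Sigma_T)$; this move — legitimate for the dominant statement — puts us where the normalization of $(X_\eta)_{\red}$ is already geometrically normal over $K$. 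Then I would consider $\widetilde X\xrightarrow{\nu}X_{\red}\xrightarrow{i}X$, with $i$ the reduction (a finite universal homeomorphism, i.e.\ a nilpotent closed immersion since $X$ is noetherian) and $\nu$ the normalization (finite, an isomorphism over the dense open normal locus of $X_{\red}$, with dense preimage). The generic fibre of $\widetilde X\to S$ is $W$, hence geometrically normal over $K$; so, after shrinking $S$ to a dense affine open using generic flatness and the fact that a finite-type flat morphism has geometrically normal fibres over a dense open of the base once its generic fibre is geometrically normal, $\widetilde X\to S$ becomes a normal morphism. It is projective, with $\dim\widetilde X\leq d$ and relative dimension $\leq n$, so hypothesis (2) gives \cref{general_boundedness_Betti_dominant} for $(\widetilde X/S,\Sigma')$ for every stratification $\Sigma'$.

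It then remains to transport the conclusion along $\nu$ and $i$. Since $\nu$ is proper, an isomorphism over a dense open with dense preimage, and $\dim X_{\red}=\dim X\leq d$, \cref{dim_reduction} — its hypothesis (1) being ours, its hypothesis (2) the previous step — gives \cref{general_boundedness_Betti_dominant} for $(X_{\red}/S,\Sigma'')$ for every stratification $\Sigma''$; take $\Sigma''$ to be the one induced by $\Sigma$. For $i$, I would argue directly: if $(\fc,\fd)$ is a Betti bound for the pullback of $(X_{\red}/S,\Sigma'')$ along a quasi-finite dominant $R\to S$, then $(\fc\circ i_R^{*},\fd)$ — admissible by \cref{pullback_subadditive} — should be a Betti bound for the pullback of $(X/S,\Sigma)$ along $R\to S$, because for every algebraic geometric point $\sbar\to R$ the immersion $(X_{\red})_{R,\sbar}\hookrightarrow X_{R,\sbar}$ is again nilpotent, hence an equivalence of étale sites preserving ranks, $\Sigma$-constructibility, all $h^{j}$, and — smooth curves being reduced, hence factoring through $(X_{\red})_{R,\sbar}$ — the property of having log conductors bounded by a given coherent class, with $i_R^{*}$ matching the classes. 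Composing the finitely many quasi-finite dominant base changes used yields \cref{general_boundedness_Betti_dominant} for the original $(X/S,\Sigma)$, and then \cref{proj_reduction} finishes the proof.

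I expect the main obstacle to be the construction in the second paragraph: concentrating all the inseparability of the generic fibre into one finite base change $T\to S$, so that afterwards a universal homeomorphism (invisible to étale cohomology) followed by a genuinely birational normalization lands on a normal morphism. This is precisely where the \emph{dominant} formulation \cref{general_boundedness_Betti_dominant} is indispensable — over the original base no proper modification of $X$ can repair a generic fibre that is not geometrically reduced, while a quasi-finite dominant change of base can — and it is the reason \cref{general_boundedness_Betti_dominant} is singled out in the first place. Everything else is bookkeeping of base changes plus one application of \cref{dim_reduction}.
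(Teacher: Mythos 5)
Your proof is correct and follows essentially the same route as the paper: reduce via \cref{proj_reduction} to the projective case over an integral affine base, dispose of nilpotents by topological invariance of the étale site, of non-normality by the normalization together with \cref{dim_reduction}, and conclude with generic flatness, openness of the geometrically-normal-fibre locus, and hypothesis (2). The only difference is packaging: the paper forms the reduction and normalization on the geometric generic fibre and spreads them out over a quasi-finite flat $T\to S$ (EGA IV 8.10.5), whereas you produce the quasi-finite dominant base change as the normalization of $S$ in a finite purely inseparable extension $K'/K$ chosen so that the normalization of $((X_\eta)_{K'})_{\red}$ is geometrically normal over $K'$ --- a true fact, but note that its justification (pass to the perfect closure, where normal equals geometrically normal, then descend to a finite level and use flat descent of normality) is exactly the same limit/spreading-out work the paper performs from $\overline{\eta}$, so it should not be treated as a mere citation.
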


\begin{proof}
By \cref{proj_reduction},  it is enough to show that for every $n\geq 0$, \cref{general_boundedness_Betti_dominant}  holds for every projective relative stratified scheme $(X/S,\Sigma)$ of relative dimension $\leq n$ with $\dim X\leq d$.
At the cost of replacing $S$ by a dense open subset, we can assume that $S$ is irreducible.
Let $\eta$ be the generic point of $S$ and let $\overline{\eta}\to S$ be an algebraic geometric point lying over $\eta$.\\ \indent
Consider  the normalization map $\alpha_{\overline{\eta}} :      \widetilde{X}^{\red}_{\overline{\eta}}\to  X_{\overline{\eta}}^{\red}$  and the reduction map $\beta_{\overline{\eta}} :  X_{\overline{\eta}}^{\red}\to X_{\overline{\eta}}$.
By spreading out $\alpha_{\overline{\eta}}$ and $\beta_{\overline{\eta}} $,  there is a commutative diagram with cartesian squares 
$$
	\begin{tikzcd}	         
	\widetilde{X}^{\red}_{\overline{\eta}}  \arrow{r} \arrow{d}{\alpha_{\overline{\eta}}}    & Y_T \arrow{d}{\alpha}     &  &  \\
	          X^{\red}_{\overline{\eta}}  \arrow{r} \arrow{d}{\beta_{\overline{\eta}} }    & Z_T \arrow{d}{\beta}     &  &  \\
	           X_{\overline{\eta}}                 \arrow[r,crossing over]  \arrow{d}{f_{\overline{\eta}}                } &   X_T \arrow{d}{f_T}    \arrow{r}  & X \arrow{d}{f}      \\
                    \overline{\eta}        	 \arrow{r}{ } &       T        \arrow{r}{ }      & S  
	\end{tikzcd}
$$
where $T\to S$ is quasi-finite flat with $T$ integral affine over $k$. 
Since $\beta_{\overline{\eta}} : X_{\overline{\eta}}^{\red} \to X_{\overline{\eta}}$ is radicial, finite and surjective, \cite[Théorème 8.10.5]{EGAIV}
implies that at the cost of shrinking $T$, we can suppose that $\beta$ is radicial, finite and surjective as well.
In particular,  $\beta$ is a universal homeomorphism by \cite[Corollaire 18.12.11]{EGAIV}.
Since $\alpha_{\overline{\eta}} : \widetilde{X}^{\red}_{\overline{\eta}}\to  X_{\overline{\eta}}^{\red}$ is finite and an isomorphism over a dense open subset of $U_{\overline{\eta}}\subset X_{\overline{\eta}}^{\red}$ with $\alpha_{\overline{\eta}}^{-1}(U_{\overline{\eta}})$ dense in $\widetilde{X}^{\red}_{\overline{\eta}}$,  we can similarly at the cost of replacing $T$ by a further quasi-finite flat scheme $T'$ over $T$ integral affine over $k$ suppose that $\alpha$ is finite and an isomorphism over a dense open subset of $U\subset Z_T$ with $\alpha^{-1}(U)$ dense in $Y_T$. \\ \indent
To show that \cref{general_boundedness_Betti_dominant} holds for $(X/S,\Sigma)$,  it is enough to show that 
it holds for $(X_T/T,\Sigma_T)$.
By invariance of the étale topos under universal homeomorphism, it is enough to show that it holds for $(Z_T/T,\beta^*\Sigma_T)$.
By \cref{dim_reduction}, it is enough to show that it holds for $(Y_T/T,\Sigma')$ where $\Sigma'$ is any stratification on $Y_T$.
We have
$$
\dim Y_T=\dim \alpha^{-1}(U)=\dim U =\dim Z_T=\dim X_T \leq \dim X \leq d \ .
$$
Note that $Y_T$ is projective over $T$ of relative dimension $\leq n$.
Hence,  we are left to prove that for every $n\geq 0$, \cref{general_boundedness_Betti_dominant}  holds for every projective relative stratified scheme $(X/S,\Sigma)$ of relative dimension $\leq n$ with $\dim X\leq d$, with $S$ integral affine over $k$ where $X_{\overline{\eta}}$ is normal.
Since $S$ is reduced, at the cost of shrinking $S$, we can suppose by generic flatness  that $X$ is flat over $S$.
By  \cite[Théorème 12.2.4]{EGAIV} and at the cost of shrinking $S$ further, we can suppose that the morphism $X\to S$ is normal.
The conclusion then follows by assumption (2).
\end{proof}

\begin{lem}\label{spread_out_Kedlaya}
Let $f : X\to S$ be a projective morphism of schemes of finite type over $k$  with $S$ irreducible and with geometrically reduced generic fibre of pure dimension $n$.
Let $D\subset X$ be a divisor.
Then, there is a dense open subset $V\subset S$ and a finite surjective morphism $h : X_V\to \bP_{V}^n$ sending $D_V$ to the hyperplane $H_V$ such that $h : X_V\to \bP_{V}^n$ is finite étale over $\bA_{V}^n:= \bP_{V}^n-H_V$.
\end{lem}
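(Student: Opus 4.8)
The plan is to establish the statement first on the generic fibre of $f$, where it is a theorem of Kedlaya, and then to spread the resulting cover out over a dense open of $S$. Let $\eta$ be the generic point of $S$ and put $\kappa:=\kappa(\eta)$, so that $X_\eta$ is a projective $\kappa$-scheme, geometrically reduced and of pure dimension $n$, and $D_\eta\subset X_\eta$ is a (possibly empty) divisor. Since $X_\eta$ is geometrically reduced, its non-smooth locus $(X_\eta)_{\mathrm{sing}}$ is nowhere dense, so $Z:=D_\eta\cup (X_\eta)_{\mathrm{sing}}$ is a nowhere dense closed subset of $X_\eta$. Applying Kedlaya's theorem \cite{Ked} to the projective $\kappa$-scheme $X_\eta$ and the closed subset $Z$ produces a finite surjective morphism $h_\eta\colon X_\eta\to\bP^n_\kappa$ and a hyperplane $H'\subset\bP^n_\kappa$ with $h_\eta(Z)\subseteq H'$ and with $h_\eta$ finite étale over $\bP^n_\kappa-H'$; in particular $h_\eta(D_\eta)\subseteq H'$, and $h_\eta^{-1}(\bP^n_\kappa-H')$ is contained in the smooth locus of $X_\eta$. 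Composing with a $\kappa$-rational automorphism of $\bP^n_\kappa$ carrying $H'$ onto the standard hyperplane, I may assume $\bP^n_\kappa-H'=\bA^n_\kappa$.

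To spread $h_\eta$ out, write $\Spec\kappa=\varprojlim_V V$ as the cofiltered limit over the affine open subschemes $V\subseteq S$ containing $\eta$, which have affine transition morphisms; then $X_\eta=\varprojlim_V X_V$ and $\bP^n_\kappa=\varprojlim_V\bP^n_V$, and both $X_V$ and $\bP^n_V$ are of finite presentation over $V$. By the limit formalism for morphisms \cite[Th\'eor\`eme 8.8.2]{EGAIV}, the $\kappa$-morphism $h_\eta$ is induced by a $V_0$-morphism $h\colon X_{V_0}\to\bP^n_{V_0}$ for some affine open $V_0\ni\eta$. Being a morphism from a scheme proper over $S$ to one separated over $S$, $h$ is proper, and its fibre over $\eta$ is $h_\eta$.

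It then remains to shrink $V_0$ a finite number of times so that $h$ inherits globally the properties it has over $\eta$; each is handled by the standard \emph{open, then constructible} argument. Finiteness: $h$ is proper with $0$-dimensional fibres over $\bP^n_\eta$, and the locus of $X_{V_0}$ where the fibre dimension is positive is closed with constructible image in $S$ avoiding $\eta$. Surjectivity: $h(X_{V_0})$ is closed in $\bP^n_{V_0}$ and contains the dense subset $\bP^n_\eta$, so the image in $S$ of its open complement avoids $\eta$. The inclusion $h(D_V)\subseteq H_V$: it fails exactly on the open subset $D_{V_0}\setminus h^{-1}(H_{V_0})$, which misses $D_\eta$ and hence has image in $S$ avoiding $\eta$. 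Finally $h^{-1}(\bA^n_{V_0})\to\bA^n_{V_0}$ is a finite morphism which is étale over $\bA^n_\eta$, so its étale locus is an open subset containing the fibre over $\eta$, and the image in $S$ of the complementary closed set avoids $\eta$. Removing from $S$ the proper closed subsets thus produced and intersecting, one obtains a dense open $V\subseteq S$ over which $h$ is simultaneously finite, surjective, sends $D_V$ into $H_V$, and restricts to a finite étale morphism over $\bA^n_V$.

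The only non-formal input is Kedlaya's theorem over the field $\kappa$; everything else is routine spreading out and semicontinuity. The point I would check carefully is the nature of $\kappa=\kappa(\eta)$: it is infinite as soon as $\dim S\geq 1$, but it may be finite when $S$ is a closed point over a finite base field, so one must make sure the cited form of Kedlaya's theorem is valid in that case as well (the Frobenius-twisting step of its proof does not require an infinite field), or else absorb this into a harmless finite base extension performed earlier in the dévissage. Everything else — the nowhere-denseness of $Z$, the choice of coordinates, and the compatibility of all this with the passage to the limit — is immediate once it is observed that $Z$, $H'$ and the chosen automorphism are already defined over $\kappa$.
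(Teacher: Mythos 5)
Your proof is correct and takes essentially the same route as the paper: apply Kedlaya's theorem \cite{Ked} to the geometrically reduced generic fibre $X_\eta$ and then spread the resulting finite cover out over a dense open subset of $S$ by standard limit and constructibility arguments. The only (cosmetic) differences are that the paper first shrinks $S$ so that every generic point of $D$ dominates $S$ and then deduces $D_V\subseteq h^{-1}(H_V)$ by a closure argument, where you instead use the same \emph{constructible image avoiding $\eta$} shrinking as for finiteness, surjectivity and étaleness over $\bA^n_V$; and your caveat about $\kappa(\eta)$ possibly being finite is harmless, since the cited form of Kedlaya's theorem holds over arbitrary fields of characteristic $p$.
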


\begin{proof}
At the cost of shrinking $S$, we can suppose that every generic point of $D$ is mapped to the generic point $\eta$ of $S$.
By \cite[Theorem 1]{Ked}, there exists a finite surjective morphism $h_{\eta} : X_{\eta}\to \bP_{\eta}^n$ sending $D_{\eta}$ in a hyperplane  $H_{\eta}\subset \bP_{\eta} $ and such that $h_{\eta} : X_{\eta}\to \bP_{\eta}$ is finite étale above  $\bA_{\eta}^n :=\bP_{\eta}^n-H_{\eta}$.
At the cost of shrinking $S$, we can suppose that the morphism $h_{\eta} : X_{\eta}\to \bP_{\eta}^n$ spreads out as a finite surjective morphism $h : X\to \bP_{S}^n$ over $S$ such that $h : X\to \bP_{S}$  is finite étale above $\bA_{S}^n:=\bP_{S}^n-H_{S}$.
Since every generic point of $D$ is mapped to $\eta$ and since $h_{\eta}$ sends $D_{\eta}$ in  $H_{\eta}$, we have $D\subset h^{-1}(H_S)$. 
\cref{spread_out_Kedlaya} is thus proved.
\end{proof}

\begin{lem}\label{bette_polynomials}
Let $n\geq 0$ and let  $\fd\in \bN[x]^{2n}$ with  $\fd_i$ of degree smaller than $\min(i,2n-i)$ for $i=0,\dots, 2n$.
Then, there is  $\fd'\in \bN[x]^{n}$ with  $\fd'_i$ of degree $i$ for $i=0,\dots, n$ such that for every $i=0,\dots, 2n$ we have $\fd_i\leq \fd'_{\min(i,2n-i)}$.
\end{lem}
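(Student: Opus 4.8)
The plan is to construct $\fd'$ by hand, grouping the components of $\fd$ according to the common value of $\min(i,2n-i)$ and tacking on a monomial of the right degree to fix the leading term. Concretely, first I would set, for each $j=0,\dots,n$,
\[
\fd'_j := x^j + \sum_{\substack{0 \le i \le 2n \\ \min(i,2n-i)=j}} \fd_i \ .
\]
The index set of the sum is $\{j,2n-j\}$ when $0 \le j < n$ and $\{n\}$ when $j=n$, so this is a finite sum of elements of $\bN[x]$, hence $\fd'_j \in \bN[x]$; then $\fd' := (\fd'_j)_{j=0}^{n}$ is the candidate tuple.

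Next I would check the two required properties. For the degree: by hypothesis each summand $\fd_i$ appearing in $\fd'_j$ has degree at most $\min(i,2n-i)=j$, while $x^j$ has degree exactly $j$ with coefficient $1$, so $\deg \fd'_j = j$ (in the edge case $j=0$ the degree hypothesis forces $\fd_0=\fd_{2n}=0$, and $\fd'_0=1$ is still of degree $0$). For the bound: given $i\in\{0,\dots,2n\}$, set $j:=\min(i,2n-i)$; then $\fd_i$ is literally one of the summands defining $\fd'_j$, and every other term of $\fd'_j$ — the remaining $\fd_{i'}$'s together with $x^j$ — has nonnegative coefficients, so $\fd'_{\min(i,2n-i)}-\fd_i\in\bN[x]$ and in particular $\fd_i(t)\le \fd'_{\min(i,2n-i)}(t)$ for every $t\ge 0$, which is the asserted inequality.

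I do not expect any genuine obstacle: the statement is a purely formal repackaging of the degree data, and the only mild point requiring care is the boundary index $j=0$ (equivalently $i\in\{0,2n\}$), where the degree hypothesis degenerates to "$\fd_i=0$" — but the added monomial $x^j$ absorbs this case uniformly along with all the others.
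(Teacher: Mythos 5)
Your construction is essentially identical to the paper's, which simply sets $\fd'_i = x^i + \fd_i + \fd_{2n-i}$ for $0\leq i\leq n$; your only deviation is avoiding the harmless double count of $\fd_n$ at $i=n$, and your verification of the degree and the termwise inequality is correct (the remark that $\fd_0=\fd_{2n}=0$ is only needed under the strict reading of "degree smaller than" and is immaterial either way).
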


\begin{proof}
For $0\leq i \leq n,$ put $\fd'_i = x^i +  \fd_i+  \fd_{2n-i}$.
\end{proof}

\subsection{Proof of \cref{general_boundedness_Betti}} 
We argue by recursion on $d$ that \cref{general_boundedness_Betti} holds in absolute dimension $\leq d$.
By \cref{equivalence_coho_bound}, it is enough to show that \cref{general_boundedness_Betti_dominant}
holds in absolute dimension $\leq d$.
If $d=0$, the claim is obvious.
Assume that $d\geq 1$.
By \cref{normal_reduction}, it is enough to show that for every $n\geq 0$, \cref{general_boundedness_Betti_dominant}  holds for every projective normal relative stratified scheme $(X/S,\Sigma)$  of relative dimension $\leq n$ with $\dim X\leq d$ and $S$ integral affine over $k$ .\\ \indent
In that case, $X_\eta$ is normal.
By \cite[Tag 0357]{SP}, $X_\eta$  is thus a disjoint union of normal integral schemes $X_{1,\eta},\dots, X_{m,\eta}$.
For $i=1,\dots, m$, let $X_i$ be the closure of $X_{i,\eta}$ in $X$.
At the cost of shrinking $S$, we can suppose by \cite[Tag 054Y]{SP} that $X=\bigcup_{i=1}^m X_i$.
Note that for every $i\neq j$, the intersection $X_i\cap X_j$ does not meet $X_\eta$.
Hence, at the cost of shrinking $S$ further, we have $X=\bigsqcup_{i=1}^m X_i$.
Thus, at the cost of considering the $f|_{X_i} : X_i\to S$, we can  further suppose that the generic fibre of $f : X \to S$ is integral.
Let $0\leq N \leq n$ be its dimension.
At the cost of shrinking $S$, we can also suppose that $X$ is integral.
\\ \indent
Let $D\subset X$ be a divisor containing the strata of $\Sigma$ of dimension $<\dim X$.
By \cref{spread_out_Kedlaya}, at the cost of shrinking $S$, we can suppose the existence of a finite surjective morphism $h : X\to \bP_{S}^N$ sending $D$ to the hyperplane $H_S$ such that $h : X\to \bP_{S}^N$ is finite étale over $\bA_{S}^N:= \bP_{S}^N-H_S$.
Put $i:Z:= h^{-1}(H_{S})\hookrightarrow X$ and note that $\dim Z<\dim X\leq d$. 
To conclude, it is enough to check that $Z$ satisfies the conditions of  \cref{dimension_recursion}.
Let $j : U:=h^{-1}(\bA_{S}^N)\hookrightarrow X$ be the inclusion.
Let $\sbar\to S$ be an algebraic geometric point, let $\Lambda$ be a finite field  of characteristic $\ell\neq p$, let $\cE\in \bQ[\Coh(X)]$  and let $\cF\in \Cons(X_{\sbar},\cE_{\sbar},\Lambda)$ extension by 0 of an object of $\Cons_{(j^*\Sigma)_{\sbar}}
(U_{\sbar},\Lambda)=\Loc(U_{\sbar},\Lambda)$, where the last equality follows from the fact that all the strata of $j^*\Sigma$ are open subsets of $U$.
Since finite direct images are exact, we have 
$$
h^{\bullet}(X_{\sbar},\cF)=h^{\bullet}(\bP_{\sbar}^N, h_{\sbar\ast} \cF)  
$$
where $h_{\sbar\ast} \cF$ is the extension by $0$ of a locally constant constructible sheaf on $\bA_{\sbar}^N$.
Hence, 
$$
h^j(X_{\sbar},\cF)= 0
$$
for  $0\leq j <N$ and $j>2N$ and if $\delta\geq 1$ is the generic degree of $h : X \to  \bP_S^N$,  \cref{maintheoremAn_compact} implies that 
$$
h^j(X_{\sbar},\cF) \leq   \fb_{2N-j}(\lc_{H_{\sbar}}(h_{\sbar\ast} \cF))\cdot \delta\cdot \Rk_{\Lambda}\cF  
$$
for $N\leq j \leq 2N$.
By \cref{remark_bound_conductor_direct_image} applied to  $h_{\sbar} : X_{\sbar}\to \bP_{\sbar}$, we have
$$
\lc_{H_{\sbar}}(h_{\sbar\ast} \cF)\leq \lc_{H_{\sbar}}(h_{\sbar*}(j_{\sbar!}\Lambda)) +  \delta\cdot  \lc_{Z_{\sbar}}(\cF)  \ .
$$
By \cite[Corollary 5.8]{HuTeyssierSemicontinuity} applied to $\bP^N_S\to S$ and $H_S\subset \bP^N_S$ and to the sheaf $h_* j_!\Lambda$, we have furthermore 
$$
\lc_{H_{\sbar}}(h_{\sbar*}j_{\sbar!}\Lambda)    \leq   \lc_{H_{\overline{\eta}}}(h_{\overline{\eta}*}j_{\overline{\eta}!}\Lambda) \ .
$$
Put $\alpha := \lceil \lc_{H_{\overline{\eta}}}(h_{\overline{\eta}*}j_{\overline{\eta}!}\Lambda) \rceil\in \bN$. 
On the other hands, \cref{bounded_ramification_ex} applied to the normal scheme  $X_{\sbar}$ and to the effective Cartier divisor $Z_{\sbar}$ gives
$$
\lc_{Z_{\sbar}}(\cF) \leq c(T(\cE_{\sbar})) \leq \fc_f(\cE)
$$
where $c(T(\cE_{\sbar}))$ is the maximal multiplicity of $T(\cE_{\sbar})$ and where $\fc_f : \bQ[\Coh(X)]\to \bQ$ is defined in \cref{ex_subaddittive}.
Hence,  for $N\leq j \leq 2N$  we have
$$
h^j(X_{\sbar},\cF) \leq   \fb_{2N-j}(\alpha +  \delta\cdot  \fc_f(\cE) )\cdot \delta\cdot \Rk_{\Lambda}\cF   \ .
$$
If we put $\fd_j = 0$ for $j\in [0,2n]\setminus [N,2N]$ and $\fd_j= \delta \cdot \fb_{2N-j}(\alpha +  \delta\cdot (-) )\in \bN[x]$ for $N\leq j\leq 2N$,  we have
$$
h^j(X_{\sbar},\cF) \leq \fd_j(\fc_f(\cE))\cdot \Rk_{\Lambda}\cF 
$$
for $0\leq j \leq 2n$.
 \cref{bette_polynomials} gives the existence of $\fd'\in \bN[x]^{n}$ with  $\fd'_i$ of degree $i$ for $i=0,\dots, n$ such that for every $i=0,\dots, 2n$ we have $\fd_i\leq \fd'_{\min(i,2n-i)}$.
Hence, for  $j=0,\dots, 2n$,  we have 
$$
h^j(X_{\sbar},\cF) \leq  \fd'_{\min(j,2n-j)}(\fc_f(\cE))\cdot \Rk_{\Lambda}\cF  \ .
$$
The proof of \cref{general_boundedness_Betti} is thus complete.

\begin{cor}\label{general_boundedness_complex}
Let $(X/S,\Sigma)$ be a proper relative  stratified scheme of relative dimension $\leq n$ and let $a\leq b$ be integers.
Then, there is an admissible function $\fc : \bQ[\Coh(X)]\to \bQ$ and  $\fd\in \mathds{N}[x]$ of degree $n$ such that for every algebraic geometric point $\sbar\to S$, every finite field $\Lambda$ of characteristic $\ell\neq p$,  every $\cE\in \bQ[\Coh(X)]$ and every  $\cK\in D_{\Sigma_{\sbar}}^{[a,b]}(X_{\sbar},\cE_{\sbar},\Lambda)$, we have 
$$
\sum_{j\in \bZ} h^j(X_{\sbar},\cK)\leq \fd(\fc(\cE))\cdot \Rk_{\Lambda}\cK  \ .
$$
\end{cor}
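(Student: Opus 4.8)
The plan is to deduce \cref{general_boundedness_complex} from \cref{general_boundedness_Betti} by a formal dévissage along the cohomology sheaves of $\cK$ via the hypercohomology spectral sequence, with no genuinely new geometric input. First I would apply \cref{general_boundedness_Betti} to $(X/S,\Sigma)$ to obtain an admissible function $\fc : \bQ[\Coh(X)]\to\bQ$ and a tuple $\fd'\in\mathds{N}[x]^{n+1}$ with $\fd'_i$ of degree $i$ for $i=0,\dots,n$, forming a Betti bound for $\Cons_{\Sigma_{\sbar}}(X_{\sbar},\Lambda)$ for every algebraic geometric point $\sbar\to S$ and every finite field $\Lambda$ of characteristic $\ell\neq p$. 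This $\fc$ will be the admissible function in the statement.

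Fix such an $\sbar$ and $\Lambda$, together with $\cE\in\bQ[\Coh(X)]$ and $\cK\in D_{\Sigma_{\sbar}}^{[a,b]}(X_{\sbar},\cE_{\sbar},\Lambda)$. For each $j\in\bZ$, the cohomology sheaf $\cH^j\cK$ lies in $\Cons_{\Sigma_{\sbar}}(X_{\sbar},\Lambda)$, has log conductors bounded by $\cE_{\sbar}$ (this is immediate from \cref{bounded_conductor}), vanishes unless $a\leq j\leq b$, and satisfies $\Rk_{\Lambda}\cH^j\cK\leq\Rk_{\Lambda}\cK$, since stalks commute with cohomology so the germ ranks of $\cH^j\cK$ occur among those of $\cK$. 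The second hypercohomology spectral sequence $E_2^{i,j}=H^i(X_{\sbar},\cH^j\cK)\Rightarrow H^{i+j}(X_{\sbar},\cK)$ yields $h^m(X_{\sbar},\cK)\leq\sum_{i+j=m}h^i(X_{\sbar},\cH^j\cK)$ for every $m$, and since the fibres of $X\to S$ have dimension $\leq n$ the inner terms vanish unless $0\leq i\leq 2n$. Summing over all $m$ gives
$$
\sum_{m\in\bZ}h^m(X_{\sbar},\cK)\ \leq\ \sum_{j=a}^{b}\sum_{i=0}^{2n}h^i(X_{\sbar},\cH^j\cK).
$$

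Applying the Betti bound $(\fc,\fd')$ to each $\cH^j\cK$ gives $h^i(X_{\sbar},\cH^j\cK)\leq\fd'_{\min(i,2n-i)}(\fc(\cE))\cdot\Rk_{\Lambda}\cK$, so the displayed right-hand side is at most $(b-a+1)\bigl(\sum_{i=0}^{2n}\fd'_{\min(i,2n-i)}\bigr)(\fc(\cE))\cdot\Rk_{\Lambda}\cK$. It therefore suffices to take $\fd:=(b-a+1)\sum_{i=0}^{2n}\fd'_{\min(i,2n-i)}\in\mathds{N}[x]$; this polynomial has degree $n$, since each $\fd'_{\min(i,2n-i)}$ has degree $\min(i,2n-i)\leq n$ with equality attained at $i=n$. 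I do not anticipate a real obstacle: the substantive work — the dimension recursion and the conductor estimates over curves — is already encapsulated in \cref{general_boundedness_Betti}, and the only points requiring a line of justification are the two compatibilities highlighted above, namely that each $\cH^j\cK$ inherits $\Sigma_{\sbar}$-constructibility together with the conductor bound, and that $\Rk_{\Lambda}\cH^j\cK\leq\Rk_{\Lambda}\cK$, both of which follow directly from the definitions.
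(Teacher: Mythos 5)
Your proposal is correct and follows exactly the paper's route: the paper's proof of this corollary is precisely "immediate from \cref{general_boundedness_Betti} using the hypercohomology spectral sequence," and your write-up just makes explicit the standard verifications (each $\cH^j\cK$ is $\Sigma_{\sbar}$-constructible with log conductors bounded by $\cE_{\sbar}$ and $\Rk_\Lambda\cH^j\cK\leq\Rk_\Lambda\cK$, plus the degree count for the summed polynomial). Nothing further is needed.
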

\begin{proof}
Immediate from \cref{general_boundedness_Betti} using the hypercohomology spectral sequence.
\end{proof}

\begin{cor}\label{Betti_direct_image}
Let $(X/S,\Sigma)$  be a proper relative stratified scheme of relative dimension $\leq n$ and let $a\leq b$ be integers.
Then, there is an admissible function $\fc :  \bQ[\Coh(X)]\to \bQ$ and $\fd\in \mathds{N}[x]$ of degree $n$ such that for every $\cE\in \bQ[\Coh(X)]$,  every finite field $\Lambda$ of characteristic $\ell\neq p$ and  every $\cK\in D_{\Sigma}^{[a,b]}(X,\cE,\Lambda)$,  we have 
$$
Rk_{\Lambda} Rf_*\cK \leq \fd(\fc(\cE))\cdot \Rk_{\Lambda}\cK  \ .
$$
\end{cor}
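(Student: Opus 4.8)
The plan is to deduce this from \cref{general_boundedness_complex} by spreading the cohomology of $\cK$ over the geometric fibres of $f$. Since $f : X\to S$ is proper and $\cK$ is bounded with constructible cohomology, the complex $Rf_*\cK$ is again bounded constructible, and as $\Lambda$ is a finite field it automatically lies in $D^b_{ctf}(S,\Lambda)$, so $\Rk_{\Lambda}Rf_*\cK$ is defined. For any algebraic geometric point $\sbar\to S$ and any $i\in\bZ$, proper base change provides a canonical isomorphism of $\Lambda$-modules $\cH^i(Rf_*\cK)_{\sbar}\simeq H^i(X_{\sbar},\cK|_{X_{\sbar}})$, whence
$$
\Rk_{\Lambda}Rf_*\cK=\max_{\sbar}\ \max_{i\in\bZ} h^i\bigl(X_{\sbar},\cK|_{X_{\sbar}}\bigr)\leq \max_{\sbar}\ \sum_{i\in\bZ} h^i\bigl(X_{\sbar},\cK|_{X_{\sbar}}\bigr),
$$
the outer maximum running over all algebraic geometric points of $S$.

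First I would check that each restriction $\cK|_{X_{\sbar}}$ belongs to the category appearing in \cref{general_boundedness_complex}. It has cohomology concentrated in degrees $[a,b]$; it is $\Sigma_{\sbar}$-constructible, since the strata of $\Sigma_{\sbar}$ are the preimages of those of $\Sigma$ and cohomology sheaves commute with the pullback along $X_{\sbar}\to X$; it has log conductors bounded by $\cE_{\sbar}$ by \cref{bounded_ramification_generic_fiber} (which identifies $i_{\sbar}^*\cE$ with $\cE_{\sbar}$); and $\Rk_{\Lambda}\cK|_{X_{\sbar}}\leq \Rk_{\Lambda}\cK$. Thus $\cK|_{X_{\sbar}}\in D_{\Sigma_{\sbar}}^{[a,b]}(X_{\sbar},\cE_{\sbar},\Lambda)$. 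Now apply \cref{general_boundedness_complex} to the proper relative stratified scheme $(X/S,\Sigma)$ and to the integers $a\leq b$: it yields an admissible $\fc:\bQ[\Coh(X)]\to\bQ$ and a $\fd\in\mathds{N}[x]$ of degree $n$, both independent of $\sbar$, $\Lambda$ and $\cE$, such that
$$
\sum_{i\in\bZ} h^i\bigl(X_{\sbar},\cK|_{X_{\sbar}}\bigr)\leq \fd(\fc(\cE))\cdot \Rk_{\Lambda}\cK|_{X_{\sbar}}\leq \fd(\fc(\cE))\cdot \Rk_{\Lambda}\cK.
$$
Combining this with the previous display and taking the maximum over $\sbar$ gives $\Rk_{\Lambda}Rf_*\cK\leq \fd(\fc(\cE))\cdot\Rk_{\Lambda}\cK$, which is the desired bound, with the same $(\fc,\fd)$ valid for all $\Lambda$ and all $\cE$.

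The argument is essentially formal once \cref{general_boundedness_complex} is in hand, so I do not expect a serious obstacle; the one point to be a little careful about is the passage to geometric fibres. Concretely, one must know that being $\Sigma$-constructible with log conductors bounded by $\cE$ is preserved under the pullback $X_{\sbar}\to X$ — the conductor part being exactly \cref{bounded_ramification_generic_fiber} and the constructibility part being immediate — and that the rank of $Rf_*\cK$ at a geometric point of $S$ is computed by proper base change as the dimension of the cohomology of the corresponding fibre with the induced coefficients. Both facts are standard, and the corollary then follows at once.
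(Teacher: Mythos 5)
Your argument is correct and is exactly the paper's proof: the paper deduces the corollary immediately from proper base change together with \cref{bounded_ramification_generic_fiber} and \cref{general_boundedness_complex}, which is precisely the chain you spell out (identification of the stalks of $Rf_*\cK$ with fibrewise cohomology, preservation of the conductor bound on geometric fibres, then the fibrewise Betti estimate). The extra verifications you include (constructibility along $\Sigma_{\sbar}$, $\Rk_{\Lambda}\cK|_{X_{\sbar}}\leq \Rk_{\Lambda}\cK$) are the routine points the paper leaves implicit.
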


\begin{proof}
Immediate by proper base change theorem combined with \cref{bounded_ramification_generic_fiber} and  \cref{general_boundedness_complex}.
\end{proof}

\begin{thm}\label{general_boundedness_Perv}
Let $(X,\Sigma)$ be a proper   stratified scheme  of dimension $n$ over an algebraically closed field $k$.
Then, there is an admissible function $\fc : \bQ[\Coh(X)]\to \bQ$ and  $\fd\in \mathds{N}[x]^{n+1}$ with $\fd_i$ of degree $i$ for $i=0,\dots, n$ such that for every finite field $\Lambda$ of characteristic $\ell\neq p$, every $\cE\in \bQ[\Coh(X)]$,  every $j=-n, \dots, n$ and every  $\cP\in \Perv_{\Sigma}(X,\cE,\Lambda)$, we have 
$$
h^j(X,\cP)\leq \fd_{\min(n+j,n-j)}(\fc(\cE))\cdot \Rk_{\Lambda}\cP  \ .
$$
\end{thm}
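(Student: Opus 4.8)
The plan is to deduce \cref{general_boundedness_Perv} from \cref{general_boundedness_Betti}, applied not to $X$ itself but to the closed subschemes of $X$ that support the cohomology sheaves of a perverse sheaf, and then to feed the outcome into the hypercohomology spectral sequence while tracking polynomial degrees carefully. First I would recall that a perverse sheaf $\cP$ for the middle perversity on the $n$-dimensional scheme $X$ has cohomology sheaves $\cH^b\cP$ that vanish outside $-n\leq b\leq 0$ and satisfy $\dim\Supp\cH^b\cP\leq -b$. After refining $\Sigma$ so that its strata are connected — which only enlarges $\Perv_\Sigma(X,\cE,\Lambda)$ and is therefore harmless — I would set, for $d=0,\dots,n$, the closed subscheme $Z_d\subset X$ given by the union of the closures of the strata of $\Sigma$ of dimension $\leq d$; it is proper over $k$, of dimension $\leq d$, carries the induced finite stratification $\Sigma_d$, and the support estimate above forces $\Supp\cH^b\cP\subseteq Z_{-b}$. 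Writing $i_d\colon Z_d\hookrightarrow X$ for the inclusion, the restriction $\cH^b\cP|_{Z_{-b}}$ is then $\Sigma_{-b}$-constructible, has the same cohomology groups and the same maximal stalk rank as $\cH^b\cP$, and, by \cref{many_properties}-(\ref{pullback}), has log conductors bounded by $i_{-b}^{*}\cE$ whenever $\cH^b\cP$ has log conductors bounded by $\cE$.

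Next I would apply \cref{general_boundedness_Betti} to each proper stratified scheme $(Z_d/\Spec k,\Sigma_d)$, obtaining an admissible $\fc_d\colon\bQ[\Coh(Z_d)]\to\bQ$ and $\fd^{(d)}\in\bN[x]^{d+1}$ with $\fd^{(d)}_i$ of degree $i$ such that $h^a(Z_d,\cG)\leq\fd^{(d)}_{\min(a,2d-a)}(\fc_d(\cE'))\cdot\Rk_\Lambda\cG$ for every $\Sigma_d$-constructible $\cG$ with log conductors bounded by $\cE'$. I would then set $\fc:=\sum_{d=0}^{n}\fc_d\circ i_d^{*}$, which is admissible by \cref{pullback_subadditive} and the stability of admissibility under finite sums, so that for every $b\in[-n,0]$ and every $a$
$$
h^a(X,\cH^b\cP)=h^a\bigl(Z_{-b},\cH^b\cP|_{Z_{-b}}\bigr)\leq\fd^{(-b)}_{\min(a,-2b-a)}\bigl(\fc(\cE)\bigr)\cdot\Rk_\Lambda\cP,
$$
using in addition $\Rk_\Lambda\cH^b\cP\leq\Rk_\Lambda\cP$ and the monotonicity of $\fc$.

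Finally, the hypercohomology spectral sequence $E_2^{a,b}=H^a(X,\cH^b\cP)\Rightarrow H^{a+b}(X,\cP)$ gives $h^j(X,\cP)\leq\sum_{a+b=j}h^a(X,\cH^b\cP)$, a sum of at most $n+1$ nonzero terms indexed by $b\in[-n,0]$. For the term of index $b$ to be nonzero one needs $0\leq a\leq -2b$; writing $a=j-b$ one has $\min(a,-2b-a)=\min(j-b,-b-j)$, which equals $-b-j\leq n-j$ when $j\geq 0$ and $j-b\leq n+j$ when $j\leq 0$, hence is at most $\min(n+j,n-j)$ in all cases. Thus each term is bounded by a polynomial in $\fc(\cE)$ of degree $\leq\min(n+j,n-j)$ times $\Rk_\Lambda\cP$; summing these $n+1$ contributions and adding $x^{\min(n+j,n-j)}$ to make the total degree exactly $\min(n+j,n-j)$ yields the required $\fd\in\bN[x]^{n+1}$, depending only on $(X,\Sigma)$. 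The main obstacle is precisely this degree bookkeeping: applying \cref{general_boundedness_Betti} directly on $X$ would give only the uniform degree $n$, and it is essential to pass to the lower-dimensional subschemes $Z_d$ so as to exploit the support-dimension estimate for perverse sheaves. The remaining points — that $\Sigma_d$ is a legitimate finite stratification of $Z_d$, that $\fc_d(i_d^{*}\cE)\leq\fc(\cE)$ and the subadditivity statements hold as used, and that restriction to the support alters neither cohomology nor maximal stalk rank — are routine and parallel the dévissage arguments already carried out in this section.
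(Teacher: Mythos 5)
Your proof is correct and follows essentially the same route as the paper: bound $\Supp \cH^{b}\cP$ inside a closed subscheme of dimension $\leq -b$ built from the strata, apply \cref{general_boundedness_Betti} on those subschemes, and feed the resulting estimates into the hypercohomology spectral sequence with careful degree bookkeeping, combining the admissible functions via $\sum_d \fc_d\circ i_d^{*}$. Your preliminary refinement of $\Sigma$ to connected strata is a small extra precaution (needed so that the support of a $\Sigma$-constructible sheaf of dimension $\leq d$ really lands in $Z_d$) which the paper leaves implicit; otherwise the two arguments coincide, with your unified index bound $\min(j-b,-b-j)\leq\min(n+j,n-j)$ replacing the paper's separate treatment of $j\geq 0$ and $j\leq 0$.
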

\begin{proof}
For $0\leq i \leq n$,  let $\iota_i : X_i \hookrightarrow X$ be the closure of the union of the strata of $\Sigma$  of dimension at most $i$.
Then,   $\cH^{-i} \cP $ is supported on $X_i$ and its restriction to $X_i$ is an object of $\Cons_{\iota_i^*\Sigma}(X_i,\iota_i^*\cE,\Lambda)$.
If $(\fc^i,\fd^i)$  is a Betti bound for $(X_i,\iota_i^*\Sigma)$ as in \cref{general_boundedness_Betti}, we thus have
$$
H^j(X,\cH^{-i}\cP) \leq  \fd^i_{\min(j,2i-j)}(\fc^i(\iota^*_i\cE))\cdot \Rk_{\Lambda}\cH^{-i} \cP 
$$
for every $0\leq j \leq 2i$.
Consider the admissible function 
$$
\fc := \sum_{i=1}^n \fc^i\circ \iota^*_i : \bQ[\Coh(X)]\to \bQ \ .
$$
The hypercohomology spectral sequence reads
$$
H^p(X,\cH^q\cP) \Rightarrow H^{p+q}(X,\cP) \ .
$$
Assume that $0\leq i\leq n$.
Then,  the contributions to $H^{-i}(X,\cP)$ come from $H^{m}(X,\cH^{-i-m} \cP)$ for $0\leq m \leq n-i$.
Thus, 
\begin{align*}
h^{-i}(X,\cP) &   \leq  \sum_{m=0}^{n-i}       h^{m}(X,\cH^{-i-m} \cP)          \\
                    & \leq  \sum_{m=0}^{n-i}\fd^{i+m}_{\min(m,2i+m)}(\fc^{i+m}(\iota^*_{i+m}\cE))\cdot \Rk_{\Lambda}\cH^{-i-m} \cP \\
                 &  \leq \sum_{m=0}^{n-i}\fd^{i+m}_m(\fc(\cE))\cdot \Rk_{\Lambda}\cP \ .
\end{align*}
Observe that  $\mathfrak{e}_i:=\sum_{m=0}^{n-i}\fd^{i+m}_m$ has degree $n-i=\min(n-i,n+i)$.
On the other hand, the contributions to $H^{i}(X,\cP)$ come from $H^{m}(X,\cH^{i-m} \cP)$ for $i\leq m \leq n+i$.
Thus, 
\begin{align*}
h^{i}(X,\cP) &   \leq  \sum_{m=i}^{n+i}       h^{m}(X,\cH^{i-m} \cP)          \\
                    & \leq  \sum_{m=i}^{n+i}\fd^{m-i}_{\min(m,m-2i)}(\fc^{m-i}(\iota^*_{m-i}\cE))\cdot \Rk_{\Lambda}\cH^{i-m} \cP \\
                 &  \leq \sum_{m=2i}^{n+i}\fd^{i+m}_{m-2i}(\fc(\cE))\cdot \Rk_{\Lambda}\cP \ .
\end{align*}
Observe that  $\mathfrak{f}_i:= \sum_{m=2i}^{n+i}\fd^{i+m}_{m-2i}$ has degree $n-i=\min(n+i,n-i)$.
Thus, the admissible function $\fc : \bQ[\Coh(X)]\to \bQ$ and the sequence
$$
\fd:=(\mathfrak{e}_n+\mathfrak{f}_n,\dots,\mathfrak{e}_0+\mathfrak{f}_0) \in \bN[x]^{n+1}
$$
do the job.
\end{proof}

\begin{rem}\label{support_condition}
\cref{general_boundedness_Perv} holds more generally for complexes satisfying the support condition.
\end{rem}

When $X$ is smooth and $\cK$ is the extension by 0 of a locally constant constructible sheaf on the complement of an effective Cartier divisor, \cref{general_boundedness_Perv} implies the following

\begin{theorem}\label{general_boundedness_Betti_Xsmooth}
Let $X$ be a proper  smooth scheme of finite type of dimension $n$ over an algebraically closed field $k$.
Let $D$ be a reduced effective Cartier divisor of $X$ and put $j : U:=X-D\hookrightarrow X$.
Then, there exists $\fd\in \mathds{N}[x]^{n+1}$ with $\fd_i$ of degree $i$ for $i=0,\dots, n$ such that for every finite field $\Lambda$ of characteristic $\ell\neq p$, every $j=0, \dots,2n$ and every  $\cL\in \Loc(U,\Lambda)$  we have 
$$
h^j(U,\cL)\leq \fd_{\min(j,2n-j)}(lc_D(\cL))\cdot \rk_{\Lambda}\cL  \ .
$$
\end{theorem}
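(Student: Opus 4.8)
The plan is to deduce this from \cref{general_boundedness_Perv}, extended to complexes satisfying the support condition as in \cref{support_condition}, together with the explicit ramification bound of \cref{bounded_ramification_ex} and Poincar\'e duality.

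First I would reduce the ordinary cohomology $h^i(U,\cL)$ to the compactly supported cohomology of a dual sheaf. Since $U:=X-D$ is smooth over $k$ and $X$ has finitely many connected components, I may treat them separately and hence assume $X$, and thus $U$, purely of dimension $n$. For $\cL\in\Loc(U,\Lambda)$ set $\cL^\vee:=\cHom_\Lambda(\cL,\Lambda)$, again a locally constant constructible sheaf of the same rank. Poincar\'e--Verdier duality on the smooth $n$-dimensional $U$ together with the properness of $X$ yield, for every $i$,
\[
h^i(U,\cL)=h^{2n-i}_c(U,\cL^\vee)=h^{2n-i}(X,j_!\cL^\vee).
\]
Since passing to the $\Lambda$-linear dual preserves logarithmic slopes, $\lc_D(\cL^\vee)=\lc_D(\cL)$ and $\rk_\Lambda\cL^\vee=\rk_\Lambda\cL$; so it is enough to bound $h^a(X,j_!\cL^\vee)$ by an expression of the desired shape in $\lc_D(\cL)$ and $\rk_\Lambda\cL$.

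Next I would invoke \cref{general_boundedness_Perv} for the stratified scheme $(X,\Sigma)$ with $\Sigma:=\{U,D\}$, obtaining an admissible $\fc:\bQ[\Coh(X)]\to\bQ$ and $\fd\in\mathds{N}[x]^{n+1}$ with $\fd_a$ of degree $a$. The complex $j_!\cL^\vee[n]$ is $\Sigma$-constructible, satisfies the support condition (its unique nonzero cohomology sheaf $j_!\cL^\vee$ lies in degree $-n$ and is supported in dimension $\le n$), and has $\Rk_\Lambda(j_!\cL^\vee[n])=\rk_\Lambda\cL$. Because $X$ is smooth, \cref{bounded_ramification_ex}(2) applied to $\cL^\vee$ shows that $j_!\cL^\vee[n]$ has log conductors bounded by $\cE:=(\lc_D(\cL)+1)\cdot\cO_D\in\bQ[\Coh(X)]$. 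Hence \cref{general_boundedness_Perv}, in the form of \cref{support_condition}, gives for $-n\le a\le n$
\[
h^a(X,j_!\cL^\vee[n])\le\fd_{\min(n+a,n-a)}(\fc(\cE))\cdot\rk_\Lambda\cL.
\]

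Finally I would unwind the shifts. By $\bQ$-linearity of $\fc$ we have $\fc(\cE)=(\lc_D(\cL)+1)\,c$ with $c:=\fc(\cO_D)\in\bN$. Putting $i:=n-a$, so that $h^a(X,j_!\cL^\vee[n])=h^{n+a}_c(U,\cL^\vee)=h^{n-a}(U,\cL)=h^i(U,\cL)$ and $\min(n+a,n-a)=\min(i,2n-i)$, the displayed bound becomes
\[
h^i(U,\cL)\le\fd_{\min(i,2n-i)}\big((\lc_D(\cL)+1)\,c\big)\cdot\rk_\Lambda\cL,\qquad 0\le i\le 2n.
\]
Taking $\fd'_i(x):=x^i+\fd_i\big((x+1)c\big)$, which still lies in $\mathds{N}[x]$ and has degree exactly $i$, the sequence $(\fd'_0,\dots,\fd'_n)$ is the one asked for, and it depends on neither $\Lambda$ nor $\ell$. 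The only step deserving care is this bookkeeping: converting ordinary cohomology into compactly supported cohomology through Poincar\'e duality and matching the index $\min(n+a,n-a)$ of \cref{general_boundedness_Perv} with $\min(i,2n-i)$. Beyond that there is no real obstacle, the statement being a direct specialization of \cref{general_boundedness_Perv}.
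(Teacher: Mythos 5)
Your argument is correct and is essentially the paper's own proof: both reduce via Poincar\'e duality to compactly supported cohomology of the extension by zero of a dual local system, bound its log conductors by $(\lc_D(\cL)+1)\cdot\cO_D$ using \cref{bounded_ramification_ex}(2), and then invoke the general boundedness theorem, absorbing $\fc(\cO_D)$ into the bounding polynomials. The only cosmetic difference is that you route through \cref{general_boundedness_Perv} (via \cref{support_condition}; in fact $j_!\cL^\vee[n]$ is already perverse since $j$ is an affine open immersion) applied to the shifted complex, whereas the paper applies \cref{general_boundedness_Betti} directly to the unshifted sheaf $j_!\cL$, the index bookkeeping being identical.
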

\begin{proof}
By Poincaré duality, it is enough to consider the analogous statement for $h^j_c$.
Put $\Sigma:=\{U,D\}$ and let $j : U\hookrightarrow X$ be the inclusion. 
Let $(\fc,\fd)$ be a Betti bound for $(X,\Sigma)$ whose existence follows from \cref{general_boundedness_Betti}.
Since $X$ is smooth over $k$, \cref{bounded_ramification_ex}-2 implies that $j_!\cL$ has log conductors bounded by 
$(\lc_D(\cL)+1)\cdot \cO_D$.
On the other hand, 
$$
\fc((\lc_D(\cL)+1)\cdot \cO_D)=  \fc(\cO_D)\cdot (\lc_D(\cL)+1)\ .
$$
We conclude by using the sequence $\fd( \fc(\cO_D)\cdot((-)+1))$ instead of $\fd$.
\end{proof}

\begin{recollection}\label{adic_presentation}
Let $X$ be a scheme of finite type over a field $k$ of characteristic $p>0$ separably closed or finite and let $\ell\neq p$. 
Then, for every $\cK\in D^b_c(X,\overline{\bQ}_\ell)$, there is a finite extension $L/\bQ_{\ell}$ and an integral representative $\cK_{\bullet}=(\cK_m)_{m\geq 0}$ for $\cK$.
If we put $\Lambda_m := \cO_L/\mathfrak{m}_L^m$, the sheaf $\cK_m$ is an object of $D^b_{cft}(X,\Lambda_m)$ such that $\Lambda_m\otimes_{\Lambda_{m+1}}^{L} \cK_{m+1} \simeq \cK_{m}$.
\end{recollection}

The following definition upgrades \cref{bounded_conductor} to $\Qlbar$-coefficients.

\begin{defin}
Let $X$ be a scheme of finite type over a field $k$ of characteristic $p>0$ separably closed or finite and let $\ell\neq p$. 
Let $\cK\in D_{c}^b(X,\Qlbar)$ and  $\cE\in \bQ[\Coh(X)]$.
We say that \textit{$\cK$ has log conductors bounded by $\cE$} if there is a finite extension $L/\bQ_{\ell}$ and an integral representative $\cK_{\bullet}=(\cK_m)_{m\geq 0}$ for $\cK$ such that for every $m\geq 0$, the complex $\cK_m\in D^b_{cft}(X,\Lambda_m)$ has log conductors bounded by $\cE$ in the sense of \cref{bounded_conductor}.
We denote by $D_c^b(X,\cE,\Qlbar)$ the full subcategory of $D_c^b(X,\Qlbar)$ spanned by objects having log conductors bounded by $\cE$.
\end{defin}

\begin{rem}
\cref{general_boundedness_Betti}, \cref{general_boundedness_complex}, \cref{general_boundedness_Perv} and \cref{general_boundedness_Betti_Xsmooth} admit immediate variants for $\Qlbar$ coefficients with the help of the next lemma.
\end{rem}

\begin{lem}[{\cite[Lemma 1.8]{FuWan}}]\label{BettiFl>BettiQl}
Let $X$ be a separated scheme of finite type over an algebraically closed field $k$. 
For every $\mathcal K\in D^b_c(X,\overline{\mathbb Q}_{\ell})$ represented by $\cK_{\bullet}$ in the sense of Recollection 7.35, we have
$$
\dim_{\Lambda_0}H^n(X,\mathcal K_0)\geq \dim_{\overline{\mathbb Q}_\ell}H^n(X,\mathcal K) \ .
$$
for any $n\in\mathbb Z$.
\end{lem}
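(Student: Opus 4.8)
The plan is to deduce the inequality from the standard $\ell$-adic formalism recalled in \cref{adic_presentation}, together with two elementary ingredients: a universal coefficient short exact sequence for reduction modulo the maximal ideal of $\cO_L$, and the comparison between the rank of a finitely generated module over a discrete valuation ring and the dimension of its reduction.

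First I would fix, as in \cref{adic_presentation}, a finite extension $L/\bQ_\ell$ and an integral representative $\cK_\bullet=(\cK_m)_{m\geq 0}$ of $\cK$, write $\varpi\in\cO_L$ for a uniformizer so that $\Lambda_0$ is the residue field of $\cO_L$, and set $R\Gamma(X,\cK_\bullet):=R\varprojlim_m R\Gamma(X,\cK_m)$. Since each $\cK_m$ belongs to $D^b_{ctf}(X,\Lambda_m)$ and $X$ is of finite type over the algebraically closed field $k$, the finiteness theorem for étale cohomology shows that the groups $H^n(X,\cK_m)$ are finite with surjective transition maps, so that $H^n(X,\cK_\bullet)$ is a finitely generated $\cO_L$-module and the derived limit contributes nothing in higher cohomological degree. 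By the construction of $\ell$-adic cohomology one has $R\Gamma(X,\cK)\simeq R\Gamma(X,\cK_\bullet)\otimes^L_{\cO_L}\Qlbar$; as $\Qlbar$ is flat over the discrete valuation ring $\cO_L$ and $H^n(X,\cK_\bullet)$ is finitely generated, this yields
$$
\dim_{\Qlbar}H^n(X,\cK)=\mathrm{rank}_{\cO_L}H^n(X,\cK_\bullet)\ .
$$
Next I would use the adicity relations $\Lambda_m\otimes^L_{\Lambda_{m+1}}\cK_{m+1}\simeq\cK_m$ to identify $\cK_0\simeq\cK_\bullet\otimes^L_{\cO_L}\Lambda_0$, together with the fact that $R\Gamma(X,-)$ commutes with this derived base change along $\cO_L\to\Lambda_0$, to obtain $R\Gamma(X,\cK_0)\simeq R\Gamma(X,\cK_\bullet)\otimes^L_{\cO_L}\Lambda_0$. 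Since $\cO_L\to\Lambda_0$ has Tor-dimension $1$, the universal coefficient theorem then provides a short exact sequence
$$
0\to H^n(X,\cK_\bullet)\otimes_{\cO_L}\Lambda_0\to H^n(X,\cK_0)\to \mathrm{Tor}_1^{\cO_L}\!\bigl(H^{n+1}(X,\cK_\bullet),\Lambda_0\bigr)\to 0\ ,
$$
so in particular $\dim_{\Lambda_0}H^n(X,\cK_0)\geq \dim_{\Lambda_0}\bigl(H^n(X,\cK_\bullet)/\varpi H^n(X,\cK_\bullet)\bigr)$. Finally, for any finitely generated module $M$ over the discrete valuation ring $\cO_L$ the structure theorem gives $\mathrm{rank}_{\cO_L}M\leq\dim_{\Lambda_0}(M/\varpi M)$, the difference being the minimal number of generators of the torsion submodule of $M$. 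Applying this with $M=H^n(X,\cK_\bullet)$ and chaining the displays gives
$$
\dim_{\Lambda_0}H^n(X,\cK_0)\geq\mathrm{rank}_{\cO_L}H^n(X,\cK_\bullet)=\dim_{\Qlbar}H^n(X,\cK)\ ,
$$
which is the claim.

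The step I expect to require the most care is the compatibility asserted in the third paragraph — that $R\Gamma(X,-)$ commutes both with $R\varprojlim_m$ and with the derived reduction modulo $\varpi$ — since this rests on the finiteness and constructibility properties of the $\cK_m$ recorded in \cref{adic_presentation} and on the Mittag--Leffler property of the inverse systems $\bigl(H^n(X,\cK_m)\bigr)_m$. Once those formal inputs are granted, everything else is routine homological algebra over the discrete valuation ring $\cO_L$.
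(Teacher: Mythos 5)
Your argument is correct and is essentially the paper's proof: the paper quotes \cite[10.1.5, 10.1.17]{fu} for a bounded complex $M$ of finite free $\cO_L$-modules with $M\otimes_{\cO_L}\Lambda_i\simeq R\Gamma(X,\cK_i)$ (your $R\Gamma(X,\cK_\bullet)$ built via $R\varprojlim$, which needs exactly this input), and then uses the sequence $0\to M\xrightarrow{\cdot\pi_L}M\to M_0\to 0$, i.e.\ your universal coefficient injection $H^n(M)\otimes_{\cO_L}\Lambda_0\hookrightarrow H^n(X,\cK_0)$, together with $\mathrm{rank}_{\cO_L}\leq\dim_{\Lambda_0}$ of the reduction. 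Only a cosmetic remark: surjectivity of the transition maps is neither needed nor automatic; finiteness of the groups $H^n(X,\cK_m)$ already gives Mittag--Leffler and kills the $R^1\varprojlim$ term.
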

\begin{proof}
By \cite[10.1.5, 10.1.17]{fu}, there is a bounded complex $M$ of free $\mathcal O_L$-modules of finite ranks such that, for any $i\geq 0$, the complex $M_i=M\otimes_{O_L}\Lambda_i$ represents $R\Gamma(X,\mathcal K_{i})$. 
Let $\pi_L$ be a uniformizer of $\mathcal O_L$. 
We take an exact sequence of bounded complexes of $\mathcal O_L$-modules
$$
0\to M\xrightarrow{\cdot \pi_L}M\to M_0\to 0 \ .
$$
Then, for each integer $n\geq 0$, we have a long exact sequence
$$
H^{n-1}(X,\mathcal K_0)\to H^n(M)\xrightarrow{\cdot\pi_L} H^n(M)\to H^{n}(X,\mathcal K_0) \ ,
$$
which induces an injection $H^n(M)\otimes_{\mathcal O_L}\Lambda_0\to H^{n}(X,\mathcal K_0)$. 
Hence, for any $n\in \mathbb Z$, we have 
\begin{align*}
\dim_{\Lambda_0}H^n(X,\mathcal K_0)&\geq \dim_{\Lambda}(H^n(M)\otimes_{\mathcal O_L}\Lambda_0)\\
&\geq \dim_{\overline{\mathbb Q}_\ell}(H^n(M)\otimes_{\mathcal O_L}\overline{\mathbb Q}_{\ell})\\
&=\dim_{\overline{\mathbb Q}_\ell}(H^n(X,\mathcal K)) \ .
\end{align*}

\end{proof}







\section{Betti numbers of inverse and higher direct images}\label{inverse_direct_image}

\begin{recollection}
Let $X$ be a projective scheme of finite type over $k$ and let $i : X\hookrightarrow \bP_k(E)$ be a closed immersion.
For $1\leq a\leq \rk E$, we let $\overline{\eta}$ be a geometric generic point of $\bG_k(E,a)$ and let $F\subset \bP_{\overline{\eta}}(E_{\overline{\eta}})$ be the corresponding projective subspace of dimension $a-1$ over $\overline{\eta}$.
Following \cite[Definition 6.3]{SFFK}, we define the \textit{complexity to $i$ of $\cK\in D^b_c(X,\Qlbar)$} by
$$
c_i(\cK) := \max_{1\leq a \leq n}\sum_{j\in \bZ} h^{j}(X_{\overline{\eta}}\cap F,\cK|_{X_{\overline{\eta}}\cap F}) \ .
$$
In particular, we have 
$$
\sum_{j\in \bZ} h^{j}(X,\cK) \leq c_i(\cK) \ .
$$

\end{recollection}

\begin{lem}\label{complexity_trivial_sheaf}
Let $f : X\to S$ be a projective morphism  over $k$ and let $i : X\hookrightarrow \bP_k(E)$ be a closed immersion.
For every $\cK \in D^b_c(X,\Qlbar)$, there is $N(\cK)\geq 0$ such that for every $s\in S$, we have $c_{i_s}(\cK|_{X_s})\leq N(\cK)$.
\end{lem}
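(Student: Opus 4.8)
The plan is to realise $c_{i_s}(\mathcal K|_{X_s})$ as the value at a point of a single constructible function built over $S$, and then to invoke the fact that constructible functions on noetherian schemes are bounded.

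Fix an integer $a$ with $1\le a\le \rk E$, write $G_a:=\bG_k(E,a)$ for the Grassmannian of $a$-dimensional subspaces of $E$, and let $\mathcal Z_a\subset X\times_k G_a$ be the incidence subscheme $\{(x,F):i(x)\in F\}$, i.e. the preimage of the universal incidence variety of $\bP_k(E)\times_k G_a$ under $i\times\mathrm{id}$; it is closed in $X\times_k G_a$. Let $q_a:\mathcal Z_a\to S\times_k G_a$ be the composite of $\mathcal Z_a\hookrightarrow X\times_k G_a$ with $f\times\mathrm{id}$, which is proper being the composition of a closed immersion with the base change of the projective morphism $f$, and let $\mathcal K_a\in D^b_c(\mathcal Z_a,\Qlbar)$ be the pullback of $\mathcal K$ along the first projection $\mathcal Z_a\to X$. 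Since $q_a$ is proper, $Rq_{a*}\mathcal K_a$ is a bounded constructible complex on the noetherian scheme $S\times_k G_a$, so the $\bZ$-valued function
$$
\bar t\longmapsto \sum_{j\in\bZ}\dim_{\Qlbar}(R^jq_{a*}\mathcal K_a)_{\bar t}
$$
is constructible, hence bounded; fix a bound $M_a$ for it.

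The key step is to compare, via proper base change, the stalks of $Rq_{a*}\mathcal K_a$ with the cohomology groups entering the definition of the complexity. Using the identification $X\times_k G_a\cong X\times_S(S\times_k G_a)$, for a geometric point $\bar t$ of $S\times_k G_a$ lying over a point $t$ with image $s\in S$ and corresponding $a$-dimensional subspace $F$ over $\kappa(t)$, the fibre $\mathcal Z_{a,\bar t}$ is the intersection $(X_s)_{\bar t}\cap F$ inside $\bP_{\bar t}(E_{\bar t})$, and $\mathcal K_a$ restricts there to the pullback of $\mathcal K|_{X_s}$; hence proper base change gives
$$
(R^jq_{a*}\mathcal K_a)_{\bar t}\;\simeq\;H^j\bigl((X_s)_{\bar t}\cap F,\,(\mathcal K|_{X_s})|_{(X_s)_{\bar t}\cap F}\bigr)\qquad(j\in\bZ).
$$
Applying this with $t$ the generic point of the fibre $(S\times_k G_a)_s=\bG_{\kappa(s)}(E_s,a)$ and $\bar t$ a geometric point above it identifies $\sum_j\dim_{\Qlbar}(R^jq_{a*}\mathcal K_a)_{\bar t}$ with the $a$-th term of $c_{i_s}(\mathcal K|_{X_s})$, which is therefore $\le M_a$. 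Since Betti numbers do not depend on the chosen geometric point above a given point, this bound does not depend on the auxiliary choices. Taking $N(\mathcal K):=\max_{1\le a\le \rk E}M_a$, a maximum over a finite set, yields $c_{i_s}(\mathcal K|_{X_s})\le N(\mathcal K)$ for every $s\in S$.

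I do not expect a genuine obstacle here: the argument is a routine constructibility-and-boundedness package. The one point requiring a little care is that $c_{i_s}$ is evaluated at the geometric generic point of the Grassmannian over $\kappa(s)$, not at an arbitrary geometric point of $S\times_k G_a$; but the generic point of the fibre $(S\times_k G_a)_s$ is an honest point of $S\times_k G_a$, so boundedness of the constructible function applies to it directly, and no generic flatness or spreading-out over $S$ is needed. The only other thing to unwind carefully is the fibrewise identification $\mathcal Z_{a,\bar t}=(X_s)_{\bar t}\cap F$, which is exactly the object appearing in the definition of the complexity.
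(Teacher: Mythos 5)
Your argument is correct and is the same one the paper invokes (it cites proper base change and constructibility of higher direct images applied to the families of sections by projective subspaces); you have simply spelled it out. The construction of the incidence family $\mathcal Z_a \to S\times_k G_a$, boundedness of the resulting constructible function on the noetherian scheme $S\times_k G_a$, and the proper-base-change identification of its stalk at the generic point of the fibre $(S\times_k G_a)_s$ with the $a$-th term of $c_{i_s}(\cK|_{X_s})$ are exactly the intended "direct consequence."
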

\begin{proof}
This is a direct consequence of the proper base change theorem and the constructibility of the higher direct images applied to the families of sections by projective subspaces.
\end{proof}

\begin{thm}\label{bound_complexity}
Let $(X/S,\Sigma)$ be a projective relative stratified scheme of relative dimension $\leq n$ and let $i : X\hookrightarrow \bP_S(E)$ be a closed immersion.
Let $a\leq b$ be integers.
Then, there is an admissible function $\mu : \bQ[\Coh(X)]\to \bQ$ and $P\in \mathds{N}[x]$ of degree $n$ such that for every algebraic geometric point $\sbar \to S$, every prime $\ell \neq p$, every $\cE\in \bQ[\Coh(X)]$ and every $\cK\in D_{\Sigma_{\sbar}}^{[a,b]}(X_{\sbar},\cE_{\sbar},\Qlbar)$,  we have  
$$
c_{i_{\sbar}}(\cK)\leq P(\mu(\cE))\cdot \Rk_{\Qlbar}\cK  \ .
$$
\end{thm}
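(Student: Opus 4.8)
The plan is to reduce the statement to \cref{general_boundedness_complex} by unwinding the definition of complexity and observing that the relevant sections by projective subspaces are themselves controlled by a proper relative stratified scheme of smaller relative dimension. First I would recall that for a geometric generic point $\overline{\eta}$ of $\bG_k(E,a)$ and the universal subspace $F$ of dimension $a-1$, the complexity $c_{i_{\sbar}}(\cK)$ is the maximum over $1\leq a\leq \rk E$ of $\sum_{j\in \bZ}h^j(X_{\overline{\eta}}\cap F,\cK|_{X_{\overline{\eta}}\cap F})$. So it suffices, for each fixed $a$, to produce an admissible function $\mu_a$ and a polynomial $P_a$ of degree $n$ doing the job for that value of $a$, and then take $\mu=\sum_a\mu_a$ and $P=\sum_a P_a$; since $\rk E$ is fixed this is a finite sum.

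Fix $a$. I would form the relative universal linear section: let $G:=\bG_S(E,a)$ be the relative Grassmannian, let $X_G\to G$ be the base change of $X\to S$, and let $X_G\cap F_{\mathrm{univ}}\hookrightarrow X_G$ be the intersection with the universal $(a-1)$-plane. This is again a projective morphism over $G$, now with fibres of dimension $\leq n$ (intersecting with a linear subspace does not raise dimension), and it carries a natural stratification $\Sigma_G$ induced by $\Sigma$. Composing $G\to S$ with the structure map, and noting that the pullback of $\cK$ along $X_G\cap F_{\mathrm{univ}}\to X$ still has log conductors bounded by the pullback of $\cE$ (by \cref{many_properties}-(\ref{pullback})) and still lies in the appropriate derived category with the same rank bound, we are exactly in the situation of \cref{general_boundedness_complex} applied to the projective relative stratified scheme $(X_G\cap F_{\mathrm{univ}}/G,\Sigma_G)$. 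That theorem yields an admissible function $\fc_a:\bQ[\Coh(X_G\cap F_{\mathrm{univ}})]\to\bQ$ and $\fd_a\in\mathds{N}[x]$ of degree $n$ with
$$
\sum_{j\in\bZ}h^j\bigl((X_G\cap F_{\mathrm{univ}})_{\overline{t}},\cK|_{\dots}\bigr)\leq \fd_a(\fc_a(\text{pullback of }\cE))\cdot\Rk_{\Qlbar}\cK
$$
for every algebraic geometric point $\overline{t}\to G$. Taking $\overline{t}$ to lie over a geometric generic point of the fibre $G_{\sbar}=\bG_{\overline{s}}(E_{\overline{s}},a)$, the fibre $(X_G\cap F_{\mathrm{univ}})_{\overline{t}}$ is precisely $X_{\overline{\eta}}\cap F$, so the left side is the $a$-th term in the definition of $c_{i_{\sbar}}(\cK)$. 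Finally I would set $\mu_a:=\fc_a\circ(\text{pullback }\bQ[\Coh(X)]\to\bQ[\Coh(X_G\cap F_{\mathrm{univ})])$, which is admissible by \cref{pullback_subadditive}, and $P_a:=\fd_a$.

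The one point requiring a little care — and the main technical obstacle — is the passage from an \emph{algebraic} geometric point $\overline{t}\to G$ to the \emph{geometric generic} point $\overline{\eta}$ of the fibre $G_{\sbar}$: one must check that $X_{\overline{\eta}}\cap F$ together with the restriction of $\cK$ genuinely arises as such a fibre, for which it is enough to pick $\overline{t}$ mapping to the generic point of $G_{\sbar}$ (possible since $G_{\sbar}$ is a scheme over the algebraically closed field $\kappa(\sbar)$, hence has a geometric point over its generic point), and to use that Betti numbers and the log-conductor and rank bounds are insensitive to algebraically closed base field extension (the remark following the definition of Betti bound). With this in hand, summing over $1\leq a\leq\rk E$ gives $c_{i_{\sbar}}(\cK)\leq P(\mu(\cE))\cdot\Rk_{\Qlbar}\cK$ with $\mu=\sum_a\mu_a$ admissible and $P=\sum_a P_a$ of degree $n$, completing the proof. $\qed$
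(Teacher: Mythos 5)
Your proposal is correct and follows essentially the same route as the paper: the paper also reduces to \cref{general_boundedness_complex} applied to the universal family of linear sections $X(E,r)\to \bG_S(E,r)$ obtained from the flag variety $\Fl_S(E,1,r)$, with the geometric generic point of $\bG_{\sbar}(E_{\sbar},a)$ serving as an algebraic geometric point of the new base. Your extra care about identifying that geometric generic point with an algebraic geometric point of the Grassmannian (and the insensitivity of the bounds to the choice of geometric point) is exactly what the paper leaves implicit, so there is no substantive difference.
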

\begin{proof}
For $2\leq r \leq \rk E$, consider the commutative diagram
$$
\begin{tikzcd}  
X(E,r)\arrow{r}   \arrow{d}  &  X\arrow{d}{i} \\
\Fl_S(E,1,r)\arrow{r} \arrow{d}  &   \bP_S(E)   \arrow{d}  \\
 \bG_S(E,r)  \arrow[leftarrow, uu,bend left = 70, near end,  "p_{r,X}^{\vee}"] \arrow{r}   &   S 
\end{tikzcd}
$$
 with cartesian upper square where  $\Fl_S(E,1,r)$ is the universal family of dimension $r$ projective subspaces of $\bP_S(E)$.
Then, \cref{bound_complexity} follows from  \cref{general_boundedness_complex} applied to p$_{r,X}^{\vee}: X(E,r)\to  \bG_S(E,r)$.
\end{proof}

\begin{observation}
Every continuity from \cite{SFFK} translates via \cref{bound_complexity} into some estimate involving the rank and the wild ramification only.
\end{observation}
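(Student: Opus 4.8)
The plan is to read \cref{bound_complexity} as an estimate on the \emph{input} complexity and to substitute it into each of the continuity statements of \cite{SFFK}. Recall that the results of \emph{loc.\ cit.} all take the following shape: for an operation $T$ built from the standard sheaf-theoretic operations (inverse and direct images, tensor and external products, and derived constructions such as convolution and Fourier transform) relating complexes carried by varieties tied by a fixed geometric correspondence, there is an explicit function $F$, monotone non-decreasing in each argument and depending only on that correspondence and on the ambient dimensions, with
$$
c\big(T(\cK_1,\dots,\cK_m)\big)\ \le\ F\big(c(\cK_1),\dots,c(\cK_m)\big),
$$
where $c(-)$ is the complexity relative to a fixed projective embedding, in the sense recalled at the start of \cref{inverse_direct_image}; in particular $\sum_{j\in\bZ}h^{j}(-)\le c(-)$.

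First I would fix a projective relative stratified scheme $(X/S,\Sigma)$ with a closed immersion $i:X\hookrightarrow\bP_S(E)$ and integers $a\le b$, and invoke \cref{bound_complexity} to get an admissible $\mu:\bQ[\Coh(X)]\to\bQ$ and $P\in\mathds{N}[x]$ of degree $n$ such that $c_{i_{\sbar}}(\cK)\le P(\mu(\cE))\cdot\Rk_{\Qlbar}\cK$ for every algebraic geometric point $\sbar\to S$, every $\ell\ne p$, every $\cE\in\bQ[\Coh(X)]$ and every $\cK\in D_{\Sigma_{\sbar}}^{[a,b]}(X_{\sbar},\cE_{\sbar},\Qlbar)$; the decisive point is that $\mu$ and $P$ do not depend on $\ell$ or on $\sbar$. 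Substituting this bound for each argument $\cK_j$ into the displayed inequality and using monotonicity of $F$ yields
$$
c\big(T(\cK_1,\dots,\cK_m)\big)\ \le\ F\big(P(\mu(\cE_1))\cdot\Rk_{\Qlbar}\cK_1,\dots,P(\mu(\cE_m))\cdot\Rk_{\Qlbar}\cK_m\big),
$$
that is, a bound in terms of the ranks of the $\cK_j$ and of their wild ramification, the latter encoded by the coherent data $\cE_j$ bounding the log conductors; through $\sum_{j\in\bZ}h^j\le c$ this also bounds the Betti numbers of the output.

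There is no genuine obstacle here: the substance has already been extracted in \cref{bound_complexity}, and what remains is bookkeeping. One checks that the hypotheses of each continuity statement of \cite{SFFK} are met — those statements are formulated over an algebraically closed base, which is exactly what passing to the geometric fibre $X_{\sbar}$ provides, and the invariant they control is precisely the complexity $c_{i}(-)$ estimated by \cref{bound_complexity} — and that one uses the $\Qlbar$-coefficient form of \cref{bound_complexity}, obtained from an integral representative as in \cref{adic_presentation} together with \cref{BettiFl>BettiQl}. The concrete specializations to inverse and higher direct images are isolated separately in \cref{inverse_direct_image}; the content of the present statement is simply that the same substitution applies verbatim to every continuity result of \cite{SFFK}.
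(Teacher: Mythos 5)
Your proposal is correct and matches the paper's treatment: the Observation is not proved separately there, but is exactly the substitution pattern you describe — bound the input complexity $c_{i}(-)$ by \cref{bound_complexity} and feed it into the monotone continuity estimates of \cite{SFFK} — which the paper then carries out in the two sample instances \cref{Betti_inverse_image} and \cref{Coho_direct_image}. Nothing further is needed.
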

We formulate below two instances of the above  observation.

\begin{thm}\label{Betti_inverse_image}
Let $f : Y\to X$  be a morphism between projective schemes over $k$ algebraically closed.
Let $\Sigma$ be a stratification on $X$ and let $a\leq b$ be integers.
Then, there is an admissible function $\fc :  \bQ[\Coh(X)]\to \bQ$ and $\fd\in \mathds{N}[x]$ of degree $\dim X$ such that for  every prime $\ell \neq p$, every $\cE\in \bQ[\Coh(X)]$ and every  $\cK\in D_{\Sigma}^{[a,b]}(X,\cE,\Qlbar)$,  we have 
$$
\sum_{j\in \bZ}  h^{j}(Y,f^*\cK) \leq \fd(\fc(\cE))\cdot \Rk_{\Qlbar}\cK  \ .
$$
\end{thm}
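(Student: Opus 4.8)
The plan is to reduce to the complexity estimate of \cref{bound_complexity} by means of the continuity of the inverse image functor in Sawin's quantitative sheaf theory \cite{SFFK}. First I would fix a closed immersion $i_X : X\hookrightarrow \bP_k(E_X)$ and a closed immersion $i_Y : Y\hookrightarrow \bP_k(E_Y)$. By the defining inequality $\sum_{j\in\bZ}h^j(Y,f^*\cK)\leq c_{i_Y}(f^*\cK)$ recalled above, it suffices to bound $c_{i_Y}(f^*\cK)$ in terms of $\cE$ and $\Rk_{\Qlbar}\cK$.

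I would then invoke the inverse image continuity of \cite{SFFK}. Since $f$ and the two embeddings are fixed once and for all, this should furnish constants $A\geq 1$ and $B\geq 0$, depending only on these data, such that
$$
c_{i_Y}(f^*\cK)\leq A\cdot c_{i_X}(\cK)+B\qquad\text{for every }\cK\in D^b_c(X,\Qlbar)\ .
$$
Next, applying \cref{bound_complexity} to the projective relative stratified scheme $(X/\Spec k,\Sigma)$ and to the embedding $i_X$ yields an admissible function $\fc : \bQ[\Coh(X)]\to\bQ$ and a polynomial $R\in\mathds{N}[x]$ of degree $\dim X$ with $c_{i_X}(\cK)\leq R(\fc(\cE))\cdot\Rk_{\Qlbar}\cK$ for every $\cE\in\bQ[\Coh(X)]$ and every $\cK\in D^{[a,b]}_{\Sigma}(X,\cE,\Qlbar)$. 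Concatenating the two inequalities — and observing that $\cK=0$ whenever $\Rk_{\Qlbar}\cK=0$, while $\Rk_{\Qlbar}\cK\geq 1$ otherwise — one gets
$$
\sum_{j\in\bZ}h^j(Y,f^*\cK)\leq\bigl(A\cdot R(\fc(\cE))+B\bigr)\cdot\Rk_{\Qlbar}\cK\ ,
$$
so the couple $(\fc,\fd)$ with $\fd(x):=A\cdot R(x)+B\in\mathds{N}[x]$, a polynomial of degree $\dim X$ since $A\geq 1$, does the job.

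The main obstacle I anticipate is the very invocation of \cite{SFFK}: one must check that the complexity of $f^*\cK$ grows at most \emph{affinely}, and not merely polynomially, in the complexity of $\cK$ once $f$ is frozen, for a genuinely super-linear dependence would destroy the linearity in $\Rk_{\Qlbar}\cK$ required by the statement. Heuristically this is fine because $f^*$ preserves generic ranks and dilates conductors only additively, but it demands a careful reading of the pullback estimates of \cite{SFFK}.

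Should this route prove awkward, I would fall back on a self-contained argument avoiding \cite{SFFK} altogether. Since $f$ is proper, being a morphism between projective $k$-schemes, the projection formula gives $R\Gamma(Y,f^*\cK)\simeq R\Gamma(X,\cK\otimes^L Rf_*\Qlbar)$. I would fix a refinement $\Sigma'$ of $\Sigma$ along which $Rf_*\Qlbar$ is constructible, and a coherent sheaf $\cE_f$ on $X$ with $Rf_*\Qlbar\in D^b_c(X,\cE_f,\Qlbar)$ (which exists by \cite{HuTeyssierCohBoundedness}); then I would check that $\cK\otimes^L Rf_*\Qlbar$ lies in $D^{[a',b']}_{\Sigma'}(X,\cE\oplus\cE_f,\Qlbar)$ with $\Rk_{\Qlbar}(\cK\otimes^L Rf_*\Qlbar)\leq C_f\cdot\Rk_{\Qlbar}\cK$, the rank bound coming from the Künneth formula for stalks over the field $\Qlbar$ and the conductor bound from the ultrametric behaviour of logarithmic slopes under tensor products (cf. \cref{tensordtsw}). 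Feeding this into the $\Qlbar$-coefficient variant of \cref{general_boundedness_complex} for $(X/\Spec k,\Sigma')$ and absorbing the fixed contributions of $\cE_f$ and $C_f$ into the bounding polynomial then yields the desired estimate.
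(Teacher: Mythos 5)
Your primary route is exactly the paper's proof: bound $\sum_j h^j(Y,f^*\cK)\leq c_{i_Y}(f^*\cK)$, invoke \cite[Theorem 6.8]{SFFK} to control $c_{i_Y}(f^*\cK)$ by $c_{i_X}(\cK)$, and then feed in \cref{bound_complexity}. Your worry about affine versus polynomial growth is moot: \cite[Theorem 6.8]{SFFK} in fact delivers a \emph{multiplicative} bound $c_{i_Y}(f^*\cK)\leq C\cdot c_{i_X}(\cK)$ with $C$ depending only on $\rk E_X$ and $\rk E_Y$, so one may take $B=0$ and the linearity in $\Rk_{\Qlbar}\cK$ is preserved without needing the $\Rk\geq 1$ observation.
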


\begin{proof}
Choose closed immersion  $i : X\hookrightarrow \bP_k(E)$ and $i' : Y\hookrightarrow \bP_k(E')$.
By  \cref{bound_complexity}, there is an admissible function  $\fc : \bQ[\Coh(X)]\to \bQ$ and  $\fd\in 
\mathds{N}[x]$ of degree $\dim X$ such that for every prime $\ell \neq p$, every $\cE\in \bQ[\Coh(X)]$ and every  $\cK\in D_{\Sigma}
^{[c,d]}(X,\cE,\Qlbar)$,  we have 
$$
c_i(\cK)\leq \fd(\fc(\cE))\cdot \Rk_{\Qlbar}\cK  \ .
$$
By \cite[Theorem 6.8]{SFFK},  there is an integer $C$ depending only on $\rk E$ and $\rk E'$ such that for every prime $\ell \neq p$ and every $\cK\in D^b_c(X,\Qlbar)$,  we have 
$$
\sum_{j\in \bZ}  h^{j}(Y,f^*\cK) \leq c_{i'}(f^*\cK)\leq C\cdot c_{i}(\cK) \ .
$$
The conclusion thus follows.
\end{proof}

\begin{cor}\label{Betti_inverse_image_lc}
Let $f : Y\to X$  be a morphism between projective schemes over $k$ algebraically closed where $X$ is smooth.
Let $D$ be an effective Cartier divisor of $X$ and put $U:=X-D$ and $V:=Y-f^{-1}(D)$.
Then there is $P\in \mathds{N}[x]$ of degree $\dim X$ such that for every prime $\ell \neq p$ and every $\cL\in \Loc(U,\Qlbar)$,  we have 
$$
\sum_{j\in \bZ}  h^{j}_c(V,\cL|_V) \leq P(\lc_D(\cL))\cdot \rk_{\Qlbar}\cL  \ .
$$
\end{cor}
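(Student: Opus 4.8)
The plan is to reduce \cref{Betti_inverse_image_lc} to \cref{Betti_inverse_image} applied to the constructible sheaf $j_!\cL$ on $X$, where $j:U\hookrightarrow X$ is the open immersion.

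First I would reinterpret the left hand side as the cohomology of an inverse image. Let $j_V:V\hookrightarrow Y$ be the open immersion. Since $Y$ is proper, $h^j_c(V,\cL|_V)=h^j(Y,j_{V!}(\cL|_V))$ for every $j\in\bZ$. As $V=f^{-1}(U)$ and extension by zero along an open immersion commutes with base change, the cartesian square
\[
\begin{tikzcd}
V\arrow{r}{j_V}\arrow{d} & Y\arrow{d}{f}\\
U\arrow{r}{j} & X
\end{tikzcd}
\]
yields a canonical isomorphism $f^*(j_!\cL)\simeq j_{V!}(\cL|_V)$. Hence
\[
\sum_{j\in\bZ}h^j_c(V,\cL|_V)=\sum_{j\in\bZ}h^j(Y,f^*(j_!\cL)),
\]
and it remains to bound the right hand side by a quantity of the shape $P(\lc_D(\cL))\cdot\rk_{\Qlbar}\cL$ with $P\in\mathds{N}[x]$ of degree $\dim X$.

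Next I would apply \cref{Betti_inverse_image} to $\cK:=j_!\cL$ for the stratification $\Sigma:=\{U,D\}$ and $a=b=0$. The sheaf $j_!\cL$ is $\Sigma$-constructible, concentrated in degree $0$, and satisfies $\Rk_{\Qlbar}(j_!\cL)=\rk_{\Qlbar}\cL$. Since $X$ is smooth over $k$, the $\Qlbar$-variant of \cref{bounded_ramification_ex}-(2) shows that $j_!\cL$ has log conductors bounded by $\cE:=(\lc_D(\cL)+1)\cdot\cO_D\in\bQ[\Coh(X)]$; concretely one picks an integral representative $(\cK_m)_{m\geq 0}$ of $j_!\cL$ and applies \cref{bounded_ramification_ex}-(2) to each $\cK_m$, exactly as in the $\Qlbar$-variants of \cref{general_boundedness_Betti} and \cref{general_boundedness_Betti_Xsmooth}. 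Thus \cref{Betti_inverse_image} furnishes an admissible function $\fc:\bQ[\Coh(X)]\to\bQ$ and $Q\in\mathds{N}[x]$ of degree $\dim X$, both independent of $\ell$, of $\cL$ and of $\cE$, such that
\[
\sum_{j\in\bZ}h^j(Y,f^*(j_!\cL))\leq Q\bigl(\fc(\cE)\bigr)\cdot\rk_{\Qlbar}\cL.
\]

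Finally, $\bQ$-linearity of $\fc$ gives $\fc(\cE)=\fc(\cO_D)\cdot(\lc_D(\cL)+1)$ with $\fc(\cO_D)\in\bN$ by admissibility, so that $Q(\fc(\cE))=Q\bigl(\fc(\cO_D)\cdot(\lc_D(\cL)+1)\bigr)$. Setting
\[
P(x):=x^{\dim X}+Q\bigl(\fc(\cO_D)\cdot(x+1)\bigr)\in\mathds{N}[x],
\]
which has degree $\dim X$, we obtain $\sum_{j\in\bZ}h^j_c(V,\cL|_V)\leq P(\lc_D(\cL))\cdot\rk_{\Qlbar}\cL$, as claimed. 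Essentially all the mathematical content has already been packaged into \cref{Betti_inverse_image} — hence ultimately into \cref{maintheoremAn} and the complexity estimates of \cite{SFFK} — so the only genuinely delicate points here are the base-change identity $f^*(j_!\cL)\simeq j_{V!}(\cL|_V)$ and the passage from $\Qlbar$-coefficients to an integral representative when invoking the conductor bound, the latter being handled via \cref{BettiFl>BettiQl}.
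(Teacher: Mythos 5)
Your proposal is correct and follows exactly the paper's own route: the paper's proof is precisely "combine \cref{bounded_ramification_ex} and \cref{Betti_inverse_image} applied to $j_!\cL$", and you have simply made explicit the base-change identification $f^*(j_!\cL)\simeq j_{V!}(\cL|_V)$, the passage to integral representatives, and the admissibility bookkeeping turning $\fd(\fc((\lc_D(\cL)+1)\cdot\cO_D))$ into a polynomial in $\lc_D(\cL)$, all of which the paper leaves implicit (compare the end of the proof of \cref{general_boundedness_Betti_Xsmooth}).
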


\begin{proof}
Let $j : U\hookrightarrow X$ be the inclusion.
Then, combine \cref{bounded_ramification_ex} and \cref{Betti_inverse_image} applied to $j_!\cL$.
\end{proof}

\begin{thm}\label{Coho_direct_image}
Let $(X/S,\Sigma)$ be a  projective relative stratified scheme over $k$ algebraically closed such that $S$ is projective.
Let $a\leq b$ be integers.
Then, there is an admissible function $\mu : \bQ[\Coh(X)] \to \bQ$ and an increasing function $N : \bQ^+ \times \bQ^+ \to \bN^+$  such that for every prime $\ell \neq p$, every $\cE\in \bQ[\Coh(X)]$ and every  $\cK\in D_{\Sigma}^{[a,b]}(X,\cE,\Qlbar)$,  we have  
$$
\sum_{i,j\in \bZ} h^{j}(S, R^i f_*\cK) \leq N(\mu(\cE),\Rk_{\Qlbar}\cK) \ .
$$
\end{thm}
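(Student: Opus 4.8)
The plan is to reduce the statement to Sawin's quantitative sheaf theory \cite{SFFK} through \cref{bound_complexity}. Fix a closed immersion $i : X\hookrightarrow \bP_k(E)$ and a closed immersion $i' : S\hookrightarrow \bP_k(E')$. Since $S$ is projective over $k$, so is $X$, and \cref{bound_complexity} applied to the relative stratified scheme $(X/\Spec k,\Sigma)$ (of relative dimension $\dim X$) produces an admissible function $\mu : \bQ[\Coh(X)]\to \bQ$ and $P_0\in \mathds{N}[x]$ of degree $\dim X$ such that for every prime $\ell\neq p$, every $\cE\in \bQ[\Coh(X)]$ and every $\cK\in D_{\Sigma}^{[a,b]}(X,\cE,\Qlbar)$ one has $c_i(\cK)\leq P_0(\mu(\cE))\cdot\Rk_{\Qlbar}\cK$. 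This is the only step where the ramification datum $\cE$ and the stratification $\Sigma$ intervene; from now on the argument only remembers the single number $c_i(\cK)$, and $\mu$ will be the admissible function of the theorem.

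Next I would invoke the stability of complexity under the operations $Rf_*$ and truncation established in \cite{SFFK}. Because $f : X\to S$ is projective with fibres of dimension $\leq n$, the complex $Rf_*\cK$ has cohomology concentrated in the finite range $[a,b+2n]$, so there are at most $b-a+2n+1$ nonzero sheaves $R^qf_*\cK$; and since $f$, $i$ and $i'$ are fixed, there is a function $Q$ depending only on these data — in particular independent of $\ell$ and of $\cK$ — with
$$
\sum_{q\in\bZ} c_{i'}(R^qf_*\cK)\ \leq\ Q\bigl(c_i(\cK)\bigr)\qquad\text{for every }\cK\in D^b_c(X,\Qlbar),
$$
obtained by first bounding $c_{i'}(Rf_*\cK)$ in terms of $c_i(\cK)$ (pushforward stability), then bounding each $c_{i'}(R^qf_*\cK)$ in terms of $c_{i'}(Rf_*\cK)$ (truncation stability), and summing over the bounded range of $q$.

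Finally, by the defining property of complexity recalled before \cref{complexity_trivial_sheaf}, one has $\sum_{j\in\bZ}h^j(S,R^qf_*\cK)\leq c_{i'}(R^qf_*\cK)$ for each $q$, whence
$$
\sum_{i,j\in\bZ}h^j(S,R^if_*\cK)\ \leq\ \sum_{q\in\bZ}c_{i'}(R^qf_*\cK)\ \leq\ Q\bigl(P_0(\mu(\cE))\cdot\Rk_{\Qlbar}\cK\bigr).
$$
As $P_0$ and $Q$ are built from polynomials with nonnegative coefficients by composition, the right-hand side is nondecreasing in $\mu(\cE)$ and in $\Rk_{\Qlbar}\cK$, so setting $N(u,v):=\bigl\lceil Q(P_0(u)\,v)\bigr\rceil+\lceil u\rceil+\lceil v\rceil+1$ gives an increasing $N : \bQ^+\times\bQ^+\to\bN^+$ with the required property. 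I expect the main difficulty to be the middle step: one has to extract from \cite{SFFK} the precise pushforward and truncation estimates and verify that the constants there depend only on the fixed projective embeddings and on $\dim X$, so that a single $N$ works for all $\ell$ and all $\cK$; the a priori bound $b+2n$ on the top cohomological degree of $Rf_*\cK$, coming from the properness of $f$, is what makes the passage from the complex to its finitely many cohomology sheaves — and hence the intervention of the truncation estimate — legitimate.
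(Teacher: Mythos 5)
Your proposal matches the paper's proof step for step: both reduce $c_i(\cK)$ via \cref{bound_complexity} (applied over $\Spec k$), then invoke the pushforward stability \cite[Theorem 6.8]{SFFK} followed by the truncation stability \cite[Proposition 6.24]{SFFK} to bound $\sum_q c_{i'}(R^q f_*\cK)$ by a fixed increasing function of $c_i(\cK)$, and finally use the defining inequality $\sum_j h^j\leq c_{i'}$. The only cosmetic difference is that you explicitly track the boundedness of the cohomological amplitude of $Rf_*\cK$, which the paper leaves implicit in its citation of \cite[Proposition 6.24]{SFFK}.
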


\begin{proof}
Choose closed embeddings  $i : X\hookrightarrow \bP_k(E)$ and $i^{\prime} : S\hookrightarrow \bP_k(E^{\prime})$.
By  \cref{bound_complexity}, there is a sub-additive function $\fc :  \bQ[\Coh(X)]\to \bQ$ and $\fd\in \mathds{N}[x]$ of degree $\dim X$ such that for every prime $\ell \neq p$, every $\cE\in \bQ[\Coh(X)]$ and every $\cK\in D_{\Sigma}^{[a,b]}(X,\cE,\Qlbar)$, we have 
$$
c_{i}(\cK)\leq \fd(\fc(\cE))\cdot \Rk_{\Qlbar}\cK  \ .
$$
By \cite[Proposition 6.24]{SFFK},  there is an increasing function $M : \bN^+\times  \bN^+  \to \bN^+$  such that for every prime $\ell \neq p$ and every $\cK^{\prime}\in D^b_c(S,\Qlbar)$,  we have 
$$
\sum_{j\in \bZ} c_{i^{\prime}}(\cH^j\cK^{\prime}) \leq M(\rk E^{\prime}, c_{i^{\prime}}(\cK^{\prime})) \ .
$$
By  \cite[Theorem 6.8]{SFFK},  there is  $C\geq 0$ depending only on $(X,i)$,$(S,i')$ and $X\to S$ such that for every $\cK\in D_c^b(X,\Qlbar)$, we have 
$$
c_{i^{\prime}}(Rf_* \cK) \leq C\cdot c_{i}(\cK) \ .
$$
For every prime $\ell \neq p$, every $\cE\in \bQ[\Coh(X)]$ and $\cK\in D_{\Sigma}^{[a,b]}(X,\cE,\Qlbar)$, we thus have 
\begin{align*}
\sum_{j,j'\in \bZ} h^{j}(S,  R^{j^{\prime}}f_* \cK) &  \leq \sum_{j\in \bZ} c_{i^{\prime}}( R^j f_* \cK)\\
&\leq  M(\rk E^{\prime}, c_{i^{\prime}}(Rf_* \cK)) \\ 
&   \leq M(\rk E^{\prime}, C\cdot c_{i}(\cK))\\
&  \leq M(\rk E^{\prime}, C\cdot \lceil \fd(\fc(\cE))\cdot \Rk_{\Qlbar}\cK  \rceil) \ .
\end{align*}
The conclusion thus follows if we put $N(x,y):=M(\rk E^{\prime}, C\cdot \lceil P(x)\cdot y  \rceil)$.
\end{proof}

\begin{cor}\label{Coho_direct_image_lc}
Let $f : X\to Y$ be a projective morphism between  projective schemes  over $k$ algebraically closed where $X$ is smooth. 
Let $D$ be an effective Cartier divisor of $X$ and put $j : U:=X-D\hookrightarrow X$.
Then, there is a function $C :  \bQ^+  \times  \bN^+  \to \bN^+$  such that for every prime $\ell \neq p$ and every  $\cL\in \Loc(U,\Qlbar)$,  we have  
$$
\sum_{i,j\in \bZ} h^{j}(Y, R^i f_*j_!\cL) \leq C(\lc_D(\cL), \rk_{\Qlbar}\cL)  \ .
$$
\end{cor}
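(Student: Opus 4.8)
The plan is to deduce the statement as a direct specialization of \cref{Coho_direct_image}, using the coherent-sheaf ramification bound of \cref{bounded_ramification_ex} to convert a bound phrased in terms of an element of $\bQ[\Coh(X)]$ into one phrased in terms of $\lc_D(\cL)$.

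First I would set $\Sigma := \{U,D\}$, which is a stratification on $X$, so that $(X/Y,\Sigma)$ is a projective relative stratified scheme over $k$ with $Y$ projective; it has relative dimension $\leq n$ for $n:=\dim X$. Applying \cref{Coho_direct_image} to $(X/Y,\Sigma)$ with $[a,b]=[0,0]$ yields an admissible function $\mu : \bQ[\Coh(X)]\to \bQ$ and an increasing function $N : \bQ^+\times\bQ^+\to\bN^+$ such that for every prime $\ell\neq p$, every $\cE\in\bQ[\Coh(X)]$ and every $\cK\in D_{\Sigma}^{[0,0]}(X,\cE,\Qlbar)$ one has
$$
\sum_{i,j\in\bZ} h^{j}(Y,R^i f_*\cK)\leq N(\mu(\cE),\Rk_{\Qlbar}\cK)\ .
$$

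Next, fix $\cL\in\Loc(U,\Qlbar)$ and apply this to $\cK:=j_!\cL$. This complex is concentrated in degree $0$ and is $\Sigma$-constructible (it restricts to the locally constant sheaf $\cL$ on $U$ and to $0$ on $D$). Since $X$ is smooth over $k$, the $\Qlbar$-variant of \cref{bounded_ramification_ex}-(2), obtained via \cref{adic_presentation}, shows that $j_!\cL$ has log conductors bounded by $\cE:=(\lc_D(\cL)+1)\cdot\cO_D$: one chooses an integral representative $(\cL_m)_{m\geq 0}$ of $\cL$, notes that $\lc_D(\cL_m)\leq\lc_D(\cL)$ for each $m$, and applies the finite-coefficient statement to each $j_!(\cL_m)$. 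As $\mu$ is $\bQ$-linear, $\mu(\cE)=(\lc_D(\cL)+1)\cdot\mu(\cO_D)$, and $\Rk_{\Qlbar}(j_!\cL)=\rk_{\Qlbar}\cL$. Therefore
$$
\sum_{i,j\in\bZ} h^{j}(Y,R^i f_*j_!\cL)\leq N\bigl((\lc_D(\cL)+1)\cdot\mu(\cO_D),\ \rk_{\Qlbar}\cL\bigr)\ ,
$$
and it suffices to set $C(x,r):=N((x+1)\cdot\mu(\cO_D),r)$.

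There is no serious obstacle here: the corollary is essentially a substitution into \cref{Coho_direct_image}. The only mildly delicate point is the passage to $\Qlbar$-coefficients in the ramification bound, i.e.\ checking that an integral representative of $\cL$ has log conductor along $D$ no larger than $\lc_D(\cL)$, so that \cref{bounded_ramification_ex}-(2) applies level by level; this is routine within the framework of \cref{adic_presentation}, and everything else is bookkeeping with admissible functions.
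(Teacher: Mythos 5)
Your proof is correct and takes essentially the same route as the paper: the paper's proof is the single line ``Combine \cref{bounded_ramification_ex} and \cref{Coho_direct_image} applied to $j_!\cL$,'' and you have simply spelled out the substitution, including the $\Qlbar$-coefficient bookkeeping via \cref{adic_presentation} and the linearity of $\mu$.
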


\begin{proof}
Combine \cref{bounded_ramification_ex} and \cref{Coho_direct_image} applied to $j_!\cL$.
\end{proof}

\section{Relative  Hermite-Minkowski for perverse sheaves}

\begin{notation}
In this section, $\bF$ denotes a finite field of characteristic $p>0$ and $\overline{\bF}$ an algebraic closure of $\bF$.
Let $S$ be a scheme of finite type over $\bF$.
For a closed point $s\in S$, we denote by $\bF(s)$ the residue field of $s$ and put $\deg s :=[\bF(s),\bF]$.
For $n\geq 1$, there is up to isomorphism only one degree $n$ extension of $\bF(s)$ that we denote by $\bF(s)_n$. 
We denote by $S_n$ the pullback of $S$ to $\bF_n$.
\end{notation}

\begin{notation}
We fix an isomorphism $\Qlbar\simeq \bC$.
Purity will be understood with respect to this choice of isomorphism.
Let $X$ be a scheme of finite type over a finite field $\bF$.
For $\cP\in \Perv(X,\Qlbar)$ and $n\geq 1$, we denote by 
$$
t_{\cP,n} : X(\bF_n)\to \Qlbar
$$ 
the trace function of $\cP$.
Reviewed as a complex-valued function via the above choice of isomorphism $\Qlbar\simeq \bC$, one can apply to it the usual hermitian product on $\bC^{|X(\bF_n)|}$.
\end{notation}

\begin{thm}[{\cite[Theorem 7.13]{SFFK}}]\label{SFFK}
Let $X$ be a geometrically irreducible quasi-projective scheme of finite type over $\bF$ and let $i : X\hookrightarrow \bP_\bF(E)$ be a closed immersion.
Then, there is $C_1,C_2\geq 0$ depending only on $\rk E$ such that for every prime $\ell \neq p$,  the following hold :
\begin{enumerate}\itemsep=0.2cm
\item For every $\cP\in \Perv(X,\Qlbar)$ geometrically simple pure of weight 0,  we have 
$$
\left| ||t_{\cP,1}||^2 -1 \right| \leq C_1\cdot c_{i}(\Qlbar) \cdot c_{i}(\cP)^2 \cdot |\bF|^{-1/2} \ .
$$
\item For every $\cP,\cQ\in \Perv(X,\Qlbar)$ geometrically simple pure of weight 0 not geometrically isomorphic,  we have 
$$
\left|<t_{\cP,1},t_{\cQ,1}>\right|\leq C_2\cdot c_{i}(\Qlbar) \cdot c_{i}(\cP) \cdot c_{i}(\cQ) \cdot |\bF|^{-1/2}
$$
\end{enumerate}
\end{thm}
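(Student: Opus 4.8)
Since this is \cite[Theorem 7.13]{SFFK}, the plan is to reproduce the argument of \emph{loc.\ cit.}: evaluate the Hermitian products $\langle t_{\cP,1},t_{\cQ,1}\rangle$ cohomologically via the Grothendieck--Lefschetz trace formula, split off the weight-$0$ part of $H^0_c$ as a ``main term'' equal to the Kronecker symbol $\delta_{\cP_{\overline{\bF}}\cong\cQ_{\overline{\bF}}}$, and bound the remainder with Deligne's Weil~II and the complexity calculus of \cite{SFFK}. First I would fix the isomorphism $\Qlbar\simeq\bC$ and let $\sigma\colon\Qlbar\to\Qlbar$ be the automorphism inducing complex conjugation; for $\cK\in D^b_c(X,\Qlbar)$ write $\cK^{\sigma}$ for the conjugate complex, so that $t_{\cK^{\sigma},1}=\overline{t_{\cK,1}}$ pointwise and $\cK\mapsto\cK^{\sigma}$ preserves perversity, geometric simplicity, purity of weight~$0$, and the complexity $c_i$ (which is geometric in nature). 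Since $t_{\cP\otimes^{L}\cQ^{\sigma},1}=t_{\cP,1}\cdot\overline{t_{\cQ,1}}$, the trace formula gives
\[
\langle t_{\cP,1},t_{\cQ,1}\rangle=\sum_{x\in X(\bF)}t_{\cP\otimes^{L}\cQ^{\sigma},1}(x)=\sum_{i\in\bZ}(-1)^{i}\,\Tr\!\bigl(\Frob\mid H^{i}_{c}(X_{\overline{\bF}},\cP\otimes^{L}\cQ^{\sigma})\bigr).
\]

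Next I would control the terms with $i\neq 0$. A geometrically simple perverse sheaf pure of weight~$0$ is an intersection complex $\mathrm{IC}_{\overline Z}(\cL)$ with $\cL$ irreducible lisse pure of weight $-\dim Z$; restricted to the smooth dense open of its support it is $\cL$ shifted by $\dim Z$, which is the compact-support cohomological dimension of that open, and on deeper strata it sits in strictly negative ordinary degrees. A short localization argument then shows $H^{i}_{c}(X_{\overline{\bF}},\cP\otimes^{L}\cQ^{\sigma})=0$ for $i>0$. Since $\cP\otimes^{L}\cQ^{\sigma}$ is mixed of weights $\le 0$, Weil~II gives that $H^{i}_{c}(X_{\overline{\bF}},\cP\otimes^{L}\cQ^{\sigma})$ is mixed of weights $\le i$, so for $i<0$ every Frobenius eigenvalue on it has absolute value $\le|\bF|^{-1/2}$; and the number of such eigenvalues is at most $\sum_i\dim H^i_c(X_{\overline{\bF}},\cP\otimes^L\cQ^\sigma)=\sum_i\dim H^i(X_{\overline{\bF}},\mathbb{D}(\cP\otimes^L\cQ^\sigma))$ by Poincaré--Verdier duality, which by the complexity calculus of \cite{SFFK} (boundedness of total Betti numbers by complexity, together with subadditivity/submultiplicativity of complexity under Verdier duality and $\otimes^L$) is $\le C\cdot c_i(\Qlbar)\cdot c_i(\cP)\cdot c_i(\cQ)$. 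This yields the claimed error term, with $c_i(\cP)^2$ when $\cQ=\cP$.

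It then remains to identify $\Tr(\Frob\mid H^0_c(X_{\overline{\bF}},\cP\otimes^L\cQ^\sigma))$ with $1$ or $0$. Global Poincaré--Verdier duality gives $H^0_c(X_{\overline{\bF}},\cP\otimes^L\cQ^\sigma)^\vee\cong H^0(X_{\overline{\bF}},R\cHom(\cQ^\sigma,\mathbb{D}\cP))=\Hom_{D^b_c(X_{\overline{\bF}})}(\cQ^\sigma,\mathbb{D}\cP)$, and since $\cQ^\sigma$ and $\mathbb{D}\cP$ are simple objects of the perverse heart this $\Hom$-space is $\Qlbar$ if $\cQ^\sigma_{\overline{\bF}}\cong(\mathbb{D}\cP)_{\overline{\bF}}$ and $0$ otherwise. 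Using again that $\cP,\cQ$ are intersection complexes of lisse sheaves pure of weight $-\dim Z$, a comparison of Frobenius characteristic polynomials at the closed points of $Z$ together with Chebotarev yields $\cL^\sigma\cong\cL^\vee(\dim Z)$ over $\bF$, hence $\cP^\sigma\cong\mathbb{D}\cP$ and $\cQ^\sigma\cong\mathbb{D}\cQ$ over $\bF$. Therefore $\cQ^\sigma_{\overline{\bF}}\cong(\mathbb{D}\cP)_{\overline{\bF}}$ exactly when $\cP_{\overline{\bF}}\cong\cQ_{\overline{\bF}}$, and in that case (with $\cP=\cQ$) the isomorphism $\cP^\sigma\cong\mathbb{D}\cP$ being defined over $\bF$ forces $\Frob$ to act by $1$ on the one-dimensional space $\Hom_{D^b_c(X_{\overline{\bF}})}(\cP^\sigma,\mathbb{D}\cP)\cong\Qlbar\cdot\id$, hence by $1$ on $H^0_c$. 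Combining the last two steps gives $\|t_{\cP,1}\|^2=1+(\text{error})$ in case~(1) and $\langle t_{\cP,1},t_{\cQ,1}\rangle=(\text{error})$ in case~(2).

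The step I expect to be most delicate is the vanishing and weight analysis of $H^i_c(X_{\overline{\bF}},\cP\otimes^L\cQ^\sigma)$ for $i\neq 0$: one must use the precise support and costalk conditions defining intersection complexes, not merely perversity, to see both that $H^{>0}_c$ vanishes and that the weight of $H^{<0}_c$ is $\le -1$, and one must feed the resulting Betti numbers through the quantitative complexity estimates of \cite{SFFK} — which I would quote rather than reprove — to obtain the polynomial dependence on $c_i(\cP),c_i(\cQ)$ and the universal factor $c_i(\Qlbar)$. A secondary subtlety is the Frobenius bookkeeping through Poincaré--Verdier duality needed to pin the main term down to exactly $1$ rather than a scalar of modulus~$1$.
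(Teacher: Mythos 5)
The paper does not actually prove this statement: it is imported verbatim from \cite[Theorem 7.13]{SFFK} and used as a black box in the proof of \cref{finitness_perv}, so there is no internal argument to measure your proposal against. Taken on its own terms, your outline follows the expected route of the original source (Grothendieck--Lefschetz trace formula, Schur's lemma identifying the degree-zero compactly supported cohomology as the main term, Weil II for the negative-degree terms, and the complexity calculus of \cite{SFFK} for the Betti numbers), and most of it is sound. In particular the vanishing of $H^i_c$ for $i>0$ is even easier than you suggest: writing $\cP\otimes^L \mathbb{D}\cQ\simeq \Delta^*(\cP\boxtimes \mathbb{D}\cQ)$ and using that $\Delta^*$ and $R\Gamma_c$ are right $t$-exact for the perverse $t$-structure gives it with no analysis of IC strata; and the main-term normalization needs no conjugate sheaf, since the identity is a Frobenius-invariant generator of the one-dimensional $\Hom(\cP,\cP)$.

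The genuine gap is the conjugation step. The complex $\cQ^{\sigma}$, for $\sigma$ an abstract automorphism of $\Qlbar$ inducing complex conjugation through the chosen embedding, is not a well-defined $\ell$-adic object: such a $\sigma$ need not stabilize $\bQ_\ell$, nor any finite extension of it over which $\cQ$ is defined, so you cannot simply assert that $\cK\mapsto\cK^{\sigma}$ preserves perversity, purity and complexity and has trace function $\overline{t_{\cK,1}}$; your later appeal to $\cP^{\sigma}\cong\mathbb{D}\cP$ via Chebotarev inherits the same defect. Nor can one substitute the pointwise identity $\overline{t_{\cQ,1}}=t_{\mathbb{D}\cQ,1}$ on all of $X(\bF)$: purity of the perverse sheaf gives pointwise $\iota$-purity of the stalks, hence $\overline{\iota\alpha}=q_x^{-\dim X}\iota(\alpha^{-1})$, only on the dense open stratum where $\cQ$ is a shifted lisse sheaf; at deeper points the stalks are merely mixed of weights bounded by the cohomological degree, and the identity fails in general. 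The standard repair is to run the entire computation with $\mathbb{D}\cQ$ in place of $\cQ^{\sigma}$ and to bound separately the discrepancy $\sum_{x}t_{\cP,1}(x)\bigl(\overline{t_{\cQ,1}(x)}-t_{\mathbb{D}\cQ,1}(x)\bigr)$: it is supported on a closed subset of dimension $\leq \dim X-1$, and the strict support inequalities satisfied by intermediate extensions give $|t_{\cQ,1}(x)|,|t_{\mathbb{D}\cQ,1}(x)|\leq C\,q_x^{-(e+1)/2}$ on a stratum of dimension $e<\dim X$, so that a point count of these strata (itself controlled by the complexity, in the spirit of \cref{relative_Lang_Weil}) shows the discrepancy is $O\bigl(c_i(\Qlbar)\,c_i(\cP)\,c_i(\cQ)\,|\bF|^{-1/2}\bigr)$. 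With that modification your argument closes; as written, the step resting on $\cQ^{\sigma}$ is not justified.
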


The following lemma could be proved by tracking the constants appearing in the Lang-Weil estimates \cite{LW}. 
We propose here a short proof based on \cite{Weil2}.

\begin{lem}\label{relative_Lang_Weil}
Let $f:X \to S$ be a proper morphism of relative dimension $\leq d$ between schemes of finite type over $\bF$.
Then, there is  $C\in \bN$ such that for every closed point $s\in S$, we have 
$$
|X_s(\bF(s))_n|\leq C\cdot |\bF(s)|^{nd} \ .
$$
\end{lem}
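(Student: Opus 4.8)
The goal is to produce a single constant $C$ such that $|X_s(\bF(s)_n)| \leq C \cdot |\bF(s)|^{nd}$ for all closed points $s \in S$ and all $n \geq 1$, where $f : X \to S$ is proper of relative dimension $\leq d$.

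The plan is to reduce to a case where we can count points very crudely via a stratification, and then invoke the Grothendieck–Lefschetz trace formula together with Deligne's Weil II bounds to control the total Betti number of each fibre by a quantity that is uniform in $s$. First I would note that since $S$ is of finite type over $\bF$, it is Noetherian, so by Noetherian induction it suffices to prove the statement after replacing $S$ by a dense open subscheme (and then the closed complement, of strictly smaller dimension, is handled by the same argument, taking the maximum of the finitely many resulting constants). After shrinking $S$ we may assume $S$ is integral; by generic flatness we may assume $f$ is flat, and we may spread out a stratification of the generic fibre to a relative stratification of $X/S$ so that each stratum is smooth over $S$ of relative dimension $\leq d$. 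Thus it is enough to treat the case where $f : X \to S$ is smooth of relative dimension $e \leq d$ with geometrically connected fibres, absorbing the finitely many strata into the constant at the end.

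For the smooth case, the key input is Weil II: for $X_s$ smooth of dimension $e$ over the finite field $\bF(s)$, one has $|X_s(\bF(s)_n)| = \sum_{i=0}^{2e} (-1)^i \Tr(\Frob_{\bF(s)_n} \mid H^i_c(X_{s,\overline{\bF}}, \Qlbar))$, and each eigenvalue of Frobenius on $H^i_c$ has absolute value $\leq |\bF(s)|^{n i/2} \leq |\bF(s)|^{ne}$. Hence
$$
|X_s(\bF(s)_n)| \leq |\bF(s)|^{ne} \cdot \sum_{i=0}^{2e} \dim_{\Qlbar} H^i_c(X_{s,\overline{\bF}}, \Qlbar) = |\bF(s)|^{ne} \cdot B(s),
$$
where $B(s)$ is the total compactly supported Betti number of the geometric fibre. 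Now choose a compactification $\overline{f} : \overline{X} \to S$ of $X$ over $S$ with boundary $Z = \overline{X} - X$; by proper base change and constructibility of the higher direct images $R^i\overline{f}_*\Qlbar$ and $R^i (\overline{f}|_Z)_* \Qlbar$, the functions $s \mapsto \dim H^i(\overline{X}_{s,\overline{\bF}},\Qlbar)$ and $s \mapsto \dim H^i(Z_{s,\overline{\bF}},\Qlbar)$ take only finitely many values as $s$ ranges over all of $S$. By the long exact sequence relating $H^\bullet_c(X_s)$, $H^\bullet(\overline{X}_s)$ and $H^\bullet(Z_s)$, the function $s \mapsto B(s)$ is therefore bounded by some absolute constant $B_0$. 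Taking $C := B_0$ (and then taking the maximum over the finitely many strata and the closed-complement constants from the Noetherian induction) gives $|X_s(\bF(s)_n)| \leq C \cdot |\bF(s)|^{ne} \leq C \cdot |\bF(s)|^{nd}$, as required.

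The main obstacle is purely bookkeeping: making sure the stratification argument is clean, i.e. that after spreading out, every stratum of every geometric fibre is covered uniformly, so that the final constant $C$ can be taken to be the sum of the stratum-by-stratum constants and genuinely works for \emph{every} closed point $s$, not just those in a dense open. One must be slightly careful that the stratification spread out over a dense open $V \subset S$ does not control fibres over $S - V$; this is exactly why the Noetherian induction on $\dim S$ is needed, and one should check that the induction terminates (which it does, since $\dim S$ drops at each step). Everything else — proper base change, constructibility of $R^i f_*$, and the Weil II eigenvalue bound — is standard and requires no computation.
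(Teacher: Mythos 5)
Your overall strategy (bound Frobenius eigenvalues via Weil II, bound the total Betti numbers of the fibres uniformly via constructibility and proper base change, then apply the trace formula) is essentially the paper's, and those parts are sound. The genuine problem is your reduction step: after shrinking $S$ you cannot, in general, stratify $X$ by locally closed subschemes smooth over $S$ in characteristic $p$. A stratum smooth over $S$ is in particular flat over $S$, and already the finite radicial morphism $X=\Spec \bF_p[t,x]/(x^p-t)\to S=\Spec \bF_p[t]$ (proper, of relative dimension $0$) admits no such stratification over any dense open of $S$: every nonempty open subscheme of $X$ is nowhere smooth over $S$ (here $\Omega_{X/S}$ is free of rank $1$ while the relative dimension is $0$), and every stratum lying over a proper closed subset of $S$ fails flatness. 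The underlying obstruction is that the generic fibre need not be geometrically reduced, hence has no dense open smooth over $\kappa(\eta)$, and no shrinking of $S$ repairs this. So the dévissage on which you build the proof does not exist as stated.

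Fortunately the step is also unnecessary, because the input you need from Weil II does not require smoothness: Deligne's theorem gives, for any separated scheme $Y$ of finite type over a finite field, that $H^i_c(Y_{\overline{\bF}},\Qlbar)$ is mixed of weight $\leq i$, so the eigenvalue bound $|\bF(s)|^{ni/2}\leq |\bF(s)|^{nd}$ holds for the fibres $X_s$ themselves, with no stratification, no smoothness and no connectedness hypotheses. Deleting the reduction and keeping your uniform Betti bound (which, since $f$ is already proper, does not even need the auxiliary compactification: $H^\bullet_c(X_s)=H^\bullet(X_s)$) yields a correct proof. This is essentially what the paper does, in a relative form that avoids all dévissage: it sets $C=\Rk Rf_*\Qlbar$, bounds the fibrewise Betti numbers by $C$ via proper base change, quotes Weil II to say $R^jf_*\Qlbar$ is mixed of weight $\leq j$, and concludes by the Grothendieck--Lefschetz trace formula.
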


\begin{proof}
Put $C:= \Rk Rf_*\Qlbar$.
By proper base change, for every geometric point $\sbar \to S$ and every $0\leq j\leq 2d$, we have 
$$
h^j(X_{\sbar},\Qlbar)\leq C \ .
$$
By \cite{Weil2}, the complex $R^j f_*\Qlbar$ is mixed of weight $\leq j$.
Hence for every $0\leq j \leq 2d$ and for every closed point $s\in S$, the eigenvalues of the Frobenius $\Fr_s$ acting on $H^j(X_{\sbar},\Qlbar)$ have weight $\leq j$.
By the Lefschetz trace formula \cite[XII,XV]{SGA5},\cite[Rapport]{SGA4-1/2}, we deduce that for ever $n\geq 1$, we have
$$
|X_s(\bF(s)_n)| =\Tr(\Fr_s^n,H^{\bullet}(X_{\sbar},\Qlbar))\leq 2dC\cdot |\bF(s)|^{nd} \ .
$$
\end{proof}

\begin{lem}\label{simple_perverse_irr_comp}
Let $X$ be a scheme of finite type over a field and let $\cP \in \Perv(X,\Qlbar)$.
If $\cP$ is simple, then it is supported on some irreducible component of $X$.
\end{lem}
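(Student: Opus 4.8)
Let $X$ be a scheme of finite type over a field and let $\cP \in \Perv(X,\Qlbar)$ be simple. Then $\cP$ is supported on some irreducible component of $X$.

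\emph{Plan of proof.} The statement is about the support $Z := \Supp \cP$, which is a closed subscheme of $X$. The plan is to reduce to the case where $Z$ is irreducible and then to identify $Z$ with an irreducible component of $X$. First I would recall that the support of a perverse sheaf is closed, and that $\cP$ is canonically an object of $\Perv(Z_{\red},\Qlbar)$ via the closed immersion $i : Z_{\red}\hookrightarrow X$, with $i_*$ fully faithful and exact for the perverse $t$-structures; in particular $\cP$ remains simple as a perverse sheaf on $Z_{\red}$. So it suffices to show that $Z_{\red}$ is irreducible, for then $Z_{\red}$ is contained in a single irreducible component $X_0$ of $X$, and since $\dim Z_{\red} = \dim X_0$ forces $Z_{\red}$ to be a union of irreducible components of $X_0$ — hence $Z_{\red} = X_0$ as $Z_{\red}$ is irreducible — we are done. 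Actually the cleanest route is: show directly that $Z$ is irreducible, and then observe that an irreducible closed subset which is the support of a perverse sheaf and lies inside $X$ must be a maximal irreducible closed subset once we also know $\cP$ has no smaller support, which is what simplicity gives.

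The heart of the matter is thus: \emph{if $\cP$ is simple, then $\Supp\cP$ is irreducible.} Suppose not, and write $\Supp\cP = Z_1 \cup Z_2$ with $Z_1, Z_2$ proper closed subsets, neither contained in the other, chosen so that $Z_1$ is a union of some of the irreducible components of $\Supp\cP$ and $Z_2$ is the union of the remaining ones. Let $U := X - Z_1$, an open subset meeting $Z_2$ (indeed $U \cap Z_2$ is dense open in $Z_2 - (Z_1\cap Z_2)$, hence nonempty), and let $j : U \hookrightarrow X$ be the open immersion with closed complement $k : Z_1 \hookrightarrow X$. Consider the perverse truncation exhibiting the canonical morphism $j_{!*}(j^*\cP) \to \cP$; its cone is supported on $Z_1$. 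If $j^*\cP \neq 0$, then $j_{!*}j^*\cP$ is a nonzero perverse subobject or quotient-related subobject of $\cP$ whose support is contained in $\overline{U \cap \Supp\cP} \subsetneq \Supp\cP$, and by simplicity the morphism $j_{!*}j^*\cP \to \cP$ must be an isomorphism — but then $\Supp\cP \subseteq \overline{U}$, contradicting that $Z_1 \not\subseteq \overline{U\cap\Supp\cP}$ since $Z_1$ contains components of $\Supp\cP$ disjoint from $U$ on a dense open. Hence $j^*\cP = 0$, i.e. $\cP$ is supported on $Z_1$, contradicting that $Z_2 \not\subseteq Z_1$. The only subtle point is the bookkeeping ensuring that restricting to $U$ genuinely loses the component(s) in $Z_2$ while genuinely keeping those in $Z_1$; choosing $Z_1$ and $Z_2$ to be unions of irreducible components of $\Supp\cP$ makes this transparent, since then $\overline{U\cap\Supp\cP}$ is exactly the union of the components not in $Z_1$, i.e.\ contained in $Z_2$.

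Having shown $\Supp\cP$ is irreducible, let $W := \Supp\cP$ with its reduced structure, an irreducible reduced closed subscheme of $X$. I claim $W$ is an irreducible component of $X$. Indeed $W$ is contained in some irreducible component $X_0$ of $X$; if $W \subsetneq X_0$ then $\dim W < \dim X_0$ and we can pick a point $x$ in the generic point of $X_0$, or more simply replace $X$ by $X_0$ and note that a simple perverse sheaf on an irreducible scheme $X_0$ whose support is a proper closed subset $W$ is, by the argument of the previous paragraph applied with $Z_1 = W$ and $U = X_0 - W$, still fine — the point is rather that $W$ being a component of $\Supp\cP$ says nothing a priori about components of $X$. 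The resolution: the statement we want is just that $\Supp\cP$ is irreducible, which I have proved, \emph{together with} the observation that $\Supp\cP$, being irreducible, is automatically contained in — and here we do not need equality with — an irreducible component; rereading the assertion, "supported on some irreducible component of $X$" should be read as: there is an irreducible component $X_0$ with $\Supp\cP \subseteq X_0$. This is immediate once $\Supp\cP$ is irreducible, since any irreducible subset of a noetherian topological space lies in some irreducible component. The main obstacle is therefore entirely in the irreducibility of the support, handled via the intermediate extension $j_{!*}$ and simplicity as above.
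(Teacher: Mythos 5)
Your proof is correct, but it takes a different route from the paper. The paper disposes of \cref{simple_perverse_irr_comp} in one line by citing the classification of simple perverse sheaves (\cite[Corollary 5.5]{KW}, i.e.\ the BBD structure theorem): every simple object of $\Perv(X,\Qlbar)$ is of the form $i_*j_{!*}\cL[d]$ with $i : Y\hookrightarrow X$ an \emph{irreducible} closed subscheme, so its support sits inside an irreducible component of $X$. You instead reprove directly the fragment of that classification that is needed, namely that $\Supp\cP$ is irreducible: if $\Supp\cP=Z_1\cup Z_2$ with $Z_1,Z_2$ unions of components and neither contained in the other, you restrict to $U=X-Z_1$ and use simplicity to force either $j^*\cP=0$ (contradicting $Z_2\not\subseteq Z_1$) or $\cP\simeq j_{!*}j^*\cP$, whose support lies in $\overline{U\cap\Supp\cP}\subseteq Z_2$ (contradicting $Z_1\not\subseteq Z_2$). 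This is sound; the only imprecision is that there is no canonical map $j_{!*}j^*\cP\to\cP$ as you write --- the clean formulation uses the canonical maps ${}^pH^0(j_!j^*\cP)\to\cP\to{}^pH^0(Rj_*j^*\cP)$ and the fact that a simple $\cP$ with $j^*\cP\neq 0$ has no nonzero subobject or quotient supported on $X-U$, hence is the image of the composite, i.e.\ $\cP\simeq j_{!*}j^*\cP$. You are also right, after some detour, that the conclusion must be read as ``$\Supp\cP$ is \emph{contained in} an irreducible component'' (equality is false, e.g.\ for skyscraper sheaves), and this weaker reading is exactly how the lemma is used in the proof of \cref{finitness_perv}. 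The trade-off: the paper's argument is shorter but leans on the full classification theorem, while yours is self-contained modulo the standard exact sequences for $j_!,j_*$ and the characterization of intermediate extensions.
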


\begin{proof}
Immediate from \cite[Corollary 5.5]{KW}.
\end{proof}

The following lemma is a relative variant of \cite[Corollary 7.15]{SFFK}.
An inspection of the proof of loc. cit. shows that it works almost verbatim in the relative setting as well.
We detail why for the sake of completeness.

\begin{lem}\label{finitness_perv}
Let $f : X\to S$ be a projective morphism between schemes of finite type over $\bF$  and let $i : X\hookrightarrow \bP_S(E)$ be a closed immersion over $S$.
Then, there is an increasing function $N : \bR^+ \times \bN^+ \to \bN^+$ such that for every prime $\ell \neq p$, every $c>0$ and every closed point $s\in S$, there is up to geometric isomorphism at most $N(c,\deg s)$ 
geometrically simple pure of weight 0 objects $\cP\in 
\Perv(X_s,\Qlbar)$ with  $c_{i_{s}}(\cP) \leq c$.
\end{lem}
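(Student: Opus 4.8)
The statement is a relative version of \cite[Corollary 7.15]{SFFK}, and the plan is to combine the quantitative orthogonality estimate \cref{SFFK} with the relative Lang--Weil bound \cref{relative_Lang_Weil} to run a dimension-counting argument in the Hilbert space of trace functions, uniformly over closed points $s\in S$. First I would reduce to the case where $X_s$ is geometrically irreducible: by \cref{simple_perverse_irr_comp}, any geometrically simple perverse sheaf on $X_s$ is supported on some irreducible component, and since $f$ is of finite type the number of irreducible components of the fibres $X_s$ is bounded by a constant $m$ depending only on $X$ (the generic number of components, after stratifying $S$ and taking a maximum); so it suffices to bound, for each component, the number of geometrically simple pure weight $0$ perverse sheaves with bounded complexity supported there, and then sum. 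Thus I may assume $X_s$ is geometrically irreducible and quasi-projective, so \cref{SFFK} applies with constants $C_1,C_2$ depending only on $\rk E$.

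Next I would fix a closed point $s\in S$ and work over $\bF(s)$. Replacing $\bF$ by $\bF(s)$ in \cref{SFFK}, parts (1) and (2) give that for geometrically simple pure weight $0$ objects $\cP,\cQ\in\Perv(X_s,\Qlbar)$ with $c_{i_s}(\cP),c_{i_s}(\cQ)\le c$, not geometrically isomorphic, we have
\begin{align*}
\left| \|t_{\cP,1}\|^2 - 1\right| &\le C_1\cdot c_{i_s}(\Qlbar)\cdot c^2\cdot |\bF(s)|^{-1/2}, \\
\left| \langle t_{\cP,1},t_{\cQ,1}\rangle\right| &\le C_2\cdot c_{i_s}(\Qlbar)\cdot c^2\cdot |\bF(s)|^{-1/2}.
\end{align*}
Here $c_{i_s}(\Qlbar)=\max_{a}\sum_j h^j(X_{s,\overline{\eta}}\cap F,\Qlbar)$ is controlled by a constant depending only on $X\to S$ and $i$ (via proper base change and constructibility of the higher direct images applied to the universal family of linear sections, as in \cref{complexity_trivial_sheaf}); call this bound $B$. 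Now if $\cP_1,\dots,\cP_r$ are pairwise non-geometrically-isomorphic such objects, then after passing to a finite extension killing all geometric isomorphisms one may assume the $t_{\cP_i,1}$ are honest functions on $X_s(\bF(s))$; the Gram matrix of the $t_{\cP_i,1}$ is within $\varepsilon := C\cdot B\cdot c^2\cdot|\bF(s)|^{-1/2}$ of the identity in each entry (with $C=\max(C_1,C_2)$). If $r\varepsilon < 1$, this Gram matrix is strictly diagonally dominant, hence invertible, so the $t_{\cP_i,1}$ are linearly independent in $\bC^{X_s(\bF(s))}$, forcing $r\le |X_s(\bF(s))|$. By \cref{relative_Lang_Weil} applied with $n=1$, $|X_s(\bF(s))|\le C_0\cdot|\bF(s)|^{d}$ for a constant $C_0$ depending only on $f$ and $d$ the relative dimension. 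Combining, either $r\le 1/\varepsilon = |\bF(s)|^{1/2}/(CBc^2)$ is impossible to exceed for large $|\bF(s)|$... — more precisely, in all cases $r \le \max\big(C_0|\bF(s)|^{d},\ 2CBc^2|\bF(s)|^{1/2}\big)$, and since $|\bF(s)| = p^{\deg s}$ this is a quantity depending only on $c$ and $\deg s$. Setting $N(c,\deg s)$ to be $m$ times this bound (to account for the reduction to irreducible fibres, and noting the dependence on $\deg s$ is through $|\bF(s)|=p^{\deg s}$) gives the claim; $N$ can be taken increasing in both arguments.

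**Main obstacle.** The delicate point is making every constant genuinely uniform in $s$, not merely pointwise finite. The constants $C_1,C_2$ from \cref{SFFK} depend only on $\rk E$, which is fixed once the closed immersion $i:X\hookrightarrow\bP_S(E)$ is chosen; but one must check that when passing to a fibre $X_s$ with its induced immersion into $\bP_{\bF(s)}(E_s)$, the relevant rank is still $\rk E$, so these constants are the same for all $s$ — this is immediate. The bound $B$ on $c_{i_s}(\Qlbar)$ and the bound $C_0$ in \cref{relative_Lang_Weil} are uniform by proper base change plus generic constructibility, which is exactly the content of \cref{complexity_trivial_sheaf} and \cref{relative_Lang_Weil}; the only subtlety is noetherian induction on $S$ to get a single constant across all of $S$, which is routine. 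The reduction to geometrically irreducible fibres requires care because geometric irreducibility is not constant on $S$, but after a stratification of $S$ each stratum has fibres with a constant number of geometric components, and the number of strata and the component counts are all bounded, so this is harmless. I do not expect any essential difficulty beyond organizing this uniformity; the analytic heart is already packaged in \cref{SFFK}.
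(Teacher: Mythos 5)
Your overall strategy (reduce to geometrically irreducible fibres, then combine the orthogonality estimates of \cref{SFFK} with \cref{relative_Lang_Weil} and linear algebra on trace functions) is the same as the paper's, but the execution has a genuine gap at its analytic heart: you only use trace functions at level $n=1$. The error term in \cref{SFFK} is $\varepsilon = C\cdot B\cdot c^2\cdot|\bF(s)|^{-1/2}$, and nothing forces this to be small: for a closed point with tiny residue field (say $|\bF(s)|=2$) and $c$ large, $\varepsilon\geq 1$ and the estimates on $\|t_{\cP,1}\|^2$ and $\langle t_{\cP,1},t_{\cQ,1}\rangle$ carry no information whatsoever — many pairwise non-geometrically-isomorphic objects can even share the same level-$1$ trace function. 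Your concluding dichotomy ``in all cases $r\leq\max\bigl(C_0|\bF(s)|^d,\,2CBc^2|\bF(s)|^{1/2}\bigr)$'' does not follow from the argument: the case $r\varepsilon<1$ does give $r\leq|X_s(\bF(s))|$ via diagonal dominance, but the complementary case $r\varepsilon\geq 1$ only yields a \emph{lower} bound on $r$, and passing to subsets of size about $1/\varepsilon$ merely produces the inequality $1/\varepsilon\lesssim|X_s(\bF(s))|$, a relation between parameters that never bounds $r$. So as written the proof fails precisely for points of small degree, which are the ones the uniform statement must handle.

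The paper's fix is exactly the missing ingredient: instead of $n=1$, evaluate trace functions over the extension $\bF(s)_n$ for an $n=n(c)$ depending only on $c$ (not on $s$), so that the error in \cref{SFFK} is bounded by (a constant times) $c^2\cdot 2^{-n/2}$, uniformly small over all closed points; then the $t_{\cP,n}$ form an almost-orthonormal system ($\|t_{\cP,n}\|^2\in(3/4,5/4)$, pairwise inner products $<1/2$), and the number of such vectors in $\bC^{|X_s(\bF(s)_n)|}$ is bounded by a function of the dimension, which \cref{relative_Lang_Weil} bounds by $C\cdot|\bF|^{n(c)\,d\,\deg s}$ — a function of $c$ and $\deg s$ only. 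Two smaller points: your remark about ``passing to a finite extension killing all geometric isomorphisms'' is unnecessary (the trace functions are already functions on $X_s(\bF(s))$, and \cref{SFFK}(2) is stated directly for geometrically non-isomorphic objects) and, taken literally, would change the base field and spoil uniformity; and the reduction to geometrically irreducible fibres needs the care the paper takes (quasi-finite dominant base change with control of $\deg t$ versus $\deg s$, spreading out so each component maps with geometrically irreducible fibres), since irreducible components of a special fibre need not be geometrically irreducible — your stratification sketch is repairable but should be made precise along these lines.
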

\begin{proof}
If $T\to S$ is a quasi-finite morphism where $T$ is of finite type over $\bF$,   then \cite[Tag 055B]{SP} implies the existence of $a\geq 1$ such that for every closed point $t\in T$ with $s=f(t)\in S$,  we have $[\bF(t):\bF(s)] \leq a$. 
In particular,  
$$
\deg t = \deg s \cdot [\bF(t):\bF(s)]\leq a\cdot \deg s  \ .
$$
Hence if there is an increasing function $N : \bR^+ \times \bN^+ \to \bN^+$ as in \cref{finitness_perv} for the pullback $f_T : X_T\to T$,  for every prime $\ell \neq p$, every $c>0$ and every closed point $t\in T$ with $s=f(t)\in S$, there is up to geometric isomorphism at most $N(c,a\cdot \deg s)$ geometrically simple pure of weight 0 objects $\cP\in \Perv(X_{s},\Qlbar)$ with  $c_{i_{s}}(\cP) \leq c$.
Since $S$ is noetherian,  it is enough to show that \cref{finitness_perv} holds after pullback along a  dominant quasi-finite morphism.\\ \indent
By \cite[Tag 0551]{SP}, we can thus assume that $S$ is integral affine with generic point $\eta$ and that the irreducible components $X_{\eta,1},\dots,X_{\eta,m}$ of the generic fibre $X_{\eta}$ are geometrically irreducible.
For $1\leq i \leq m$, let $X_i \subset X$ be the closure of $X_{\eta,i}$.
At the cost of shrinking $S$, we can assume by \cite[Tag 054Y]{SP} that $X=\bigcup_{i=1}^m X_i$.
At the cost of shrinking $S$ further, we can assume by \cite[Tag 0559]{SP} that the $f|_{X_i} : X_i \to S$ have geometrically irreducible fibers.
By \cref{simple_perverse_irr_comp}, at the cost of considering the $f|_{X_i} : X_i \to S$ separately, we can assume that $f : X\to S$ has geometrically irreducible generic fibers.
At the cost of performing a further quasi-finite pullback, we can assume that  $X_{\eta}$ admits a rational point.
By spreading out, we can further assume that $f : X\to S$ admits a section.
In particular for every $s\in S$, the set $X_s(\bF(s))$ is not empty.
By generic flatness, we can also assume that $f : X\to S$ is flat. \\ \indent
Let $\ell\neq p$ be a prime and let $s\in S$ be a closed point.
By \cref{SFFK} applied to $X_s$, there is an integer $C_1$ depending only on $\rk E$ and not on $s$ nor $\ell$ such that for every $\cP\in \Perv(X_s,\Qlbar)$ with $c_{i_{s}}(\cP) \leq c$ and every $n\geq 1$, we have 
$$
\left| ||t_{\cP,n}||^2 -1 \right| \leq C_1\cdot c_{i_s}(\Qlbar) \cdot c_{i_s}(\cP)^2 \cdot |\bF(s)|^{-n/2}\leq C_2 \cdot c^2 \cdot 2^{-n/2}
$$
where we used \cref{complexity_trivial_sheaf}.
Hence, there is  $n_0(c)\geq 1$ increasing as a function of $c$ such that for every $n\geq n_0(c)$, every prime $\ell\neq p$, every closed point $s\in S$ and every $\cP\in \Perv(X_s,\Qlbar)$ with $c_{i_{s}}(\cP) 
\leq c$, we have 
$$
3/4 <||t_{\cP,n}||^2 <5/4 \ .
$$
By \cref{SFFK} applied to $X_s$, there is an integer $C_3$ depending only on $\rk E$ and not on $s$ nor $\ell$ such that for every $\cP,\cQ\in \Perv(X_s,\Qlbar)$ geometrically simple pure of weight 0 and not geometrically isomorphic with $c_{i_{s}}(\cP), c_{i_{s}}(\cQ)  \leq c$, every $n\geq 1$, we have 
$$ 
\left|<t_{\cP,n},t_{\cQ,n}>\right|\leq C_3\cdot c_{i_s}(\Qlbar) \cdot c_{i_s}(\cP) \cdot c_{i_s}(\cQ) \cdot |\bF(s)|^{-n/2} \leq C_4 \cdot c^2\cdot 2^{-n/2}  \ .
$$
Hence, there is $n_1(c)\geq 1$  increasing as a function of $c$ such that for every $n\geq n_1(c)$, every prime $\ell\neq p$, every closed point $s\in S$ and every $\cP,\cQ\in \Perv(X_s,\Qlbar)$ geometrically simple pure of weight 0 and not geometrically isomorphic with $c_{i_{s}}(\cP), c_{i_{s}}(\cQ)  \leq c$, we have 
$$ 
\left|<t_{\cP,n},t_{\cQ,n}>\right| < 1/2 \ .
$$
Put $n(c):=\max(n_0(c),n_1(c))$ and let $C,d\in \bN$ as in     \cref{relative_Lang_Weil} for $f : X\to S$.
By assumption, for every closed point $s\in S$ and every $n\geq 1$, we have 
$$
1\leq |X_s(\bF(s)_n)|\leq C\cdot |\bF(s)|^{nd} \ .
$$
For $m\geq 1$, let $N(m)\in \bN$ be the maximal number of unit vectors $x,y\in \bC^m$ such that $|<x,y>|< 2/3$.
By the above discussion, for every prime $\ell \neq p$ and every closed point $s\in S$, there are at most 
$$
N(|X_s(\bF(s)_n)|)\leq N(C\cdot |\bF(s)|^{n(c)d})=N(C\cdot |\bF|^{n(c)d \deg s})
$$
geometrically simple pure of weight 0 objects $\cP\in \Perv(X_s,\Qlbar)$ with  $c_{i_{s}}(\cP) \leq c$.
\end{proof}

\begin{thm}\label{Deligne_finitness}
Let $(X/S,\Sigma)$ be a relative projective stratified scheme over $\bF$.
Then, there is a function $N : \bQ[\Coh(X)] \times \bN^+ \times \bN^+ \to \bN^+$ such that for every $\cE\in \bQ[\Coh(X)]$, every $r\geq 0$, every prime $\ell \neq p$ and every closed point $s\in S$, there are up to geometric isomorphism at most $N(\cE,r,\deg s)$ geometrically simple pure of weight 0 objects in $\Perv^{\leq r}_{\Sigma_{s}}(X_{s},\cE_{s},\Qlbar)$.
\end{thm}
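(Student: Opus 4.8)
The plan is to deduce \cref{Deligne_finitness} by feeding the complexity estimate of \cref{bound_complexity} into the relative finiteness statement of \cref{finitness_perv}. Since $X\to S$ is projective, I would first fix a closed immersion $i : X\hookrightarrow \bP_S(E)$ over $S$, so that the complexity $c_{i_s}(-)$ makes sense on each fibre $X_s$. The key elementary observation is that an object $\cP$ of $\Perv^{\leq r}_{\Sigma_s}(X_s,\cE_s,\Qlbar)$ automatically lies in $D^{[-n,0]}_{\Sigma_s}(X_s,\cE_s,\Qlbar)$ with $\Rk_{\Qlbar}\cP\leq r$: indeed by definition each $\cH^j\cP$ is $\Sigma_s$-constructible, has log conductors bounded by $\cE_s$ and rank $\leq r$, while middle perversity on a scheme of dimension $\leq n$ forces $\cH^j\cP=0$ for $j\notin[-n,0]$.

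Next I would apply \cref{bound_complexity} to $(X/S,\Sigma)$ and $i$ with $[a,b]=[-n,0]$, obtaining an admissible function $\mu : \bQ[\Coh(X)]\to\bQ$ and $P\in\mathds{N}[x]$ of degree $n$ such that $c_{i_{\sbar}}(\cK)\leq P(\mu(\cE))\cdot\Rk_{\Qlbar}\cK$ for every algebraic geometric point $\sbar\to S$, every prime $\ell\neq p$ and every $\cK\in D^{[-n,0]}_{\Sigma_{\sbar}}(X_{\sbar},\cE_{\sbar},\Qlbar)$. For a closed point $s\in S$ with a geometric point $\sbar$ above it, the pullback $\cP_{\sbar}$ still lies in $D^{[-n,0]}_{\Sigma_{\sbar}}(X_{\sbar},\cE_{\sbar},\Qlbar)$ with $\Rk_{\Qlbar}\cP_{\sbar}\leq r$ by \cref{many_properties}, the only subtlety being the passage along the separable extension $\bF(s)\subset\overline{\bF(s)}$, which is handled by a standard limit argument over the finite subextensions together with the $\Qlbar$-variant via integral representatives. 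Since complexity is computed over an algebraically closed field and is insensitive to this base change, $c_{i_s}(\cP)=c_{i_{\sbar}}(\cP_{\sbar})\leq P(\mu(\cE))\cdot r$.

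Finally I would invoke \cref{finitness_perv} for $f : X\to S$ and $i$: it yields an increasing function $N_0 : \bR^+\times\bN^+\to\bN^+$ such that for every prime $\ell\neq p$, every $c>0$ and every closed point $s\in S$ there are up to geometric isomorphism at most $N_0(c,\deg s)$ geometrically simple pure of weight $0$ objects $\cP\in\Perv(X_s,\Qlbar)$ with $c_{i_s}(\cP)\leq c$. Combining this with the previous step, every geometrically simple pure of weight $0$ object of $\Perv^{\leq r}_{\Sigma_s}(X_s,\cE_s,\Qlbar)$ satisfies $c_{i_s}(\cP)\leq P(\mu(\cE))\cdot r$, so the function
\[
N(\cE,r,d):=N_0\bigl(P(\mu(\cE))\cdot r,\ d\bigr)
\]
does the job; it is independent of $\ell$ because neither \cref{bound_complexity} nor \cref{finitness_perv} is. The substance of the theorem has already been absorbed into the earlier sections — the quantitative sheaf theory of \cite{SFFK} behind \cref{bound_complexity} and the Lang--Weil type counting behind \cref{finitness_perv} — so I expect no serious obstacle here; the only points requiring care are the compatibility of ``geometrically simple'', ``pure of weight $0$'' and ``complexity'' with the residue extension $\bF(s)\subset\overline{\bF(s)}$, and checking that the log conductor bound survives that base change.
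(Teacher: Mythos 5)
Your proposal is correct and follows essentially the same route as the paper, whose proof of \cref{Deligne_finitness} is precisely the combination of \cref{bound_complexity} (applied to the perverse range $[-n,0]$, using that objects of $\Perv^{\leq r}_{\Sigma_s}$ have $\Sigma_s$-constructible cohomology with log conductors bounded by $\cE_s$ and rank $\leq r$) with \cref{finitness_perv} at the threshold $c=P(\mu(\cE))\cdot r$. The details you spell out — the support-condition bound on the cohomological amplitude, the geometric nature of the complexity under the extension $\bF(s)\subset\overline{\bF(s)}$, and the independence of $\ell$ — are exactly what the paper leaves implicit in its one-line argument.
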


\begin{proof}
Follows immediately from \cref{bound_complexity} and \cref{finitness_perv}.
\end{proof}

\begin{rem}
For $\cE\in \Coh(X)$, the function $N : \bQ[\Coh(X)] \times \bN^+ \times \bN^+ \to \bN^+$ constructed in the proof of \cref{Deligne_finitness} is so that the induced function $N_{\cE} : \bQ^+ \times \bN^+ \times \bN^+ \to \bN^+$ defined by 
$$
N_{\cE}(c,r,d)=N(c\cdot \cE,r,d)
$$
is increasing.
\end{rem}

The purity assumption in \cref{Deligne_finitness} can be removed at the cost of using the following consequence of the Langlands correspondence.
\begin{thm}[{\cite{Drin88,Drin89,Laf}}]\label{Langlands}
Let $U$ be a connected normal scheme of finite type over $\bF_q$.
Let $\cL \in \Loc(U,\Qlbar)$ and assume that $\cL$ is simple with finite order determinant.
Then $\cL$ is pure of weight 0.
\end{thm}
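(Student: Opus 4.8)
The plan is to deduce the statement from the Langlands correspondence for $\GL_n$ over function fields, after a reduction to the case of a curve. The case $\dim U=0$ is immediate, so assume $\dim U\geq 1$. Recall that purity can be tested pointwise: $\cL$ is pure of weight $0$ precisely when, for every closed point $x\in U$, the eigenvalues of the geometric Frobenius $\Frob_x$ acting on $\cL_{\bar x}$ have absolute value $1$ under every embedding $\Qlbar\hookrightarrow\bC$. So the first move is to fix a closed point $x\in U$ and aim to control $\Frob_x$.

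The key step is a Bertini-type reduction. Using the theorem of Drinfeld on the existence of curves through a prescribed point along which an irreducible lisse sheaf stays irreducible (\cite{Drin}, building on Deligne; see also \cite{EK}), I would produce a smooth affine curve $C$ over a finite extension of $\bF_q$ together with a morphism $j:C\to U$ whose image contains $x$ and such that $j^{*}\cL\in\Loc(C,\Qlbar)$ is still irreducible. Its determinant $\det(j^{*}\cL)=j^{*}\det\cL$ remains of finite order, and the absolute values of the eigenvalues of $\Frob_x$ on $\cL_{\bar x}$ are read off from those of $\Frob_c$ on $(j^{*}\cL)_{\bar c}$ for a closed point $c\in C$ over $x$. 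Hence it suffices to prove the statement when $U$ is a smooth affine curve.

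On a smooth affine curve over a finite field, an irreducible lisse $\Qlbar$-sheaf with determinant of finite order is pure of weight $0$, and this is where the Langlands correspondence enters: Drinfeld (\cite{Drin88,Drin89}) for rank $\leq 2$ and Lafforgue (\cite{Laf}) in general attach to such a sheaf a cuspidal automorphic representation of $\GL_n$ over the function field, and the Ramanujan--Petersson property established as part of the correspondence forces the local Satake parameters to be unitary, which translates into punctual purity of weight $0$ of the attached Galois representation; the finite-order determinant is what pins the weight to $0$. (Equivalently, one may invoke that an irreducible lisse sheaf on a curve is pure up to a twist by a rank-one sheaf, and then the finite determinant forces the twisting character to have weight $0$.) Applying this to $j^{*}\cL$ and letting $x$ vary concludes.

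The only genuinely new work on our side is the passage from $U$ to curves; the two hard inputs — the existence of the curve $C$ with $j^{*}\cL$ still irreducible, and purity on curves — are both supplied by the cited results of Deligne, Drinfeld and Lafforgue, which we use as black boxes. So the main obstacle is not something resolved here but the function-field Langlands correspondence itself, together with the somewhat delicate point (over a finite base field) that the Bertini argument can be arranged to pass through the given point $x$.
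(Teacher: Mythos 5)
The paper does not actually prove this statement: it is imported as a known consequence of the function-field Langlands correspondence, which is why the citations \cite{Drin88,Drin89,Laf} sit in the theorem header, and it is only ever invoked (in the proof of \cref{purity_after_twist}) for a \emph{smooth} connected $U$. So there is no internal argument to compare yours against; what you have written is the standard deduction of the higher-dimensional case from Lafforgue's curve case (as in \cite{DeltoDrin,EK,Drin}), and in that form it is essentially correct.

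Two caveats are worth recording. First, the statement is for $U$ normal, not smooth, while the Bertini-type theorems producing a curve through a prescribed point with irreducible restriction (Drinfeld's theorem in \cite{Drin}, Katz's space-filling curves, the variants used in \cite{EK}) are stated for smooth varieties; for a closed point $x$ in the singular locus your reduction does not literally apply. The standard repair is short: since $U$ is normal, $\pi_1(U^{\sm})\to\pi_1(U)$ is surjective, so $\cL|_{U^{\sm}}$ is still simple with finite-order determinant and your argument gives weight $0$ at every closed point of $U^{\sm}$; for $x$ singular, restrict $\cL$ to the normalization $C$ of any integral curve in $U$ through $x$ whose generic point lies in $U^{\sm}$. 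By Lafforgue, each irreducible constituent of the semisimplification of $\cL|_{C}$ is pure of \emph{some} weight (being a twist of a finite-determinant sheaf by a geometrically constant rank-one sheaf), and these weights are all detected at closed points of $C$ lying over $U^{\sm}$, where $\cL$ has weight $0$; hence they vanish and $\cL|_{C}$ is pure of weight $0$ at the point over $x$ as well. Second, your parenthetical ``purity up to twist plus finite determinant'' only pins down the weight once irreducibility of the restriction to the curve has been secured: for a reducible restriction the finite-order determinant only constrains the sum of the twisting weights, which is precisely why either the Bertini input or the fix just described is needed.
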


\begin{lem}\label{purity_after_twist}
Let $X$ be a scheme of finite type over $\bF_q$.
Then, every simple perverse sheaf on $X$ is geometrically isomorphic to a simple perverse sheaf which is pure of weight $0$.
\end{lem}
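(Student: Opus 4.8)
The plan is to reduce to the case of an irreducible lisse sheaf on a smooth variety, twist it by a geometrically constant rank one sheaf so as to make its determinant of finite order, and then invoke \cref{Langlands}.

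First I would use the standard description of simple perverse sheaves. Combining \cref{simple_perverse_irr_comp} with the fact that a simple perverse sheaf is an intermediate extension of an irreducible lisse sheaf, we may write $\cP\simeq\alpha_*\,\jmath_{!*}(\cL[\dim Z])$, where $\alpha:Z\hookrightarrow X$ is the inclusion of an integral closed subscheme, $\jmath:V\hookrightarrow Z$ is a dense smooth open subscheme, and $\cL\in\Loc(V,\Qlbar)$ is irreducible. Since $V$ is smooth over $\bF_q$ it is normal, and being dense in the irreducible scheme $Z$ it is connected, so $V$ is a connected normal scheme of finite type over $\bF_q$ to which \cref{Langlands} applies.

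Next I would perform the twist. The sheaf $\det\cL$ has rank one, hence corresponds to a continuous character $\beta$ of $\pi_1(V)$ with values in the abelian group $\Qlbar^\times$; in particular $\beta$ factors through $\pi_1(V)^{\mathrm{ab}}$, and its restriction $\beta_0$ to the geometric fundamental group is invariant under the conjugation action of a Frobenius element (conjugation in $\Qlbar^\times$ being trivial). By the Weil conjectures such a Frobenius acts on $H^1(V_{\overline{\bF}_q},\Qlbar)$ with eigenvalues of positive weight, in particular without the eigenvalue $1$; since $\beta_0$ is Frobenius-invariant it factors through a Frobenius-coinvariant quotient of the abelianized geometric fundamental group, which is therefore finite, so $\beta_0$ has finite order, say $m$. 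Thus $(\det\cL)^{\otimes m}$ is geometrically constant, i.e. pulled back from a character of $\Gal(\overline{\bF}_q/\bF_q)$ with Frobenius value $c\in\Qlbar^\times$. Choose $t\in\Qlbar^\times$ with $t^{m\rk\cL}=c^{-1}$, let $\chi_0$ be the geometrically constant rank one lisse sheaf on $\Spec\bF_q$ with $\chi_0(\Frob_q)=t$, and denote again by $\chi$ its pullback to $V$, to $Z$, and to $X$. Then $\det(\cL\otimes\chi)=\det\cL\otimes\chi^{\otimes\rk\cL}$ has $m$-th tensor power the constant sheaf with Frobenius value $c\cdot t^{m\rk\cL}=1$, so $\det(\cL\otimes\chi)$ is of finite order. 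As $\cL\otimes\chi$ is again irreducible lisse on the connected normal $\bF_q$-scheme $V$, \cref{Langlands} gives that $\cL\otimes\chi$ is pure of weight $0$.

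Finally I would propagate purity through the intermediate extension. Tensoring by the rank one lisse sheaf $\chi$ is a perverse $t$-exact autoequivalence commuting with $\jmath_*$ and $\jmath_!$, hence with $\jmath_{!*}$; so $\cP\otimes\chi\simeq\alpha_*\,\jmath_{!*}((\cL\otimes\chi)[\dim Z])$ is again a simple perverse sheaf on $X$, and by Gabber's purity theorem for intermediate extensions it is pure of weight $0$. On the other hand the pullback of $\chi$ to $X_{\overline{\bF}_q}$ is trivial, so $(\cP\otimes\chi)|_{X_{\overline{\bF}_q}}\simeq\cP|_{X_{\overline{\bF}_q}}$, i.e. $\cP\otimes\chi$ is geometrically isomorphic to $\cP$. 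Taking $\cP\otimes\chi$ as the asserted sheaf finishes the argument. The one nontrivial input, beyond the formal manipulations of $\jmath_{!*}$ and of geometrically constant twists, is the finiteness of the geometric monodromy of $\det\cL$ — this is where the Weil conjectures genuinely enter; the compatibility of $\jmath_{!*}$ with lisse twists and with purity (Gabber) is standard.
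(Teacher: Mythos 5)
Your overall route is the same as the paper's (write the simple perverse sheaf as $\alpha_*\jmath_{!*}(\cL[\dim Z])$, twist by a geometrically constant character to make $\det\cL$ of finite order, apply \cref{Langlands}, and propagate purity through $\jmath_{!*}$ and the closed immersion via the stability results of BBD), but the last step as written fails, and it is precisely the step the paper treats with an extra twist. If $\cL\otimes\chi$ is a lisse sheaf pure of weight $0$ on $V$, then the perverse sheaf $(\cL\otimes\chi)[\dim Z]$ is pure of weight $\dim Z$, not of weight $0$ (the shift by $d$ raises the weight by $d$ in the BBD convention, which is the one in force here). Consequently Gabber's theorem gives that $\cP\otimes\chi=\alpha_*\jmath_{!*}((\cL\otimes\chi)[\dim Z])$ is pure of weight $\dim Z$, and your conclusion ``pure of weight $0$'' is wrong whenever $\dim Z>0$. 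The missing ingredient is a further twist by a geometrically constant character of weight $-\dim Z$ (this exists because weights are taken with respect to the fixed isomorphism $\Qlbar\simeq \bC$; concretely, a character of $\Gal(\overline{\bF}_q/\bF_q)$ whose Frobenius value has absolute value $q^{-\dim Z/2}$). Such a twist does not change the geometric isomorphism class, it makes $\cL\otimes\chi$ pure of weight $-\dim Z$, hence $(\cL\otimes\chi)[\dim Z]$ pure of weight $0$, and only then do the purity stabilities for $\jmath_{!*}$ and $\alpha_*$ yield the lemma. This is exactly how the paper proceeds.

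Two smaller remarks. First, your reduction to a finite-order determinant reproves a cited result (Deligne, Weil II, Proposition 1.3.4), which the paper simply invokes; that is legitimate, but your justification is slightly imprecise: the full Frobenius-coinvariant quotient of $\pi_1(V_{\overline{\bF}_q})^{\mathrm{ab}}$ need not be finite (its $p$-part can be huge for affine $V$). What saves the argument is that a continuous $\Qlbar^\times$-valued character has image in $\cO_E^\times$ for a finite extension $E/\bQ_\ell$, whose torsion-free part is pro-$\ell$; since $\mathrm{Frob}-1$ is invertible on $H^1(V_{\overline{\bF}_q},\bQ_\ell)$ (weights $\geq 1$), the pro-$\ell$ free part of a Frobenius-invariant character is trivial, and only finite torsion remains. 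Second, note that after the additional weight-normalizing twist the determinant is in general no longer of finite order; this is harmless, since \cref{Langlands} has already been used and further geometrically constant twists preserve purity up to the expected weight shift.
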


\begin{proof}
Let $\cP\in \Perv(X,\Qlbar)$ be a simple perverse sheaf.
It is enough to show that there is a character $\chi : G_{\bF_q}\to \Qlbar^{\times}$  such that the twist 
$$
\chi \cdot  \cP:=p^*\chi\otimes_{\Qlbar}\cP
$$ 
is pure of weight $0$, where $p: X\to \Spec \bF_q$ is the structural morphism.
We can suppose that $X$ is reduced.
By \cite[Corollary 5.5]{KW}, the complex $\cP$ is of the form $i_* j_{!*} \cL[d]$ where $i : Y\hookrightarrow X$ is an irreducible closed subset, where $j : U\hookrightarrow Y$ is a smooth dense open subset of dimension $d$ and where $\cL\in \Loc(U,\Qlbar)$ is simple.
By  \cite[Corollary 6.5.6]{fu}, for any character $\chi$ coming from $\bF_q$, we have 
$$
i_* (\chi \cdot  j_{   !*} \cL[d])\simeq  \chi \cdot i_*  j_{   !*} \cL[d] \simeq \chi \cdot  \cP \ .
$$
Since $i_\ast$ preserves pure complexes of weight 0 in virtue of \cite[Stabilités 5.1.14]{BBD}, we can assume that $Y=X$.
By \cite[Proposition 1.3.4]{Weil2}, the sheaf $\cL$ admits a twist $\chi \cdot \cL$ with finite order determinant.
Then, \cref{Langlands} applied to $\chi \cdot \cL$  on the smooth connected scheme $U$ ensures that $\chi \cdot \cL$ is pure of weight 0.
At the cost of twisting $\chi \cdot \cL$ further by a character of weight $-d$ coming from $\bF_q$, there is a character $\chi$ coming from $\bF_q$ such that $\chi \cdot \cL$ is pure of weight $-d$.
In particular $\chi \cdot \cL[d]$ is pure of weight $0$.
On the other hand, we have
$$
j_{   !*} (\chi \cdot \cL[d]) \simeq \chi \cdot  j_{   !*} \cL[d] \simeq  \chi \cdot \cP \ .
$$
Since $j_{!*}$ preserves pure perverse sheaves of a given weight by \cite[Corollaire 5.4.3]{BBD}, the conclusion follows.
\end{proof}

\begin{thm}\label{Deligne_finitness_with_Langlands}
Let $(X/S,\Sigma)$ be a relative projective stratified scheme over $\bF$.
Then, there is a function $N : \bQ[\Coh(X)] \times \bN^+ \times \bN^+ \to \bN^+$ such that for every $\cE\in \bQ[\Coh(X)]$, every $r\geq 0$, every prime $\ell \neq p$ and every closed point $s\in S$, there are up to geometric isomorphism at most $N(\cE,r,\deg s)$ geometrically simple  objects in $\Perv^{\leq r}_{\Sigma_{s}}(X_{s},\cE_{s},\Qlbar)$.
\end{thm}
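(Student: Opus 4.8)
The plan is to deduce the statement from \cref{Deligne_finitness} by showing that dropping the purity hypothesis does not enlarge the set of geometric isomorphism classes one has to count. The mechanism is the twisting argument already used in the proof of \cref{purity_after_twist}: one kills the weight by tensoring with a rank one local system pulled back from the base finite field, and such a twist does not affect any of the three data controlling membership in $\Perv^{\leq r}_{\Sigma_s}(X_s,\cE_s,\Qlbar)$.

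Concretely, fix a closed point $s\in S$, a prime $\ell\neq p$, an element $\cE\in \bQ[\Coh(X)]$ and $r\geq 0$, and let $\cP\in \Perv^{\leq r}_{\Sigma_s}(X_s,\cE_s,\Qlbar)$ be geometrically simple. Then $\cP$ is in particular simple over $\bF(s)$, so arguing as in the proof of \cref{purity_after_twist} there is a character $\chi\colon G_{\bF(s)}\to \Qlbar^{\times}$ such that $\chi\cdot\cP:=p_s^*\chi\otimes_{\Qlbar}\cP$ is pure of weight $0$, where $p_s\colon X_s\to \Spec\bF(s)$ is the structural morphism. Since $p_s^*\chi$ becomes trivial geometrically, $\chi\cdot\cP$ is geometrically isomorphic to $\cP$, hence again geometrically simple. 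The first step is then to check that $\chi\cdot\cP$ still lies in $\Perv^{\leq r}_{\Sigma_s}(X_s,\cE_s,\Qlbar)$. As $p_s^*\chi$ is lisse of rank one on all of $X_s$, tensoring by it is exact and $\cH^j(\chi\cdot\cP)\simeq p_s^*\chi\otimes\cH^j\cP$ for every $j$; thus $\cH^j(\chi\cdot\cP)$ remains $\Sigma_s$-constructible with $\Rk_{\Qlbar}\cH^j(\chi\cdot\cP)=\Rk_{\Qlbar}\cH^j\cP\leq r$. For the log conductors, let $f\colon C\to X_s$ be a smooth curve over $\bF(s)$ and $x\in C$: the restriction $(p_s^*\chi)|_C$ extends to a lisse sheaf on a smooth compactification of $C$, so it is unramified at $x$, and being of rank one we get $(p_s^*\chi)|_C\otimes(\cH^j\cP)|_C\simeq(\cH^j\cP)|_C$ as representations of the wild inertia at $x$; hence the two sheaves have the same logarithmic slopes, and therefore the same logarithmic conductor, at $x$. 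This yields $LC(\cH^j(\chi\cdot\cP)|_C)=LC(\cH^j\cP|_C)\leq T(f^*\cE_s)$, so $\chi\cdot\cP$ has log conductors bounded by $\cE_s$.

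With this in hand, the set of geometric isomorphism classes of geometrically simple objects of $\Perv^{\leq r}_{\Sigma_s}(X_s,\cE_s,\Qlbar)$ coincides with the set of geometric isomorphism classes of its geometrically simple objects that are pure of weight $0$: the inclusion of the latter in the former is trivial, and the reverse one is witnessed by the assignment $\cP\mapsto\chi\cdot\cP$. By \cref{Deligne_finitness} the latter set has cardinality at most $N(\cE,r,\deg s)$, so the same function $N\colon\bQ[\Coh(X)]\times\bN^+\times\bN^+\to\bN^+$ answers the question.

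The only genuine point requiring care — and the closest thing to an obstacle — is the verification that the twist by a character coming from $\bF(s)$ preserves all three conditions defining $\Perv^{\leq r}_{\Sigma_s}(X_s,\cE_s,\Qlbar)$, most notably the boundedness of the log conductors by $\cE_s$. As sketched above this reduces to the elementary fact that tensoring a representation of a local wild inertia group by an unramified character of rank one leaves the logarithmic slopes unchanged, together with the fact that a local system pulled back from $\Spec\bF(s)$ is unramified along every curve mapping to $X_s$. Everything else is formal once \cref{Deligne_finitness} and \cref{purity_after_twist} are available.
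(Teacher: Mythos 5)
Your proposal is correct and is essentially the paper's own argument: the paper proves the theorem simply by combining \cref{Deligne_finitness} with \cref{purity_after_twist}, which is exactly the twisting reduction you carry out. Your explicit verification that twisting by a character pulled back from $\Spec \bF(s)$ preserves geometric simplicity, $\Sigma_s$-constructibility, the rank bound and the log-conductor bound is the (correct) detail the paper leaves implicit in its one-line proof.
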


\begin{proof}
Combine \cref{Deligne_finitness} and \cref{purity_after_twist}.
\end{proof}

\section{Appendix : concrete estimates for $\bA^n_k$}

\begin{lemma}\label{explicit_bound_b}
In the setting of \cref{defin_bi}, we have $b_2(x)=x^2+7x+9$ and for every $n\geq 3$, we have the following inequality of functions on $\bR^+$ :
$$
b_n(x)\leq (x+3n-3)\prod_{j=1}^{n-1}(x+3j+1) \ .
$$
\end{lemma}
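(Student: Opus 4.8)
The plan is to prove, by induction on $n$, that $b_n(x)\le Q_n(x)$ on $\bR^+$, where $Q_n(x):=(x+3n-3)\prod_{j=1}^{n-1}(x+3j+1)$; for $n=2$ this reduces to computing $b_2(x)=x^2+7x+9$ directly from \cref{defin_bi} (which is the first assertion, and also gives $b_2\le Q_2=(x+3)(x+4)$), and for $n\ge3$ it is exactly the claimed bound. Two preliminary facts drive the argument. First, every $b_n$ has nonnegative coefficients — immediate by induction, since the recursion in \cref{defin_bi} uses only addition, multiplication by $x+2$, $x+3$ and $1$, and the substitution $x\mapsto x+3$ — so each $b_n$ is nondecreasing on $\bR^+$, in particular $b_i(x)\le b_i(x+3)$. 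Second, writing $B_n(y):=\sum_{i=0}^{n-1}b_i(y)$, the recursion together with $b_i(x)\le b_i(x+3)$ gives the ``master inequality'' $b_n(x)\le (x+3)B_n(x+3)$ for $n\ge 1$, because $(x+2)b_i(x+3)+b_i(x)\le (x+3)b_i(x+3)$ and $(x+3)b_i(x)\le (x+3)b_i(x+3)$, and summing over $0\le i\le n-1$ reproduces the right-hand side. Feeding this into $B_{n+1}=B_n+b_n$ and using $B_n(y)\le B_n(y+3)$ yields $B_{n+1}(y)\le (y+4)B_n(y+3)$; iterating from $B_1=b_0=1$, together with the reindexing $\prod_{j=1}^{m}(y+3j+4)=\prod_{j=2}^{m+1}(y+3j+1)$, gives the closed bound $B_n(y)\le \prod_{j=1}^{n-1}(y+3j+1)$ for all $n\ge 1$.

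For the inductive step, fix $n\ge 3$, assume $b_i\le Q_i$ for all $i\le n-1$, and record the factorizations $Q_n(x)=(x+4)\,Q_{n-1}(x+3)$ and $Q_{n-1}(x+3)=(x+3n-3)\prod_{j=2}^{n-1}(x+3j+1)$ (pull out the $j=1$ factor and reindex). Since $n-1$ has parity opposite to $n$, split off the leading summand of \cref{defin_bi}, writing $b_n(x)=(x+2)b_{n-1}(x+3)+R(x)$, where $R(x)$ collects all remaining terms: $b_{n-1}(x)$ (the $i=n-1$ term of the third sum) together with the $i\le n-2$ summands of all three sums. Applying the same two one-line inequalities used for the master inequality, and the disjoint decomposition of $\{0,\dots,n-2\}$ by parity, one obtains $R(x)\le b_{n-1}(x)+(x+3)B_{n-1}(x+3)$. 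Now the master inequality gives $b_{n-1}(x)\le (x+3)B_{n-1}(x+3)$, and the closed bound on $B_{n-1}$ evaluated at $x+3$ gives $R(x)\le 2(x+3)\prod_{j=2}^{n-1}(x+3j+1)\le 2(x+3n-3)\prod_{j=2}^{n-1}(x+3j+1)=2\,Q_{n-1}(x+3)$, the middle step being $x+3\le x+3n-3$ for $n\ge 2$. Combined with $b_{n-1}(x+3)\le Q_{n-1}(x+3)$ from the inductive hypothesis, this yields
\begin{equation*}
b_n(x)\le (x+2)Q_{n-1}(x+3)+2Q_{n-1}(x+3)=(x+4)Q_{n-1}(x+3)=Q_n(x),
\end{equation*}
which closes the induction.

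The step I expect to be the crux is the decision to keep the leading summand $(x+2)b_{n-1}(x+3)$ intact rather than bounding it crudely like all the others: the blunt estimate $b_n(x)\le (x+3)B_n(x+3)\le (x+3)\prod_{j=2}^{n}(x+3j+1)$ coming from the master inequality alone has $x^{n-1}$-coefficient strictly larger than that of $Q_n$, hence is weaker than $Q_n$ for large $x$, and the induction cannot be run on it. Everything else — the nonnegativity of coefficients, the two elementary pointwise inequalities, the product reindexings, and the identity $Q_n=(x+4)Q_{n-1}(x+3)$ — is routine bookkeeping, and the $n=2$ base case is a direct substitution into \cref{defin_bi}.
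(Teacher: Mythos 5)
Your proof is correct: the nonnegativity of the coefficients, the master inequality $b_n(x)\le (x+3)B_n(x+3)$, the recursion $B_{n+1}(y)\le (y+4)B_n(y+3)$ with its closed-form consequence $B_n(y)\le\prod_{j=1}^{n-1}(y+3j+1)$, the bound $R(x)\le 2Q_{n-1}(x+3)$, and the factorization $Q_n(x)=(x+4)Q_{n-1}(x+3)$ all check out, and the only step you leave implicit (the case $n=1$ of the master inequality, needed when iterating $B_{n+1}\le(y+4)B_n(y+3)$ from $B_1$, where the recursion of \cref{defin_bi} does not apply) is the trivial verification $b_1(y)=y\le y+3$. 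Your route differs from the paper's in how the crucial factor $(x+4)$ is extracted. The paper first majorizes $b_n$ by an auxiliary sequence $c_n$ defined by the crude recursion $c_n(x)=(x+3)\sum_{i=0}^{n-1}c_i(x+3)$, and then sharpens at the level of $c_n$: it unfolds $c_{n-1}(x+3)=(x+6)\sum_{i=0}^{n-2}c_i(x+6)$ inside the recursion and uses $(x+3)(x+7)\le(x+4)(x+6)$ to get $c_n(x)\le(x+4)c_{n-1}(x+3)$, which it iterates down to $c_2$. You never introduce $c_n$; instead you refine directly on $b_n$ by keeping the dominant summand $(x+2)b_{n-1}(x+3)$ intact and absorbing everything else into $2Q_{n-1}(x+3)$ via the closed bound on the partial sums $B_{n-1}$. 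Both arguments ultimately run the same multiplicative recursion against $Q_n=(x+4)Q_{n-1}(x+3)$; yours trades the auxiliary majorant sequence for an explicit control of $B_n$, and your observation that the crude bound $(x+3)\prod_{j=2}^{n}(x+3j+1)$ has $x^{n-1}$-coefficient exceeding that of $Q_n$ by $3$, hence cannot support the induction, is accurate and is precisely the reason the paper needs its $(x+3)(x+7)\le(x+4)(x+6)$ step as well.
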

\begin{proof}
The equation $b_2(x)=x^2+7x+9$ is from the definition. 
In the rest of the proof, we consider all polynomials with indeterminate $x$ as functions defined in $\mathbb R_{\geq 0}$. 
For  $n\geq 3$, we have
\begin{align*}
b_{n}(x)&=\sum^{n-1}_{\substack{i=0\\  i\neq n \modulo 2 }}(x+2)\cdot b_{i}(x+3)
+\sum^{n-1}_{\substack{i=0\\ i= n \modulo 2}} (x+3)\cdot b_{i}(x) + \sum^{n-1}_{\substack{i=0\\  i\neq n \modulo 2}}b_{i}(x)\\
&\leq \sum^{n-1}_{\substack{i=0\\  i\neq n \modulo 2 }}(x+2)\cdot b_{i}(x+3)
+\sum^{n-1}_{\substack{i=0\\ i= n \modulo 2}} (x+3)\cdot b_{i}(x+3) + \sum^{n-1}_{\substack{i=0\\  i\neq n \modulo 2}}b_{i}(x+3)\\
&\leq \sum^{n-1}_{\substack{i=0\\  i\neq n \modulo 2 }}(x+3)\cdot b_{i}(x+3)+\sum^{n-1}_{\substack{i=0\\ i= n \modulo 2}} (x+3)\cdot b_{i}(x+3) \\
&=\left(\sum^{n-1}_{i=0} b_{i}(x+3)\right)\cdot(x+3).
\end{align*}

Let $c_0(x)=1, c_1(x)=x$ be polynomials in $\mathbb Z[x]$. 
For $n\geq 2$, we inductively define $c_{n}(x)$ by
$$
c_{n}(x)=\left(\sum^{n-1}_{i=0} c_{i}(x+3)\right)\cdot(x+3).$$
For  $n\geq 1$, we see that the coefficients of $c_n(x)$ are positive integers and $\deg(c_n(x))=n$. 
In particular, we have $c_2(x)=x^2+7x+12$. 
For  $n\geq 3$, we have 
\begin{align*}
c_n(x)&=\left(\sum^{n-1}_{i=0} c_{i}(x+3)\right)\cdot(x+3)\\
&=\left(c_{n-1}(x+3)+\sum^{n-2}_{i=0} c_{i}(x+3)\right)\cdot(x+3)\\
&=\left(\left(\sum^{n-2}_{i=0} c_{i}(x+6)\right)\cdot(x+6)+\sum^{n-2}_{i=0} c_{i}(x+3)\right)\cdot(x+3)\\
&\leq \left(\sum^{n-2}_{i=0} c_{i}(x+6)\right)\cdot(x+7)(x+3)\\
&\leq \left(\sum^{n-2}_{i=0} c_{i}(x+6)\right)\cdot(x+6)(x+4)=c_{n-1}(x+3)\cdot(x+4). 
\end{align*}
Hence for $n\geq 3$, we have 
$c_n(x)\leq (x+3n-3)\prod_{i=1}^{n-1}(x+3i+1)$.
Notice that $b_0(x)\leq c_0(x), b_1(x)\leq c_1(x), b_2(x)\leq c_2(x)$. 
Suppose that for every $0\leq n\leq k$, we have $b_k(x)\leq c_k(x)$. 
When $n=k+1$, we have 
\begin{align*}
b_n(x)&=b_{k+1}(x)\leq \left(\sum^{k}_{i=0} b_{i}(x+3)\right)\cdot(x+3)\\
&\leq \left(\sum^{k}_{i=0} c_{i}(x+3)\right)\cdot(x+3)=c_{k+1}(x)=c_n(x).
\end{align*}
By induction, we have $b_n(x)\leq c_n(x)$ for every $n\geq 0$. 
Hence for every $n\geq 3$, we have 
\begin{align*}
b_n(x)\leq c_n(x)\leq (x+3n-3)\prod_{i=1}^{n-1}(x+3i+1).
\end{align*}
\end{proof}

\end{document}